\documentclass[12pt, twoside]{article}
\usepackage{amssymb,amsmath,amsfonts,amsthm,mathrsfs}
\usepackage{indentfirst}
\usepackage{graphics}
\usepackage{color}
\usepackage{geometry}
\usepackage{enumerate}
\usepackage{amsmath}
\usepackage{bbm}           
\usepackage{bm}
\usepackage{eso-pic,graphicx}
\usepackage{tikz}
\usepackage{cite}
\usepackage{esint}
\usepackage{hyperref}
\usepackage{pslatex}
\usepackage{rotating}
\usepackage{verbatim}

\makeatletter
\def\Ddots{\mathinner{\mkern1mu\raise\p@
		\vbox{\kern7\p@\hbox{.}}\mkern2mu
		\raise4\p@\hbox{.}\mkern2mu\raise7\p@\hbox{.}\mkern1mu}}
\makeatother

\def\XXint#1#2#3{{\setbox0=\hbox{$#1{#2#3}{\int}$}
		\vcenter{\hbox{$#2#3$}}\kern-.5\wd0}}

\textwidth=15cm
\textheight=21.1cm
\oddsidemargin 0.46cm
\evensidemargin 0.46cm
\allowdisplaybreaks
	
\newtheorem{cor}{Corollary}[section]
\newtheorem{thm}{Theorem}[section]
\newtheorem{lem}{Lemma}[section]
\newtheorem{defn}{Definition}[section]
\newtheorem{rem}{Remark}[section]
\newtheorem{pro}{Proposition}[section]

\makeatletter
\@addtoreset{equation}{section}
\makeatother

\def\rn{{\mathbb R^n}}

\def\d{{\mathrm d}}
\def\D{{\mathcal D}}

\begin{document}
\title
{\bf\Large 
Weighted Bourgain-Morrey-Besov type and Triebel-Lizorkin type spaces associated with operators
 	\footnotetext{Jingshi Xu is supported by the National Natural Science Foundation of China (Grant No. 12161022) and the Science and Technology Project of Guangxi (Guike AD23023002).
		Pengfei Guo is supported by Hainan Provincial Natural Science Foundation of China (Grant No. 122RC652).
	}
}
	
\date{}

\author{Tengfei Bai\textsuperscript{a}, Pengfei Guo\textsuperscript{a},  Jingshi Xu\textsuperscript{b,c,d}\footnote{Corresponding author, E-mail: jingshixu@126.com}  \\
	{\scriptsize \textsuperscript{a}  College of Mathematics and Statistics, Hainan Normal University, Haikou, Hainan 571158,
		China}\\
	{\scriptsize  \textsuperscript{b} School of Mathematics and Computing Science, Guilin University of Electronic Technology, Guilin 541004, China} \\
	{\scriptsize  \textsuperscript{c} Center for Applied Mathematics of Guangxi (GUET), Guilin 541004, China}\\
	{\scriptsize  \textsuperscript{d} Guangxi Colleges and Universities Key Laboratory of Data Analysis and Computation, Guilin 541004, China}
}

\pagestyle{myheadings}\markboth{\footnotesize\rm\sc Tengfei Bai,  Pengfei Guo, Jingshi Xu}
{\footnotesize\rm\sc Weighted Bourgain-Morrey type Besov and Triebel-Lizorkin type spaces}
	
\maketitle

\begin{abstract}
	Let $(X,\mu)$ be a space of homogeneous type satisfying $\mu(X) =\infty$, the doubling property and the reverse doubling condition.	
	 Let $L$ be a nonnegative self-adjoint operator on $L^2(X)$	whose heat kernel enjoys  a Gaussian upper bound.  
	We introduce the 	weighted  homogeneous  Bourgain-Morrey-Besov type spaces and Triebel-Lizorkin type spaces  associated with 	the operator $L$. 
We obtain their continuous   characterizations	in terms of  Peetre maximal functions,   noncompactly supported functional calculus, heat kernel.  Atomic  and molecular decompositions of weighted  homogeneous  Bourgain-Morrey-Besov type spaces and Triebel-Lizorkin type spaces are also given. As an application, we obtain the boundedness of the fractional power of $L$, the spectral multiplier of $L$ on  Bourgain-Morrey-Besov type spaces and Triebel-Lizorkin type spaces. 
\end{abstract}
\textbf{Keywords}  atom,  Besov space, heat kernel,  maximal function,    molecule, spectral multiplier,   Triebel-Lizorkin space,   weight.

\noindent \textbf{Mathematics Subject Classification}  Primary 46E36; Secondary  46F05, 47B38.

\section{Introduction}\label{sec1}
Let  $X$ be a spaces of homogeneous type, with quasidistance $\rho$ and  nonnegative Borel measure $\mu$ on $X$, which satisfies the doubling property below. In this paper, we assume that $\mu (X) = \infty$.

For $x\in X$ and $r>0$, denote by $B(x,r) = \{ y\in X : \rho(x,y) < r\}$  the open ball with radius $r>0$ and center $x \in X$. Set $V(x,r) = \mu ( B (x,r )) $.  We say that $\mu $ has the {\it doubling property} if there exists a constant $C>0$ such that for all $x \in X$ and $r>0$, 
\begin{equation} \label{double}
	V(x,2r) \le C V(x,r).
\end{equation}
The doubling property (\ref{double}) yields a constant $n>0 $  playing the role of a dimension such that 
\begin{equation*} 
	V(x, \lambda r) \le C \lambda ^n  V(x,  r), 
\end{equation*}
for all $\lambda \ge 1$, $x\in X$ and $r>0$, and that 
\begin{equation} \label{V x r V y r}
	V(x,r) \le C \Big( 1+ \frac{\rho(x,y)}{r} \Big)^{\tilde n}  V(y,r),
\end{equation}
for all $x,y \in X$, $r>0$ and for some $\tilde{n} \in [0,n]$; see for example \cite{CW71}.
To fit function spaces of this paper, we  also assume that $X$  satisfies the {\it reverse doubling condition}; see Remark \ref{dya cube}. 

Let $L$ be a nonnegative self-adjoint operator on $L^2 (X)$ which generates a semigroup $\{ e^{-\alpha L} \} _{ \alpha>0 }$. Let $p_\alpha (x,y)$ be the kernel of the semigroup $ e^{-\alpha L} $. In this paper, we suppose that the kernel  $p_\alpha (x,y)$  satisfies a Gaussian upper bound, that is, there exist constants $C, c >0$ such that for all $x,y \in X$ and $\alpha >0$, 
\begin{equation*}
 p_\alpha (x,y) \le \frac{C}{\mu( B(x, \sqrt{\alpha} )  )}  \exp \bigg(   -\frac{\rho(x,y) ^2 }{c \alpha}   \bigg) . 
\end{equation*}

The classical Besov and Triebel-Lizorkin spaces including many famous function spaces have played an essential role in approximation theory and partial differential equations, see \cite{Bes59, Bes61, BIN79, BPT95, BPT96, BPT97, BT00, FJ85, FJ90, Pee75, Pee76, Tri83, Tri92, YY17}. The theory of  Besov and Triebel-Lizorkin spaces associated with differential operators $L$ has been developed by many mathematicians in various settings; see \cite{BBD20, BDY12, BD15, BD17, CKP12, DP05, KP15, KPPX09, LYY16, PX08, YY14, GHS23,SYY24}. For example,  Liu, Yang and Yuan \cite{LYY16} introduced the inhomogeneous Besov-type and Triebel-Lizorkin spaces associated to a nonnegative self adjoint operator which satisfies sub-Gaussian upper bound estimate, H\"older continuity, and stochastic completeness. They obtained the characterizations of these spaces in terms of Peetre maximal functions, heat kernel, frame. Embedding properties were also given. 
In \cite{GHS23}, Gon{\c{c}}alves, Haroske and Skrzypczak considered the necessary and sufficient conditions for the continuity of limiting embeddings of Besov-type and Triebel-Lizorkin-type spaces on bounded Lipschitz domains.
In \cite{SYY24}, Sun, Yang and Yuan introduced ball quasi-Banach function sequence spaces and obtained characterizations of these spaces via  $\varphi$-transform, atoms, molecules, maximal functions, Littlewood-Paley functions and wavelets. They also showed the boundedness of almost diagonal operators, pseudodifferential operators and generalized Calderon-Zygmund operators on these spaces.

Under the assumptions that $X$  is a space of homogeneous type and that $L$ is a nonnegative self-adjoint operator satisfying the Gaussian upper bound, in \cite{BBD20}, Bui, Bui, and Duong established  the theory of weighted Besov spaces $\dot{B}_{p,q,\omega}^{\alpha,L} (X) $ and  weighted Triebel-Lizorkin spaces $\dot{F}_{p,q,\omega}^{\alpha,L}(X) $ associated with the operator $L$ for the full range $0<p,q\le \infty$, $\alpha \in \mathbb R$ and $\omega$ being in the Muckenhoupt weight class $A_\infty$. They obtained  characterizations via Peetre maximal functions,   noncompactly supported functional calculus, heat kernel, Lusin functions, Littlewood-Paley functions, atom and molecule.

Since a special case of Bourgain-Morrey spaces was introduced by Bourgain in \cite{B91},  Bourgain-Morrey spaces have been applied to
various partial differential equations, especially the Strichartz estimate and nonlinear Schr\"odinger equations, see, e.g., \cite{BPV07,B91,C64, M16,MV98,MVV99}.

Recently, Hatano, Nogayama, Sawano and Hakim investigated
the Bourgain--Morrey function spaces $M_{p}^{t,r}$ 
from the viewpoints of harmonic analysis and functional analysis in \cite{HNS23}.
After then, some function spaces extending Bourgain-Morrey spaces were established.
For example, in \cite{ZSTYY23}, Zhao et al.  introduced Besov-Bourgain-Morrey spaces which connect Bourgain-Morrey spaces with amalgam-type spaces.  They obtained predual, dual spaces and complex interpolation  of these spaces. They also gave an equivalent norm with an integral expression  and  obtained the boundedness on these spaces
of the Hardy-Littlewood maximal operator, the fractional integral, and the Calder\'on-Zygmund operator. 

Immediately after \cite{ZSTYY23},
Hu, Li, and Yang  introduced  Triebel-Lizorkin-Bourgain-Morrey spaces which connect Bourgain-Morrey spaces and global Morrey spaces in \cite{HLY23}. 
They considered the embedding relations between Triebel-Lizorkin-Bourgain-Morrey spaces and Besov-Bourgain-Morrey spaces.  
They studied various fundamental real-variable properties of these spaces. They obtained the sharp boundedness of  the Hardy-Littlewood maximal operator, the Calder\'on-Zygmund operator, and the fractional integral on these spaces.

Inspired by the generalized grand Morrey spaces and Besov-Bourgain-Morrey spaces, Zhang et al. introduced generalized grand Besov-Bourgain-Morrey spaces in \cite{ZYZ24}. They obtained predual spaces and the Gagliardo-Peetre  interpolation theorem,  extrapolation theorem.  The boundedness of Hardy-Littlewood maximal operator, the fractional integral and the Calder\'on-Zygmund operator on generalized grand Besov-Bourgain-Morrey spaces is also proved. 

Very recently, the first author and the third author of the paper \cite{BX25} introduced the weighted homogeneous Bourgain-Morrey Besov spaces and Triebel-Lizorkin associated with the operator. They obtained the Hardy-Littlewood maximal function and Fefferman-Stein maximal inequality on weighted Bourgain-Morrey sapces. Some characterizations of homogeneous Bourgain-Morrey Besov and Triebel-Lizorkin spaces were also obtained, such as  Peetre maximal function, compact support functions,  atomic  and molecular decompositions.

Motivated by above literature, we will establish the theory of weighted homogeneous Bourgain-Morrey-Besov type and Triebel-Lizorkin type spaces. 
The paper is organized as follows.
	In  Section \ref{Pre},   
	 we recall notations such as Muckenhoupt weights, Hardy-Littlewood maximal functions, the class of distributions and   dyadic cubes; and some lemmas.
	 In Section \ref{Bou spa},
	the definition of weighted homogeneous Bourgain-Morrey Besov type and Triebel-Lizorkin type  spaces is given first.  We  get the Hardy type inequality  on sequence Bourgain-Morrey spaces. These are  useful in the proof of this paper.
	Charaterizations of weighted homogeneous Bourgain-Morrey Besov type and Triebel-Lizorkin type  spaces via Peetre maximal function, compact support functions, noncompact support functions  are obtained in this section. 
	We get the 	charaterizations of weighted homogeneous Bourgain-Morrey Triebel-Lizorkin type  spaces via Lusin functions and Littlewood-Paley functions in the last of Section \ref{Bou spa}.
	 In Section \ref{atm dec}, we obtain  atomic  and molecular decompositions of weighted homogeneous Bourgain-Morrey-Besov type  and Triebel-Lizorkin type  spaces.
	 In Section \ref{app}, using the results in Section \ref{Bou spa},  we obtain that fractional powers and spectral multipliers of Laplace transform type of $L$  are bounded  on weighted homogeneous Bourgain-Morrey Besov type and Triebel-Lizorkin type spaces.

Throughout this paper, we let $c, C$ denote constants that are independent of the main parameters involved but whose value may differ from line to line. 
Let $\mathbb N = \{0,1,2,3,\ldots \}$ and $\mathbb N_+ = \{1,2,3,\ldots \}$.
Let $\mathbb Z$ be the set of all integers.
Let $\chi_{E}$ be the characteristic function of the set $E\subset X$.
Given a ball $B:=B(x_B, r_B)$ and $\lambda>0$, $\lambda B$ denotes the ball with the same center as $B$ whose radius is $\lambda$ times that of $B$. 
 By $A\lesssim B$ we mean that $A\leq CB$ with some positive constant $C$ independent of appropriate quantities. By $ A \approx B$, we mean that $A\lesssim B$ and $B\lesssim A$.
Let $\log a := \log_2 a$ for $a>0$.
Let $ \mathscr S (\mathbb R)$ be the calss of Schwartz functions on $\mathbb R$.

\section{Preliminaries}\label{Pre}

\subsection{Dyadic cubes}
The following covering lemma comes from \cite{C90}.
\begin{lem}\label{cube}
	Let $\mu (X)  =\infty$.
	There exists a collection of open sets $\{  Q_\tau ^k \subset X : k \in \mathbb Z, \tau \in I_k  \}$, where $I_k$ denotes certain  index set depending on $k$ and constants $\gamma \in (0,1)$, $a_0 \in (0,1]$ and $ \kappa_0 \in (0,\infty)$ such that
	\begin{itemize}
		\item[\rm (i)] $\mu (  X \backslash \cup_{ \tau \in I_k  } Q_\tau^k) =0 $ for all $k \in \mathbb Z$;
		\item[\rm (ii)] if $i\ge k$, then either $Q_\tau^i \subset Q_\beta^k$ or $Q_\tau^i \cap Q_\beta^k  = \emptyset $;
		\item[\rm (iii)] for every $(k,\tau)$ and each $i < k$, there exists a unique $\tau '$ such that $Q_\tau^k \subset Q_{\tau '}^i$; we say that $ Q_{\tau '}^i  $ is a parent of cube $Q_\tau^k$.
		\item[\rm (iv)] the diameter of  $Q_\tau ^k \le \kappa _0 \gamma ^k$;
		\item[\rm (v)] each $Q_\tau ^k $ contains certain ball $B(x_{Q_\tau ^k}, a_0 \gamma ^k )$.
	\end{itemize}
\end{lem}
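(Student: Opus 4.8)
This is M.\ Christ's construction of a dyadic system adapted to a space of homogeneous type, so the plan is to follow \cite{C90}, and I will only indicate the main steps. First I would fix $\gamma\in(0,1)$ small enough that, after one application of the quasi-triangle inequality for $\rho$, a ball of radius $\gamma^{k+1}$ sits comfortably inside a ball of radius $\gamma^{k}$; the required smallness is dictated only by the quasi-metric constant. For each $k\in\mathbb Z$ I would then choose, by a Zorn/greedy argument, a maximal $\gamma^{k}$-separated subset $\{x_\tau^k:\tau\in I_k\}$ of $X$. Maximality forces this set to be $\gamma^{k}$-dense, so $\{B(x_\tau^k,\gamma^k)\}_{\tau}$ covers $X$ while $\{B(x_\tau^k,\gamma^k/2)\}_{\tau}$ is pairwise disjoint; these centers will serve as the $x_{Q_\tau^k}$ in part (v).

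Next I would impose a consistent ancestor relation: between consecutive scales, assign to each $x_\tau^{k+1}$ a single parent $x_{\tau'}^{k}$ minimizing $\rho(x_\tau^{k+1},x_{\tau'}^{k})$, with ties broken by a fixed well-ordering of $\bigcup_k I_k$, and obtain the relation at general scales $i>k$ by composition; part (iii) is then immediate from this one-step definition. I would define $Q_\tau^k$ as the interior of the closure of the union of the small balls $B(x_\sigma^{i},c\gamma^{i})$ taken over all descendants $(i,\sigma)$ of $(k,\tau)$. Using the density and separation of the nets, and summing the geometric series $\sum_{i\ge k}\gamma^{i}$, one gets $B(x_\tau^k,a_0\gamma^k)\subset Q_\tau^k\subset B(x_\tau^k,\kappa_0\gamma^k)$ with constants $a_0\in(0,1]$ and $\kappa_0\in(0,\infty)$ depending only on the structural constants of $X$; this yields (iv) and (v). The nesting dichotomy (ii) for $i\ge k$ follows from transitivity of the descendant relation, and (i) follows because for each fixed $k$ the open cubes $\{Q_\tau^k\}_{\tau}$ are pairwise disjoint with union of full measure, the residual boundary set carrying no $\mu$-mass.

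The technical heart, and the step I would simply cite rather than carry out in detail, is verifying that the nets together with the nearest-center maps can be arranged so that the resulting sets nest \emph{simultaneously} across all pairs of scales while keeping the ball-sandwich estimates uniform in $k$; this compatibility bookkeeping is precisely what makes the argument of \cite{C90} delicate, and here we quote it directly, since conclusions (i)--(v) are stated exactly in the form produced by that construction.
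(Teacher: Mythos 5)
The paper gives no proof of this lemma at all---it simply cites Christ \cite{C90}---and your proposal is exactly a sketch of Christ's construction (nets, parent maps, descendant unions, ball-sandwich estimates) followed by the same citation for the delicate compatibility step, so you are taking essentially the same route. One small caution: property (i) is not a formal consequence of disjointness plus covering; the fact that the residual set is $\mu$-null is Christ's small-boundary estimate, which relies on the doubling property and is part of what you are quoting from \cite{C90}.
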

\begin{rem}\label{dya cube}
	Since the constants $\gamma$ and $a_0$ are not essential in the paper, without loss of generality, we may assume that $\gamma = a_0 = 1/2$. Then fix a collection of open sets in Lemma \ref{cube} and denote this collection by $\mathcal D$. We call open sets in $\mathcal D$ the dyadic cubes in $X$ and $  x_{Q_\tau ^k}$ the center of the cube $Q_\tau ^k \in \mathcal{D}$. Denote $\mathcal D _\nu : = \{  Q_\tau ^{\nu +1} \in \mathcal D : \tau \in I_{\nu+1}  \}$ for each $\nu \in \mathbb Z$. Then for each cube $Q \in \mathcal D_\nu$, we have $ B(x_Q, c_0 2^{-\nu} )  \subset Q \subset   B(x_Q, \kappa_0 2^{-\nu} ) =: B_Q $, where $c_0 $ is a constant independent of $Q$.

Note that the diameter of cubes in $\mathcal D_k$ approximates $2^{-k}$. Hence the parent cube $Q^{k - \nu}_{\tau'}$ has  at most $c2^{\nu n}$ cubes in $\mathcal D _k$, where $c$ is a not important positive number.

For each cube $Q \in \mathcal D$, let $2^1 Q$ be its parent cube. Since $X$ satisfies the doubling condition and the relation $B(x_Q, c_0 2^{-\nu} )  \subset Q \subset   B(x_Q, \kappa_0 2^{-\nu} )$, we have
\begin{equation*}
	\mu(2^1 Q) \le C 2^n \mu (Q),
\end{equation*}
where $n$ is the role of a dimension of $X$.
\end{rem}
In the sequel, we further suppose that $X$ satisfies    the {\it  reverse doubling condition}. That is,  for a dyadic cube $Q $ and its parent $R$, there exists a constant $\alpha_0 \in (0,1) $ such that 
\begin{equation*}
	\mu (Q) \le \alpha_0 \mu (R).
\end{equation*} 
And  $\alpha_0$ is called the reverse doubling  constant of $X$.
Hence in this article,  $X$  is a space of homogeneous type, with quasidistance $\rho$ and  nonnegative Borel measure $\mu$ on $X$, which satisfies the doubling property,  the reverse doubling condition, and $\mu (X) = \infty$.

\subsection{Muckenhoupt  weights}
A weight function $\omega$ is a locally integrable function on $X$ that takes values in $(0,\infty)$ almost everywhere. For a given weight function $\omega$ and a measurable set $E \subset X$, we denote  the weighted measure of $E$ by $\omega(E)$, where $\omega(E)=\int_E \omega(x)\,\mathrm {d} \mu (x)$ and $V(E) = \mu(E)$.
For $1\le p \le \infty $, $p' $ is the conjugate exponent of $p$, that is $1/p + 1/p' =1$.

\begin{defn}
	Let $1<p<\infty $. We say that a weight $\omega$ belongs to class $A_p$ if
	\begin{equation*}
		[\omega]_{A_p} := \sup_{B\;  \mathrm{ balls \; in  } \;X} \Big(   \frac{1}{V(B)} \int_B \omega(x) \mathrm {d} \mu(x)    \Big) ^{1/p} \Big(   \frac{1}{V(B)} \int_B \omega(x) ^ { -1/(p-1) } \mathrm {d} \mu (x)   \Big) ^{ (p-1 ) /p} <\infty.
	\end{equation*}
 We call a weight $\omega$   an $A_1$ weight if
	\begin{equation*}
		\mathcal M (w) (x) \le C w(x)
	\end{equation*}
	for almost all $x \in X$ and some $C>0$, where and what follows $\mathcal M$ is the usual Hardy-Littlewood maximal function.
	Set $A_\infty = \cup_{p\ge 1} A_p$.
\end{defn}

For  $\omega\in A_\infty $ and $ 0<p<\infty  $, define the weighted Lebesgue space $L^p(\omega)$ by
\begin{equation*}
	\bigg\{   f : \| f\| _{  L^p(\omega)}  = \bigg(  \int_X |f(x)|^p \omega (x)\mathrm {d} \mu(x)     \bigg) ^{1/p} <\infty      \bigg\}.
\end{equation*}

In the following lemma, 
we list some properties  of $A_p$ weights, whose proofs are  similar to those in \cite[Chapter 7]{Gra1}. 
More properties of $A_p$ weights can be seen in \cite[Chapter 7]{Gra1} and  \cite{ST89}. 

\begin{lem} \label{weights} 
	Let $\omega \in A_p$ for some $1\le p <\infty$. Then
	\begin{itemize}
		\item[\rm (i)] The classes $A_p$ are increasing as $p$ increases.
		\item[\rm (ii)] The following is an equivalent characterization of the $A_p$ characteristic constant of $\omega $:
		\begin{equation*}
			[\omega]_{A_p} = \sup_{Q} \sup_{f \in L^p(Q,\omega), \int_Q |f|^p \omega \mathrm{d} \mu (t) >0}  \bigg\{    \frac{ (\frac{1}{|Q|} \int_Q |f(t)| \mathrm {d} \mu (t) )^p  }{   \frac{1}{\omega(Q)} \int_Q |f(t)|^p \omega (t) \mathrm {d} \mu (t)  }    \bigg\}.
		\end{equation*}
		\item[\rm (iii)] The measure $\omega (x) \mathrm {d} \mu (x) $ is doubling: for all cubes $Q$ and all $\mu$-measurable subsets $A$ of $Q$ we have
		\begin{equation*}
			\Big(    \frac{\mu(A)}{ \mu(Q) } \Big) ^p \le [ \omega]_{A_p}  \frac{\omega (A)  }{  \omega(Q) }   .
		\end{equation*}
		\item[\rm (iv)] Let $0<\alpha <1$. Then there exists $\beta : = 1-   {(1-\alpha)^p} / {[\omega]_{A_p} }  <1$ such that  whenever $S$ is a measurable subset of a cube $Q$ with $  \mu (S) \le \alpha \mu (Q)$, we have $ \omega (S) \le \beta \omega (Q).$
		\item[\rm (v)] If $ \omega \in  A_p, 1 < p <\infty,$ then there exists $1<r < p<\infty$ such that $\omega \in A _r$.
	\end{itemize}
\end{lem}

For $\omega \in A_\infty$,  we define $q_\omega  = \inf \{ q: \omega \in A_q   \}$. The self-improvement	property of the Muckenhoupt weights shows that $\omega \in A_{q_\omega}$ can never happen unless $\omega \in A_1$.
Let $\omega \in A_\infty $ and $0< A <\infty $. The weighted Hardy-Littlewood maximal function $\mathcal M _{ A , \omega}$  is defined by
\begin{equation*}
	\mathcal M _{ A , \omega} f(x) = \sup_{x\in B}  \Big(    \frac{1}{\omega(B)} \int_B  |f(y)|^A  \omega (y) \mathrm {d} \mu (y)       \Big) ^{1/A},
\end{equation*}
where the supremum is taken over all balls $B$ containing $x$. We will drop the subscripts $A$ or $\omega $ when either $A=1$  or $\omega =1.$

Let $\omega \in A_\infty$, and $0<A< p \le  \infty$. It is well known that (for example, see \cite[(6)]{BBD20})
\begin{equation}\label{M a omega Lp}
	\|  	\mathcal M _{A , \omega} f \|_{L^p (\omega) }  \lesssim \| f \| _   {  L^p (\omega) }.
\end{equation} 
Moreover, let $0<A< p \le  \infty$,  and $\omega \in A_{p/A}$. Then we have
\begin{equation*}
	\|  	\mathcal M _{A } f \|_{L^p (\omega) }  \lesssim \| f \| _   {  L^p (\omega) }.
\end{equation*}
The following is the Fefferman-Stein vector maximal inequality and its variant in \cite{GLY09, KP15}.
For $0<p < \infty$, $ 0<q \le \infty $, $0<A <\min \{ p,q \} $ and $\omega \in A_{p/A}$, then for any sequence of measurable functions $ \{  f_\nu \}$,
\begin{equation} \label{FS}
	\bigg\| \bigg(  \sum_\nu | \mathcal M _A f_\nu |^q \bigg)^{1/q} \bigg\|_{L^p (\omega)} \lesssim  \bigg\| \bigg(  \sum_\nu |  f_\nu |^q \bigg)^{1/q} \bigg\|_{L^p (\omega)}.
\end{equation}
The Young inequality and (\ref{FS}) imply the following inequality: if $ \{a_\nu \} \in \ell^q \cap  \ell^1 $, then
\begin{equation*}
	\bigg\|  \sum_j  \bigg(  \sum_\nu |a_{j-\nu} \mathcal M _A f_\nu |^q \bigg)^{1/q} \bigg\|_{L^p (\omega)} \lesssim  \bigg\| \bigg(  \sum_\nu |  f_\nu |^q \bigg)^{1/q} \bigg\|_{L^p (\omega)},
\end{equation*}
where the implicit constant is independent of the sequence of functions $\{f_\nu\}$.

\subsection{The class of distributions}
Fix $x_0 \in X$ as a reference point  in $X$. The class of test functions $\mathcal S$ associated with $L$ is defined as the set of all functions $\phi \in \cap _{m\ge 1} D(L^m) $ such that
\begin{equation*}
	\mathcal P _{m, \ell } (\phi) = \sup_{x\in X}  (1+ \rho(x, x_0) )^m  | L^\ell \phi (x) | <\infty, \; \forall m >0, \ell \in \mathbb{N}.
\end{equation*}
From \cite{KP15}, we know that $\mathcal S$ is a complete locally convex space with topology generated by the family of seminorms  $ \{  \mathcal P _m^\ell : m >0, \ell \in \mathbb N  \}. $
The space of distributions $\mathcal S ' $   is   the set of all continuous linear functional on $\mathcal S$ with the dual defined by
\begin{equation*}
	\langle f , \phi \rangle = f ( \bar \phi)
\end{equation*}
for all $f\in \mathcal S' $ and $\phi \in \mathcal S$.

Following \cite{GKKP17} , we define the space $\mathcal S _\infty$ as the set of all functions $\phi \in \mathcal S$ such that for each $k \in \mathbb N$, there exists $g_k \in \mathcal S$ such that $\phi = L^k g_k $. 
That is, $L^{-k}\phi \in \mathcal S $  for all $k\ge 1$.
 Note that such an $g_k$, if exists, is unique; see \cite{GKKP17}. The topology in $\mathcal S _\infty $  is generated by the following family of seminorms:
\begin{equation*}
	\mathcal P _{m,\ell, k} ^{*} (\phi) = \mathcal P_{m,\ell} (g_k), \; m >0; \ell , k \in  \mathbb N
\end{equation*}
where $\phi = L^k g_k$. Denote by $\mathcal S _\infty ' $ the set of all continuous linear functionals on $\mathcal  S_\infty $.
We also define
\begin{equation*}
	\mathcal P_m = \{  g \in \mathcal S ' : L^m g = 0 \}, \; m\in \mathbb N
\end{equation*}
and set $\mathcal P = \cup _{m\in \mathbb N} \mathcal P _m$. From \cite[Proposition 3.7]{GKKP17}, we have the identification $\mathcal S ' / \mathcal P = \mathcal S _\infty '$.
Note that if $L = - \Delta$, the Laplacian on $\rn$, the distributions in $\mathcal S ' / \mathcal P = \mathcal S _\infty ^{'} $ are identical with the classical tempered distributions modulo polynomial.

From \cite[Lemma 2.6]{BBD20}, we can see that if $\varphi \in \mathscr S (\mathbb R)$ with supp $\varphi \subset(0,\infty)$, then $K_{  \varphi ( t \sqrt {L} ) }  (x, \cdot) \in \mathcal S _\infty $ and  $K_{  \varphi ( t \sqrt {L} ) }  (\cdot, y) \in \mathcal S _\infty $. Hence, for $f\in \mathcal S _\infty ' $ we can define
\begin{equation} \label{varphi}
	\varphi( t \sqrt{L}  ) f(x) = \langle f, K_{  \varphi( t \sqrt{L} ) }  (x, \cdot) \rangle .
\end{equation}

For  $a,b \in \mathbb R$, let $a\wedge b=\min\{a,b\} $ and $a\vee b = \max\{a,b\}$. 
For $x,y \in X$,  $\alpha >0$, let $ V(x   \wedge y, \alpha) = \min\{   V(x,\alpha),V(y,\alpha)  \}$ and $ V(x   \vee  y, \alpha) = \max\{   V(x,\alpha),V(y,\alpha)  \}$. 

%

\begin{lem} [Lemma 3.2, \cite{BDL18}]  \label{basic est}
	Let $\epsilon >0$.
	
	{\rm (i)} For any $1\le p \le \infty $, we have
	\begin{equation*}
		\bigg(   \int_X     \Big(  1+ \frac{\rho(x,y)}{\alpha}  \Big)^{-(n+ \epsilon)  p}  \mathrm {d} \mu(y)   \bigg)^{1/p} \lesssim V(x, \alpha)^{1/p}
	\end{equation*}
	for all $x\in X$ and $\alpha > 0$.
	
	{\rm (ii)} For all $f\in L_{\rm {loc}}^1 (X)$, we have
	\begin{equation*}
		\int_X \frac{1}{  V(x   \wedge y, \alpha) }  \Big(  1+ \frac{\rho(x,y)}{\alpha}  \Big)^{-n-\epsilon} |f(y)| \mathrm {d} \mu (y) \lesssim \mathcal M f(x)
	\end{equation*}
	for all $x\in X$ and $\alpha > 0$.
	
\end{lem}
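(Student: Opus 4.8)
The plan is to derive both estimates from a single dyadic annular decomposition centred at $x$. I would set $A_0:=B(x,\alpha)$ and $A_j:=B(x,2^{j}\alpha)\setminus B(x,2^{j-1}\alpha)$ for $j\ge 1$, so that $X=\bigcup_{j\ge 0}A_j$ up to a $\mu$-null set and $1+\rho(x,y)/\alpha\approx 2^{j}$ on $A_j$. For part (i), writing the integral as $\sum_{j\ge 0}\int_{A_j}$ and invoking the dimensional consequence $V(x,2^{j}\alpha)\lesssim 2^{jn}V(x,\alpha)$ of \eqref{double}, I would bound
\[
\int_X\Bigl(1+\tfrac{\rho(x,y)}{\alpha}\Bigr)^{-(n+\epsilon)p}\,\mathrm d\mu(y)\ \lesssim\ \sum_{j\ge 0}2^{-j(n+\epsilon)p}V(x,2^{j}\alpha)\ \lesssim\ V(x,\alpha)\sum_{j\ge 0}2^{j(n-(n+\epsilon)p)};
\]
since $p\ge 1$ forces $(n+\epsilon)p\ge n+\epsilon>n$, the geometric series converges, and taking the $p$-th root settles this case (for $p=\infty$ the left-hand side equals $\sup_{y}\bigl(1+\rho(x,y)/\alpha\bigr)^{-(n+\epsilon)}\le 1$, which is exactly the claim).

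For part (ii) the only real work is to absorb the factor $1/V(x\wedge y,\alpha)$. The key point I would exploit is that for $y\in A_j$ one has $\rho(x,y)<2^{j}\alpha$, hence $B(y,2^{j}\alpha)$ and $B(x,2^{j}\alpha)$ are each contained in a fixed dilate of the other (the dilation factor depending only on the quasi-triangle constant); finitely many applications of \eqref{double} then give $V(x,2^{j}\alpha)\approx V(y,2^{j}\alpha)$ with constants independent of $j$ and of $y\in A_j$. Combined with $V(x,\alpha)\gtrsim 2^{-jn}V(x,2^{j}\alpha)$ and its counterpart at $y$, this yields, uniformly on $A_j$,
\[
\frac{1}{V(x\wedge y,\alpha)}=\max\Bigl\{\tfrac{1}{V(x,\alpha)},\,\tfrac{1}{V(y,\alpha)}\Bigr\}\ \lesssim\ \frac{2^{jn}}{V(x,2^{j}\alpha)}.
\]
Consequently the $j$-th annular piece of the integral is at most $C\,2^{jn}2^{-j(n+\epsilon)}V(x,2^{j}\alpha)^{-1}\int_{B(x,2^{j}\alpha)}|f|\,\mathrm d\mu\le C\,2^{-j\epsilon}\mathcal M f(x)$, and summing over $j\ge 0$ (where $\epsilon>0$ enters) gives the bound $\lesssim\mathcal M f(x)$.

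I expect the routine ingredients — the annular splitting and the two geometric-series summations — to be entirely standard. The one step that needs genuine care is the uniform-in-$j$ comparability $V(x,2^{j}\alpha)\approx V(y,2^{j}\alpha)$ for $y\in A_j$: this is precisely what makes the stronger-looking factor $1/V(x\wedge y,\alpha)$ (rather than $1/V(x\vee y,\alpha)$) harmless, and one must avoid estimating $1/V(y,\alpha)$ directly via \eqref{V x r V y r}, since that would leave behind an unsummable factor $2^{j\tilde n}$ (as $\tilde n$ may be as large as $n$ while only $\epsilon>0$ is assumed).
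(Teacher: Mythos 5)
Your proof is correct: the dyadic annular decomposition, the bound $\mu(A_j)\le V(x,2^j\alpha)\lesssim 2^{jn}V(x,\alpha)$ for (i), and in (ii) the uniform comparison $V(x,2^j\alpha)\approx V(y,2^j\alpha)$ at scale $2^j\alpha$ (rather than invoking \eqref{V x r V y r} at scale $\alpha$, which would indeed leave an unsummable $2^{j\tilde n}$) all hold, and the geometric series converge precisely because $\epsilon>0$. The paper does not prove this lemma but simply cites it from \cite{BDL18}, and your argument is essentially the standard proof given there, so nothing further is needed.
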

\begin{defn}
	A function $\psi$ is called  a partition of unity if $\psi \in \mathscr S (\mathbb R)$ with supp $\psi \subset [1/2,2]$, $\int \psi (\xi )  \xi ^{-1} \mathrm{d} \xi  \neq 0$ and
	\begin{equation*}
		\sum_{j\in \mathbb Z} \psi_j (\lambda) =1 \; \mathrm{on} \; (0,\infty),
	\end{equation*}
	where $\psi_j (\lambda) := \psi(2^{-j} \lambda )$ for each $j \in \mathbb Z$.
 \end{defn}

\begin{lem}[Proposition 2.11, \cite{BBD20}] \label{iden}
	Let $\psi \in \mathscr S (\mathbb R)$ be such that supp $\psi \subset [1/2, 2]$ and $\int \psi(\xi )  \xi^{-1} \mathrm{d} \xi  \neq 0$. Then for any $f\in \mathcal S_\infty '$, we have
	\begin{equation*}
		f = c_{\psi}  \int_0 ^\infty  \psi  (t \sqrt{L}) (f)  \frac{ \mathrm{d} t }{t} \; \mathrm{in} \;  \mathcal S_\infty ',
	\end{equation*}
	where $c_\psi = [\int_0^\infty \psi(t)  t^{-1}  \mathrm{d}  t] ^{-1} .$
	Moreover, if $f\in\mathcal S '$, then there exists $g\in \mathcal P$ such that
	\begin{equation*}
		f-g = c_\psi \int_0 ^\infty \psi (t \sqrt{L})  f \frac{ \mathrm{d} t }{t} \; \mathrm{in} \; \mathcal S '.
	\end{equation*}
\end{lem}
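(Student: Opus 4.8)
\medskip
\noindent\textbf{Proof proposal.} The plan is to recognise the formula as the Calder\'on reproducing identity produced by the functional calculus of $L$: verify it first on $L^{2}(X)$ by the spectral theorem, transfer it to $\mathcal S_\infty$, dualise to $\mathcal S_\infty'$, and deduce the $\mathcal S'$ statement from the identification $\mathcal S'/\mathcal P=\mathcal S_\infty'$. For the spectral picture: for $\lambda>0$ the substitution $s=t\lambda$ gives $c_\psi\int_0^\infty\psi(t\lambda)\,\mathrm dt/t=c_\psi\int_0^\infty\psi(s)\,\mathrm ds/s=1$, while this vanishes at $\lambda=0$ since $\operatorname{supp}\psi\subset[1/2,2]$, and $\int_0^\infty|\psi(s)|\,\mathrm ds/s<\infty$. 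Hence, by the spectral theorem for the nonnegative self-adjoint operator $L$ (equivalently $\sqrt L$) and dominated convergence in the spectral integral,
\begin{equation*}
	c_\psi\int_0^\infty\psi(t\sqrt L)\,u\,\frac{\mathrm dt}{t}=\Pi_L u\qquad(u\in L^{2}(X)),
\end{equation*}
as an improper $L^{2}$-integral, where $\Pi_L$ is the orthogonal projection onto $(\ker L)^{\perp}$.

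The main step is to run this in $\mathcal S_\infty$. Fix $\phi\in\mathcal S_\infty$; then $L^{\pm N}\phi\in\mathcal S_\infty\subset\mathcal S$ for every $N$. Since $\psi$ is flat at the origin, $\widetilde\psi_N(\lambda):=\lambda^{-2N}\psi(\lambda)$ and $\widehat\psi_N(\lambda):=\lambda^{2N}\psi(\lambda)$ belong to $\mathscr S(\mathbb R)$ with support in $[1/2,2]$, and by multiplicativity of the functional calculus
\begin{equation*}
	\psi(t\sqrt L)\phi=t^{2N}\,\widetilde\psi_N(t\sqrt L)\big(L^{N}\phi\big)=t^{-2N}\,\widehat\psi_N(t\sqrt L)\big(L^{-N}\phi\big).
\end{equation*}
The Gaussian upper bound yields, through the functional-calculus estimates of \cite{KP15} (see also \cite{BBD20}), that $\widetilde\psi_N(t\sqrt L)$ and $\widehat\psi_N(t\sqrt L)$ act on $\mathcal S$ with seminorm bounds uniform for $0<t\le1$ and for $t\ge1$ respectively; combined with Lemma \ref{basic est} and $L^{\pm N}\phi\in\mathcal S$ this gives, for every seminorm of $\mathcal S_\infty$ and every $N$,
\begin{equation*}
	\mathcal P^{*}_{m,\ell,j}\big(\psi(t\sqrt L)\phi\big)\lesssim\min\{t^{2N},\,t^{-2N}\}\,\big(\text{a finite family of seminorms of }\phi\big).
\end{equation*}
Taking $N\ge1$, the map $t\mapsto\psi(t\sqrt L)\phi$ is a continuous $\mathcal S_\infty$-valued function, Bochner integrable against $\mathrm dt/t$ on $(0,\infty)$, so $c_\psi\int_0^\infty\psi(t\sqrt L)\phi\,\mathrm dt/t$ converges in $\mathcal S_\infty$, hence in $L^{2}(X)$, and equals $\Pi_L\phi$ by the first step. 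Since $\phi=Lg_1$ with $g_1\in\mathcal S\subset D(L)$, we get $\langle\phi,u\rangle_{L^{2}}=\langle g_1,Lu\rangle_{L^{2}}=0$ for every $u\in\ker L$, so $\Pi_L\phi=\phi$ and the identity holds in $\mathcal S_\infty$. The same argument applies with $\psi$ replaced by $\bar\psi$, which satisfies the same hypotheses with $c_{\bar\psi}=\overline{c_\psi}$.

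For $f\in\mathcal S_\infty'$ I would dualise. By \eqref{varphi}, $\psi(t\sqrt L)f\in\mathcal S_\infty'$ and $\langle\psi(t\sqrt L)f,\phi\rangle=\langle f,\bar\psi(t\sqrt L)\phi\rangle$ for $\phi\in\mathcal S_\infty$, since the $L^{2}$-adjoint of $\psi(t\sqrt L)$ is $\bar\psi(t\sqrt L)$. The bounds above (applied to $\bar\psi$) make $t\mapsto\langle\psi(t\sqrt L)f,\phi\rangle$ integrable against $\mathrm dt/t$, so $c_\psi\int_0^\infty\psi(t\sqrt L)f\,\mathrm dt/t$ defines an element of $\mathcal S_\infty'$; moving the continuous functional $f$ past the ($\mathcal S_\infty$-Bochner) $t$-integral and invoking the $\bar\psi$-version of the $\mathcal S_\infty$-identity, the scalars $c_\psi,c_{\bar\psi}$ cancelling, gives $\big\langle c_\psi\int_0^\infty\psi(t\sqrt L)f\,\mathrm dt/t,\phi\big\rangle=\langle f,\phi\rangle$, which is the first assertion. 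For $f\in\mathcal S'$, since $\psi(t\sqrt L)$ annihilates $\mathcal P$ (if $g\in\mathcal P_m$ then $\psi(t\sqrt L)g=t^{2m}\widetilde\psi_m(t\sqrt L)L^{m}g=0$), the integrand depends only on the class $[f]\in\mathcal S'/\mathcal P=\mathcal S_\infty'$ (see \cite[Proposition 3.7]{GKKP17}); applying the $\mathcal S_\infty'$-identity to $[f]$ and lifting back to $\mathcal S'$ produces $g\in\mathcal P$ with $f-g=c_\psi\int_0^\infty\psi(t\sqrt L)f\,\mathrm dt/t$ in $\mathcal S'$.

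The crux is the second paragraph: extracting from the Gaussian bound alone the uniform-in-$t$ functional-calculus estimates that force the $\mathcal S_\infty$-seminorms of $\psi(t\sqrt L)\phi$ to decay like $t^{2N}$ as $t\to0$ and like $t^{-2N}$ as $t\to\infty$ (the flatness of $\psi$ at $0$ producing the first, and $L^{-N}\phi\in\mathcal S$ the second). Granting these, the spectral identification, the duality, and the passage from $\mathcal S_\infty'$ to $\mathcal S'$ are all routine.
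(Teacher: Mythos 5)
The paper does not prove this lemma at all: it is quoted verbatim from \cite[Proposition 2.11]{BBD20}, so the only comparison available is with that standard argument. Your route is exactly the standard one (spectral resolution of the identity on $L^{2}$, uniform functional--calculus seminorm estimates giving $\min\{t^{2N},t^{-2N}\}$ decay of $\psi(t\sqrt L)\phi$ in $\mathcal S_\infty$, Bochner convergence, duality, and the identification $\mathcal S'/\mathcal P=\mathcal S_\infty'$), and the first assertion (the identity in $\mathcal S_\infty'$) is correctly assembled, granted the cited kernel/seminorm estimates that you explicitly black-box; the bookkeeping with $c_{\bar\psi}=\overline{c_\psi}$ and the conjugate-linear pairing is also right.

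The genuine gap is the final step, which you dismiss as routine. Knowing the identity in $\mathcal S_\infty'$ and that $\psi(t\sqrt L)$ kills $\mathcal P$ only tells you that the truncated integrals $c_\psi\int_\delta^{R}\psi(t\sqrt L)f\,\mathrm{d}t/t$ converge when paired with $\phi\in\mathcal S_\infty$. The ``moreover'' statement asserts equality \emph{in} $\mathcal S'$, which presupposes that the improper integral converges against every $\phi\in\mathcal S$; only then is $g:=f-c_\psi\int_0^\infty\psi(t\sqrt L)f\,\mathrm{d}t/t$ a well-defined element of $\mathcal S'$ annihilating $\mathcal S_\infty$, hence in $\mathcal P$. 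For $\phi\in\mathcal S\setminus\mathcal S_\infty$ your large-$t$ mechanism $\phi=L^{N}(L^{-N}\phi)$ is unavailable, and the pairing $\langle f,\bar\psi(t\sqrt L)\phi\rangle$ need not decay as $t\to\infty$: already for $L=-\Delta$ and $f(x)=|x|^{s}$ with $s>0$ non-integer one has $\psi(t\sqrt{-\Delta})f=t^{s}\Theta(\cdot/t)$ with $\Theta\in\mathscr S(\mathbb R^n)$, $\Theta(0)\neq0$, so $\langle\psi(t\sqrt{-\Delta})f,\phi\rangle\sim t^{s}\Theta(0)\int\phi$ grows when $\int\phi\neq0$. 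So the tail $\int_1^\infty$ is a delicate point that cannot be obtained by ``lifting back'' from the quotient; it is exactly where the cited source has to work (estimating $\bar\psi(t\sqrt L)\phi$ for general $\phi\in\mathcal S$, respectively producing the correction $g$ through a limiting procedure on truncations), and any self-contained proof must either supply that argument or state in what regularized sense the integral in the second identity is to be understood.
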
			

We will often use the following inequality  and may use these in the sequel without
stating any reasons.
 For all $x,y, z \in X$ and all $\alpha,W >0$, we have
\begin{equation*}
	\bigg(1 + \frac{\rho(x,y)}{\alpha} \bigg)^{-W} 	\bigg(1 + \frac{\rho(y,z)}{\alpha} \bigg)^{-W} \lesssim  	\bigg(1 + \frac{\rho(x,z)}{\alpha} \bigg)^{-W}.
\end{equation*}

\section{Bourgain-Morrey  Besov type and Triebel-Lizorkin type spaces associated with $L$ } \label{Bou spa}

\subsection{Definitions}
First, we recall the definitions  of the weighted Bourgain-Morrey spaces and
 the weighted homogeneous Bourgain-Morrey Besov-Triebel-Lizorkin spaces  on $X$  from  \cite{BX25}.
\begin{defn}
	Let $\mathcal D$ be the dyadic cubes in $X$ as in Remark \ref{dya cube}.
	Let $0<p\le t<\infty$
	, $0<r\le\infty$ and $\omega \in A_\infty$. Define $M_{p,\omega}^{t,r}  :=  M_{p,\omega}^{t,r}  (X)$ as the space of $f\in L_{\mathrm{loc}}^{p}(\omega)$
	such that
	\[
	\|f\|_{M_{p,\omega}^{t,r}} :=\bigg\|  \bigg\{\omega (Q_\tau ^k  ) ^{1/t-1/p}\bigg(\int_{Q_\tau ^k } |f|^{p}\omega(x) \mathrm {d} \mu(x)  \bigg)^{1/p}\bigg\}_{k \in\mathbb{Z}, \tau \in I_k}\bigg\|_{\ell^{r}}<\infty.
	\]
\end{defn}

\begin{defn} \label{def spa 1}
	Let $\mathcal D$ be the dyadic cubes in $X$ as in Remark \ref{dya cube}.
	Let $\psi$ be a partition of unity. Let $ 0< q\le \infty$, $s\in \mathbb R$ and $\omega \in A_\infty $. Let $0<p<t<r<\infty$ 	or $0<p\le t<r=\infty$.
	The weighted homogeneous Bourgain-Morrey Besov  space $\dot{B}^{s,q,\psi,L}_{p,t,r,\omega} (X)$ is defined as follows:
	\[
	\dot{B}^{s,q,\psi,L}_{p,t,r,\omega} (X) = \{ f\in \mathcal S_\infty ' : \| f \| _{ \dot{B}^{s,q,\psi,L}_{p,t,r,\omega} (X)} <\infty  \},
	\]
	where
	\[
	\| f\|_{\dot{B}^{s,q,\psi,L}_{p,t,r,\omega} (X)} =
	\bigg(  \sum_{j\in \mathbb{Z}}  2^{js}\Big\|  \psi_j \big(\sqrt{L}\big)f\Big\|^q_{M_{p,\omega}^{t,r}}   \bigg)^{1/q}.
	\]
	The weighted homogeneous Bourgain-Morrey	 Triebel-Lizorkin  space $\dot{F}^{s,q,\psi,L}_{p,t,r,\omega} (X)$ is defined by
	\[
	\dot{F}^{s,q,\psi,L}_{p,t,r,\omega} (X) = \{ f\in \mathcal S_\infty ' : \| f \| _{ \dot{F}^{s,q,\psi,L}_{p,t,r,\omega} (X)} <\infty  \},
	\]
	where
	\[
	\| f\|_{\dot{F}^{s,q,\psi,L}_{p,t,r,\omega} (X)} =  \bigg\|\bigg( \sum_{j\in \mathbb{Z}} 2^{jsq} \Big| \psi_j \big(\sqrt{L}\big)f \Big|^q\bigg)^{1/q}\bigg\|_{M_{p,\omega}^{t,r}}   .
	\]
\end{defn}

The classical Besov-Triebel-Lizorkin spaces can date back to Besov \cite{Besov59}, Triebel \cite{Triebel73} and Lizorkin \cite{Lizorkin74,Lizorkin742}. We refer readers to monographs \cite{Tri83,Tri92,Tri06,Tri20} of Triebel for the systematic theory of Besov-Triebel-Lizorkin spaces. Weighted Besov-Triebel-Lizorkin spaces with Muckenhoupt weights were investigated by  Bui \cite{Bui82,Bui84}, Bui et al.\cite{BPT96}, Besoy et al. \cite{BHT22}, Haroske et al. \cite{HP08}. Weighted Besov-Triebel-Lizorkin spaces with more general weight were studied by Rychkov \cite{R01}, Izuki and Sawano \cite{IS09,IS12}. Kozono and Yamazaki \cite{KY94}   introduced Besov-Morrey spaces. Later Besov-Triebel-Morrey spaces were  systematically investigated by Mazzucato \cite{M03,M032}, Tang and Xu \cite{TX05}, Sawano and Tanaka \cite{ST07},  Sawano \cite{S10,S08,S09} and Rosenthal \cite{R13}. Mixed-norm Besov-Triebel-Lizorkin spaces were systematically studied by Cleanthous et al. \cite{CGN17,CGN19,CGN192}, Georgiadis et al. \cite{GJN17}, Georgiadis and Nielsen \cite{GN16}. Besov-Triebel-Lizorkin spaces with variable exponents were investigated by the third author of the paper \cite{X08,X09}, Diening et al. \cite{DHR09}, Almeida and H{\"a}st{\"o} \cite{AH10}. 
Yang and Yuan introduced Triebel-Lizorkin-type spaces in \cite{YY08}.
Besov-Triebel-Lizorkin-type spaces were intensively investigated by Liang et al. \cite{LSUYY12}, Sawano \cite{SYY10}, Yang and Yuan \cite{YY10,YY13}, Yuan et al. \cite{YSY10}.
 Recently, Bu et al.  systematically studied  Besov-Triebel-Lizorkin type spaces with martrix weight in \cite{BHYY25I,BHYY23,BHYY24,BHYY25}.

Now, we introduce the weighted homogeneous Bourgain-Morrey-Besov type spaces and  the weighted homogeneous Bourgain-Morrey Triebel-Lizorkin type spaces.

\begin{defn} \label{def spa}
	Let $\mathcal D$ be the dyadic cubes in $X$ as in Remark \ref{dya cube}. 
	For each $Q \in \mathcal D _{\nu }$, let $j_Q := -\nu$. 
Let $\psi$ be a partition of unity. Let $ 0< q\le \infty$, $s\in \mathbb R$ and $\omega \in A_\infty $. Let $0<p<t<r<\infty$ 	or $0<p\le t<r=\infty$.
	We define the weighted homogeneous Bourgain-Morrey-Besov type space $\dot{\mathcal B}^{s,q,\psi,L}_{p,t,r,\omega} (X)$ as follows:
	\begin{equation*}
		\dot{\mathcal B}^{s,q,\psi,L}_{p,t,r,\omega} (X) = \{ f\in \mathcal S_\infty ' : \| f \| _{ \dot{\mathcal B}^{s,q,\psi,L}_{p,t,r,\omega} (X)} <\infty  \},
	\end{equation*}
	where
	\begin{equation*}
		\| f\|_{\dot{\mathcal B}^{s,q,\psi,L}_{p,t,r,\omega} (X)} = 
		\bigg(    \sum_{Q \in \mathcal D}  \omega (Q)^{r/t-r/p} \Big(  \sum_{j=j_Q}^\infty \Big(   \int_Q | 2^{js}  \psi_j (   \sqrt{L} )f (x)|^p \omega (x)\d \mu (x)   \Big)^{q/p}   \Big)^{r/q}                 \bigg)^{1/r}.
		\end{equation*}
	Similarly, the weighted homogeneous Bourgain-Morrey	Triebel-Lizorkin type  space $\dot{\mathcal F}^{s,q,\psi,L}_{p,t,r,\omega} (X)$ is defined by
		\begin{equation*}
		\dot{\mathcal F}^{s,q,\psi,L}_{p,t,r,\omega} (X) = \{ f\in \mathcal S_\infty ' : \| f \| _{ \dot{\mathcal F}^{s,q,\psi,L}_{p,t,r,\omega} (X)} <\infty  \},
	\end{equation*}
	where
	\begin{equation*}
		\| f\|_{\dot{\mathcal F}^{s,q,\psi,L}_{p,t,r,\omega} (X)} = \bigg(    \sum_{Q \in \mathcal D}  \omega (Q)^{r/t-r/p} \Big(  \int_Q  \Big(  \sum_{j=j_Q}^\infty | 2^{js}  \psi_j  ( \sqrt{L} )f (x)|^q \Big)^{p/q}  \omega (x) \d \mu (x)      \Big)^{r/p}                 \bigg)^{1/r} .
	\end{equation*}
	\end{defn}
	
	\begin{rem}
	 	If $p =\infty$, then $M_p^{t,r} =L^\infty$. This case is included in \cite[Definitions 3.1, 3.16]{BBD20}. Hence, we do not consider the case $p=\infty$.
From  Definitions \ref{def spa 1} and \ref{def spa}, we have 
\begin{equation*}
		\| f\|_{\dot{\mathcal F}^{s,q,\psi,L}_{p,t,r,\omega} (X)} \le 	\| f\|_{\dot{F}^{s,q,\psi,L}_{p,t,r,\omega} (X)}.
\end{equation*}	
In what follows, the symbol $\hookrightarrow$ stands for continuous embedding.
Hence 
\begin{equation*}
	\dot{F}^{s,q,\psi,L}_{p,t,r,\omega} (X) \hookrightarrow \dot{\mathcal F}^{s,q,\psi,L}_{p,t,r,\omega} (X).
\end{equation*}
If $p =q$, then 
\begin{equation*}
		\| f\|_{\dot{\mathcal B}^{s,p,\psi,L}_{p,t,r,\omega} (X)} = 	\| f\|_{\dot{\mathcal F}^{s,p,\psi,L}_{p,t,r,\omega} (X)}.
\end{equation*}
	\end{rem}
		
	Let $ 0< q\le \infty$, and $\omega \in A_\infty $. Let $0<p<t<r<\infty$ 	or $0<p\le t<r=\infty$.	
For convenience, we define sequence valued Bourgain-Morrey spaces  by 
\begin{align*}
	\| \{ g_j\}_{j\in \mathbb Z}  \| _{M_{p,\omega}^{t,r} (\ell^q)} := \Big\|  \| \{ g_j \}_{j\in \mathbb Z} \| _{\ell^q} \Big\| _{M_{p,\omega}^{t,r} },   
\end{align*}
and 
\begin{align*}
	\| \{ g_j\}_{j\in \mathbb Z}  \| _{  \ell ^q ( M_{p,\omega}^{t,r} ) } := \Big\|    \big  \{ \|  g_j \| _{   M_{p,\omega}^{t,r}   }  \big \}_{j\in \mathbb Z}  \Big\| _{  \ell^q },
\end{align*}
with finite norm respectively, where $\{ g_j \}_{j\in \mathbb Z}$  is a  sequences of measurable functions on $X$. 
		
\begin{defn}
	Let $\mathcal D$ be the dyadic cubes in $X$ as in Remark \ref{dya cube}.   The dyadic  maximal operator  with respect to $\mathcal D$ is defined by
	\begin{equation*}
		\mathcal M^{\mathcal D}  f (x ) =\sup_{x\in Q, Q\in \mathcal D }  \frac{1}{V(Q)}\int_Q |f(y)|  \d  \mu (y ).
	\end{equation*}
	Let $\omega$ be a weight and  define the weighted dyadic  maximal operator  with respect to $\mathcal D$  by
	\begin{equation*}
		\mathcal M^{\mathcal D} _\omega  f (x ) =\sup_{x\in Q, Q\in \mathcal D }  \frac{1}{\omega (Q)}\int_Q |f(y)|  \omega (y) \d  \mu (y ).
	\end{equation*}
\end{defn}
The following lemma from \cite[Theorem 2.6]{CS15} says that there  is a  finite family of dyadic cubes such that an arbitrary quasi-metric ball is contained in a dyadic cube from one of these cubes. Such a finite family of dyadic cubes is referred to as an adjacent system of dyadic cubes.
\begin{lem} \label{ball adjacent}
	There exists a positive integer $K=K(X)$, a finite constant $C=C(X)$, and a finite collection of dyadic cubes, $\mathcal D^{\theta}$, $1\le \theta  \le K $, such that given any ball $B=B(x,r) \subset X$ there exists  $\theta$ and a dyadic cube $ Q \in \mathcal D^\theta $ such that $B\subset Q$ and {\rm diam} $Q \le C r$. 
\end{lem}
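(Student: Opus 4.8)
The plan is to build, following the Hyt\"onen--Kairema construction of adjacent systems of dyadic cubes, finitely many dyadic systems $\mathscr D^\theta$ out of the same raw material (maximal separated nets) used in Lemma~\ref{cube}, arranged so that every ball sits well inside a cube of one of them. The only structural input is that \eqref{double} makes $(X,\rho)$ \emph{geometrically doubling}: iterating \eqref{double} shows there is an integer $M=M(X)$ such that any ball $B(x,r)$ contains at most $M$ points that are mutually $r/2$-separated (if $x_1,\dots,x_m\in B(x,r)$ are $r/2$-separated, the balls $B(x_i,r/4)$ are pairwise disjoint, each is comparable in measure to $B(x,2r)$ by doubling, and all lie in a dilate of $B(x,r)$, forcing $m\le M$).

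First I would fix, for each generation $k\in\mathbb Z$, a maximal $2^{-k}$-separated set of points, and more generally a \emph{finite} family $\{\mathscr X^{k,\theta}\}_{k\in\mathbb Z}$, $1\le\theta\le K$, of such nets; the point of Hyt\"onen--Kairema is that, using geometric doubling, the ``shifts'' defining these nets can be taken from a finite index set so that the following covering property holds: for every $x\in X$ and every $k\in\mathbb Z$ there exist $\theta$ and $z\in\mathscr X^{k,\theta}$ with $\rho(x,z)\le\lambda\,2^{-k}$, where $\lambda\in(0,1)$ is a small fixed constant (chosen, say, below one tenth of the $c_0$ from Remark~\ref{dya cube}). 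Then I would run Christ's nesting argument (as in Lemma~\ref{cube}, with $\gamma=1/2$) separately for each $\theta$ to produce $K$ dyadic systems $\mathscr D^\theta$, each cube $Q\in\mathscr D^\theta$ of generation $k$ having a center $x_Q\in\mathscr X^{k,\theta}$ with $B(x_Q,c_0 2^{-k})\subset Q\subset B(x_Q,\kappa_0 2^{-k})$.

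Given the systems, the containment is a short estimate. Let $B=B(x,r)$ and let $A_0\ge1$ be the quasi-triangle constant of $\rho$. Fix $k\in\mathbb Z$ with $2^{-k-1}\le r<2^{-k}$ and an integer $L=L(A_0,c_0,\kappa_0)\in\mathbb N_+$, depending only on the structural constants, so large that $A_0\big(r+\lambda\,2^{-(k-L)}\big)\le c_0\,2^{-(k-L)}$; this is possible since $2^{-(k-L)}=2^{L}2^{-k}>2^{L-1}r$ and $\lambda<c_0$. Apply the covering property at generation $k-L$: pick $\theta$ and a net point $z\in\mathscr X^{k-L,\theta}$ with $\rho(x,z)\le\lambda\,2^{-(k-L)}$, and let $Q\in\mathscr D^\theta$ be the generation-$(k-L)$ cube with center $x_Q=z$. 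For any $y\in B$, $\rho(y,z)\le A_0\big(r+\rho(x,z)\big)\le c_0\,2^{-(k-L)}$, so $y\in B(z,c_0 2^{-(k-L)})\subset Q$; hence $B\subset Q$. Finally $\mathrm{diam}\,Q\le 2\kappa_0 2^{-(k-L)}=2^{L+1}\kappa_0 2^{-k}\le 2^{L+2}\kappa_0\,r$, so the lemma holds with $C:=2^{L+2}\kappa_0$ and this $K$.

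The main obstacle is squarely the middle step: producing a \emph{finite} family of net systems that keeps the covering property at every scale simultaneously. This is the heart of the Hyt\"onen--Kairema theorem; its proof selects the net points through random (parametrized) dyadic systems and then uses geometric doubling to collapse the relevant parameter space to finitely many configurations, while the nesting between consecutive generations must be made consistent for each fixed $\theta$ — that bookkeeping is the delicate part. Everything else (geometric doubling from \eqref{double}, Christ's nesting exactly as in Lemma~\ref{cube}, and the final containment computation) is routine.
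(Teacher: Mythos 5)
The paper does not actually prove this lemma: it is quoted from \cite[Theorem 2.6]{CS15}, which is the Hyt\"onen--Kairema theorem on adjacent systems of dyadic cubes. Your outline follows exactly that route, and your closing step --- fixing $k$ with $2^{-k-1}\le r<2^{-k}$, passing to generation $k-L$ so that $A_0\big(r+\lambda 2^{-(k-L)}\big)\le c_0 2^{-(k-L)}$, and concluding $B\subset B(z,c_0 2^{-(k-L)})\subset Q$ with $\operatorname{diam}Q\le 2^{L+2}\kappa_0 r$ --- is correct and is the standard deduction of the ball-in-cube statement from the adjacent-systems property. So as a description of how the cited result is obtained, your plan is accurate, and your reduction of the lemma to ``covering property $+$ Christ's construction $+$ containment estimate'' is sound.

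As a self-contained proof, however, it has a genuine gap, which you yourself flag: the existence of finitely many systems $\mathcal D^{1},\dots,\mathcal D^{K}$ whose generation-$k$ centers simultaneously (i) are $2^{-k}$-separated within each fixed $\theta$ (so Christ's construction applies family by family), (ii) have union forming a $\lambda 2^{-k}$-net for your prescribed small $\lambda$, and (iii) are chosen consistently across consecutive generations within each fixed $\theta$, with $K$ independent of the scale, is precisely the content of Hyt\"onen--Kairema and is not established by your sketch. Point (ii) by itself is easy from geometric doubling (take a maximal $\lambda 2^{-k}$-separated set and split it by greedy colouring into boundedly many $2^{-k}$-separated subfamilies), but (iii) --- arranging the parent--child assignments so that each subfamily separately generates a genuine dyadic grid with the nesting and central-ball properties at \emph{every} scale --- is the selection argument that carries the whole weight of the theorem, and your proposal leaves it as a black box. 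The honest options are either to cite \cite{CS15} (or Hyt\"onen--Kairema) for that construction, as the paper does, or to reproduce it in full; as written, your text is an outline of the known proof rather than a proof.
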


\begin{lem}  [Proposition 2.13, \cite{CS15}]  \label{M le dya}
	Let $\{ \mathcal D ^\theta \}_{1\le \theta \le K}$ be the adjacent dyadic system from Lemma  \ref{ball adjacent}. Then there exists a constant $C\ge 1$ such that for all $f\in L^1_{\rm loc} (X)$,
	\begin{equation*}
		\mathcal  M f (x) \le C \sum_{\theta=1}^K \mathcal M^{ \mathcal D^\theta} f (x).
	\end{equation*}
\end{lem}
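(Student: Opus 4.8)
The plan is to estimate $\mathcal M f(x)$ one ball at a time, replacing each ball that competes in the supremum defining $\mathcal M f(x)$ by an adjacent dyadic cube produced by Lemma \ref{ball adjacent}. Recall that $\mathcal M f(x)=\sup\frac{1}{V(B)}\int_B|f(y)|\,\d\mu(y)$, the supremum being taken over all balls $B=B(z,r)$ with $x\in B$. So it suffices to prove that
\[
\frac{1}{V(B)}\int_B|f(y)|\,\d\mu(y)\le C\sum_{\theta=1}^{K}\mathcal M^{\mathcal D^{\theta}}f(x)
\]
for every such ball $B$, with $C$ independent of $B$, $x$ and $f$.

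First I would fix a ball $B=B(z,r)$ with $x\in B$ and invoke Lemma \ref{ball adjacent}: there exist $\theta\in\{1,\dots,K\}$ and a dyadic cube $Q\in\mathcal D^{\theta}$ with $B\subset Q$ and ${\rm diam}\,Q\le Cr$, where $C=C(X)$. Since $Q$ is open and $x\in B\subset Q$, the cube $Q$ is admissible in the supremum defining $\mathcal M^{\mathcal D^{\theta}}f(x)$. Next I would compare $V(Q)$ with $V(B)$: from $z\in B\subset Q$ and ${\rm diam}\,Q\le Cr$ we get $Q\subset B(z,Cr)$, so the doubling property \eqref{double} (in its iterated form $V(z,\lambda r)\le C\lambda^{n}V(z,r)$) gives $V(Q)\le V(B(z,Cr))\le C\,V(z,r)=C\,V(B)$. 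Combining these,
\[
\frac{1}{V(B)}\int_B|f(y)|\,\d\mu(y)\le\frac{C}{V(Q)}\int_Q|f(y)|\,\d\mu(y)\le C\,\mathcal M^{\mathcal D^{\theta}}f(x)\le C\sum_{\theta=1}^{K}\mathcal M^{\mathcal D^{\theta}}f(x),
\]
and taking the supremum over all balls $B\ni x$ finishes the proof.

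I do not expect a genuine obstacle in this argument; the only thing to be careful about is the uniformity of the constants. The constant in ${\rm diam}\,Q\le Cr$ comes from Lemma \ref{ball adjacent} and depends only on $X$, the doubling constant likewise depends only on $X$, and the sum over $\theta$ runs over the finite index set $\{1,\dots,K\}$ with $K=K(X)$; hence the resulting constant $C\ge1$ is independent of $f$ and $x$, as required. If one instead defines $\mathcal M$ using only balls centered at $x$, the same computation applies verbatim with $z=x$.
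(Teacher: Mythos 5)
Your argument is correct and is essentially the standard proof of this fact: the paper does not prove the lemma itself but cites \cite[Proposition 2.13]{CS15}, whose argument is exactly this ball-to-adjacent-dyadic-cube replacement combined with the doubling comparison $V(Q)\le V(B(z,Cr))\lesssim V(B)$ (up to the harmless adjustment of enlarging the radius slightly, e.g. $Q\subset B(z,2Cr)$, since balls are open). The constants are uniform for precisely the reasons you state, so there is nothing to add.
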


\begin{rem}
	If we replace $\mathcal M$, $\mathcal M ^\mathcal {D^\theta}$    by $\mathcal M_\omega$ , $\mathcal M_\omega ^\mathcal {D^\theta}$in Lemma \ref{M le dya} respectively, the result still holds true.
\end{rem}
		
	The following result coming from \cite[Theorem 3.1]{BX25} says that	the weighted  Bourgain-Morrey spaces are independent of  the choice of dyadic cubes.
\begin{lem} \label{spa dya inde}
	Let   $\{ \mathcal D ^\theta \}_{1\le \theta \le K}$ be the adjacent dyadic system from Lemma  \ref{ball adjacent}. Then $\| \cdot \|_{ M_{p,\omega}^{t,r} } $ and  $\| \cdot \|_{ M_{p,\omega}^{t,r} (  \widetilde{ \mathcal D }   )} $  are equivalent for any dyadic cubes $ \widetilde{ \mathcal D } \in \{ \mathcal D ^\theta \} _{1\le \theta \le K}$ where
	\begin{equation*}
		\|f\|_{ M_{p,\omega}^{t,r} ( \widetilde{ \mathcal D }  )} : = \bigg\|  \bigg\{\omega (Q ) ^{1/t-1/p}\bigg(\int_{Q } |f(x)|^{p}\omega(x) \mathrm {d} \mu(x)  \bigg)^{1/p}\bigg\}_{Q \in \widetilde{ \mathcal D } }\bigg\|_{\ell^{r}}
	\end{equation*}
	and $f$ is  a measurable functions over $X$.
\end{lem}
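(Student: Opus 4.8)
The plan is to prove the two inequalities $\|f\|_{M_{p,\omega}^{t,r}} \lesssim \|f\|_{M_{p,\omega}^{t,r}(\widetilde{\mathcal D})}$ and $\|f\|_{M_{p,\omega}^{t,r}(\widetilde{\mathcal D})} \lesssim \|f\|_{M_{p,\omega}^{t,r}}$, where $\widetilde{\mathcal D}$ is one of the adjacent dyadic systems $\mathcal D^\theta$; by symmetry of the hypotheses it suffices to treat one direction, say bounding $\|f\|_{M_{p,\omega}^{t,r}}$ (the norm built from the fixed system $\mathcal D$) by $\|f\|_{M_{p,\omega}^{t,r}(\widetilde{\mathcal D})}$, and the reverse follows by the same argument with the roles of $\mathcal D$ and $\widetilde{\mathcal D}$ exchanged. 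The key geometric input is that every cube $Q\in\mathcal D$ is contained in the ball $B_Q=B(x_Q,\kappa_0 2^{-\nu})$ when $Q\in\mathcal D_\nu$ (Remark \ref{dya cube}), and by Lemma \ref{ball adjacent} there is $\theta=\theta(Q)$ and a dyadic cube $R=R(Q)\in\mathcal D^\theta$ with $B_Q\subset R$ and $\operatorname{diam} R\le C\kappa_0 2^{-\nu}$. Thus each $Q\in\mathcal D$ sits inside a cube $R(Q)$ from one of finitely many adjacent systems, of comparable size.

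First I would fix $\theta$ and estimate the $\ell^r$-piece over those $Q\in\mathcal D$ assigned to $\mathcal D^\theta$. For such a $Q$ with parent-relation cube $R=R(Q)\in\mathcal D^\theta$, since $Q\subset R$ and $\mu(R)\approx\mu(B_Q)\approx\mu(Q)$ (using the doubling property and $B(x_Q,c_02^{-\nu})\subset Q\subset B_Q$, plus Lemma \ref{weights}(iii) to pass to the weighted measure $\omega$), we have $\omega(Q)\approx\omega(R)$ with implicit constants depending only on $X$, $\omega$ and the finitely many $\theta$. Consequently
\[
\omega(Q)^{1/t-1/p}\Big(\int_Q|f|^p\omega\,\d\mu\Big)^{1/p}
\lesssim \omega(R)^{1/t-1/p}\Big(\int_R|f|^p\omega\,\d\mu\Big)^{1/p},
\]
using monotonicity of the integral and that $1/t-1/p<0$ together with $\omega(Q)\approx\omega(R)$. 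The only remaining issue is multiplicity: for a fixed $R\in\mathcal D^\theta$, how many cubes $Q\in\mathcal D$ can be assigned to it. Since all such $Q$ have $\operatorname{diam}Q$ comparable to $\operatorname{diam}R$ (within a fixed ratio determined by $\kappa_0$, $c_0$ and $C$), they lie at a bounded number of dyadic generations of $\mathcal D$, and within each generation they are disjoint subsets of $R$ whose balls $B(x_Q,c_02^{-\nu})$ are essentially disjoint inside a fixed dilate of $R$; by the doubling property there are at most $N=N(X)$ of them. This gives a bounded-overlap bound, so summing the $r$-th powers over $Q$ assigned to $\theta$ is controlled by $N^{r}$ (or $N$ if one first uses the finite-overlap to absorb) times the sum of the $r$-th powers over $R\in\mathcal D^\theta$, i.e.\ by $\|f\|_{M_{p,\omega}^{t,r}(\mathcal D^\theta)}^r$.

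Finally I would sum over the finitely many $\theta\in\{1,\dots,K\}$: writing $\mathcal D=\bigcup_\theta\{Q\in\mathcal D: \theta(Q)=\theta\}$ (choosing one valid $\theta$ for each $Q$) and using the quasi-triangle inequality for $\ell^r$ over this finite union, one obtains $\|f\|_{M_{p,\omega}^{t,r}}\lesssim\sum_{\theta=1}^K\|f\|_{M_{p,\omega}^{t,r}(\mathcal D^\theta)}$, and each summand is $\lesssim\|f\|_{M_{p,\omega}^{t,r}(\widetilde{\mathcal D})}$ once we know the claimed equivalence among the systems $\mathcal D^\theta$ themselves — which is exactly the statement being proved, so more precisely the clean formulation is that $\|\cdot\|_{M_{p,\omega}^{t,r}(\mathcal D)}\approx\|\cdot\|_{M_{p,\omega}^{t,r}(\mathcal D^\theta)}$ for every $\theta$, and transitivity then gives equivalence of any two. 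The main obstacle I anticipate is the bounded-multiplicity count: one must carefully verify, using properties (iv) and (v) of Lemma \ref{cube} and the doubling property, that each adjacent cube $R$ receives only $O(1)$ cubes of $\mathcal D$ of comparable diameter, and symmetrically that each $Q\in\mathcal D$ is covered using only boundedly many $R$'s — so that no logarithmic or unbounded loss appears when passing between $\ell^r$ norms. Everything else is a routine application of Lemma \ref{weights}(iii), the doubling property, and the quasi-triangle inequality for $\ell^r$ when $r<1$.
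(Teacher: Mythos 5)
There is a genuine structural gap: the two directions of the equivalence are not symmetric, and your scheme never closes. The only geometric input you invoke, Lemma \ref{ball adjacent}, sends a ball into a cube of \emph{some} $\mathcal D^\theta$, with $\theta$ depending on the ball; it does not send balls into cubes of the fixed system $\mathcal D$, nor into a prescribed $\mathcal D^{\theta_0}$ (indeed, a ball need not be contained in any single Christ cube of comparable size — that is precisely why adjacent systems exist). Consequently your containment argument only yields $\|f\|_{M_{p,\omega}^{t,r}(\mathcal D)}\lesssim \max_{1\le\theta\le K}\|f\|_{M_{p,\omega}^{t,r}(\mathcal D^\theta)}$, and "the same argument with the roles of $\mathcal D$ and $\widetilde{\mathcal D}$ exchanged" again lands in the adjacent systems, so it gives $\|f\|_{M_{p,\omega}^{t,r}(\widetilde{\mathcal D})}\lesssim\max_\theta\|f\|_{M_{p,\omega}^{t,r}(\mathcal D^\theta)}$, which is vacuous for the direction you need. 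You notice the resulting circularity yourself, but appealing to "equivalence among the $\mathcal D^\theta$ and transitivity" does not resolve it, because no step of your argument ever bounds any single system's norm by any other single system's norm; nothing is ever dominated by $\|f\|_{M_{p,\omega}^{t,r}(\mathcal D)}$ or by $\|f\|_{M_{p,\omega}^{t,r}(\widetilde{\mathcal D})}$ alone.

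The missing idea is to abandon single-cube containment and instead, for each cube $Q$ of one system and the appropriate generation $k$ of the \emph{other, fixed} system, cover $Q$ by the cubes $R$ of that generation which intersect it: since same-generation cubes are pairwise disjoint, cover $X$ up to a null set, and are sandwiched between balls of radius $\approx$ their scale, doubling gives at most $N(X)$ such $R$, each with $\mu(R)\approx\mu(Q)$, hence $\omega(R)\approx\omega(Q)$ by Lemma \ref{weights}(iii) (this comparability does not need $R\subset Q$; compare both to a ball $B(x_Q,Cr)$ via doubling of $\omega\,\mathrm d\mu$). Then $\omega(Q)^{1/t-1/p}(\int_Q|f|^p\omega\,\mathrm d\mu)^{1/p}\lesssim\sum_{i\le N}\omega(R_i)^{1/t-1/p}(\int_{R_i}|f|^p\omega\,\mathrm d\mu)^{1/p}$, and summing the $r$-th powers only costs the multiplicity of the map $Q\mapsto\{R_i\}$, which is bounded by the per-generation disjointness argument you sketch together with the reverse doubling condition to limit the number of generations (your diameter-comparability justification is shaky in a general quasi-metric space; it is safer to bound chain lengths through measures, using $\mu(\text{child})\le\alpha_0\,\mu(\text{parent})$). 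This covering argument applies verbatim in both directions ($\mathcal D$ versus $\widetilde{\mathcal D}$), giving the two-sided equivalence for each fixed $\widetilde{\mathcal D}$ directly and without Lemma \ref{ball adjacent}. Note also that the paper itself does not prove this lemma but imports it from \cite[Theorem 3.1]{BX25}, so your write-up would in any case need to supply the full covering/multiplicity argument rather than the containment-plus-symmetry route.
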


	The following result comes from \cite[Theorem 3.2]{BX25}.

\begin{lem}\label{M bour weight}
	Let $1<p\le t<r=\infty $ or  let  $1<p <t <r <\infty$.  Let $\omega \in A_p$.
Let $\beta = 1-  (1- \alpha_0)^p   / [\omega]_{A_p}$ where $\alpha_0$  is the reverse doubling  constant of $X$. If $ r >  -nt / \log \beta  $, then 	

{\rm(i) }  the Hardy-Littlewood maximal operator $\mathcal M, \mathcal M_\omega$ are bounded on $M_{p,\omega}^{t,r}$;

{\rm 	(ii)  }  for all sequences $\{f_j\} _{j \in \mathbb Z}  \in M_{p,\omega}^{t,r} (\ell^u)$, $1<u\le \infty $,   we have
\[
\|  \{  \mathcal M f_j \}_{j\in \mathbb Z}  \|_{  M_{p,\omega}^{t,r} (\ell^u) } \lesssim 	\|  \{  f_j \}_{j \in \mathbb Z}\|_{  M_{p,\omega}^{t,r} (\ell^u) },
\]
and
\[
\|  \{  \mathcal M _\omega f_j \}_{j\in \mathbb Z}  \|_{  M_{p,\omega}^{t,r} (\ell^u) } \lesssim 	\|  \{  f_j \}_{j \in \mathbb Z} \|_{  M_{p,\omega}^{t,r} (\ell^u) }.
\]
\end{lem}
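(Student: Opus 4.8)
The plan is to reduce, via Lemmas \ref{M le dya} and \ref{spa dya inde}, to the dyadic maximal operators on a single adjacent dyadic system, and then to split, cube by cube, into a \emph{local} part handled by (weighted, vector-valued) maximal inequalities and a \emph{global} part which reorganizes into a geometric series over the level of the ancestor cube; the condition $r>-nt/\log\beta$ is precisely what makes that series converge. Since $\mathcal M f\lesssim\sum_{\theta=1}^{K}\mathcal M^{\mathcal D^{\theta}}f$ (and likewise for $\mathcal M_{\omega}$), and since $\|\cdot\|_{M_{p,\omega}^{t,r}}\approx\|\cdot\|_{M_{p,\omega}^{t,r}(\mathcal D^{\theta})}$ for every $\theta$, it suffices to show, for an arbitrary adjacent dyadic system $\mathcal D'$, that $\|\mathcal M^{\mathcal D'}f\|_{M_{p,\omega}^{t,r}(\mathcal D')}\lesssim\|f\|_{M_{p,\omega}^{t,r}(\mathcal D')}$ together with its $\mathcal M_{\omega}$ and vector-valued analogues (recall $r>p>1$, so $\|\cdot\|_{M_{p,\omega}^{t,r}}$ is a norm and the triangle inequality over $\theta$ is legitimate). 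Throughout we use $\mathcal M^{\mathcal D'}\lesssim\mathcal M$ and $\mathcal M^{\mathcal D'}_{\omega}\lesssim\mathcal M_{\omega}$, which hold because each dyadic cube is comparable to a ball.

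For (i), fix $Q\in\mathcal D'$ and write $Q^{(m)}$ for its $m$-th dyadic ancestor. Since distinct dyadic cubes are nested or disjoint, for $x\in Q$ every cube of $\mathcal D'$ containing $x$ either lies in $Q$ or contains $Q$, whence
\[
\mathcal M^{\mathcal D'}f(x)\le\mathcal M^{\mathcal D'}(f\chi_{Q})(x)+c_{Q},\qquad c_{Q}:=\sup_{m\ge1}\frac1{V(Q^{(m)})}\int_{Q^{(m)}}|f|\,\d\mu .
\]
For the local term, the boundedness of $\mathcal M$ on $L^{p}(\omega)$ ($\omega\in A_{p}$) gives $\omega(Q)^{1/t-1/p}\|\mathcal M^{\mathcal D'}(f\chi_{Q})\|_{L^{p}(Q,\omega)}\lesssim\omega(Q)^{1/t-1/p}\|f\|_{L^{p}(Q,\omega)}$, and the $\ell^{r}$-norm over $Q\in\mathcal D'$ of the right-hand side equals $\|f\|_{M_{p,\omega}^{t,r}(\mathcal D')}$. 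For the global term, Hölder's inequality and $\omega\in A_{p}$ yield, for any dyadic cube $Q'$, the estimate $\frac1{V(Q')}\int_{Q'}|f|\,\d\mu\lesssim\omega(Q')^{-1/p}\big(\int_{Q'}|f|^{p}\omega\,\d\mu\big)^{1/p}$, so that with $a_{Q'}:=\omega(Q')^{1/t-1/p}\big(\int_{Q'}|f|^{p}\omega\,\d\mu\big)^{1/p}$ the global contribution of $Q$ is
\[
\omega(Q)^{1/t-1/p}\Big(\int_{Q}c_{Q}^{p}\omega\,\d\mu\Big)^{1/p}=\omega(Q)^{1/t}c_{Q}\lesssim\sum_{m\ge1}\Big(\frac{\omega(Q)}{\omega(Q^{(m)})}\Big)^{1/t}a_{Q^{(m)}}\le\sum_{m\ge1}\beta^{m/t}a_{Q^{(m)}},
\]
the last step using the reverse doubling condition and Lemma \ref{weights}(iv), which give $\omega(Q)\le\beta\,\omega(2^{1}Q)$ and hence $\omega(Q)\le\beta^{m}\omega(Q^{(m)})$ (this is the $\beta$ of the statement). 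Taking the $\ell^{r}$-norm over $Q$, Minkowski's inequality reduces matters to $\sum_{m\ge1}\beta^{m/t}\|a_{Q^{(m)}}\|_{\ell^{r}_{Q}}$; since each cube is the $m$-th ancestor of at most $c\,2^{mn}$ cubes of $\mathcal D'$ (Remark \ref{dya cube}), one has $\|a_{Q^{(m)}}\|_{\ell^{r}_{Q}}\lesssim2^{mn/r}\|f\|_{M_{p,\omega}^{t,r}(\mathcal D')}$, leaving $\|f\|_{M_{p,\omega}^{t,r}(\mathcal D')}\sum_{m\ge1}(\beta^{1/t}2^{n/r})^{m}<\infty$ precisely because $r>-nt/\log\beta$ is equivalent to $\beta^{1/t}2^{n/r}<1$. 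The operator $\mathcal M_{\omega}$ is handled identically: the local term uses that $\mathcal M^{\mathcal D'}_{\omega}$ is bounded on $L^{p}(\omega)$ by \eqref{M a omega Lp}, and the global term uses Hölder with respect to $\omega\,\d\mu$, which gives the same bound $\frac1{\omega(Q')}\int_{Q'}|f|\omega\,\d\mu\le\omega(Q')^{-1/p}\big(\int_{Q'}|f|^{p}\omega\,\d\mu\big)^{1/p}$.

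For (ii) I would run the same splitting on each $f_{j}$, writing $F:=\|\{f_{j}\}_{j}\|_{\ell^{u}}$. For $x\in Q$ one obtains $\|\{\mathcal M^{\mathcal D'}f_{j}(x)\}_{j}\|_{\ell^{u}}\le\|\{\mathcal M^{\mathcal D'}(f_{j}\chi_{Q})(x)\}_{j}\|_{\ell^{u}}+\|\{c_{Q,j}\}_{j}\|_{\ell^{u}}$ with $c_{Q,j}:=\sup_{m\ge1}\frac1{V(Q^{(m)})}\int_{Q^{(m)}}|f_{j}|\,\d\mu$. The local piece is controlled by the weighted vector-valued Fefferman--Stein inequality \eqref{FS} (applicable with $A=1$ since $1<\min\{p,u\}$ and $\omega\in A_{p}$) together with $\mathcal M^{\mathcal D'}\lesssim\mathcal M$, giving $\big\|\,\|\{\mathcal M^{\mathcal D'}(f_{j}\chi_{Q})\}_{j}\|_{\ell^{u}}\,\big\|_{L^{p}(\omega)}\lesssim\|F\chi_{Q}\|_{L^{p}(\omega)}$, which sums in $Q$ to $\|F\|_{M_{p,\omega}^{t,r}(\mathcal D')}$. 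For the global piece, $\sup_{m}(\cdot)\le(\sum_{m}(\cdot)^{u})^{1/u}$, Minkowski's integral inequality in $j$ (valid since $u\ge1$) and Hölder with $\omega\in A_{p}$ give $\|\{c_{Q,j}\}_{j}\|_{\ell^{u}}\lesssim\big(\sum_{m\ge1}\omega(Q^{(m)})^{-u/p}\big(\int_{Q^{(m)}}F^{p}\omega\,\d\mu\big)^{u/p}\big)^{1/u}$, so that, setting $a_{Q'}:=\omega(Q')^{1/t-1/p}\big(\int_{Q'}F^{p}\omega\,\d\mu\big)^{1/p}$ and using $\omega(Q)\le\beta^{m}\omega(Q^{(m)})$, the global contribution of $Q$ is $\lesssim\big(\sum_{m\ge1}\beta^{mu/t}a_{Q^{(m)}}^{u}\big)^{1/u}$. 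Taking the $\ell^{r}$-norm over $Q$ and distinguishing $r\ge u$ (Minkowski in $\ell^{r/u}$) from $r<u$ (subadditivity of $x\mapsto x^{r/u}$), the ancestor-counting bound produces $\sum_{m}(\beta^{1/t}2^{n/r})^{mu}$, respectively $\sum_{m}(\beta^{r/t}2^{n})^{m}$, both summable when $r>-nt/\log\beta$; this gives $\|\{\mathcal M f_{j}\}_{j}\|_{M_{p,\omega}^{t,r}(\ell^{u})}\lesssim\|\{f_{j}\}_{j}\|_{M_{p,\omega}^{t,r}(\ell^{u})}$, and the $\mathcal M_{\omega}$ version follows by the same scheme with the Fefferman--Stein vector-valued inequality for $\mathcal M_{\omega}$ on $L^{p}(\omega)$ (the Hardy--Littlewood inequality of the doubling space $(X,\omega\,\d\mu)$) and Hölder with respect to $\omega\,\d\mu$.

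The main obstacle is the global piece. One must organize the averages of the relevant function over \emph{all} ancestors of a given cube $Q$ into a single series indexed by the ancestor level $m$, and then verify that, after the outer Bourgain--Morrey summation, the factor $2^{mn/r}$ coming from the number of $m$-th descendants of a cube is dominated by the reverse-doubling decay $\beta^{m/t}$. This balance is exactly the numerical condition $r>-nt/\log\beta$ (equivalently $\beta^{1/t}2^{n/r}<1$, or $\beta^{r/t}2^{n}<1$), and the remaining care is purely bookkeeping of exponents in the sub-cases ($r\ge1$ for (i); $r\ge u$ versus $r<u$ for (ii)), where one alternates between Minkowski's inequality and the subadditivity of $x\mapsto x^{\theta}$ for $0<\theta\le1$.
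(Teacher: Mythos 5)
Your proof is correct and follows essentially the same route the paper takes (this lemma is quoted from \cite[Theorem 3.2]{BX25}, and the paper's proof of Theorem \ref{M ell Bour} exhibits exactly this scheme): reduce to dyadic maximal operators via Lemmas \ref{M le dya} and \ref{spa dya inde}, split into a local part handled by the weighted (vector-valued) maximal inequalities and a global part organized over dyadic ancestors, where Lemma \ref{weights}(iv) with the reverse doubling constant gives $\omega(Q)\le\beta^{m}\omega(Q^{(m)})$ and the descendant count $c2^{mn}$ yields the geometric series whose convergence is the condition $r>-nt/\log\beta$. The only bookkeeping point to note is the boundary case $u=\infty$ with $r<\infty$, where the ``subadditivity of $x\mapsto x^{r/u}$'' step should be read as the obvious estimate with the supremum in $m$ replacing the $\ell^{u}$-sum, which gives the same bound.
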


	The variant of weighted (and unweighted) Fefferman-Stein maximal inequality and  maximal inequality comes from \cite[Theorem 3.3]{BX25}.
\begin{lem}\label{max FS Bou}
 Let $ 0< q\le \infty$  and $\omega \in A_\infty $. Let $0<p<t<r<\infty$ 	or $0<p\le t<r=\infty$.
Then the following statements hold:

{\rm (i)} 
For $0<A<p$ and $\omega \in A_{p/A}$, let
 $\beta = 1-  (1- \alpha_0)^{p/A} /  [\omega]_{A_{p/A}} $. If $ r >  -nt / \log \beta  $,
then
\[
\|  	\mathcal M _{A } f \|_{  M_ {p,\omega }^{t,r}  }  \lesssim \| f \| _   {  M_ {p,\omega }^{t,r}  }.
\]

{\rm (ii)} For $0<A<p$ and $\omega \in A_{p/A}$, let
 $\beta = 1-  (1- \alpha_0)^{p/A} /  [\omega]_{A_{ p /A }} $.    If $r >  -nt / \log \beta  $,
then
\[
\| \mathcal M _{A , \omega} f \|_{M_ {p,\omega }^{t,r} }  \lesssim \| f \| _{ M_ {p,\omega }^{t,r}  }.
\]

{\rm (iii)} For  $0<A <\min \{ p,q \} $ and $\omega \in A_{p/A}$, 	let $\beta = 1-  (1- \alpha_0)^{p/A} /  [\omega]_{A_{ p /A }} $.    If $r >  -nt / \log \beta  $,
then
for any sequence of measurable functions $ \{  f_\nu \}$,
\begin{equation*} 
	\bigg\| \bigg(  \sum_\nu | \mathcal M _A f_\nu |^q \bigg)^{1/q} \bigg\|_{   M_ {p,\omega }^{t,r}   } \lesssim  \bigg\| \bigg(  \sum_\nu |  f_\nu |^q \bigg)^{1/q} \bigg\|_{ M_ {p,\omega }^{t,r}  }.
\end{equation*}

{\rm (iv)} For  $0<A < \min \{ p,q \}  $ and $\omega \in A_{p/A}$,
let $\beta = 1-  (1- \alpha_0)^{p/A} /  [\omega]_{A_{ p /A }} $.    If $r >  -nt / \log \beta  $,
and $ \{a_\nu \}  \in \ell^q \cap  \ell^1 $, then
\[
\bigg\|  \sum_j  \bigg(  \sum_\nu |a_{j-\nu} \mathcal M _A f_\nu |^q \bigg)^{1/q} \bigg\|_{ M_ {p,\omega }^{t,r}    } \lesssim  \bigg\| \bigg(  \sum_\nu |  f_\nu |^q \bigg)^{1/q} \bigg\|_{  M_ {p,\omega }^{t,r}    },
\]
where the implicit constant is independent of  the sequence of functions $\{f_\nu\}$.

{\rm (v)} For  $0<A <p$ and $\omega \in A_{p/A}$, 
let $\beta = 1-  (1- \alpha_0)^{p/A} /  [\omega]_{A_{ p /A }} $.    If $r >  -nt / \log \beta  $ and
 $ \{a_\nu \}  \in \ell^q \cap  \ell^1 $,
then
\[
\sum_j  \bigg(  \sum_\nu  \big|  a_{j-\nu} \|  \mathcal M _A f_\nu \| _{M_ {p,\omega }^{t,r}   } \big|^q \bigg)^{1/q}   \lesssim  \bigg(  \sum_\nu  \|  f_\nu  \| _ {  M_ {p,\omega }^{t,r}  } ^q \bigg)^{1/q},
\]
where the implicit constant is independent of the sequence of functions $\{f_\nu\}$.
\end{lem}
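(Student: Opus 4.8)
The plan is to derive all five assertions from the weighted maximal bounds already available in Lemma \ref{M bour weight} by a single rescaling trick combined with the homogeneity of the Bourgain--Morrey quasi-norm. The elementary fact underlying everything is that for any measurable $g\ge0$ and any $A>0$,
\[
\big\|\,g^{1/A}\,\big\|_{M_{p,\omega}^{t,r}}=\big\|\,g\,\big\|_{M_{p/A,\omega}^{t/A,r/A}}^{\,1/A},
\]
which is immediate from the definition: replacing $(p,t,r)$ by $(p/A,t/A,r/A)$ scales each of the exponents $1/t-1/p$ and $1/p$ by $A$, and $\|\{c_Q\}\|_{\ell^r}=\|\{c_Q^{\,A}\}\|_{\ell^{r/A}}^{\,1/A}$. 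Thus questions about $\mathcal M_A$ and $\mathcal M_{A,\omega}$ on $M_{p,\omega}^{t,r}$ turn into questions about $\mathcal M=\mathcal M_1$ and $\mathcal M_\omega$ on $M_{p/A,\omega}^{t/A,r/A}$. The point is that every hypothesis needed on the rescaled space is exactly what we are given: since $A<p$ we have $1<p/A$, the ordering $p<t<r$ (or $p\le t<r=\infty$) passes to $1<p/A<t/A<r/A$ (resp. $1<p/A\le t/A<r/A=\infty$), the weight assumption $\omega\in A_{p/A}$ is unchanged, the constant $\beta=1-(1-\alpha_0)^{p/A}/[\omega]_{A_{p/A}}$ is already written in terms of $p/A$ and hence is literally the same, and the threshold $r/A>-n(t/A)/\log\beta$ is equivalent to $r>-nt/\log\beta$. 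So Lemma \ref{M bour weight} applies verbatim on $M_{p/A,\omega}^{t/A,r/A}$.

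For (i) I would write $\mathcal M_A f=(\mathcal M(|f|^A))^{1/A}$; the homogeneity identity then gives
\[
\|\mathcal M_A f\|_{M_{p,\omega}^{t,r}}=\big\|\mathcal M(|f|^A)\big\|_{M_{p/A,\omega}^{t/A,r/A}}^{\,1/A}\lesssim\big\||f|^A\big\|_{M_{p/A,\omega}^{t/A,r/A}}^{\,1/A}=\|f\|_{M_{p,\omega}^{t,r}},
\]
the inequality being the boundedness of $\mathcal M$ on $M_{p/A,\omega}^{t/A,r/A}$ from Lemma \ref{M bour weight}(i). Part (ii) is the same with $\mathcal M_{A,\omega}f=(\mathcal M_\omega(|f|^A))^{1/A}$ and the boundedness of $\mathcal M_\omega$ on $M_{p/A,\omega}^{t/A,r/A}$, again from Lemma \ref{M bour weight}(i). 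For (iii), observe that $\big(\sum_\nu|\mathcal M_A f_\nu|^q\big)^{1/q}=\big[\big(\sum_\nu(\mathcal M|f_\nu|^A)^{q/A}\big)^{A/q}\big]^{1/A}$; by the homogeneity identity its $M_{p,\omega}^{t,r}$-norm is the $1/A$-th power of the $M_{p/A,\omega}^{t/A,r/A}$-norm of $\big(\sum_\nu(\mathcal M|f_\nu|^A)^{q/A}\big)^{A/q}$, and since $A<\min\{p,q\}$ we have $q/A>1$, so Lemma \ref{M bour weight}(ii) (the $\ell^{q/A}$-valued maximal inequality) replaces each $\mathcal M|f_\nu|^A$ by $|f_\nu|^A$; undoing the rescaling yields (iii). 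The case $q=\infty$ is identical, with $\ell^{q/A}=\ell^\infty$, which is allowed in Lemma \ref{M bour weight}(ii).

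Part (v) reduces, after applying (i) to each $f_\nu$, to the scalar discrete inequality $\sum_j\big(\sum_\nu|a_{j-\nu}|^q d_\nu^{\,q}\big)^{1/q}\lesssim\big(\sum_\nu d_\nu^{\,q}\big)^{1/q}$ for the numbers $d_\nu:=\|f_\nu\|_{M_{p,\omega}^{t,r}}$, which is a discrete Young-type bound ($\ell^1\ast\ell^q\hookrightarrow\ell^q$ for $q\ge1$, and the convolution-algebra inequality of $\ell^q$ for $q\le1$), valid because $\{a_\nu\}\in\ell^q\cap\ell^1$. Part (iv) is the functional version of the same thing: writing $|a_{j-\nu}|\mathcal M_A f_\nu=\mathcal M_A(a_{j-\nu}f_\nu)$, applying the quasi-triangle inequality of $M_{p,\omega}^{t,r}$ and then (iii) in place of the Fefferman--Stein inequality, one repeats verbatim the derivation of the inequality displayed after (\ref{FS}) in Section \ref{Pre}, now with $L^p(\omega)$ replaced by $M_{p,\omega}^{t,r}$ and (\ref{FS}) by (iii), the summability in $j$ being furnished by $\{a_\nu\}\in\ell^1\cap\ell^q$. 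The only place demanding genuine care is the rescaling bookkeeping --- chiefly verifying that $\beta$ and the threshold $r>-nt/\log\beta$ are invariant under $(p,t,r)\mapsto(p/A,t/A,r/A)$ --- together with the routine quasi-Banach adjustments (the $\min\{1,p,r\}$-triangle inequality and convergence of the $j$-sum) that (iv) requires when $p$ or $q$ is below $1$; once Lemma \ref{M bour weight} is granted, the lemma is essentially this bookkeeping.
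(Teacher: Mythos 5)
Your proposal cannot be checked against an internal argument, because the paper gives none: Lemma \ref{max FS Bou} is imported from \cite[Theorem 3.3]{BX25} without proof. As a self-contained route, your treatment of (i)--(iii) is correct: the exact homogeneity $\|\,g^{1/A}\|_{M_{p,\omega}^{t,r}}=\||g|\|_{M_{p/A,\omega}^{t/A,r/A}}^{1/A}$ does hold, the identities $\mathcal M_A f=(\mathcal M(|f|^A))^{1/A}$, $\mathcal M_{A,\omega}f=(\mathcal M_\omega(|f|^A))^{1/A}$ are right, and the bookkeeping you stress (the rescaled exponents satisfy $1<p/A<t/A<r/A$ or $1<p/A\le t/A<r/A=\infty$, the constant $\beta$ is literally the one in the statement, and $r/A>-n(t/A)/\log\beta$ is the same as $r>-nt/\log\beta$) indeed makes Lemma \ref{M bour weight} applicable, with $u=q/A\in(1,\infty]$ covering $q=\infty$ in (iii). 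This is a clean reduction and, given that the paper only cites \cite{BX25}, it is as good a proof of (i)--(iii) as one can ask for here.

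The gap is in (iv) and (v). The scalar inequality you reduce (v) to, $\sum_j\big(\sum_\nu|a_{j-\nu}|^q d_\nu^q\big)^{1/q}\lesssim\big(\sum_\nu d_\nu^q\big)^{1/q}$, is not an instance of Young's inequality and is false for $q>1$: take $a=\delta_0$ (certainly in $\ell^1\cap\ell^q$); the left side is then $\sum_j d_j=\|d\|_{\ell^1}$, which is not controlled by $\|d\|_{\ell^q}$. The same delta-sequence test defeats the ``verbatim repetition'' you propose for (iv), since $\mathcal M_Af_j\ge|f_j|$ a.e. forces the left side to dominate $\|\sum_j|f_j|\,\|_{M_{p,\omega}^{t,r}}$, which for $q>1$ cannot be bounded by $\|(\sum_j|f_j|^q)^{1/q}\|_{M_{p,\omega}^{t,r}}$ (take $N$ identical $f_j$'s). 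In other words, the displays in (iv)--(v) (like the unweighted display after (\ref{FS})) are only tenable when read in the Young-convolution form, with the $j$-sum inside the $q$-th power and the convolution $\sum_\nu|a_{j-\nu}|\,\mathcal M_Af_\nu$ (respectively $\sum_\nu|a_{j-\nu}|\,d_\nu$) inside, in which case your plan --- apply (i)/(iii) and then $\ell^1\ast\ell^q\hookrightarrow\ell^q$ for $q\ge1$, or the $q$-triangle inequality giving $\ell^q\ast\ell^q\hookrightarrow\ell^q$ for $q\le1$ --- does go through. As written, however, your justification (``discrete Young-type bound'') does not establish the stated inequalities; you need either to prove the convolution form explicitly and say that this is the intended reading, or to flag the discrepancy rather than assert that Young covers it.
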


For brevity, we define the following notations.
\begin{defn}
	Let $ 0< q\le \infty$, and $\omega \in A_\infty $. Let $0<p<t<r<\infty$ 	or $0<p\le t<r=\infty$. Let $G = \{g_j\}_{j\in \mathbb Z}$ be a sequence of measurable functions on $X$.
	Denote by $ \widetilde{ \ell^q ( M_{t,r} ^{p, \omega})   }$  the set of  all sequences $\{g_j\}_{j\in\mathbb Z}$ of measurable functions on $X$ such that 
	\begin{equation*}
		\|G \|_{ \widetilde{ \ell^q ( M_{t,r} ^{p, \omega})   }} :=
		\bigg(    \sum_{Q \in \mathcal D}  \omega (Q)^{r/t-r/p} \Big(  \sum_{j=j_Q}^\infty \Big(   \int_Q         |   g_j (x)    |^p
		\omega (x)\d \mu (x)   \Big)^{q/p}   \Big)^{r/q}                 \bigg)^{1/r}  <\infty.
	\end{equation*}
	Similarly, 	denote by $\widetilde{   M_{t,r} ^{p, \omega} (\ell^q )  }$  the set of all sequences all sequences $\{g_j\}_{j\in\mathbb Z}$ of measurable functions on $X$ such that 
	\begin{equation*}
		\|G \|_{ \widetilde{   M_{t,r} ^{p, \omega} (\ell^q )  } } :=
		\bigg(    \sum_{Q \in \mathcal D}  \omega (Q)^{r/t-r/p} \Big(   \int_Q     \Big(  \sum_{j=j_Q}^\infty     |   g_j (x)    |^q \Big)^{p/q} 
		\omega (x)\d \mu (x)     \Big)^{r/p}                 \bigg)^{1/r}  <\infty.
	\end{equation*}
\end{defn}
\begin{rem} \label{min tri}
		It is basic that 
	\begin{align*}
		\|  f +g  \|_{ \widetilde{ \ell^q ( M_{t,r} ^{p, \omega})   }} ^{\min( 1,p,q )} \le 	\|  f  \|_{ \widetilde{ \ell^q ( M_{t,r} ^{p, \omega})   }} ^{\min( 1,p,q ) } + 	\|  g \|_{ \widetilde{ \ell^q ( M_{t,r} ^{p, \omega})   }} ^{\min( 1,p,q )}, 	
	\end{align*}
and
\begin{equation*}
		\|  f +g  \|_{  \widetilde{   M_{t,r} ^{p, \omega} (\ell^q )  } } ^{\min( 1,p,q )} \le 	\|  f  \|_{ \widetilde{   M_{t,r} ^{p, \omega} (\ell^q )  }  } ^{\min( 1,p,q ) } + 	\|  g \|_{\widetilde{   M_{t,r} ^{p, \omega} (\ell^q )  }  } ^{\min( 1,p,q )}.
\end{equation*}
	We call the above inequalities  $\min(1,p,q)$-inequality.
\end{rem}

\begin{thm} \label{M ell Bour}
	Let $1<p\le t<r=\infty $ or  let  $1<p <t <r <\infty$.  Let $\omega \in A_p$ and
$\beta = 1-  (1- \alpha_0)^{p} /  [\omega]_{A_{p}} $. 	Let $ 0< q\le \infty$. If $r >- n t/  \log{  \beta } $,
		 then the Hardy-Littlewood maximal operator $\mathcal M, \mathcal M_\omega$ are bounded on $ \widetilde{   \ell^q ( M_{p,\omega}^{t,r}  ) }.$

\end{thm}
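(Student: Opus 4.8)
The plan is to reduce the boundedness of $\mathcal M$ and $\mathcal M_\omega$ on $\widetilde{\ell^q(M_{p,\omega}^{t,r})}$ to the already established boundedness on the sequence Bourgain--Morrey space $M_{p,\omega}^{t,r}(\ell^q)$ from Lemma \ref{M bour weight}, by a careful analysis of the restricted sum $\sum_{j=j_Q}^\infty$ appearing in the $\widetilde{\ell^q(M_{p,\omega}^{t,r})}$ norm. Write $G=\{g_j\}_{j\in\mathbb Z}$ and, for a dyadic cube $Q\in\mathcal D_\nu$ (so $j_Q=-\nu$), split each $g_j$ with $j\ge j_Q$ as $g_j=g_j\chi_{2^1 Q}+g_j\chi_{X\setminus 2^1 Q}$ — more precisely, since the Peetre-type tail decay is not available for a bare maximal function, I would instead split the \emph{ball of averaging} in $\mathcal M g_j(x)$ for $x\in Q$ into the ``local'' part (balls $B\ni x$ with $B\subset CB_Q$) and the ``global'' part (larger balls). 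The local part is controlled by $\mathcal M(g_j\chi_{C'B_Q})(x)$ and the global part, for $x\in Q$, is dominated by a fixed number $\sup_{R\supset Q}\frac{1}{V(R)}\int_R|g_j|$ which, by reverse doubling and property \eqref{double}, is $\lesssim \sum_{\ell\ge 0}2^{-\ell\delta}\big(\text{average of }|g_j|\text{ over }2^\ell B_Q\big)$ for some $\delta>0$; here one uses that $\mu(2^\ell B_Q)\gtrsim (\text{const})^\ell\mu(B_Q)$.

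For the local part, since $g_j\chi_{C'B_Q}$ is supported in $C'B_Q$, one has $\mathcal M(g_j\chi_{C'B_Q})\chi_Q$ bounded in $L^p(\omega)$-norm over $Q$ by (a constant times) $\big(\int_{C'B_Q}|g_j|^p\omega\big)^{1/p}$ using \eqref{M a omega Lp} with $A=1$ and $\omega\in A_p$; summing in $j\ge j_Q$ and then over $Q$, one recognizes (after noting that each dilate $C'B_Q$ is covered by boundedly many cubes $Q'\in\mathcal D_{\nu'}$ with $\nu'$ close to $\nu$, by Remark \ref{dya cube}) an expression controlled by $\|G\|_{\widetilde{\ell^q(M_{p,\omega}^{t,r})}}$ itself, modulo the $\min(1,p,q)$-inequality from Remark \ref{min tri} to handle the finitely many cube-shifts. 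For the global part, the key point is that the quantity $\big(\text{average of }|g_j|\text{ over }2^\ell B_Q\big)$ over a \emph{larger} cube carries the index $j_{2^\ell B_Q}$, which is \emph{smaller} than $j_Q$, so replacing the restricted sum $\sum_{j\ge j_Q}$ by $\sum_{j\ge j_{2^\ell B_Q}}$ only enlarges it; combined with the geometric factor $2^{-\ell\delta}$ and the weighted doubling estimate $\omega(Q)/\omega(2^\ell B_Q)\lesssim (\text{something})$ controlled via Lemma \ref{weights}(iii), one sums the geometric series in $\ell$ — this is exactly where the hypothesis $r>-nt/\log\beta$ enters, guaranteeing that the exponents combine to give a convergent series, as in the proof of Lemma \ref{M bour weight}.

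The main obstacle will be the bookkeeping of the \emph{variable} lower summation index $j_Q$ under dyadic dilation: unlike the ordinary sequence space $M_{p,\omega}^{t,r}(\ell^q)$, here the inner $\ell^q$ sum depends on the cube, so one cannot simply invoke Lemma \ref{M bour weight}(ii) as a black box. The resolution is the monotonicity observation $j_{2^\ell B_Q}\le j_Q$ noted above, which lets the global contribution be absorbed; making this rigorous requires choosing the decay rate $\delta$ from the reverse doubling constant $\alpha_0$ and checking that $\delta$ together with the exponent $r/t-r/p<0$ yields summability precisely under $r>-nt/\log\beta$ — I expect this to mirror, line by line, the summation lemma underlying Lemma \ref{max FS Bou}. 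The case $r=\infty$ is handled separately and more easily, since then the outer $\ell^r$ becomes a supremum over cubes and the argument reduces to a single-cube estimate. For $\mathcal M_\omega$ the proof is identical with Lebesgue averages replaced by $\omega$-averages and \eqref{M a omega Lp} used in the form $\|\mathcal M_\omega f\|_{L^p(\omega)}\lesssim\|f\|_{L^p(\omega)}$ valid for all $\omega\in A_\infty$, $1<p$.
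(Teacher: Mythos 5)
Your overall plan coincides with the paper's proof in all essential respects: for each fixed $Q$ one splits $g_j=g_j\chi_Q+g_j\chi_{X\setminus Q}$ (the paper first reduces to the dyadic maximal operators via Lemmas \ref{M le dya} and \ref{spa dya inde}, so the global part is automatically controlled by averages over the dyadic ancestors $Q_k$ of $Q$), handles the local piece by the $L^p(\omega)$-boundedness (\ref{M a omega Lp}), pulls the sum over scales out with the $\min(1,p,q)$-inequality of Remark \ref{min tri}, uses that the ancestor has smaller index ($j_{Q_k}\le j_Q$, your monotonicity observation) so the inner sum $\sum_{j\ge j_Q}$ can be re-anchored at the larger cube, and finally balances the weight decay $\omega(Q)\le\beta^k\omega(Q_k)$ (reverse doubling combined with Lemma \ref{weights}) against the multiplicity $c2^{kn}$ of cubes $Q$ sharing the ancestor $Q_k$, which after the outer $\ell^r$-sum gives the series $\sum_k\beta^{k/t}2^{kn/r}$ and hence exactly the hypothesis $r>-nt/\log\beta$. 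Working with the genuine maximal operator and the dilates $2^\ell B_Q$ instead of the dyadic parents changes nothing essential, it only adds bounded-overlap bookkeeping.

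There is, however, one step that is false as you state it: the claim that $\sup_{R\supset Q}\frac{1}{V(R)}\int_R|g_j|\,\d\mu\lesssim\sum_{\ell\ge0}2^{-\ell\delta}\frac{1}{V(2^\ell B_Q)}\int_{2^\ell B_Q}|g_j|\,\d\mu$ for some $\delta>0$. Reverse doubling gives decay of the measure ratios $\mu(B_Q)/\mu(2^\ell B_Q)$, not of the averages: if $g_j$ is supported in $2^LB_Q\setminus2^{L-1}B_Q$, the left-hand side is at least the average of $|g_j|$ over $2^LB_Q$, while every nonzero term on the right carries a factor $2^{-\ell\delta}$ with $\ell\ge L$, so the right-hand side is $\lesssim2^{-L\delta}$ times that same average. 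Thus the global term has no intrinsic pointwise geometric decay in $\ell$; all of the summability must come from the weight/measure bookkeeping described above — the factor $\omega(Q)^{r/t}\le\beta^{kr/t}\omega(Q_k)^{r/t}$ from Lemma \ref{weights}(iii)--(iv) together with reverse doubling, set against the factor $2^{kn\min(1,p,q)/r}$ from the multiplicity of descendants — which is precisely the computation of \cite[Theorem 3.2]{BX25} that the paper reproduces and which produces the threshold $r>-nt/\log\beta$. Since you also invoke this mechanism and intend the summability check to mirror Lemma \ref{max FS Bou}, your argument goes through once the spurious $2^{-\ell\delta}$ factor is deleted; but as written, that inequality would fail.
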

\begin{proof}
	The proof is similar as \cite[Theorem  3.2]{BX25}. For the reader's convenience, we give some details. 
		Thanks to Lemmas \ref{M le dya}  and  \ref{spa dya inde}, it is sufficient to show the boundedness for the dyadic maximal operator $\mathcal M ^\mathcal D$,  $\mathcal M_\omega ^\mathcal D$   instead of investigating the operator $\mathcal M$, $\mathcal M_\omega$. Here and below, we identify $\mathcal M$, $\mathcal M_\omega$  with $\mathcal M ^\mathcal D$,  $\mathcal M_\omega ^\mathcal D$, respectively.
	
	For a fixed $Q \in \mathcal D$, each $j\ge j_Q$, let $f_j = f_{j,1} +f_{j,2}$ and $f_{j,1} =f_j \chi _Q$.  By (\ref{M a omega Lp}), we have
	\begin{equation}
		\Big( \sum_{j=j_Q}^\infty  \Big(   \int_{Q} |	\mathcal M _\omega f_{j,1} |^{p}\omega(x) \d \mu (x) \Big)^{q/p} \Big)^{1/q} \lesssim  	\Big( \sum_{j=j_Q}^\infty  \Big(   \int_{Q} |	 f_{j,1} (x)  |^{p}\omega(x) \d \mu (x) \Big)^{q/p} \Big)^{1/q}.
	\end{equation}
	For $k\in \mathbb N$, let $Q_k$ be the $k$-th dyadic parent of $Q$, which is the dyadic cube in $\mathcal D$ satisfying $Q \subset Q_k $ and $\ell(Q_k) \approx 2^k \ell (Q).$  
	By the H\"older inequality, we have
	\begin{align} 
		\nonumber
	&	\omega(Q ) ^{1/t-1/p}\Big( \sum_{j=j_Q}^\infty  \Big(  \int_{Q }\mathcal M _\omega f_2(x)^{p}\omega(x) \d \mu (x) \Big)^{q/p} \Big)^{1/q} \\
&	\le \omega (Q)^{1/t} \Big( \sum_{j=j_Q}^\infty  \Big(
\sum_{k=1}^\infty   \frac{1}{\omega(Q_k)}  \int_{Q_k } | f (x)|^{p}\omega(x) \d \mu (x)  \Big)^{q/p} \Big)^{1/q}.
	\end{align}

As in Remark \ref{dya cube}, there are at most $c2^{kn}$ dyadic cubes $P$ such that $P \subset Q_k $ where $c$ is a constant depending only on $X$.
	Using 
$\min(1,p,q)$-inequality (see Remark \ref{min tri}),
we have
	\begin{align*}
	\nonumber
	& 	\| \mathcal M _\omega f_2  \|_{ \widetilde{ \ell^q ( M_{t,r} ^{p, \omega})   }} ^ { \min(1,p,q) }  \\
	\nonumber
	&\lesssim 	\bigg(    \sum_{Q \in \mathcal D}  
	\omega (Q)^{r/t} \Big( \sum_{j=j_Q}^\infty  \Big(
	\sum_{k=1}^\infty   \frac{1}{\omega(Q_k)}  \int_{Q_k } | f (x)|^{p}\omega(x) \d \mu (x)  \Big)^{q/p} \Big)^{r/q}   \bigg)^{ \min(1,p,q)  /r} \\
	\nonumber
	& \lesssim \sum_{k=1}^\infty  \bigg(    \sum_{Q \in \mathcal D}  
	\omega (Q)^{r/t} \Big( \sum_{j=j_Q}^\infty  \Big(
	 \frac{1}{\omega(Q_k)}  \int_{Q_k } | f (x)|^{p}\omega(x) \d \mu (x)  \Big)^{q/p} \Big)^{r/q}   \bigg)^{ \min(1,p,q) /r} \\
	\nonumber
	& \lesssim \sum_{k=1}^\infty \beta ^{ \min(1,p,q)  k/t}
	\\
	& \quad \times
	 \bigg(    \sum_{Q \in \mathcal D}  
	\omega (Q_k)^{r/t} \Big( \sum_{j=j_Q}^\infty  \Big(
	  \frac{1}{\omega(Q_k)}  \int_{Q_k } | f (x)|^{p}\omega(x) \d \mu (x)  \Big)^{q/p} \Big)^{r/q}   \bigg)^{ \min(1,p,q) /r} \\
	\nonumber
	& \lesssim \sum_{k=1}^\infty \beta ^{ \min(1,p,q) k/t} 2^{ \min(1,p,q) kn/r} 
	\\
	\nonumber
	& \quad \times
	 \bigg(    \sum_{Q_k \in \mathcal D}  
	\omega (Q_k)^{r/t} \Big( \sum_{j=j_Q}^\infty  \Big(
	 \frac{1}{\omega(Q_k)}  \int_{Q_k } | f (x)|^{p}\omega(x) \d \mu (x)  \Big)^{q/p} \Big)^{r/q}   \bigg)^{\min(1,p,q) /r} \\	
	& \lesssim \|  f  \|_{ \widetilde{ \ell^q ( M_{t,r} ^{p, \omega})   }}  ^{  \min(1,p,q)  },
\end{align*}
	as long as $r > - n t / \log  \beta $.
	The proof of $\mathcal M$  is similar and we refer the reader to \cite[Theorem 3.2]{BX25} for the details.
\end{proof}

Similar with Lemma \ref{max FS Bou}, we give the  following result without  proofs. 

	\begin{thm} \label{Ma ell q Mptr}
	 Let $ 0< q\le \infty$. Let $0<p<t<r<\infty$ 	or $0<p\le t<r=\infty$.
	  Let $0<A<p$ such that  $\omega \in A_{p/A}$ and $\beta = 1-  (1- \alpha_0)^{p/A} /  [\omega]_{A_{p/A}} $. If $r >- n t / \log{ \beta } $,  	
	 then 

		\begin{equation*}
			\|  	\mathcal M _{A } f \|_{   \widetilde{   \ell^q ( M_{p,\omega}^{t,r}  ) }  }  \lesssim \| f \| _   {  \widetilde{   \ell^q ( M_{p,\omega}^{t,r}  ) }  }.
		\end{equation*} 
	and
		\begin{equation*}
			\|  	\mathcal M _{A , \omega} f \|_{\widetilde{   \ell^q ( M_{p,\omega}^{t,r}  ) } }  \lesssim \| f \| _   {  \widetilde{   \ell^q ( M_{p,\omega}^{t,r}  ) }  }.
		\end{equation*} 
\end{thm}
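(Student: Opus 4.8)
The plan is to deduce the assertion from the already established Theorem~\ref{M ell Bour} by the classical exponent-rescaling device, so that no new analytic input is needed. Throughout, write $\{f_j\}$ for a sequence $\{f_j\}_{j\in\mathbb Z}$ of measurable functions on $X$, and understand $\mathcal M_A$ and $\mathcal M_{A,\omega}$ to act componentwise. The first step is to record the two elementary identities
\[
\mathcal M_A g=\big(\mathcal M(|g|^A)\big)^{1/A},\qquad \mathcal M_{A,\omega}g=\big(\mathcal M_\omega(|g|^A)\big)^{1/A},
\]
which hold for every measurable $g$ and follow at once from the definitions since $\lambda\mapsto\lambda^{1/A}$ is increasing on $[0,\infty)$.

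Next I would introduce the rescaled exponents $\tilde p:=p/A$, $\tilde q:=q/A$, $\tilde t:=t/A$, $\tilde r:=r/A$ and check that the tuple $(\tilde p,\tilde t,\tilde r,\tilde q)$ satisfies all the hypotheses of Theorem~\ref{M ell Bour}: since $0<A<p$ one has $\tilde p=p/A>1$; the chain $0<p<t<r<\infty$ (resp. $0<p\le t<r=\infty$) turns into $1<\tilde p<\tilde t<\tilde r<\infty$ (resp. $1<\tilde p\le\tilde t<\tilde r=\infty$); by hypothesis $\omega\in A_{p/A}=A_{\tilde p}$; the number $\beta=1-(1-\alpha_0)^{p/A}/[\omega]_{A_{p/A}}$ coincides with $1-(1-\alpha_0)^{\tilde p}/[\omega]_{A_{\tilde p}}$ and lies in $(0,1)$; and, on multiplying through by $A>0$, the hypothesis $r>-nt/\log\beta$ becomes exactly $\tilde r>-n\tilde t/\log\beta$. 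Consequently Theorem~\ref{M ell Bour}, applied with these parameters, gives the boundedness of both $\mathcal M$ and $\mathcal M_\omega$ on $\widetilde{\ell^{\tilde q}(M_{\tilde p,\omega}^{\tilde t,\tilde r})}$.

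I would then record the homogeneity relation
\[
\big\|\{|h_j|^A\}\big\|_{\widetilde{\ell^{\tilde q}(M_{\tilde p,\omega}^{\tilde t,\tilde r})}}=\big\|\{h_j\}\big\|_{\widetilde{\ell^{q}(M_{p,\omega}^{t,r})}}^{A},
\]
valid for any sequence $\{h_j\}$; this is a direct computation from the definition of the two quasi-norms, using $(|h_j|^A)^{\tilde p}=|h_j|^p$ together with $\tilde r/\tilde t-\tilde r/\tilde p=r/t-r/p$, $\tilde q/\tilde p=q/p$, $\tilde r/\tilde q=r/q$ and $\tilde r=r/A$ (the cases $r=\infty$ or $q=\infty$ being handled by the evident modifications with suprema). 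Applying this with $h_j=\mathcal M_A f_j$, so that $|\mathcal M_A f_j|^A=\mathcal M(|f_j|^A)$, the boundedness of $\mathcal M$ on the rescaled space yields
\begin{align*}
\big\|\{\mathcal M_A f_j\}\big\|_{\widetilde{\ell^{q}(M_{p,\omega}^{t,r})}}^{A}
&=\big\|\{\mathcal M(|f_j|^A)\}\big\|_{\widetilde{\ell^{\tilde q}(M_{\tilde p,\omega}^{\tilde t,\tilde r})}}\\
&\lesssim\big\|\{|f_j|^A\}\big\|_{\widetilde{\ell^{\tilde q}(M_{\tilde p,\omega}^{\tilde t,\tilde r})}}\\
&=\big\|\{f_j\}\big\|_{\widetilde{\ell^{q}(M_{p,\omega}^{t,r})}}^{A},
\end{align*}
and taking $A$-th roots proves the first inequality; replacing $\mathcal M$ by $\mathcal M_\omega$ and $\mathcal M_A$ by $\mathcal M_{A,\omega}$ throughout gives the second.

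The only point I expect to require care is the bookkeeping of exponents in the rescaling---equivalently, keeping track of the weight class $A_{p/A}$---since there is no genuine analytic obstacle beyond Theorem~\ref{M ell Bour}. If a self-contained argument is preferred, one may instead repeat the proof of Theorem~\ref{M ell Bour} verbatim with $\mathcal M$ replaced by $\mathcal M_A$: the local piece $f_j\chi_Q$ is controlled by the $L^p(\omega)$-boundedness of $\mathcal M_A$ (and of $\mathcal M_{A,\omega}$), cf.~(\ref{M a omega Lp}), while the tail piece $f_j\chi_{X\setminus Q}$ is handled by the expansion over the dyadic parents $Q_k$ of $Q$ together with the reverse-doubling gain $\beta^{k}$ furnished by Lemma~\ref{weights}(iv), exactly as in the displayed estimate in the proof of Theorem~\ref{M ell Bour}.
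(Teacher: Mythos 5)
Your argument is correct. Note that the paper itself offers no written proof here: it simply states that the result follows ``similar with Lemma~\ref{max FS Bou}'', i.e.\ by adapting the direct dyadic argument of Theorem~\ref{M ell Bour} (split $f_j=f_j\chi_Q+f_j\chi_{X\setminus Q}$, treat the local part with the $L^p(\omega)$-bound for $\mathcal M_A$, and the tail over the dyadic parents $Q_k$ with the gain $\beta^{k}$ from Lemma~\ref{weights}(iv)), which is exactly your fallback suggestion. Your primary route is genuinely different and cleaner: the power-rescaling identities $\mathcal M_A g=(\mathcal M(|g|^A))^{1/A}$, $\mathcal M_{A,\omega}g=(\mathcal M_\omega(|g|^A))^{1/A}$, together with the homogeneity $\|\{|h_j|^A\}\|_{\widetilde{\ell^{q/A}(M^{t/A,r/A}_{p/A,\omega})}}=\|\{h_j\}\|^A_{\widetilde{\ell^{q}(M^{t,r}_{p,\omega})}}$, reduce the statement to Theorem~\ref{M ell Bour} with exponents $(\tilde p,\tilde t,\tilde r,\tilde q)=(p/A,t/A,r/A,q/A)$; your bookkeeping is right, and in particular the hypothesis $r>-nt/\log\beta$ is invariant under this rescaling because $\beta$ is already defined through $p/A$, while the absence of any restriction $A<q$ is consistent since Theorem~\ref{M ell Bour} allows all $0<q\le\infty$. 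What the reduction buys is economy (no re-derivation of the parent-cube tail estimate and the $\min(1,p,q)$-inequality manipulations); what it costs is only the verification of the homogeneity identity, including the usual supremum interpretations when $q=\infty$ or $r=\infty$, which you correctly flag. Either route establishes the theorem under exactly the stated hypotheses.
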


The following result is useful to the proof of characterizations of weighted Bourgain-Morrey-Besov type and Triebel-Lizorkin type spaces.	
\begin{thm}[Hardy type inequality]  \label{hardy bourgain}
		Let $ 0< q\le \infty$. Let $0<p<t<r<\infty$ 	or $0<p\le t<r=\infty$. 
	 Let $0<A<p$ such that   $\omega \in A_{p/A}$. 
	Let  $\{g_m\}_{m\in\mathbb Z}$ be  a sequence of  measurable functions on $X$. For all $j\in \mathbb Z$, $x\in X$, let $G_j (x) =\sum_{m\in\mathbb Z} 2^{-|m-j|\delta}  g_m (x)  $. 	If  $\delta > (1/p -1/t) n p /A$, then
	\begin{equation*}
			\|G \|_{ \widetilde{ \ell^q ( M_{t,r} ^{p, \omega})   }} \lesssim 	\|g\|_{ \widetilde{ \ell^q ( M_{t,r} ^{p, \omega})   }},
	\end{equation*}
	and 
	\begin{equation*}
			\|G \|_{ \widetilde{   M_{t,r} ^{p, \omega} (\ell^q )  }  } \lesssim 	\|g\|_{ \widetilde{   M_{t,r} ^{p, \omega} (\ell^q )  } }.
	\end{equation*}
\end{thm}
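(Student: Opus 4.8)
The plan is to reduce the Hardy-type inequality to a geometric series estimate, exactly as in the classical Triebel–Lizorkin situation, but with the dyadic-cube bookkeeping that is forced by the Bourgain–Morrey structure. Recall that for each dyadic cube $Q \in \mathcal D_\nu$ the scale index is $j_Q = -\nu$, so the inner sum $\sum_{j=j_Q}^\infty$ only sees frequencies at or above the scale of $Q$. The key observation is that $G_j = \sum_{m \in \mathbb Z} 2^{-|m-j|\delta} g_m$ splits into the ``low part'' $\sum_{m \ge j_Q}$ and the ``high part'' $\sum_{m < j_Q}$ relative to a given cube $Q$. For the low part, for each fixed $k \ge 0$ one isolates the contribution of $g_{j+k}$ and $g_{j-k}$; summing in $k$ against the factor $2^{-k\delta}$ and using the $\min(1,p,q)$-inequality of Remark \ref{min tri} together with Young's convolution inequality on $\ell^q$ in the index $j$, one obtains a bound by $\sum_{k\ge 0} 2^{-k\delta}\|g\|$, which is the right-hand side up to a constant. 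This part does not even need $\delta > (1/p-1/t)np/A$; any $\delta > 0$ suffices.

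The real point — and the place the hypothesis $\delta > (1/p - 1/t) n p /A$ enters — is the high part, where for a cube $Q \in \mathcal D_\nu$ we must control $\sum_{j \ge j_Q} 2^{jsq}$-type sums of terms $\sum_{m < j_Q} 2^{-|m-j|\delta} g_m$. Here $g_m$ ``lives at a coarser scale'' than $Q$, so on $Q$ one only has the crude pointwise information coming from the cube $Q_k \supset Q$ of the appropriate scale: writing, for $m = j_Q - k$ with $k \ge 1$, the average of $|g_m|$ over the scale-$m$ ancestor $Q_k$ of $Q$, and estimating $|g_m(x)| \lesssim \mathcal M_A g_m(x)$ or rather replacing the $L^p(\omega)$ average over $Q$ by the $L^p(\omega)$ average over $Q_k$, one picks up the weight ratio $\omega(Q)^{1/t-1/p}$ against $\omega(Q_k)^{1/t-1/p}$. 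By the reverse-doubling / $A_p$ estimate (Lemma \ref{weights}(iv)) this ratio is $\lesssim \beta^{ck(1/p-1/t)}$ for the relevant exponent, while the number of scale-$\nu$ cubes inside $Q_k$ is $\lesssim 2^{kn}$ (Remark \ref{dya cube}). So after reindexing the outer sum from $Q$ to $Q_k$ one gains $2^{kn}$ but loses a factor that behaves like $2^{-k\delta p/A}$ (from the decay of $2^{-|m-j|\delta}$, once $m = j_Q-k$ is at distance $\gtrsim k$ from every $j \ge j_Q$) together with the weight gain; the condition $\delta > (1/p-1/t)np/A$ is precisely what makes the resulting geometric series in $k$ converge.

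Concretely, the steps I would carry out, in order, are: (1) fix $Q \in \mathcal D$ and split $G_j$ on $Q$ into low ($m \ge j_Q$) and high ($m < j_Q$) parts; (2) for the low part, apply the $\min(1,p,q)$-inequality in $k = |m-j|$, then Young's inequality in $j$, and sum the geometric series $\sum_k 2^{-k\delta}$ — this handles both the $\widetilde{\ell^q(M^{p,\omega}_{t,r})}$ and the $\widetilde{M^{p,\omega}_{t,r}(\ell^q)}$ norm by the same argument with the order of the $\ell^q$ and $L^p(\omega)$ norms swapped; (3) for the high part, again apply the $\min(1,p,q)$-inequality to reduce to a single $k \ge 1$, pass from $Q$ to its scale-$k$ ancestor $Q_k$ using Hölder and the weight-doubling/reverse-doubling bound $\omega(Q)^{1/t-1/p} \lesssim \beta^{\,ck(1/p-1/t)}\omega(Q_k)^{1/t-1/p}$, absorb the $2^{kn}$ multiplicity factor, and sum the geometric series in $k$, which converges exactly when $\delta > (1/p-1/t)np/A$. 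I expect the high-part estimate — specifically getting the bookkeeping of the weight ratio, the cube multiplicity, and the decay of $2^{-|m-j|\delta}$ to line up so that the exponent in the geometric series is $\delta - (1/p-1/t)np/A > 0$ — to be the main obstacle; everything else is a routine repetition of the scheme used in the proof of Theorem \ref{M ell Bour}.
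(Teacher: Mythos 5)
Your overall skeleton --- fix a cube $Q$, split $G_j$ into the part with $m\ge j_Q$ and the part with $m<j_Q$, dispose of the first by a geometric series/Young argument (any $\delta>0$), and for the second pass from $Q$ to its ancestor $Q_k$ at scale $m=j_Q-k$ and pay a weight-comparison factor before summing a geometric series in $k$ --- is exactly the paper's. The gap sits in the one step you yourself call ``the real point''. You claim the comparison of $\omega(Q)^{1/t-1/p}$ with $\omega(Q_k)^{1/t-1/p}$ is a \emph{gain} of order $\beta^{ck(1/p-1/t)}$ coming from reverse doubling and Lemma \ref{weights}(iv). This is backwards: since $1/t-1/p<0$, an upper bound on $\omega(Q)^{1/t-1/p}$ in terms of $\omega(Q_k)^{1/t-1/p}$ requires an \emph{upper} bound on the ratio $\omega(Q_k)/\omega(Q)$, whereas reverse doubling plus Lemma \ref{weights}(iv) gives $\omega(Q)\le\beta^{k}\omega(Q_k)$, i.e.\ a lower bound on that ratio; unwinding your exponents, the inequality you assert amounts to $\omega(Q_k)\lesssim \beta^{ck}\omega(Q)$ with $\beta<1$, which is false. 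The correct input is the hypothesis $\omega\in A_{p/A}$ through Lemma \ref{weights}(iii): for $Q\subset Q_k$ one has $\omega(Q_k)/\omega(Q)\le[\omega]_{A_{p/A}}\,(\mu(Q_k)/\mu(Q))^{p/A}\lesssim 2^{knp/A}$, hence $\omega(Q)^{1/t-1/p}\lesssim 2^{k(1/p-1/t)np/A}\,\omega(Q_k)^{1/t-1/p}$, a \emph{loss} growing in $k$ that must be beaten by the kernel decay $2^{-k\delta}$ (the decay is $2^{-k\delta}$, not ``$2^{-k\delta p/A}$''; the factor $p/A$ enters only through the weight). Only with this bookkeeping does the series condition come out as $\delta>(1/p-1/t)np/A$; with the factors as you list them ($\beta$-gain, $2^{kn}$ multiplicity, $2^{-k\delta p/A}$ decay) the convergence threshold would be a different quantity, and you assert rather than derive the stated one. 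Since this is precisely where the hypotheses $\omega\in A_{p/A}$ and $\delta>(1/p-1/t)np/A$ must be used, the proposal has a genuine gap at its core.

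Two secondary points. The detour through $\mathcal M_A$ is not only unnecessary but counterproductive: a maximal-function bound on these Bourgain--Morrey scales would force a restriction of the type $r>-nt/\log\beta$ (as in Theorem \ref{M ell Bour} and Lemma \ref{max FS Bou}), which Theorem \ref{hardy bourgain} deliberately avoids; the paper's move is the elementary enlargement $\int_Q\le\int_{Q_k}$ after the weight comparison. Moreover, after that enlargement you still need the interchanges of the $\ell^q$-, $L^p(\omega)$- and $\ell^r$-scales (the paper runs a case analysis: the $p$-power trick for $p\le1$ versus Minkowski for $p>1$, and an $\epsilon$-loss H\"older step when $q>p$ or the $r$-power is taken), and for $r<\infty$ the passage from the cubes $Q$ to their ancestors $Q_k$ is $2^{kn}$-to-one, so the multiplicity must be absorbed --- for instance by summing the disjoint descendants' integrals inside $Q_k$ rather than bounding each by the full integral --- none of which your outline settles, and which your exponent bookkeeping, as written, cannot settle under the stated hypothesis.
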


\begin{rem}
	 The condition $\delta > (1/p -1/t) n p /A$ can be replaced by $\delta > (1/p -1/t) n q_\omega$. Indeed, let $\epsilon>0$ be sufficiently small such that $\omega \in A_{q_\omega +\epsilon}$ and  $\delta > (1/p -1/t) n (q_\omega +\epsilon)  > (1/p -1/t) n q_\omega$. Then Theorem \ref{hardy bourgain} holds under the condition $\delta > (1/p -1/t) n (q_\omega +\epsilon)$.
\end{rem}

\begin{proof}[Proof of Theorem \ref{hardy bourgain}]
	The ideas comes from \cite[Lemma 2.3]{YY10}.
	By similarity, we only prove the first inequality. 
	For brevity, we write
	\begin{equation*}
			I_Q   :=    \omega (Q)^{1/t-1/p} \Big(  \sum_{j=j_Q}^\infty \Big(    \int_Q         \Big|  \sum_{m\in\mathbb Z} 2^{-|m-j|\delta}  g_m (x)      \Big|^p
		\omega (x)\d \mu (x)   \Big)^{q/p}   \Big)^{1/q}   .
	\end{equation*}

	Case $0<p\le t<r=\infty$.

	Subcase  $p \in (0,1]$.
	 Applying the inequality that for all $d \in (0,1] $ and sequence $\{a_j\}_{j\in \mathbb Z}$,
	\begin{equation} \label{ell^d}
		\Big(  \sum_{j \in \mathbb Z} |a _j |     \Big) ^d \le \sum_{j \in \mathbb Z} |a_j|^d,
	\end{equation}
	we obtain 
	\begin{align}
		\nonumber
		I_Q 
		& \lesssim    \omega (Q)^{1/t-1/p} \Big(  \sum_{j=j_Q}^\infty \Big(   \sum_{m\in\mathbb Z} 2^{-|m-j|\delta  p}   \int_Q         |     g_m (x)     |^p
		\omega (x)\d \mu (x)   \Big)^{q/p}   \Big)^{1/q}                .
	\end{align}
	When $q \in (0,p]$, applying  (\ref{ell^d}) again,  we obtain
		\begin{align} \label{IQ le IQ1 IQ2}
		\nonumber
	I_Q	& \lesssim
	\omega (Q)^{1/t-1/p} \left(  \sum_{j=j_Q}^\infty    \sum_{m\in\mathbb Z} 2^{-|m-j|\delta  q}  \left(   \int_Q         |     g_m (x)     |^p
	\omega (x)\d \mu (x) \right) ^{q/p}     \right)^{1/q}        \\
		\nonumber
		&\lesssim  \omega (Q)^{1/t-1/p} \left(  \sum_{j=j_Q}^\infty    \sum_{m = j_Q}^\infty 2^{-|m-j|\delta  q}  \left(   \int_Q         |     g_m (x)     |^p
		\omega (x)\d \mu (x) \right) ^{q/p}     \right)^{1/q}  
		\\
		\nonumber
		& \quad + \omega (Q)^{1/t-1/p} \left(  \sum_{j=j_Q}^\infty    \sum_{m= -\infty}^{ j_Q - 1} 2^{-|m-j|\delta  q}  \left(   \int_Q         |     g_m (x)     |^p
		\omega (x)\d \mu (x) \right) ^{q/p}     \right)^{1/q}    
		\\
		&      =: I_{Q,1} +I_{Q,2}. 
	\end{align}
The estimate of $I_{Q,1}$ is easy and we have
\begin{align*}
	I_{Q,1}  & \le \|g\|_{ \widetilde{ \ell^q ( M_{t,\infty} ^{p, \omega}) }} \left(  \sum_{j=j_Q}^\infty    \sum_{m = j_Q}^\infty 2^{-|m-j|\delta  q} \right)^{1/q} \lesssim  \|g\|_{ \widetilde{ \ell^q ( M_{t,\infty} ^{p, \omega}) }}.
\end{align*}
Now we estimate $I_{Q,2}$. Note that for each $Q\in \D$, there is one cube $Q' \in \D _m$ such that $Q \subset Q'$ and 
\begin{equation*}
	\frac{\omega (Q ')}{ \omega  (Q)} \le [\omega]_{A_{P/A}} \left( \frac{\mu (Q')}{\mu (Q)}  \right)^{p/A} \lesssim 2^{(j_Q -m)n p/A},
\end{equation*} 
where we use the properties of weight and doubling condition. Since $ 1/t - 1/p \le 0 $, 
\begin{equation} \label{omega Q le omega fa Q}
	\omega  (Q) ^{1/t- 1/p} \lesssim  \omega (Q ')^{1/t- 1/p}  2^{  (1/p -1/t) (j_Q -m)n p/A }.
\end{equation}
Hence 
\begin{align*}
	I_{Q,2} & \lesssim \omega (Q ')^{1/t- 1/p}  2^{  (1/p -1/t) (j_Q -m)n p/A }  \left(  \sum_{j=j_Q}^\infty    \sum_{m= - \infty}^{ j_Q - 1} 2^{-|m-j|\delta  q}  \left(   \int_Q         |     g_m (x)     |^p
	\omega (x)\d \mu (x) \right) ^{q/p}     \right)^{1/q} \\
	& \le  \omega (Q ')^{1/t- 1/p}  \\
	& \quad \times \left(  \sum_{j=j_Q}^\infty    \sum_{m= - \infty}^{ j_Q - 1} 2^{-|m-j|\delta  q}   2^{  (1/p -1/t) (j_Q -m)n q p/A }  \left(   \int_{Q'}        |     g_m (x)     |^p
	\omega (x)\d \mu (x) \right) ^{q/p}     \right)^{1/q} \\
	& \lesssim  \|g\|_{ \widetilde{ \ell^q ( M_{t,\infty} ^{p, \omega}) }}  \left(     \sum_{m= -\infty}^{ j_Q - 1}  2^{  (1/p -1/t) (j_Q -m)n q p/A } \sum_{j=j_Q}^\infty   2^{-( m-j) \delta  q}   \right)^{1/q} \\
	& \lesssim  \|g\|_{ \widetilde{ \ell^q ( M_{t,\infty} ^{p, \omega}) }}  \left(     \sum_{m= -\infty}^{ j_Q - 1}  2^{  (1/p -1/t) (j_Q -m)n q p/A }   2^{-( m-j_Q) \delta  q}   \right)^{1/q}  \\
	& \lesssim \|g\|_{ \widetilde{ \ell^q ( M_{t,\infty} ^{p, \omega}) }}
\end{align*}
as long as $  (1/p -1/t) n p /A - \delta <0 $. That is $\delta > (1/p -1/t) n p /A$.

When $q \in (p,\infty] $, choosing $\epsilon \in (0, \delta - (1/p -1/t) n p /A  )$, and applying H\"older's inequality, similarly to the above argument, we obtain
\begin{align*}
	I_Q &  \lesssim  \omega (Q)^{1/t-1/p} \left(  \sum_{j=j_Q}^\infty   \sum_{m\in\mathbb Z} 2^{-|m-j| ( \delta -\epsilon)  q}  \left(  \int_Q         |     g_m (x)     |^p
	\omega (x)\d \mu (x)   \right)^{q/p}   \right)^{1/q}      \\
	&  \lesssim  \omega (Q)^{1/t-1/p} \left(  \sum_{j=j_Q}^\infty   \sum_{m = j_Q}^\infty 2^{-|m-j| ( \delta -\epsilon)  q}  \left(  \int_Q         |     g_m (x)     |^p
	\omega (x)\d \mu (x)   \right)^{q/p}   \right)^{1/q}      \\
	& \quad + \omega (Q)^{1/t-1/p} \left(  \sum_{j=j_Q}^\infty   \sum_{m= -\infty}^{j_Q -1} 2^{-|m-j| ( \delta -\epsilon)  q}  \left(  \int_Q         |     g_m (x)     |^p
	\omega (x)\d \mu (x)   \right)^{q/p}   \right)^{1/q}      \\
	& \lesssim \|g\|_{ \widetilde{ \ell^q ( M_{t,\infty} ^{p, \omega}) }}.
\end{align*}
Subcase $ p \in (1,\infty)$. Applying Minkowski's inequality, we have
\begin{align*}
	I_Q \lesssim \omega (Q)^{1/t-1/p} \left(  \sum_{j=j_Q}^\infty \left(  \sum_{m\in\mathbb Z} 2^{-|m-j|\delta}    \left(\int_Q  | g_m (x) |^p \d  \mu (x)   \right)^{1/p}          \right)^{q}   \right)^{1/q}.    
\end{align*}
Then by  H\"older's inequality when $q \in (1,\infty]$ or (\ref{ell^d}) when $q \in (0,1)$, we also obtain $I_Q \lesssim \|g\|_{ \widetilde{ \ell^q ( M_{t,\infty} ^{p, \omega}) }}$, which further implies that
\begin{equation*}
		\|G \|_{ \widetilde{ \ell^q ( M_{t,\infty} ^{p, \omega})   }}  = \sup_{ Q \in \mathcal D } I_Q \lesssim 	\|g\|_{ \widetilde{ \ell^q ( M_{t,\infty} ^{p, \omega})   }}. 
\end{equation*}
Thus the proof of case $ r=\infty$ is complete. 

Case $0<p<t<r<\infty$.

	If $0< p \le 1 $ and $q \in (0,p]$,
	from (\ref{IQ le IQ1 IQ2}), we have
	\begin{equation*}
		\|G \|_{ \widetilde{ \ell^q ( M_{t,r} ^{p, \omega})   }} = \left( \sum_{Q  \in \mathcal D}	I_Q ^r \right)^{1/r} \lesssim   \left( \sum_{Q  \in \mathcal D}I_{Q,1} ^r \right)^{1/r}+  \left( \sum_{Q  \in \mathcal D}I_{Q,2} ^r \right)^{1/r} .
	\end{equation*}
	It is clear that 
	\begin{align*}
	 & \left( \sum_{Q  \in \mathcal D}I_{Q,1} ^r \right)^{1/r} \\
	 & \lesssim 	\bigg(    \sum_{Q \in \mathcal D}  \omega (Q)^{r/t-r/p} \left(  \sum_{m= j_Q}^\infty   2^{-|m-j_Q|\delta  q} \left(    \int_Q         |     g_m (x)     |^p
	 \omega (x)\d \mu (x)   \right)^{q/p}   \right)^{r/q}                 \bigg)^{1/r} \\
	 \nonumber
	 & \lesssim		\bigg(    \sum_{Q \in \mathcal D}  \omega (Q)^{r/t-r/p} \left(  \sum_{m= j_Q}^\infty   \left(    \int_Q         |     g_m (x)     |^p
	 \omega (x)\d \mu (x)   \right)^{q/p}   \right)^{r/q}                 \bigg)^{1/r}  = \|g \|_{ \widetilde{ \ell^q ( M_{t,r} ^{p, \omega})   }} .
	\end{align*}

	Note that $q \le p < t<r <\infty$. Let   $\epsilon \in (0, \delta - (1/p -1/t) n p /A  )$.
	As for $\left( \sum_{Q  \in \mathcal D}I_{Q,2} ^r \right)^{1/r}$, using (\ref{omega Q le omega fa Q}) and H\"older's inequality with $r/q>1$, we have 
	\begin{align*}
		I_{Q,2} ^r &= \omega (Q)^{r/t-r/p} \left(  \sum_{j=j_Q}^\infty    \sum_{m= -\infty}^{ j_Q - 1} 2^{-|m-j|\delta  q}  \left(   \int_Q         |     g_m (x)     |^p
		\omega (x)\d \mu (x) \right) ^{q/p}     \right)^{r/q}  \\
		& \lesssim  \omega (Q ')^{r/t- r/p} \\
		& \quad \times  \left(  \sum_{j=j_Q}^\infty    \sum_{m= -\infty}^{ j_Q - 1} 2^{-|m-j|\delta  q} 2^{  (1/p -1/t) (j_Q -m)n q p/A } \left(   \int_Q         |     g_m (x)     |^p
		\omega (x)\d \mu (x) \right) ^{q/p}     \right)^{r/q}  \\
		& \lesssim  \omega (Q ')^{r/t- r/p}    \sum_{j=j_Q}^\infty    \sum_{m= -\infty}^{ j_Q - 1} 2^{-|m-j| ( \delta -\epsilon)  r} 2^{ (1/p -1/t) (j_Q -m)n r p/A } \left(   \int_Q         |     g_m (x)     |^p
		\omega (x)\d \mu (x) \right) ^{r/p}     \\
		& \le    \sum_{j=j_Q}^\infty    \sum_{m= -\infty}^{ j_Q - 1} 2^{-|m-j| ( \delta -\epsilon)  r} 2^{ (1/p -1/t) (j_Q -m)n r p/A } \omega (Q ')^{r/t- r/p}  \left(   \int_{Q'}        |     g_m (x)     |^p
		\omega (x)\d \mu (x) \right) ^{r/p} .
	\end{align*}
Note that the relation of $Q$ and $Q'$ is a bijection.
Hence
\begin{align*}
	\left( \sum_{Q  \in \mathcal D}I_{Q,2} ^r \right)^{1/r} 
& \lesssim  \left( \sum_{j=j_Q}^\infty    \sum_{m= \infty}^{ j_Q - 1} 2^{-|m-j| ( \delta -\epsilon)  r} 2^{ (1/p -1/t) (j_Q -m)n r p/A } \right)^{1/r} 	\|g \|_{ \widetilde{ \ell^q ( M_{t,r} ^{p, \omega})   }} 
\\
& \lesssim \|g \|_{ \widetilde{ \ell^q ( M_{t,r} ^{p, \omega})   }}  .
\end{align*}	
	
Next if $0<p \le 1$ and $ p <q \le r <\infty $, still use  	 H\"older's inequality twice and we have
\begin{align*}
		I_Q^r &    \lesssim  \omega (Q)^{r/t-r/p} \left(  \sum_{j=j_Q}^\infty   \sum_{m\in\mathbb Z} 2^{-|m-j| ( \delta -\epsilon /2 )  q}  \left(  \int_Q         |     g_m (x)     |^p
	\omega (x)\d \mu (x)   \right)^{q/p}   \right)^{r/q}     \\
	&    \lesssim  \omega (Q)^{r/t-r/p}  \sum_{j=j_Q}^\infty   \sum_{m\in\mathbb Z} 2^{-|m-j| ( \delta -\epsilon  )  r}  \left(  \int_Q         |     g_m (x)     |^p
	\omega (x)\d \mu (x)   \right)^{r/p}     \\
	& \lesssim \omega (Q ')^{r/t- r/p} \sum_{j=j_Q}^\infty   \sum_{m\in\mathbb Z} 2^{-|m-j| ( \delta -\epsilon  )  r} 2^{ r (1/p -1/t) (j_Q -m)n p/A }  \left(  \int_{Q'}         |     g_m (x)     |^p
	\omega (x)\d \mu (x)   \right)^{r/p}     \\ 
	& \lesssim \omega (Q ')^{r/t- r/p} \sum_{j=j_Q}^\infty   \sum_{m = j_Q}^\infty 2^{-|m-j| ( \delta -\epsilon  )  r} 2^{ r (1/p -1/t) (j_Q -m)n p/A }  \left(  \int_{Q'}         |     g_m (x)     |^p
	\omega (x)\d \mu (x)   \right)^{r/p}     \\ 
	& + \omega (Q ')^{r/t- r/p} \sum_{j=j_Q}^\infty   \sum_{m = -\infty} ^{j_Q -1} 2^{-|m-j| ( \delta -\epsilon  )  r} 2^{ r (1/p -1/t) (j_Q -m)n p/A }  \left(  \int_{Q'}         |     g_m (x)     |^p
	\omega (x)\d \mu (x)   \right)^{r/p} .
\end{align*}
Hence similarly to the above argument, we obtain
\begin{align*}
	\|G \|_{ \widetilde{ \ell^q ( M_{t,r} ^{p, \omega})   }}  & = 	\left( \sum_{Q  \in \mathcal D}	I_Q^r  \right)^{1/r} 
 \lesssim \|g \|_{ \widetilde{ \ell^q ( M_{t,r} ^{p, \omega})   }}  .
\end{align*}
	Next if $0<p \le 1$ and $ p < r <q  \le \infty $, using H\"older's inequality   once and (\ref{ell^d}) , we obtain
	\begin{align*}
		I_Q^r & \lesssim  \omega (Q)^{r/t-r/p} \left(  \sum_{j=j_Q}^\infty   \sum_{m\in\mathbb Z} 2^{-|m-j| ( \delta -\epsilon  )  q}  \left(  \int_Q         |     g_m (x)     |^p
		\omega (x)\d \mu (x)   \right)^{q/p}   \right)^{r/q}     \\
		& \lesssim    \omega (Q)^{r/t-r/p}  \sum_{j=j_Q}^\infty   \sum_{m\in\mathbb Z} 2^{-|m-j| ( \delta -\epsilon  )  r}  \left(  \int_Q         |     g_m (x)     |^p
		\omega (x)\d \mu (x)   \right)^{r/p} .
	\end{align*}
	Repeating the above argument, we get
	\begin{align*}
		\|G \|_{ \widetilde{ \ell^q ( M_{t,r} ^{p, \omega})   }}  & = 	\left( \sum_{Q  \in \mathcal D}	I_Q^r  \right)^{1/r} 
		\lesssim \|g \|_{ \widetilde{ \ell^q ( M_{t,r} ^{p, \omega})   }}  .
	\end{align*}
Hence we prove the subcase  $0< p \le 1 $. 

If $p>1$, using  Minkowski's inequality, we have
\begin{align*}
	I_Q^r \lesssim \omega (Q)^{r/t-r/p} \left(  \sum_{j=j_Q}^\infty \left(  \sum_{m\in\mathbb Z} 2^{-|m-j|\delta}    \left(\int_Q  | g_m (x) |^p \d  \mu (x)   \right)^{1/p}          \right)^{q}   \right)^{r/q}.    
\end{align*}
For the terms like $(\sum \cdots )^{a}$, use H\"older's inequality if $a>1$ and use (\ref{ell^d}) if $0< a\le 1$. We also obtain
	\begin{align*}
	\|G \|_{ \widetilde{ \ell^q ( M_{t,r} ^{p, \omega})   }}  & = 	\left( \sum_{Q  \in \mathcal D}	I_Q^r  \right)^{1/r} 
	\lesssim \|g \|_{ \widetilde{ \ell^q ( M_{t,r} ^{p, \omega})   }}  .
\end{align*}
So we prove the subcase $ 1<p <t <r <\infty $. The proof of case $r<\infty$ is complete. 

	Thus we completes the proof of Theorem  \ref{hardy bourgain}.
\end{proof}

\subsection{Characterization via Peetre maximal function} 
For $\lambda >0$, $j\in\mathbb Z$, $f\in \mathcal S '$ and $\varphi  \in \mathscr S (\mathbb R)$, we define the Peetre-type maximal functions  by
\begin{equation*}
	\varphi_{j, \lambda}^* ( \sqrt{L} ) f(x) = \sup_{y\in X} \frac{| \varphi_j (\sqrt{L} ) f(y) |}{(1+2^j \rho(x,y))^\lambda }, \;\; x\in X,
\end{equation*}
where $\varphi_j (\lambda ) = \varphi(2^{-j} \lambda)$.
Similarly, for $  \alpha ,\lambda >0$, and $f\in \mathcal S ' $,  put
\begin{equation*}
		\varphi_{\lambda}^* ( \alpha \sqrt{L} ) f(x) = \sup_{y\in X} \frac{| \varphi ( \alpha \sqrt{L} ) f(y) |}{(1+ \rho(x,y) /\alpha  )^\lambda }, \;\; x\in X.
\end{equation*}
Note that in the particular case when $\varphi  $ is supported in $(0,\infty )$, these maximal functions can be defined for $f \in \mathcal S _\infty ' $ via (\ref{varphi}).
	
	\begin{lem}[Proposition 2.16, \cite{BBD20}]\label{peetre psi}
		Let $\psi $  and $\varphi $ be a partition of unity. Then for any $\lambda >0$ and $j \in \mathbb Z$, we have 
		\begin{equation*}
			\sup_{\alpha \in [2^{-j-1}  , 2^{-j}]}  \psi_\lambda ^* (  \alpha \sqrt{L} ) f(x) \lesssim  \sum_{k=j-2}^{j+3}  \varphi_{k,\lambda}^* ( \sqrt{L} ) f(x),
		\end{equation*}
		for all $f\in \mathcal S _\infty '$ and $x  \in X$.
	\end{lem}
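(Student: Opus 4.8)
The plan is to use that $\varphi$ is a partition of unity to split $\psi(\alpha\sqrt L)$ into finitely many pieces adapted to dyadic frequency blocks, and then to control each piece by the corresponding $\varphi$-Peetre maximal function using the rapid off-diagonal decay of the kernel of $\psi(\alpha\sqrt L)$.

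First I would do the support bookkeeping. Since $\operatorname{supp}\psi\subset[1/2,2]$, for $\alpha\in[2^{-j-1},2^{-j}]$ the function $\xi\mapsto\psi(\alpha\xi)$ is supported in $[2^{j-1},2^{j+2}]$; as $\operatorname{supp}\varphi_k\subset[2^{k-1},2^{k+1}]$ and $\sum_{k\in\mathbb Z}\varphi_k\equiv 1$ on $(0,\infty)$, the only indices $k$ for which $\psi(\alpha\xi)\varphi_k(\xi)\not\equiv 0$ are $j-2\le k\le j+3$. Hence, on $(0,\infty)$,
\[
\psi(\alpha\xi)=\sum_{k=j-2}^{j+3}\psi(\alpha\xi)\,\varphi_k(\xi),
\]
so, by the spectral theorem, for $f\in\mathcal S_\infty'$,
\[
\psi(\alpha\sqrt L)f=\sum_{k=j-2}^{j+3}\psi(\alpha\sqrt L)\big(\varphi_k(\sqrt L)f\big).
\]
If some $\varphi_{k,\lambda}^*(\sqrt L)f(x)=\infty$ with $k$ in this range, the asserted inequality is trivial, so I may assume that all of them are finite; then each $\varphi_k(\sqrt L)f$ is a smooth function of at most polynomial growth, which legitimizes the manipulations below.

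Next, since $\psi\in\mathscr S(\mathbb R)$ and the heat kernel enjoys a Gaussian bound, the operator $\psi(\alpha\sqrt L)$ has an integral kernel obeying, for every $N>0$,
\[
|K_{\psi(\alpha\sqrt L)}(y,z)|\le\frac{C_N}{V(y,\alpha)}\Big(1+\frac{\rho(y,z)}{\alpha}\Big)^{-N};
\]
this is a standard consequence of the Gaussian estimate via functional calculus, which I would cite (cf. \cite{BBD20}). Writing $g=\varphi_k(\sqrt L)f$ and using the definition of the Peetre maximal function, $|g(z)|\le\varphi_{k,\lambda}^*(\sqrt L)f(x)\,(1+2^k\rho(x,z))^\lambda$, together with $(1+2^k\rho(x,z))^\lambda\lesssim(1+2^k\rho(x,y))^\lambda(1+2^k\rho(y,z))^\lambda$ and the comparability $2^k\approx\alpha^{-1}$ (valid since $j-2\le k\le j+3$ and $\alpha\in[2^{-j-1},2^{-j}]$), I obtain
\[
|\psi(\alpha\sqrt L)g(y)|\lesssim\varphi_{k,\lambda}^*(\sqrt L)f(x)\Big(1+\frac{\rho(x,y)}{\alpha}\Big)^\lambda\int_X\frac{1}{V(y,\alpha)}\Big(1+\frac{\rho(y,z)}{\alpha}\Big)^{\lambda-N}\mathrm d\mu(z).
\]
Choosing $N>\lambda+n$ and applying Lemma \ref{basic est}(i) with $p=1$, the integral is bounded by a constant.

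Finally, I would divide by $(1+\rho(x,y)/\alpha)^\lambda$, sum over the six values of $k$, take the supremum over $y\in X$, and then over $\alpha\in[2^{-j-1},2^{-j}]$; since every constant above is independent of $\alpha$, $j$, $x$ and $f$, this yields exactly
\[
\sup_{\alpha\in[2^{-j-1},2^{-j}]}\psi_\lambda^*(\alpha\sqrt L)f(x)\lesssim\sum_{k=j-2}^{j+3}\varphi_{k,\lambda}^*(\sqrt L)f(x).
\]
The only genuinely delicate points are the rapidly decaying kernel estimate for $\psi(\alpha\sqrt L)$ and the justification that $\psi(\alpha\sqrt L)$ applied to the slowly growing function $\varphi_k(\sqrt L)f$ is represented by integration against that kernel; both are by now routine consequences of the Gaussian upper bound and the functional-calculus machinery already used in the paper, so I would dispatch them by citation rather than reproving them.
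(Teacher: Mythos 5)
Your proposal is correct and is essentially the argument behind the cited result \cite[Proposition 2.16]{BBD20} (the paper itself only quotes it): the support bookkeeping giving the six indices $j-2\le k\le j+3$, the decomposition $\psi(\alpha\sqrt L)f=\sum_k\psi(\alpha\sqrt L)\varphi_k(\sqrt L)f$, the rapidly decaying kernel bound for $\psi(\alpha\sqrt L)$ (Lemma \ref{kernel est}, after an even extension of $\psi$), the Peetre-type splitting $(1+2^k\rho(x,z))^\lambda\lesssim(1+2^k\rho(x,y))^\lambda(1+2^k\rho(y,z))^\lambda$ with $2^k\approx\alpha^{-1}$, and Lemma \ref{basic est}(i) to absorb the integral. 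The two technical points you defer to citation (the kernel estimate and the integral representation of $\psi(\alpha\sqrt L)$ acting on the polynomially growing function $\varphi_k(\sqrt L)f$) are indeed established in \cite{BBD20}, so the proof is complete as written.
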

	
	\begin{lem}[Proposition 2.18, \cite{BBD20}] \label{psi le con}
		Let $\psi$ be a partition of unity and $\varphi \in \mathscr S (\mathbb R)$ be an even function such that $\varphi \neq 0$ on $[1/2, 2]$. Then for any $\lambda>0$, $j \in \mathbb Z$,  $A>0$, and $f\in \mathcal S '$, we have 
		\begin{equation*}
			|\psi_j ( \sqrt{L}) f(x) | \lesssim \bigg(  \int_{2^{-j-2}} ^{ 2^{-j+2} } | \psi_\lambda ^* (\alpha\sqrt{L} ) f(x) |^A  \frac{\d \alpha}{\alpha}  \bigg)^{1/A}.
		\end{equation*}
	\end{lem}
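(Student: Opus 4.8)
The plan is to derive the estimate from a window-restricted reproducing formula, together with the Schwartz-type kernel bounds for $\psi_j(\sqrt{L})$ and the definition of the Peetre maximal function.

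First I would record a local reproducing identity. Set $c_0:=\int\psi(s)\,s^{-1}\,\d s=\int_{1/2}^{2}\psi(s)\,s^{-1}\,\d s\neq0$ and, for $j\in\mathbb Z$, $I_j:=[2^{-j-2},2^{-j+2}]$. Since $\operatorname{supp}\psi\subset[1/2,2]$, for any $\xi$ with $\psi_j(\xi)\neq0$ (that is, $\xi\in[2^{j-1},2^{j+1}]$) the set $\{\alpha>0:\psi(\alpha\xi)\neq0\}=[1/(2\xi),\,2/\xi]$ is contained in $I_j$, and the change of variables $s=\alpha\xi$ gives $\int_{I_j}\psi(\alpha\xi)\,\alpha^{-1}\,\d\alpha=\int\psi(s)\,s^{-1}\,\d s=c_0$. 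Hence $\psi_j(\xi)=\frac{1}{c_0}\int_{I_j}\psi_j(\xi)\,\psi(\alpha\xi)\,\frac{\d\alpha}{\alpha}$ for all $\xi>0$, so that, at the level of kernels and hence, pairing with $f\in\mathcal S'$,
\[
\psi_j(\sqrt{L})f(x)=\frac{1}{c_0}\int_{I_j}\psi_j(\sqrt{L})\,\psi(\alpha\sqrt{L})f(x)\,\frac{\d\alpha}{\alpha}.
\]
The hypothesis on the even function $\varphi$ with $\varphi\neq0$ on $[1/2,2]$ serves here only to offer an alternative route: one may instead factor $\psi=(\psi/\varphi)\,\varphi$, which is admissible because $\psi/\varphi\in\mathscr S(\mathbb R)$ is supported in $[1/2,2]$, and reproduce $\varphi(\alpha\sqrt{L})f$ in place of $\psi(\alpha\sqrt{L})f$; I will use the displayed form.

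Next I would turn this into a pointwise bound. Fixing $x\in X$ and $\alpha\in I_j$, write $\psi_j(\sqrt{L})\psi(\alpha\sqrt{L})f(x)=\int_X K_{\psi_j(\sqrt{L})}(x,y)\,\psi(\alpha\sqrt{L})f(y)\,\d\mu(y)$ and combine three ingredients: the kernel bound $|K_{\psi_j(\sqrt{L})}(x,y)|\lesssim V(x,2^{-j})^{-1}(1+2^j\rho(x,y))^{-N}$ valid for every $N$ (with constant depending on $N$), which holds because $\operatorname{supp}\psi\subset[1/2,2]$ and the heat kernel has a Gaussian upper bound; the defining inequality $|\psi(\alpha\sqrt{L})f(y)|\le(1+\rho(x,y)/\alpha)^{\lambda}\,\psi_\lambda^*(\alpha\sqrt{L})f(x)$ together with $(1+\rho(x,y)/\alpha)\approx(1+2^j\rho(x,y))$ for $\alpha\in I_j$; and Lemma \ref{basic est}(i) with $N>\lambda+n$. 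This yields $|\psi_j(\sqrt{L})\psi(\alpha\sqrt{L})f(x)|\lesssim\psi_\lambda^*(\alpha\sqrt{L})f(x)$ uniformly in $\alpha\in I_j$, whence by the reproducing formula
\[
|\psi_j(\sqrt{L})f(x)|\lesssim\int_{I_j}\psi_\lambda^*(\alpha\sqrt{L})f(x)\,\frac{\d\alpha}{\alpha}.
\]
For $A\ge1$ this already finishes the proof: Hölder's inequality on $(I_j,\frac{\d\alpha}{\alpha})$, a measure of finite total mass, turns the right-hand side into a constant times $\bigl(\int_{I_j}|\psi_\lambda^*(\alpha\sqrt{L})f(x)|^{A}\,\frac{\d\alpha}{\alpha}\bigr)^{1/A}$.

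The hard part will be the range $0<A<1$, where Hölder's inequality points the wrong way and the $L^1$-average bound just obtained cannot be used directly; I expect this to be the only genuinely delicate point. The asserted inequality is trivial when its right-hand side is infinite, and since $\psi_\lambda^*$ is non-increasing in $\lambda$ it suffices to treat $\lambda$ large, where for $f\in\mathcal S'$ the relevant Peetre maximal quantities over a compact range of scales are finite (the general case reducing to this by a truncation of the kernel). Under this reduction I would re-run the estimate above but keep $|\psi(\alpha\sqrt{L})f(y)|^{A}$ intact: split $|\psi(\alpha\sqrt{L})f(y)|=|\psi(\alpha\sqrt{L})f(y)|^{1-A}|\psi(\alpha\sqrt{L})f(y)|^{A}$, bound the first factor by $\bigl[(1+2^j\rho(x,y))^{\lambda}\psi_\lambda^*(\alpha\sqrt{L})f(x)\bigr]^{1-A}$, re-expand the surviving factor by a second application of the reproducing formula (now to $\psi(\alpha\sqrt{L})f$ itself), decompose the resulting $y$-integral into dyadic annuli and the scale-integral into boundedly many pieces so that the subadditivity $(\sum a_i)^{A}\le\sum a_i^{A}$ applies, and close with Young's inequality $ab\le\varepsilon a^{1/(1-A)}+C_\varepsilon b^{1/A}$, absorbing a small multiple of $\sup_{\alpha\in I_j}\psi_\lambda^*(\alpha\sqrt{L})f(x)$ into the left-hand side. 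A key auxiliary fact behind this is the self-improving inequality $\psi_\lambda^*(\alpha\sqrt{L})f(x)\lesssim\int_{\alpha/4}^{4\alpha}\psi_\lambda^*(\beta\sqrt{L})f(x)\,\frac{\d\beta}{\beta}$, itself a consequence of the argument above applied at scale $\alpha$ in place of $2^{-j}$, which expresses that $\alpha\mapsto\psi_\lambda^*(\alpha\sqrt{L})f(x)$ has no spike narrower than a fixed multiplicative window. Making all of this precise at non-dyadic $\alpha$, and in particular arranging that every scale that appears stays within $I_j$, is the part I expect to demand the most care; the kernel bounds, the doubling property and Lemma \ref{basic est} handle everything else routinely.
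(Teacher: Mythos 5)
First, a point of comparison: the paper itself gives no proof of this lemma — it is quoted from \cite{BBD20} (Proposition 2.18) — and note that, as the hypothesis on $\varphi$ and the later use in Step 2 of the proof of Theorem \ref{char con Sm} indicate, the intended right-hand side involves $\varphi_\lambda^*(\alpha\sqrt{L})f$ for the general even, non-compactly supported $\varphi$; you prove only the special case $\varphi=\psi$, and your parenthetical factorization $\psi=(\psi/\varphi)\varphi$ as stated only controls $|\psi_j(\sqrt{L})f|$ by the single-scale quantity $\varphi_{j,\lambda}^*(\sqrt{L})f$, not by the average over the window $[2^{-j-2},2^{-j+2}]$, so the version actually needed later is not covered. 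That said, your window-restricted reproducing identity, the kernel bound for $\psi_j(\sqrt{L})$, the resulting $L^1(\frac{\d\alpha}{\alpha})$ estimate, and the Hölder step are correct, and they constitute the standard first half of the argument; they settle the case $A\ge 1$.

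The genuine gap is the case $0<A<1$, which is the entire point of stating the lemma for all $A>0$ (it is applied with $A=q$, possibly $q<1$, in Theorems \ref{char conti} and \ref{char con Sm}), and your sketch does not close it. (i) You propose to absorb $\varepsilon\sup_{\alpha\in I_j}\psi_\lambda^*(\alpha\sqrt{L})f(x)$ ``into the left-hand side,'' but the left-hand side is $|\psi_j(\sqrt{L})f(x)|$, which does not dominate that supremum; absorption can only be run after upgrading the inequality so that the supremum of the Peetre maximal function over the window (or a Rychkov-type iterated series) itself sits on the left. (ii) Once you do that, reproducing $\psi(\beta\sqrt{L})f$ for $\beta$ near the endpoints of $[2^{-j-2},2^{-j+2}]$ requires scales in $[2^{-j-4},2^{-j+4}]$, so the window is not stable under your argument; you flag this but offer no mechanism (an enlarged-window intermediate estimate plus iteration with geometric gain, as in Rychkov/Ullrich-type proofs) to repair it. (iii) The finiteness needed for absorption is not secured with uniform constants: reducing to ``large $\lambda$'' makes the admissible $\lambda$, and hence the constant (which grows with the kernel decay exponent $N>\lambda+n$), depend on the order of the distribution $f$, whereas the lemma must hold for the given $\lambda$ with a constant independent of $f$; the standard remedy — inserting an auxiliary decay/truncation parameter, proving the estimate uniformly in it, and passing to the limit — is only alluded to, not executed. (iv) The ``self-improving'' inequality you invoke is an $L^1$ average over scales; it neither produces the $L^A$ average for $A<1$ nor gives finiteness of the supremum from finiteness of the asserted right-hand side. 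This delicate sub-mean-value/self-improvement step with the power $A$ inside (compare the companion Lemma \ref{est peetre psi}) is exactly where the work lies in \cite{BBD20}; as it stands, your proposal proves the lemma only for $A\ge1$.
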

	
	For brevity, we  define
	\begin{align*}
			& \|f\|_{  \dot{\mathcal F}^{s,q,\psi,L}_{p,t,r,\omega} (X) }^{*,\lambda} :=	
		\bigg(    \sum_{Q \in \mathcal D}  \omega (Q)^{r/t-r/p} \Big(  \int_Q  \Big(  \sum_{j=j_Q}^\infty | 2^{js}  \psi_{j,\lambda}^*  ( \sqrt{L} )f (x)|^q \Big)^{p/q}  \omega (x) \d \mu (x)      \Big)^{r/p}                 \bigg)^{1/r} ,  
	\end{align*}
and
\begin{equation*}
	 \|f\|_{  \dot{\mathcal B}^{s,q,\psi,L}_{p,t,r,\omega} (X) }^{*,\lambda} :=	\bigg(    \sum_{Q \in \mathcal D}  \omega (Q)^{r/t-r/p} \Big(  \sum_{j=j_Q}^\infty \Big(   \int_Q | 2^{js}  \psi_{j,\lambda}^ * (   \sqrt{L} )f (x)|^p \omega (x)\d \mu (x)   \Big)^{q/p}   \Big)^{r/q}                 \bigg)^{1/r} .
\end{equation*}

	By Lemma \ref{peetre psi}, we have the following result.

\begin{thm}\label{equivalence}
	 Let  $\omega \in A_{\infty}$. 
	Let $\psi, \varphi $ be partitions of unity. Let  $s\in \mathbb R$, $0<q\le \infty $, and $\lambda >0$.
		Let  $0<p<t< r<\infty$	or let $0<p\le t<r=\infty$.
	Then the following norm equivalence holds: for all $f\in \mathcal S _\infty '$,
	\begin{align*}
		& \|f\|_{  \dot{\mathcal B}^{s,q,\psi,L}_{p,t,r,\omega} (X) }^{*,\lambda} \approx 	  \|f\|_{  \dot{\mathcal B}^{s,q,\varphi,L}_{p,t,r,\omega} (X) }^{*,\lambda} \quad 
		 \operatorname{and} \quad  \|f\|_{  \dot{\mathcal F}^{s,q,\psi,L}_{p,t,r,\omega} (X) }^{*,\lambda}
		\approx   \|f\|_{  \dot{\mathcal F}^{s,q,\varphi,L}_{p,t,r,\omega} (X) }^{*,\lambda}.
	\end{align*}
\end{thm}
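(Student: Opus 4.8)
The plan is to reduce the norm equivalence between two partitions of unity $\psi$ and $\varphi$ to a single symmetric inequality and then iterate. Specifically, by symmetry it suffices to prove that
\[
\|f\|_{\dot{\mathcal B}^{s,q,\psi,L}_{p,t,r,\omega}(X)}^{*,\lambda} \lesssim \|f\|_{\dot{\mathcal B}^{s,q,\varphi,L}_{p,t,r,\omega}(X)}^{*,\lambda},
\]
and likewise for the $\dot{\mathcal F}$-scale; the reverse inequality follows by interchanging the roles of $\psi$ and $\varphi$. The key input is Lemma~\ref{peetre psi}: for any $\lambda>0$ and $j\in\mathbb Z$,
\[
\psi_{j,\lambda}^*(\sqrt L)f(x) \lesssim \sup_{\alpha\in[2^{-j-1},2^{-j}]}\psi_\lambda^*(\alpha\sqrt L)f(x) \lesssim \sum_{k=j-2}^{j+3}\varphi_{k,\lambda}^*(\sqrt L)f(x),
\]
where I use that $\psi_j(\sqrt L)f(x)\le \psi_\lambda^*(\alpha\sqrt L)f(x)$ for $\alpha=2^{-j}$ together with the stated estimate (the first inequality above is immediate from the definitions since $1+2^j\rho(x,y)\approx 1+\rho(x,y)/\alpha$ uniformly for $\alpha\in[2^{-j-1},2^{-j}]$). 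Thus pointwise
\[
2^{js}\,\psi_{j,\lambda}^*(\sqrt L)f(x) \lesssim \sum_{k=j-2}^{j+3} 2^{(j-k)s}\,2^{ks}\,\varphi_{k,\lambda}^*(\sqrt L)f(x) \lesssim \sum_{k=j-2}^{j+3} 2^{ks}\,\varphi_{k,\lambda}^*(\sqrt L)f(x),
\]
since $|j-k|\le 3$ makes $2^{(j-k)s}$ a bounded constant depending only on $s$.

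Next I would insert this pointwise bound into the definitions of the two starred norms. For the Besov case: for each dyadic cube $Q\in\mathcal D$ and each $j\ge j_Q$,
\[
\Big(\int_Q |2^{js}\psi_{j,\lambda}^*(\sqrt L)f(x)|^p\omega(x)\,\d\mu(x)\Big)^{1/p} \lesssim \sum_{k=j-2}^{j+3}\Big(\int_Q |2^{ks}\varphi_{k,\lambda}^*(\sqrt L)f(x)|^p\omega(x)\,\d\mu(x)\Big)^{1/p},
\]
using the triangle inequality in $L^p(\omega)$ when $p\ge 1$ or the $p$-subadditivity \eqref{ell^d} when $0<p\le 1$ (absorbing the resulting constant). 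The only subtle point is the range of the inner sum: for $k\in\{j-2,j-1\}$ one has $k<j$, possibly $k<j_Q$, so $\varphi_k(\sqrt L)f$ need not appear in the target norm with that cube. I would handle this exactly as in the Hardy-type inequality (Theorem~\ref{hardy bourgain}): pass from $Q$ to its dyadic ancestor $Q'\in\mathcal D_k$ with $Q\subset Q'$, use the reverse-doubling/weight estimate $\omega(Q)^{1/t-1/p}\lesssim \omega(Q')^{1/t-1/p}2^{(1/p-1/t)(j_Q-k)np/A}$ from \eqref{omega Q le omega fa Q} (here $j_Q-k\le 2$, so this factor is a harmless bounded constant), and enlarge $\int_Q$ to $\int_{Q'}$. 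This shows
\[
\|f\|_{\dot{\mathcal B}^{s,q,\psi,L}_{p,t,r,\omega}(X)}^{*,\lambda} \lesssim \Big\| \big\{ 2^{ks}\varphi_{k,\lambda}^*(\sqrt L)f \big\}_k \Big\|_{\text{shifted by a bounded amount}} \lesssim \|f\|_{\dot{\mathcal B}^{s,q,\varphi,L}_{p,t,r,\omega}(X)}^{*,\lambda},
\]
where the last step again uses \eqref{ell^d} (if $\min(1,q)<1$) or the triangle inequality in $\ell^q$ to collapse the sum over the five shifts $k-j\in\{-2,\dots,3\}$, together with the $\min(1,p,q)$-inequality of Remark~\ref{min tri} to sum over $Q\in\mathcal D$. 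For the Triebel--Lizorkin case, the argument is identical except that after the pointwise bound I take the $\ell^q$-norm in $j$ first (moving the five-term sum outside via \eqref{ell^d} or Minkowski as appropriate), then the $L^p(\omega)$-integral over $Q$, then handle the ancestor-cube issue and sum over $Q$ with the $\min(1,p,q)$-inequality as before; this is precisely the structure already used in the proof of Theorem~\ref{hardy bourgain}.

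The main obstacle is bookkeeping rather than conceptual: one must carefully track that the finitely many index shifts $k=j-2,\dots,j+3$ produce only bounded multiplicative constants (from $2^{(j-k)s}$, from $2^{(1/p-1/t)(j_Q-k)np/A}$, and from the doubling $\omega(Q')\lesssim\omega(Q)$ when $Q'$ is a bounded number of generations above $Q$), and that the lower shifts $k<j_Q$ can be reabsorbed by passing to ancestor cubes without the sum over cubes blowing up. Since the shift is bounded by $3$ rather than running over all of $\mathbb Z$, no convergence of a geometric series in the shift parameter is needed, so this is strictly easier than Theorem~\ref{hardy bourgain}; in fact the cleanest write-up simply invokes Theorem~\ref{hardy bourgain} with the trivial weight sequence $2^{-|m-j|\delta}$ replaced by the indicator of $\{|m-j|\le 3\}$, noting that the proof there only used $\sum_m 2^{-|m-j|\delta q}<\infty$ after the ancestor-cube reduction, which holds a fortiori for a finite sum. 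One should also note that everything is valid for $f\in\mathcal S_\infty'$ because $\psi,\varphi$ are supported in $(0,\infty)$, so $\psi_j(\sqrt L)f$ and $\varphi_k(\sqrt L)f$ are well defined via \eqref{varphi}, and Lemma~\ref{peetre psi} is stated precisely in this setting.
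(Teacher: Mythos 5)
Your proposal is correct and takes essentially the same route as the paper: reduce by symmetry, apply Lemma \ref{peetre psi} to dominate $\psi_{j,\lambda}^*(\sqrt L)f$ by the finitely shifted terms $\varphi_{k,\lambda}^*(\sqrt L)f$, $k=j-2,\dots,j+3$, and absorb the shifts whose index falls below $j_Q$ by passing to a dyadic ancestor cube and using doubling of $\omega$ over boundedly many generations (the paper carries this out explicitly for the extreme shifts $I_{-2}$ and $I_{3}$). One cosmetic caveat: since the theorem only assumes $\omega\in A_\infty$, justify the bounded-generation estimate $\omega(Q')\lesssim\omega(Q)$ by picking $u$ with $\omega\in A_u$ (as the paper does via $\omega(Q)\approx\omega(2^2Q)$) rather than quoting \eqref{omega Q le omega fa Q} or Theorem \ref{hardy bourgain} verbatim, whose statements carry an $\omega\in A_{p/A}$ hypothesis; since only a bounded constant is needed, this changes nothing substantive.
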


\begin{proof}
	We only prove $ \|f\|_{  \dot{\mathcal B}^{s,q,\psi,L}_{p,t,r,\omega} (X) }^{*,\lambda} \approx 	  \|f\|_{  \dot{\mathcal B}^{s,q,\varphi,L}_{p,t,r,\omega} (X) }^{*,\lambda}$ since another case is similar. By symmetry, it suffices to show $ \|f\|_{  \dot{\mathcal B}^{s,q,\psi,L}_{p,t,r,\omega} (X) }^{*,\lambda} \lesssim 	  \|f\|_{  \dot{\mathcal B}^{s,q,\varphi,L}_{p,t,r,\omega} (X) }^{*,\lambda}$.	
	 By  Lemma \ref{peetre psi}, we have
	\begin{align*}
	&	\bigg(    \sum_{Q \in \mathcal D}  \omega (Q)^{r/t-r/p} \Big(  \sum_{j=j_Q}^\infty \Big(   \int_Q | 2^{js}  \psi_{j,\lambda}^ * (   \sqrt{L} )f (x)|^p \omega (x)\d \mu (x)   \Big)^{q/p}   \Big)^{r/q}                 \bigg)^{1/r}  \\
	& \lesssim 	  \sum_{m=-2}^{3} \bigg(    \sum_{Q \in \mathcal D}  \omega (Q)^{r/t-r/p} \Big(  \sum_{j=j_Q}^\infty \Big(   \int_Q | 2^{js}   \varphi_{j+m,\lambda}^* ( \sqrt{L} ) f (x)|^p \omega (x)\d \mu (x)   \Big)^{q/p}   \Big)^{r/q}                 \bigg)^{1/r}  \\
	& =:  \sum_{m=-2}^{3} I_m.
\end{align*}
We only estimate $I_{-2}$ and $I_3$. For $Q \in \D_{-j_Q}$, since $\omega \in A_\infty$, $\omega (Q) \approx \omega (2^2 Q) $ where $2^2 Q \in \D_{-j_Q -2 }$ is the parent of $Q$. Then 
\begin{align*}
	I_{-2} & \lesssim  \bigg(    \sum_{Q \in \mathcal D}  \omega (2^2 Q)^{r/t-r/p} \Big(  \sum_{j=j_Q}^\infty \Big(   \int_{2^2 Q} | 2^{(j-2)s} 2^{2s}  \varphi_{j-2,\lambda}^* ( \sqrt{L} ) f (x)|^p \omega (x)\d \mu (x)   \Big)^{q/p}   \Big)^{r/q}                 \bigg)^{1/r} \\
	& \lesssim  \bigg(    \sum_{Q \in \mathcal D}  \omega (2^2 Q)^{r/t-r/p} \Big(  \sum_{j={ j_{2^2 Q  } }}^\infty \Big(   \int_{2^2 Q} | 2^{(j-2)s} 2^{2s}  \varphi_{j-2,\lambda}^* ( \sqrt{L} ) f (x)|^p \omega (x)\d \mu (x)   \Big)^{q/p}   \Big)^{r/q}                 \bigg)^{1/r} \\
	& \lesssim  \|f\|_{  \dot{\mathcal B}^{s,q,\varphi,L}_{p,t,r,\omega} (X) }^{*,\lambda} .
\end{align*}

As for $ I_3 $, for a cube $Q \in \D _{-j_Q} $, there are at most $c2^{3n}$  cubes $Q_i$ in $ \D _{-j_Q +3}  $ and $\omega (Q ) \approx \omega (Q_i)$.
\begin{align*}
	I_{3} & \lesssim  \bigg(    \sum_{Q \in \mathcal D}  \omega ( Q)^{r/t-r/p} \Big(  \sum_{j=j_Q }^\infty   \Big( \sum_{i=1}^{c2^{3n}}  \int_{ Q_i } | 2^{js}  \varphi_{j+3,\lambda}^* ( \sqrt{L} ) f (x)|^p \omega (x)\d \mu (x)   \Big)^{q/p}   \Big)^{r/q}                 \bigg)^{1/r} \\
	& \lesssim  \sum_{i=1}^{c2^{3n}} \bigg(    \sum_{Q \in \mathcal D}  \omega ( Q_i)^{r/t-r/p} \Big(  \sum_{j=j_Q }^\infty   \Big(  \int_{ Q_i } | 2^{js}  \varphi_{j+3,\lambda}^* ( \sqrt{L} ) f (x)|^p \omega (x)\d \mu (x)   \Big)^{q/p}   \Big)^{r/q}                 \bigg)^{1/r} \\
		& \le \sum_{i=1}^{c2^{3n}} \bigg(    \sum_{Q \in \mathcal D}  \omega ( Q_i)^{r/t-r/p} \Big(  \sum_{j=j_{Q_i} }^\infty   \Big(  \int_{ Q_i } | 2^{js}  \varphi_{j+3,\lambda}^* ( \sqrt{L} ) f (x)|^p \omega (x)\d \mu (x)   \Big)^{q/p}   \Big)^{r/q}                 \bigg)^{1/r} \\
	& \lesssim  \|f\|_{  \dot{\mathcal B}^{s,q,\varphi,L}_{p,t,r,\omega} (X) }^{*,\lambda} .
\end{align*}
Thus the proof is complete.
\end{proof}

To  prove the following Theorem \ref{Peetre},  we need a lemma. Theorem \ref{Peetre} is usually called the characterization via Peetre maximal function.
\begin{lem}[Proposition 2.17, \cite{BBD20}] \label{est peetre psi}
	Let $\psi$ be a partition of unity. Then for any $\lambda ,\alpha >0$ and $A>0$, we have 
	\begin{equation}\label{est peetre psi 1}
		\psi_\lambda ^* (\alpha \sqrt{L} ) f (x)  \lesssim 
		\bigg(  \int_X \frac{1}{V(z,\alpha)}  \frac{ |\psi ( \alpha \sqrt{L} ) f(z) |^A }{ (1+\rho(x,z) / \alpha )^{\lambda A} }  \d \mu (z)  \bigg)^{1/A},
	\end{equation}
	for all $f \in \mathcal S_\infty '$ and $x \in X$.
	\end{lem}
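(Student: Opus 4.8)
The plan is to deduce this Peetre-type maximal inequality from a reproducing identity for the functional calculus together with the standard Gaussian-type bounds for the kernel of $\eta(\alpha\sqrt L)$ (for a suitable cut-off $\eta$), and then to run the classical Peetre bootstrap, treating the ranges $A\ge 1$ and $0<A<1$ separately.

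First I would fix an auxiliary function $\eta\in\mathscr S(\mathbb R)$ with $\operatorname{supp}\eta\subset(0,\infty)$ and $\eta\equiv 1$ on $\operatorname{supp}\psi\subset[1/2,2]$, so that $\psi=\psi\eta$ and hence, by the functional calculus, $\psi(\alpha\sqrt L)=\eta(\alpha\sqrt L)\,\psi(\alpha\sqrt L)$. For $f\in\mathcal S_\infty'$ this yields the reproducing formula
\[
\psi(\alpha\sqrt L)f(y)=\int_X K_{\eta(\alpha\sqrt L)}(y,z)\,\psi(\alpha\sqrt L)f(z)\,\d\mu(z),\qquad y\in X,
\]
where, as a standard consequence of the Gaussian heat kernel bound, the kernel of $\eta(\alpha\sqrt L)$ satisfies for every $N>0$
\[
|K_{\eta(\alpha\sqrt L)}(y,z)|\lesssim \frac{1}{V(y,\alpha)}\Big(1+\frac{\rho(y,z)}{\alpha}\Big)^{-N},
\]
with implicit constant depending on $N$.

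Next I would write $u(z):=|\psi(\alpha\sqrt L)f(z)|\,(1+\rho(x,z)/\alpha)^{-\lambda}$, so that $\psi_\lambda^*(\alpha\sqrt L)f(x)=\sup_{y\in X}u(y)=:I$, insert the kernel bound into the reproducing formula and divide by $(1+\rho(x,y)/\alpha)^\lambda$. Using the quasi-triangle inequality $(1+\rho(x,y)/\alpha)^{-\lambda}\lesssim (1+\rho(x,z)/\alpha)^{-\lambda}(1+\rho(y,z)/\alpha)^{\lambda}$ recalled at the end of Section~\ref{Pre}, and then replacing $V(y,\alpha)^{-1}$ by $V(z,\alpha)^{-1}$ at the cost of an extra factor $(1+\rho(y,z)/\alpha)^{\tilde n}$ via \eqref{V x r V y r}, I obtain, for every $M>0$ (choosing $N$ correspondingly large),
\[
u(y)\lesssim\int_X\frac{1}{V(z,\alpha)}\Big(1+\frac{\rho(y,z)}{\alpha}\Big)^{-M}u(z)\,\d\mu(z),\qquad y\in X.
\]
Now I would split into cases. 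For $A\ge 1$ I would apply H\"older's inequality in $z$ with exponents $A$ and $A'$, writing $V(z,\alpha)^{-1}(1+\rho(y,z)/\alpha)^{-M}$ as $[V(z,\alpha)^{-1}(1+\rho(y,z)/\alpha)^{-M_1}]^{1/A}\,[V(z,\alpha)^{-1}(1+\rho(y,z)/\alpha)^{-M_2}]^{1/A'}$ with $M_1/A+M_2/A'=M$ and $M_1,M_2$ large; since $\int_X V(z,\alpha)^{-1}(1+\rho(y,z)/\alpha)^{-M_2}\,\d\mu(z)\lesssim 1$ by \eqref{V x r V y r} and Lemma~\ref{basic est}(i), and $(1+\rho(y,z)/\alpha)^{-M_1}\le 1$, this gives
\[
u(y)\lesssim\Big(\int_X\frac{1}{V(z,\alpha)}\,\frac{|\psi(\alpha\sqrt L)f(z)|^A}{(1+\rho(x,z)/\alpha)^{\lambda A}}\,\d\mu(z)\Big)^{1/A},
\]
and taking the supremum over $y$ is exactly \eqref{est peetre psi 1}. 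For $0<A<1$ I would instead estimate $u(z)=u(z)^A u(z)^{1-A}\le I^{\,1-A}u(z)^A$ and bound $(1+\rho(y,z)/\alpha)^{-M}\le 1$ in the previous display, so that $u(y)\lesssim I^{\,1-A}\int_X V(z,\alpha)^{-1}u(z)^A\,\d\mu(z)$; taking the supremum over $y$ and dividing by $I^{\,1-A}$ yields \eqref{est peetre psi 1} in this range too.

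The main obstacle is precisely the a priori finiteness of $I=\psi_\lambda^*(\alpha\sqrt L)f(x)$, needed to justify the division by $I^{\,1-A}$ when $0<A<1$ (the range $A\ge 1$ is clean and requires no such division). I would handle it by the standard device: run the argument first with $I$ replaced by the supremum over a fixed ball, using the a priori polynomial bound for $|\psi(\alpha\sqrt L)f|$ coming from $f\in\mathcal S_\infty'$ and the arbitrarily fast decay of $K_{\eta(\alpha\sqrt L)}$ to absorb that polynomial factor, and then let the radius tend to infinity, exactly as in the proof of \cite[Proposition~2.17]{BBD20}. The remaining work is only the bookkeeping of decay exponents---$N$, hence $M$, $M_1$, $M_2$, must be taken large relative to $n$, $\tilde n$ and $\lambda$ so that every $z$-integral converges---which causes no difficulty since $N$ is at our disposal.
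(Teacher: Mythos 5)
This lemma is not proved in the paper at all: it is imported verbatim as \cite[Proposition 2.17]{BBD20}, and your argument is, in substance, the standard proof of that result — the reproducing identity $\psi(\alpha\sqrt L)=\eta(\alpha\sqrt L)\psi(\alpha\sqrt L)$ with $\eta\equiv1$ on $[1/2,2]$ (use the even extension of $\eta$, harmless since $\eta$ vanishes near the origin, so that the kernel bound of Lemma~\ref{kernel est} applies), the volume comparison \eqref{V x r V y r} together with Lemma~\ref{basic est}(i) to reach the majorant $u(y)\lesssim\int_X V(z,\alpha)^{-1}(1+\rho(y,z)/\alpha)^{-M}u(z)\,\d\mu(z)$, H\"older's inequality when $A\ge1$, and the sub-mean-value trick $u(z)\le I^{1-A}u(z)^A$ when $0<A<1$. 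That skeleton is correct and matches the cited source.

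The one step I would not accept as written is your finiteness device for $0<A<1$: replacing $I$ by the supremum over a ball of radius $R$ does not close the argument, because the $z$-integral on the right still runs over all of $X$; what you actually get is a bound of the radius-$R$ supremum by a larger (say radius-$2R$) supremum raised to the power $1-A$ plus a tail, and letting $R\to\infty$ then recovers nothing. The standard rigorous completion is an exponent bootstrap rather than a spatial truncation. The polynomial growth of $\psi(\alpha\sqrt L)f$ for $f\in\mathcal S_\infty'$ (the same a priori ingredient you invoke) gives $\psi^*_{\mu}(\alpha\sqrt L)f(x)<\infty$ for some large $\mu\ge\lambda$; inserting $|\psi(\alpha\sqrt L)f(z)|^{1-A}\le[\psi^*_{\mu}(\alpha\sqrt L)f(x)]^{1-A}(1+\rho(x,z)/\alpha)^{\mu(1-A)}$ into your master inequality and dividing by $(1+\rho(x,y)/\alpha)^{\mu_1}$ with $\mu_1:=\mu(1-A)+\lambda A$ yields $\psi^*_{\mu_1}(\alpha\sqrt L)f(x)\lesssim[\psi^*_{\mu}(\alpha\sqrt L)f(x)]^{1-A}J^{A}$, where $J$ denotes the right-hand side of \eqref{est peetre psi 1} and the kernel decay order is fixed once and for all larger than the initial $\mu$. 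Iterating, $\mu_k\downarrow\lambda$, the suprema increase monotonically to $\psi^*_{\lambda}(\alpha\sqrt L)f(x)$, and passing to the limit in the resulting bounds gives \eqref{est peetre psi 1} for the given $\lambda$. With that replacement (or with Rychkov's more careful truncation scheme, which is what your ball device is gesturing at) your proof is complete and coincides with the argument behind the cited proposition.
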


\begin{thm}\label{Peetre}
		 Let  $\omega \in A_\infty$, $0<p<\infty$, $0<q\le \infty$ and  $s\in \mathbb R$.
	Let $\psi$ be a partition of unity. 
		
	{\rm (a)} 	If $0<p<t<\infty$ and $ \max\{t, -nt / \log \beta  \} < r <\infty $, where $\beta = 1-  (1- \alpha_0)^{p/A} /  [\omega]_{A_{p/A}} $ with $0< A<  p / q_\omega$, or $ 0 <p \le t < r =\infty$, then for  $\lambda >  n q_\omega /p$,
	\begin{equation*}
		\|f\|_{  \dot{\mathcal B }^{s,q,\psi,L}_{p,t,r,\omega} (X) }^{*,\lambda}  \approx  	\| f\|_{\dot{\mathcal B }^{s,q,\psi,L}_{p,t,r,\omega} (X)} .
	\end{equation*}
		
			{\rm (b)} 
				If $0<p<t<\infty$ and $ \max\{t, -nt / \log \beta  \} < r <\infty $, where $\beta = 1-  (1- \alpha_0)^{p/A} /  [\omega]_{A_{p/A}} $ with $0< A<  \min \{ q , p/q_\omega  \}$, or $ 0 <p \le t < r =\infty$, then for  $\lambda >  \max \{ n/q , n q_\omega /p\}$,
				\begin{equation*}
					\|f\|_{  \dot{\mathcal F}^{s,q,\psi,L}_{p,t,r,\omega} (X) }^{*,\lambda}     \approx  	\| f\|_{\dot{\mathcal F}^{s,q,\psi,L}_{p,t,r,\omega} (X)} .
			\end{equation*}
	\end{thm}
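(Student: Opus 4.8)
The plan is to prove both equivalences by establishing two chains of inequalities: the trivial direction $\|f\|_{\dot{\mathcal B}^{s,q,\psi,L}_{p,t,r,\omega}(X)} \le \|f\|^{*,\lambda}_{\dot{\mathcal B}^{s,q,\psi,L}_{p,t,r,\omega}(X)}$ (which is immediate because $|\psi_j(\sqrt L)f(x)| \le \psi_{j,\lambda}^*(\sqrt L)f(x)$ pointwise, taking $y=x$ in the supremum), and then the reverse direction, which is where all the work lies. For the reverse direction I would follow the classical Peetre-maximal-function scheme adapted to the Bourgain--Morrey setting: fix $A$ small enough that $\omega \in A_{p/A}$ and (in part (b)) $A < \min\{q, p/q_\omega\}$, and combine Lemma~\ref{est peetre psi} with Lemma~\ref{peetre psi} to control $\psi_{j,\lambda}^*(\sqrt L)f$ by a sum over neighboring scales of integral averages of $|\psi_k(\sqrt L)f|$. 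The key pointwise estimate to derive is that for $\lambda$ large enough,
\[
2^{js}\psi_{j,\lambda}^*(\sqrt L)f(x) \lesssim \sum_{k\in\mathbb Z} 2^{-|k-j|\delta}\, 2^{ks}\, \bigl(\mathcal M_A(|\psi_k(\sqrt L)f|^A)(x)\bigr)^{1/A}\cdot 2^{(k-j)_+ \cdot(\text{something})},
\]
or more precisely, using Lemma~\ref{basic est}(ii) after splitting the integral over $X$ into the region near $x$ and dyadic annuli, one gets $\psi_{j,\lambda}^*(\sqrt L)f(x) \lesssim \sum_{k\ge j} 2^{-(k-j)(\lambda - n/A)/1}(\mathcal M_A \psi_k(\sqrt L)f)(x) + (\text{terms } k<j)$; the decay in $|k-j|$ comes from the spectral support separation of $\psi_j$ and $\psi_k$ together with the rapid decay of kernels, exactly as in the proof of Proposition~2.17/2.18 of \cite{BBD20}. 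The condition $\lambda > nq_\omega/p$ (resp. $\lambda > \max\{n/q, nq_\omega/p\}$) is precisely what makes the resulting geometric series in $k$ converge after one applies the maximal inequality in the $L^p(\omega)$-direction inside each cube.

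Next I would feed this pointwise bound into the Bourgain--Morrey norm. For part (a), I would apply the $\min(1,p,q)$-inequality (Remark~\ref{min tri}) to split the sum over $k$ out of the norm, reducing to estimating, for each fixed shift $k-j=\ell$, the quantity $\|\{2^{js}(\mathcal M_A \psi_{j+\ell}(\sqrt L)f)\}_j\|_{\widetilde{\ell^q(M^{p,\omega}_{t,r})}}$; here I invoke Theorem~\ref{Ma ell q Mptr} (the Fefferman--Stein-type bound for $\mathcal M_A$ on $\widetilde{\ell^q(M^{p,\omega}_{t,r})}$, valid under $r > -nt/\log\beta$ and $r>t$), absorb the $2^{\ell s}$ factor and sum the geometric series in $\ell$ against the $2^{-|\ell|\delta}$ decay — this needs $\delta$ larger than $|s|$ plus the Hardy-type loss $(1/p-1/t)np/A$, which is exactly the regime covered by Theorem~\ref{hardy bourgain}. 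Since the index set of the outer sum changes when we shift scales (the cube $Q\in\mathcal D_{-j_Q}$ has $j$ running from $j_Q$ while $\mathcal M_A\psi_{j+\ell}(\sqrt L)f$ naturally wants $j+\ell \ge j_{Q'}$ for a shifted cube $Q'$), I would handle the reindexing exactly as in the proof of Theorem~\ref{equivalence} above, passing to parent cubes (for $\ell<0$) or to the bounded family of children (for $\ell>0$) and using $\omega(Q)\approx\omega(Q')$ for $\omega\in A_\infty$; this only costs a constant and a finite sum. For part (b), the same scheme works but now one must keep the $\ell^q$-sum \emph{inside} the $M^{p,\omega}_{t,r}$-norm, so after the pointwise estimate I would apply the vector-valued $\mathcal M_A$ bound of Theorem~\ref{Ma ell q Mptr} together with Lemma~\ref{max FS Bou}(iii) (which requires $A<\min\{p,q\}$, hence the stronger constraint on $A$ and the extra $\lambda>n/q$ needed so that the $k$-series converges even in $\ell^q$), followed again by Hardy's inequality (Theorem~\ref{hardy bourgain}, the $\widetilde{M^{p,\omega}_{t,r}(\ell^q)}$ version) and the same parent-cube reindexing.

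The main obstacle, as usual in this circle of results, is the interplay between the two summations (over the auxiliary scale $k$ and over the "defining" scale $j\ge j_Q$) and the Bourgain--Morrey outer sum over cubes $Q\in\mathcal D$: one cannot simply pull $\mathcal M_A$ out of the norm scale-by-scale without first checking that the truncated sum $\sum_{j\ge j_Q}$ behaves well under the scale shift $j\mapsto j+\ell$, and the honest bookkeeping of how $j_Q$ relates to $j_{Q'}$ for parents/children is the technical heart of the argument. A secondary subtlety is choosing all the parameters $A$, $\delta$, $\lambda$ compatibly: one needs $A$ small enough for $\omega\in A_{p/A}$ and (in (b)) $A<\min\{q,p/q_\omega\}$, then $\lambda$ large relative to $n/A$ and $n/q$ to get kernel decay, then $\delta:=\lambda A - n$ (roughly) must still exceed the Hardy threshold $(1/p-1/t)np/A + |s|$ — one verifies a feasible choice exists precisely under the stated hypotheses $\lambda > nq_\omega/p$ (resp. $\lambda>\max\{n/q,nq_\omega/p\}$) and $r>\max\{t,-nt/\log\beta\}$. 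Once the parameters are fixed and the reindexing lemma of Theorem~\ref{equivalence}-type is in hand, the rest is a routine but lengthy chase through the inequalities of Lemmas~\ref{est peetre psi}, \ref{peetre psi}, \ref{psi le con}, Theorem~\ref{Ma ell q Mptr}, Lemma~\ref{max FS Bou}, and Theorem~\ref{hardy bourgain}.
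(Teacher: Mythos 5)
Your overall skeleton (trivial direction plus a maximal-function bound for the reverse direction) is right, but the mechanism you build the reverse direction on does not close under the stated hypotheses and is, in fact, unnecessary. The paper's proof is a two-step, single-scale argument: by Lemma~\ref{est peetre psi}, for \emph{any} $\lambda,A>0$ one has, at the same scale $j$,
\[
\psi_{j,\lambda}^*\big(\sqrt L\big)f(x)\lesssim \bigg(\int_X \frac{1}{V(z,2^{-j})}\,\frac{|\psi_j(\sqrt L)f(z)|^A}{(1+2^j\rho(x,z))^{\lambda A}}\,\d\mu(z)\bigg)^{1/A}\lesssim \mathcal M_A\big(|\psi_j(\sqrt L)f|\big)(x),
\]
the last step being Lemma~\ref{basic est}(ii), valid as soon as $\lambda A>n$. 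One then chooses $A$ with $n/\lambda<A<p/q_\omega$ (and additionally $A<q$ in the $\mathcal F$-case), which is exactly what the hypotheses $\lambda>nq_\omega/p$, resp.\ $\lambda>\max\{n/q,nq_\omega/p\}$, guarantee, and concludes by the Fefferman--Stein type inequality of Lemma~\ref{max FS Bou}(iii) (resp.\ Theorem~\ref{Ma ell q Mptr} for the $\mathcal B$-case). No sum over auxiliary scales $k$, no Hardy-type inequality, and no parent/child reindexing of $j_Q$ occurs: the domination is term-by-term in $j$, so the truncated sums $\sum_{j\ge j_Q}$ and the outer sum over cubes are untouched. Your assertion that the condition on $\lambda$ ``is precisely what makes the resulting geometric series in $k$ converge'' misattributes the role of the hypothesis; it only serves to make the choice of $A$ above possible.

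The concrete gap in your route is the parameter feasibility claim. If you really run the multiscale scheme with $G_j=\sum_k 2^{-|k-j|\delta}g_k$ and Theorem~\ref{hardy bourgain}, you need $\delta>(1/p-1/t)np/A$ (and in your bookkeeping also an $|s|$-dependent margin), with $\delta\approx\lambda A-n$. This is \emph{not} implied by $\lambda>nq_\omega/p$: for $1/p-1/t$ or $|s|$ large, no admissible $A$ exists, so the proof as written does not close under the theorem's hypotheses. The only way to rescue your scheme is to observe that, since $\psi_j$ and $\psi_k$ have disjoint spectral supports for $|j-k|\ge 2$ (equivalently, via Lemma~\ref{peetre psi} the relevant window is $k\in\{j-2,\dots,j+3\}$), the sum over $k$ is finite; but then the entire Hardy/geometric-decay apparatus and the cube-reindexing ``technical heart'' evaporate, and the argument reduces to the paper's. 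In short: the Hardy-based multiscale route either fails (if taken literally, because of the parameter constraint) or collapses to the much shorter same-scale proof.
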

	
\begin{proof}
	We will only prove (b) since the proof of (a) is similar. Since $  |\psi_j (  \sqrt{L} ) f(x) |\le \psi_{j, \lambda}^* (\sqrt{L}) f (x) $, it suffices to prove that
	\begin{equation} \label{peetre norm less}
		\|f\|_{  \dot{\mathcal F}^{s,q,\psi,L}_{p,t,r,\omega} (X) }^{*,\lambda} \lesssim \|f\|_{  \dot{\mathcal F}^{s,q,\psi,L}_{p,t,r,\omega} (X) }.
	\end{equation}
	Indeed, taking $A < \min\{  p,q, p / q_\omega  \} = \min \{ q , p/q_\omega  \}$ such that $\lambda > n/A$ and $\omega \in A_{p/A}$, then applying (\ref{est peetre psi 1}) in Lemma \ref{est peetre psi}, we have
	\begin{align*}
		\psi_{j, \lambda}^* (\sqrt{L}) f (x) & \lesssim  \bigg(  \int_X \frac{1}{V(z,2^{-j} )}  \frac{ |\psi_j (  \sqrt{L} ) f(z) |^A }{ (1+2^j \rho(x,z)  )^{\lambda A} }  \d \mu (z)  \bigg)^{1/A},  \\
		& \lesssim \mathcal M _A ( |\psi_j (  \sqrt{L} ) f | ) (x),
	\end{align*}
	where we use Lemma \ref{basic est}  in the last inequality. Then using Lemma \ref{max FS Bou}, we obtain (\ref{peetre norm less}). (For the proof of (a), use Theorem \ref{Ma ell q Mptr} instead of Lemma \ref{max FS Bou})
\end{proof}

As a consequence of Theorems \ref{equivalence} and \ref{Peetre}, we obtain that these spaces are independent of the choice of $\psi,\varphi$.

\begin{thm}\label{inde psi}
		 Let  $\omega \in A_\infty$, $0<p<\infty$, $0<q\le \infty$ and  $s\in \mathbb R$.
		Let $\psi$ and $\varphi$ be partitions of unity.  
	
	{\rm (a)} 	If $0<p<t<\infty$ and $ \max\{t, -nt / \log \beta  \} < r <\infty $, where $\beta = 1-  (1- \alpha_0)^{p/A} /  [\omega]_{A_{p/A}} $ with $0< A<  p / q_\omega$, or $ 0 <p \le t < r =\infty$, then the spaces $\dot{\mathcal B}^{s,q,\psi,L}_{p,t,r,\omega} (X)$ and $\dot{\mathcal B}^{s,q,\varphi,L}_{p,t,r,\omega} (X)$ coincide with equivalent norms.
	
	{\rm (b)} 
	If $0<p<t<\infty$ and $ \max\{t, -nt / \log \beta  \} < r <\infty $, where $\beta = 1-  (1- \alpha_0)^{p/A} /  [\omega]_{A_{p/A}} $ with $0< A<  \min \{ q , p/q_\omega  \}$, or $ 0 <p \le t < r =\infty$, then the spaces $\dot{\mathcal F}^{s,q,\psi,L}_{p,t,r,\omega} (X)$ and $\dot{\mathcal F}^{s,q,\varphi,L}_{p,t,r,\omega} (X)$ coincide with equivalent norms.
\end{thm}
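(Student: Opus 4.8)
The plan is to deduce the statement directly by chaining the two equivalences already established: the stability of the Peetre-type norm under a change of partition of unity (Theorem \ref{equivalence}) and the equivalence of the Peetre-type norm with the defining norm (Theorem \ref{Peetre}). First I would observe that the hypotheses of Theorem \ref{Peetre} on $n$, $p$, $q$, $t$, $r$, $\omega$ and $\lambda$ do not involve the partition of unity, and that they are exactly the hypotheses imposed in Theorem \ref{inde psi}; consequently one may fix, once and for all, a single value of $\lambda$ --- any $\lambda > n q_\omega/p$ in part (a), any $\lambda > \max\{n/q,\, n q_\omega/p\}$ in part (b) --- for which Theorem \ref{Peetre} applies simultaneously to $\psi$ and to $\varphi$.

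With such a $\lambda$ fixed, for the Besov scale I would write, for every $f \in \mathcal S_\infty'$,
\begin{equation*}
	\| f\|_{\dot{\mathcal B}^{s,q,\psi,L}_{p,t,r,\omega} (X)} \approx \|f\|_{  \dot{\mathcal B}^{s,q,\psi,L}_{p,t,r,\omega} (X) }^{*,\lambda} \approx \|f\|_{  \dot{\mathcal B}^{s,q,\varphi,L}_{p,t,r,\omega} (X) }^{*,\lambda} \approx \| f\|_{\dot{\mathcal B}^{s,q,\varphi,L}_{p,t,r,\omega} (X)},
\end{equation*}
where the two outer equivalences are Theorem \ref{Peetre}(a) applied to $\psi$ and to $\varphi$ respectively, and the middle one is Theorem \ref{equivalence} (valid for all $\lambda>0$ under the sole assumption $\omega\in A_\infty$). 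Reading off the two ends of this chain shows that the norms are comparable, so $\dot{\mathcal B}^{s,q,\psi,L}_{p,t,r,\omega}(X)$ and $\dot{\mathcal B}^{s,q,\varphi,L}_{p,t,r,\omega}(X)$ coincide with equivalent norms, which proves (a). For (b) I would run the verbatim argument with $\dot{\mathcal B}$ replaced by $\dot{\mathcal F}$, invoking part (b) of Theorem \ref{Peetre} and the Triebel-Lizorkin half of Theorem \ref{equivalence}; the tighter requirements $0<A<\min\{q,\,p/q_\omega\}$ and $\lambda>\max\{n/q,\,n q_\omega/p\}$ in (b) are precisely what makes Theorem \ref{Peetre}(b) available.

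I do not expect any genuine obstacle here: the substantive analysis was already carried out in Theorems \ref{equivalence} and \ref{Peetre}, and what remains is bookkeeping. The only point deserving a moment's care is the compatibility of the two constraints on $\lambda$ coming from the two invocations of Theorem \ref{Peetre} (one for $\psi$, one for $\varphi$), which is immediate since those constraints do not depend on the partition. It is also worth recording that every object appearing in the chain is well defined for $f\in\mathcal S_\infty'$, because a partition of unity is supported in $[1/2,2]\subset(0,\infty)$, so $\psi_j(\sqrt L)f$, $\varphi_j(\sqrt L)f$ and their Peetre maximal functions are given by (\ref{varphi}), and Lemma \ref{peetre psi} underlying Theorem \ref{equivalence} is stated for exactly such $f$.
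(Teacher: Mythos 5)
Your argument is correct and is exactly the paper's intended route: the authors state Theorem \ref{inde psi} as an immediate consequence of chaining Theorem \ref{equivalence} with Theorem \ref{Peetre}, applied once to $\psi$ and once to $\varphi$ with a common admissible $\lambda$, just as you do. Your additional remarks on the compatibility of the $\lambda$-constraints and on well-definedness via (\ref{varphi}) are fine but not needed beyond what the cited theorems already guarantee.
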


\subsection{Characterizations via compact supports}
Let $\psi$ be a partition of unity.
We define
\begin{align*}
	\| f\| _{  \dot{\mathcal F}^{s,q,L}_{p,t,r,\omega} (X)      }^{\bigstar}
		&:= \bigg(    \sum_{Q \in \mathcal D}  \omega (Q)^{r/t-r/p} \Big(  \int_Q  \Big(  \int_{0}^{2^{-j_Q} }| \alpha ^{-s} \psi  (\alpha  \sqrt{L} )f (x)|^q\frac{\d \alpha}{ \alpha} \Big)^{p/q}  \omega (x) \d \mu (x)      \Big)^{r/p}                 \bigg)^{1/r} ,\\
			\| f\| _{  \dot{\mathcal F}^{s,q,L}_{p,t,r,\omega} (X)      }^{\bigstar \lambda}
			& := 	\bigg(    \sum_{Q \in \mathcal D}  \omega (Q)^{r/t-r/p} \Big(  \int_Q  \Big(  \int_{0}^{2^{-j_Q} }| \alpha ^{-s} \psi_\lambda ^*  (\alpha  \sqrt{L} )f (x)|^q\frac{\d \alpha}{ \alpha} \Big)^{p/q}  \omega (x) \d \mu (x)      \Big)^{r/p}                 \bigg)^{1/r} ,\\
				\| f\| _{  \dot{\mathcal B}^{s,q,L}_{p,t,r,\omega} (X)      }^{\bigstar}
				& := \bigg(    \sum_{Q \in \mathcal D}  \omega (Q)^{r/t-r/p} 
				\Big(  \int_{0}^{2^{-j_Q} }  \Big( \int_Q | \alpha ^{-s} \psi  (\alpha  \sqrt{L} )f (x)|^p \omega (x )\d \mu (x) \Big)^{q/p} \frac{\d \alpha}{ \alpha} \Big) ^{r/q}
				          \bigg)^{1/r} ,\\
				          	\| f\| _{  \dot{\mathcal B}^{s,q,L}_{p,t,r,\omega} (X)      }^{\bigstar \lambda}
				          & := \bigg(    \sum_{Q \in \mathcal D}  \omega (Q)^{r/t-r/p} 
				          \Big(  \int_{0}^{2^{-j_Q} }  \Big( \int_Q | \alpha ^{-s} \psi _\lambda ^* (\alpha  \sqrt{L} )f (x)|^p \omega (x )\d \mu (x) \Big)^{q/p} \frac{\d \alpha}{ \alpha} \Big) ^{r/q}
				          \bigg)^{1/r} .
\end{align*}

\begin{thm}\label{char conti}
		 Let  $\omega \in A_\infty$, $0<p<\infty$, $0<q\le \infty$ and  $s\in \mathbb R$.
	
	{\rm (a)} 	If $0<p<t<\infty$ and $ \max\{t, -nt / \log \beta  \} < r <\infty $, where $\beta = 1-  (1- \alpha_0)^{p/A} /  [\omega]_{A_{p/A}} $ with $0< A<  p / q_\omega$, or $ 0 <p \le t < r =\infty$, 	then for $\lambda > n q_\omega / p$ and $f\in \mathcal S_\infty '$,
		\begin{align*}
		\| f\| _{  \dot{\mathcal B}^{s,q,L}_{p,t,r,\omega} (X)  } & \approx 	\| f\| _{  \dot{\mathcal B}^{s,q,L}_{p,t,r,\omega} (X)      }^{\bigstar}
		\approx
		\| f\| _{  \dot{\mathcal B}^{s,q,L}_{p,t,r,\omega} (X)      }^{\bigstar \lambda}.
	\end{align*}

	{\rm (b)} 
	If $0<p<t<\infty$ and $ \max\{t, -nt / \log \beta  \} < r <\infty $, where $\beta = 1-  (1- \alpha_0)^{p/A} /  [\omega]_{A_{p/A}} $ with $0< A<  \min \{ q , p/q_\omega  \}$, or $ 0 <p \le t < r =\infty$, then for  $\lambda >\max \{ n/q, n q_\omega / p  \} $ and 
	$f\in \mathcal S_\infty '$,
	\begin{align*}
		\| f \| _{  \dot{\mathcal F}^{s,q,L}_{p,t,r,\omega} (X)  } & \approx 	\| f\| _{  \dot{\mathcal F}^{s,q,L}_{p,t,r,\omega} (X)      }^{\bigstar} \approx
		\| f\| _{  \dot{\mathcal F}^{s,q,L}_{p,t,r,\omega} (X)      }^{\bigstar \lambda}.
	\end{align*}
\end{thm}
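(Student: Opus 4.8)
The plan is to prove part (b); part (a) is entirely analogous with the Besov-type norms and $\ell^q$-sums replaced appropriately, using Theorem \ref{Ma ell q Mptr} in place of Lemma \ref{max FS Bou}. The argument has three stages: first establish $\| f\| _{  \dot{\mathcal F}^{s,q,L}_{p,t,r,\omega} }^{\bigstar} \lesssim \| f\| _{  \dot{\mathcal F}^{s,q,L}_{p,t,r,\omega} }^{\bigstar \lambda}$ trivially (pointwise, since $|\psi(\alpha\sqrt L)f(x)| \le \psi^*_\lambda(\alpha\sqrt L)f(x)$), then prove the ``hard direction'' $\| f\| _{  \dot{\mathcal F}^{s,q,L}_{p,t,r,\omega} }^{\bigstar \lambda} \lesssim \| f\| _{\dot{\mathcal F}^{s,q,\psi,L}_{p,t,r,\omega}}$, and finally close the circle with $\| f\| _{\dot{\mathcal F}^{s,q,\psi,L}_{p,t,r,\omega}} \lesssim \| f\| _{  \dot{\mathcal F}^{s,q,L}_{p,t,r,\omega} }^{\bigstar}$. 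For the second stage, fix $Q\in\mathcal D$ and split the integral $\int_0^{2^{-j_Q}}$ dyadically as $\sum_{j\ge j_Q}\int_{2^{-j-1}}^{2^{-j}}$; on each piece apply Lemma \ref{peetre psi} to dominate $\psi^*_\lambda(\alpha\sqrt L)f(x)$ by $\sum_{k=j-2}^{j+3}\varphi^*_{k,\lambda}(\sqrt L)f(x)$ for an auxiliary partition of unity $\varphi$, bounding $\int_{2^{-j-1}}^{2^{-j}}\alpha^{-sq}\,\frac{\d\alpha}{\alpha}\approx 2^{jsq}$. This reduces the continuous norm to a discrete Peetre-maximal expression of the form $\|f\|_{\dot{\mathcal F}^{s,q,\varphi,L}_{p,t,r,\omega}}^{*,\lambda}$, up to reindexing the lower limit of the $j$-sum (which is harmless, as in the proof of Theorem \ref{equivalence}: passing from $\mathcal D$-cubes to parents or children only changes $\omega(Q)$ by a bounded $A_\infty$ factor and the lower index $j_Q$ by a bounded amount). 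Then Theorems \ref{equivalence} and \ref{Peetre}(b) identify this with $\| f\| _{\dot{\mathcal F}^{s,q,\psi,L}_{p,t,r,\omega}}$.

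For the third stage I would use Lemma \ref{psi le con}: choosing an even $\varphi\in\mathscr S(\mathbb R)$ with $\varphi\neq 0$ on $[1/2,2]$, one has $|\psi_j(\sqrt L)f(x)| \lesssim \big(\int_{2^{-j-2}}^{2^{-j+2}}|\psi^*_\lambda(\alpha\sqrt L)f(x)|^A\,\frac{\d\alpha}{\alpha}\big)^{1/A}$ for any $A>0$; picking $A<\min\{q,p/q_\omega\}$ with $\lambda>n/A$ and $\omega\in A_{p/A}$, and then applying Lemma \ref{est peetre psi} to replace $\psi^*_\lambda(\alpha\sqrt L)f$ by $\mathcal M_A(|\psi(\alpha\sqrt L)f|)$. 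Substituting into the discrete $\dot{\mathcal F}$-norm, passing $2^{js}\approx\alpha^{-s}$ inside (for $\alpha\in[2^{-j-2},2^{-j+2}]$), and recombining the overlapping $\alpha$-intervals $\bigcup_j[2^{-j-2},2^{-j+2}]$ into a single integral $\int_0^{2^{-j_Q+2}}$ — again absorbing the factor-of-$4$ shift in the upper limit by the $A_\infty$-comparison of $\omega$ on nearby dyadic cubes — turns the sum into $\lesssim \big\|\mathcal M_A(\,\cdot\,)$-type expression, and Lemma \ref{max FS Bou}(iii)/(iv) (the Fefferman--Stein inequality on $\widetilde{M^{p,\omega}_{t,r}(\ell^q)}$) removes the maximal operator. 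This yields $\|f\|_{\dot{\mathcal F}^{s,q,\psi,L}_{p,t,r,\omega}} \lesssim \| f\| _{  \dot{\mathcal F}^{s,q,L}_{p,t,r,\omega} }^{\bigstar}$, completing the cycle of three estimates.

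The main obstacle I anticipate is not any single inequality but the bookkeeping of the dyadic structure when converting between the continuous integral $\int_0^{2^{-j_Q}}\cdots\frac{\d\alpha}{\alpha}$ and the discrete sum $\sum_{j\ge j_Q}$: one must check carefully that the mismatch between the cutoff $2^{-j_Q}$ and the shifted cutoffs $2^{-j_Q\pm 2}$ produced by Lemmas \ref{peetre psi} and \ref{psi le con} is absorbed by replacing each cube $Q$ by a parent or a bounded family of children, exactly as in Theorem \ref{equivalence}, and that the lower-limit shift in the $j$-sum does not spoil the norm — this is where the Hardy-type inequality (Theorem \ref{hardy bourgain}) may be needed to handle the ``tail'' contributions $j<j_Q$ that appear after reindexing. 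A secondary technical point is ensuring the parameter $A$ can be chosen to satisfy simultaneously $A<\min\{q,p/q_\omega\}$, $\lambda>n/A$, $\omega\in A_{p/A}$, and (when $r<\infty$) the condition $r>-nt/\log\beta$ with $\beta=1-(1-\alpha_0)^{p/A}/[\omega]_{A_{p/A}}$; this is guaranteed by the hypotheses $\lambda>\max\{n/q,nq_\omega/p\}$ and $r>\max\{t,-nt/\log\beta\}$ together with the self-improvement of Muckenhoupt weights, but it should be spelled out to justify invoking Lemma \ref{max FS Bou}.
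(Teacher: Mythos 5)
Your plan is correct and runs on exactly the machinery of the paper's proof (Lemma \ref{peetre psi}, Lemma \ref{psi le con}, Lemma \ref{est peetre psi} with Lemma \ref{basic est}, the Fefferman--Stein bound of Lemma \ref{max FS Bou}, and Theorems \ref{equivalence}, \ref{Peetre}), but you traverse the three-norm cycle in the opposite orientation: the paper proves $\|f\|^{\bigstar}\lesssim\|f\|\lesssim\|f\|^{\bigstar\lambda}\lesssim\|f\|^{\bigstar}$ (its middle step is Lemma \ref{psi le con} at exponent $q$, its last step is the maximal-function estimate for $\bigstar\lambda\lesssim\bigstar$), whereas you prove $\|f\|^{\bigstar}\lesssim\|f\|^{\bigstar\lambda}\lesssim\|f\|\lesssim\|f\|^{\bigstar}$, merging the paper's last two steps into your Stage 3. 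The only place you must tighten is exactly there: after replacing $\psi^*_\lambda(\alpha\sqrt L)f$ by $\mathcal M_A\big(|\psi(\alpha\sqrt L)f|\big)$ pointwise in $\alpha$, you are left with $\mathcal M_A$ applied to an $\alpha$-dependent family inside the integral $\int_0^{2^{-j_Q+2}}(\cdot)^q\,\d\alpha/\alpha$, and Lemma \ref{max FS Bou}(iii)/(iv) is stated only for sequences in $\ell^q$, so it cannot be invoked verbatim on this continuum. The repair is the paper's own device in its Step 3: since $A<q$, use Fubini at exponent $A$ in Lemma \ref{est peetre psi} together with H\"older (or Minkowski) on the $\alpha$-interval to move the whole chunk $G_j:=\big(\int_{2^{-j-2}}^{2^{-j+2}}(\alpha^{-s}|\psi(\alpha\sqrt L)f|)^q\,\d\alpha/\alpha\big)^{1/q}$ inside the maximal operator, so that $2^{js}|\psi_j(\sqrt L)f|\lesssim\mathcal M_A(G_j)$ and the discrete Fefferman--Stein inequality applies to the sequence $\{G_j\}_{j\ge j_Q}$; the reassembled $\sum_{j\ge j_Q}G_j^q$ is then absorbed into $\|f\|^{\bigstar}$ over a parent cube exactly as you describe. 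With that adjustment your Stage 3 closes; your Stage 2 and the parent-cube/reindexing bookkeeping are the same manipulations as in Theorem \ref{equivalence}, the parameter compatibility of $A$, $\lambda$, $r$ is handled just as in the proof of Theorem \ref{Peetre}, and no Hardy-type inequality (Theorem \ref{hardy bourgain}) is in fact needed for this theorem --- it only enters in Theorem \ref{char con Sm}.
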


\begin{proof}
	We only prove (b) since the proof of (a) is similar. We divide the proof of (b) into three steps.
	
	Step 1:  We first prove that 
	\begin{equation}\label{eq con le norm}
		\| f\| _{  \dot{\mathcal F}^{s,q,L}_{p,t,r,\omega} (X)      }^{\bigstar}
		  \lesssim 	\| f\| _{  \dot{\mathcal F}^{s,q,L}_{p,t,r,\omega} (X)  } .
	\end{equation}
	Indeed, for $\alpha \in [2^{-j-1}, 2^{-j}]$ with $j\in \mathbb Z$, from Lemma \ref{peetre psi}, we have 
	\begin{equation}\label{eq psi point}
		\sup _{ \alpha \in [2^{-j-1}, 2^{-j}] } |\psi ( \alpha \sqrt{L}) f (x) | \lesssim \sum_{k=j-2 } ^{j+3} \psi_{k,\lambda}^* ( \sqrt{L} ) f(x).
	\end{equation}
	Hence, (\ref{eq con le norm}) follows from (\ref{eq psi point}) and Theorem \ref{Peetre}.
	
	Step 2: We next show that 
	\begin{equation}\label{eq con le norm 2}
			\| f\| _{  \dot{\mathcal F}^{s,q,L}_{p,t,r,\omega} (X)  }  \lesssim  
				\| f\| _{  \dot{\mathcal F}^{s,q,L}_{p,t,r,\omega} (X)      }^{\bigstar \lambda}.
	\end{equation}
	By Lemma \ref{psi le con}, we have
	\begin{equation}
		|\psi_j ( \sqrt{L}) f(x) | \lesssim \bigg(  \int_{2^{-j-2}} ^{ 2^{-j+2} } | \psi_\lambda ^* (\alpha\sqrt{L} ) f(x) |^q  \frac{\d \alpha}{\alpha}  \bigg)^{1/q}.
	\end{equation}
	This implies (\ref{eq con le norm 2}).
	
	Step 3: Finally, we prove that 
	\begin{align}\label{eq con le norm 3}
		\| f\| _{  \dot{\mathcal F}^{s,q,L}_{p,t,r,\omega} (X)      }^{\bigstar \lambda} \lesssim 	\| f\| _{  \dot{\mathcal F}^{s,q,L}_{p,t,r,\omega} (X)      }^{\bigstar}.
	\end{align}
	 By Lemma \ref{est peetre psi}, we have, for all $ \alpha  \in [1,2]$, 
	\begin{equation*}
		|\psi_\lambda ^* (  2^{-j} \alpha \sqrt{L} ) f (x) |^A \lesssim \int _X \frac{1}{V (x,2^{-j}) }  \frac{|\psi (2^{-j} \alpha  \sqrt{L}  ) f(z)|^A }{ (1+ 2^j \rho(x,z)) ^{\lambda A} }  \d \mu (z).
	\end{equation*}
	Since $A < q$, use Minkowski's inequality to obtain
	\begin{align*}		
		& \Bigg(  \int_1^2  | \psi_\lambda^* ( 2^{-j} \alpha \sqrt{L} f (x) ) |^q \frac{d \alpha }{\alpha }   \Bigg)^{A / q} 
		\lesssim  \int _X  \frac{1}{V (x,2^{-j}) }  \frac{ \Big( \int_1^2  |\psi (2^{-j} \alpha  \sqrt{L}  ) f(z)|^q \frac{\d \alpha }{ \alpha }  \Big)^{A/q}   }{ (1+ 2^j \rho(x,z)) ^{\lambda A} }  \d \mu (z).
	\end{align*}
	By a change of variables,
		\begin{align*}		
			\nonumber
		& \Bigg(  \int_{2^{-j}}^{2^{-j+1}} ( \alpha ^{-s}  | \psi_\lambda^* (  \alpha \sqrt{L} f (x) ) |  )^q \frac{\d \alpha }{\alpha }   \Bigg)^{A / q} \\
		& \lesssim  \int _X  \frac{1}{V (x,2^{-j}) }  \frac{ \Big( \int_{2^{-j}}^{2^ { -j+1 } } ( \alpha ^{-s}  |\psi ( \alpha  \sqrt{L}  ) f(z)|^q \frac{\d \alpha }{ \alpha }  \Big)^{A/q}   }{ (1+ 2^j \rho(x,z)) ^{\lambda A} }  \d \mu (z).
	\end{align*}
	Hence, applying Lemma \ref{basic est}, we obtain
	\begin{equation*}
		\Bigg(  \int_{2^{-j}}^{2^{-j+1}} ( \alpha ^{-s}  | \psi_\lambda^* (  \alpha \sqrt{L} f (x) ) |  )^q \frac{d \alpha }{\alpha }   \Bigg)^{1 / q}  \lesssim \mathcal M _A \bigg(  \bigg( \int_{2^{-j}}^{2^{ -j+1} } ( \alpha ^{-s}  |\psi ( \alpha  \sqrt{L}  ) f|^q \frac{\d \alpha}{\alpha} \bigg) ^{1/q} \bigg) (x)
	\end{equation*}
	since $\lambda > n/A$.
	Using Lemma \ref{max FS Bou}, we get the required estimate (\ref{eq con le norm 3}).
	Thus the proof of Theorem  \ref{char conti} is complete.
\end{proof}

\subsection{Continuous characterizations by Schwartz functions}
For each $m\in \mathbb N$, denote by $ \mathscr S _m (\mathbb R)$ the set of all even functions $\varphi \in \mathscr S (\mathbb R) $ such that $\varphi (\xi) = \xi ^{2m}  \phi (\xi) $ for some $\phi \in \mathscr S (\mathbb R)$ and $ \varphi (\xi ) \neq 0$ on $ (1/2, 2).$ 
Now, let $\varphi \in \mathscr S _m (\mathbb R) $ and denote
\begin{align*}
	\| f\| _{  \dot{\mathcal F}^{s,q, \varphi, L}_{p,t,r,\omega} (X)      }^{\bigstar}
	&:= \bigg(    \sum_{Q \in \mathcal D}  \omega (Q)^{r/t-r/p} \Big(  \int_Q  \Big(  \int_{0}^{2^{-j_Q} }| \alpha ^{-s} \varphi  (\alpha  \sqrt{L} )f (x)|^q\frac{\d \alpha}{ \alpha} \Big)^{p/q}  \omega (x) \d \mu (x)      \Big)^{r/p}                 \bigg)^{1/r} ,\\
	\| f\| _{  \dot{\mathcal F}^{s,q, \varphi, L}_{p,t,r,\omega} (X)     }^{\bigstar \lambda}
	& := 	\bigg(    \sum_{Q \in \mathcal D}  \omega (Q)^{r/t-r/p} \Big(  \int_Q  \Big(  \int_{0}^{2^{-j_Q} }| \alpha ^{-s} \varphi_\lambda ^*  (\alpha  \sqrt{L} )f (x)|^q\frac{\d \alpha}{ \alpha} \Big)^{p/q}  \omega (x) \d \mu (x)      \Big)^{r/p}                 \bigg)^{1/r} ,\\
	\| f\| _{  \dot{\mathcal B}^{s,q,\varphi, L}_{p,t,r,\omega} (X)      }^{\bigstar}
	& := \bigg(    \sum_{Q \in \mathcal D}  \omega (Q)^{r/t-r/p} 
	\Big(  \int_{0}^{2^{-j_Q} }  \Big( \int_Q | \alpha ^{-s} \varphi  (\alpha  \sqrt{L} )f (x)|^p \omega (x )\d \mu (x)  \Big)^{q/p} \frac{\d \alpha}{ \alpha} \Big) ^{r/q}
	\bigg)^{1/r} ,\\
	\| f\| _{  \dot{\mathcal B}^{s,q,\varphi, L}_{p,t,r,\omega} (X)      }^{\bigstar \lambda}
	& := \bigg(    \sum_{Q \in \mathcal D}  \omega (Q)^{r/t-r/p} 
	\Big(  \int_{0}^{2^{-j_Q} }  \Big( \int_Q | \alpha ^{-s} \varphi _\lambda ^* (\alpha  \sqrt{L} )f (x)|^p \omega (x )\d \mu (x)  \Big)^{q/p} \frac{\d \alpha}{ \alpha} \Big) ^{r/q}
	\bigg)^{1/r} .
\end{align*}

\begin{thm}\label{char con Sm}
			 Let  $\omega \in A_\infty$, $0<p<\infty$, $0<q\le \infty$ and  $s\in \mathbb R$.
	
	{\rm (a)} 	Let $0<p<t<\infty$ and $ \max\{t, -nt / \log \beta  \} < r <\infty $, where $\beta = 1-  (1- \alpha_0)^{p/A} /  [\omega]_{A_{p/A}} $ with $0< A<  p / q_\omega$, or $ 0 <p \le t < r =\infty$. 
		Let $m\in \mathbb N $,  $  2m  >  (1/p-1/t)n q_\omega +s$. Let $\varphi \in \mathscr S _m (\mathbb R) $. Then
	 for $\lambda > n q_\omega / p$ and $f\in \mathcal S '$,
there exists $g \in \mathcal P$ such that
\begin{align} \label{eq Sm 1}
	\| f-g\| _{  \dot{\mathcal B}^{s,q,\varphi, L}_{p,t,r,\omega} (X)      }^{\bigstar \lambda}
	\lesssim
	\| f\| _{  \dot{\mathcal B}^{s,q,L}_{p,t,r,\omega} (X)  }
	\lesssim  		\| f\| _{  \dot{\mathcal B}^{s,q,\varphi, L}_{p,t,r,\omega} (X)      }^{\bigstar}.
\end{align}
	
	{\rm (b)} 
	If $0<p<t<\infty$ and $ \max\{t, -nt / \log \beta  \} < r <\infty $, where $\beta = 1-  (1- \alpha_0)^{p/A} /  [\omega]_{A_{p/A}} $ with $0< A<  \min \{ q , p/q_\omega  \}$, or $ 0 <p \le t < r =\infty$.
		Let $m\in \mathbb N $,  $  2m  >  (1/p-1/t)n q_\omega +s$. Let $\varphi \in \mathscr S _m (\mathbb R) $. Then for
	  $\lambda >\max \{  n/q, n q_\omega /p  \} $ and $f \in \mathcal S'$, there exists $g \in \mathcal P$ such that
	 \begin{align}\label{eq Sm 2}
	 	\| f-g\| _{  \dot{\mathcal F}^{s,q, \varphi, L}_{p,t,r,\omega} (X)      }^{\bigstar \lambda}
	 	\lesssim  	\| f\| _{  \dot{\mathcal F}^{s,q,L}_{p,t,r,\omega} (X)  }
	 	\lesssim  			\| f\| _{  \dot{\mathcal F}^{s,q, \varphi, L}_{p,t,r,\omega} (X)     }^{\bigstar}.
	 \end{align}
\end{thm}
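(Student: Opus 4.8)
The plan is to follow the scheme of \cite{BBD20} together with the proofs of Theorems \ref{Peetre} and \ref{char conti}: the function $\varphi\in\mathscr S_m(\mathbb R)$ is linked to a fixed partition of unity $\psi$ via a Calder\'on-type reproducing formula and composition-kernel estimates, the vanishing $\varphi(\xi)=\xi^{2m}\phi(\xi)$ at the origin being precisely what controls the resulting series. We treat only (b); part (a) is identical, using the $\widetilde{\ell^q(M_{t,r}^{p,\omega})}$ versions of Theorems \ref{hardy bourgain} and \ref{Ma ell q Mptr}/\ref{max FS Bou} in place of the $\widetilde{M_{t,r}^{p,\omega}(\ell^q)}$ ones. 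By Theorems \ref{inde psi}, \ref{Peetre} and \ref{char conti} we may replace $\|f\|_{\dot{\mathcal F}^{s,q,L}_{p,t,r,\omega}(X)}$ throughout by $\|f\|_{\dot{\mathcal F}^{s,q,\psi,L}_{p,t,r,\omega}(X)}$, and, running the argument of Step 3 of the proof of Theorem \ref{char conti} with $\varphi$ in place of $\psi$ (the analogue of Lemma \ref{est peetre psi} for $\varphi\in\mathscr S_m(\mathbb R)$, see \cite{BBD20}), we get $\|f\|^{\bigstar\lambda}_{\dot{\mathcal F}^{s,q,\varphi,L}_{p,t,r,\omega}(X)}\approx\|f\|^{\bigstar}_{\dot{\mathcal F}^{s,q,\varphi,L}_{p,t,r,\omega}(X)}$. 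Hence it suffices to prove
\[
\|f\|_{\dot{\mathcal F}^{s,q,\psi,L}_{p,t,r,\omega}(X)}\lesssim\|f\|^{\bigstar\lambda}_{\dot{\mathcal F}^{s,q,\varphi,L}_{p,t,r,\omega}(X)}
\quad\text{and}\quad
\|f-g\|^{\bigstar\lambda}_{\dot{\mathcal F}^{s,q,\varphi,L}_{p,t,r,\omega}(X)}\lesssim\|f\|_{\dot{\mathcal F}^{s,q,\psi,L}_{p,t,r,\omega}(X)}
\]
for a suitable $g\in\mathcal P$.

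For the first estimate, since $\varphi\neq0$ on $(1/2,2)$ we fix an even $\eta\in\mathscr S(\mathbb R)$ supported in $(1/2,2)$ with $\int_0^\infty\eta(\alpha\xi)\varphi(\alpha\xi)\,\frac{\d\alpha}{\alpha}=1$ for $\xi\neq0$; then for $f\in\mathcal S'$ there is $g'\in\mathcal P$ with $f-g'=\int_0^\infty\eta(\alpha\sqrt L)\varphi(\alpha\sqrt L)f\,\frac{\d\alpha}{\alpha}$ in $\mathcal S'$, so that $\psi_j(\sqrt L)f=\int_0^\infty\psi_j(\sqrt L)\eta(\alpha\sqrt L)\varphi(\alpha\sqrt L)f\,\frac{\d\alpha}{\alpha}$ because $\psi_j(\sqrt L)$ annihilates $\mathcal P$. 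Since $\psi$ and $\eta$ vanish to infinite order at $0$, the kernel of $\psi_j(\sqrt L)\eta(\alpha\sqrt L)$ obeys, for all $N,\Lambda>0$,
\[
|K_{\psi_j(\sqrt L)\eta(\alpha\sqrt L)}(x,z)|\lesssim\Big(\tfrac{2^{-j}\wedge\alpha}{2^{-j}\vee\alpha}\Big)^{N}\frac{1}{V(x,2^{-j}\vee\alpha)}\Big(1+\frac{\rho(x,z)}{2^{-j}\vee\alpha}\Big)^{-\Lambda}.
\]
Inserting the pointwise bound $|\varphi(\alpha\sqrt L)f(z)|\le(1+\rho(x,z)/\alpha)^\lambda\varphi^*_\lambda(\alpha\sqrt L)f(x)$, using Lemma \ref{basic est}, discretising the $\alpha$-integral, and applying H\"older (for $q\ge1$) or the analogue of Lemma \ref{psi le con} for $\varphi$ (for $0<q<1$) exactly as in \cite{BBD20} and the proof of Theorem \ref{char conti}, we arrive at
\[
2^{js}|\psi_j(\sqrt L)f(x)|\lesssim\sum_{k\in\mathbb Z}2^{-|j-k|\delta}\,\mathcal M_A\!\left(\left(\int_{2^{-k-1}}^{2^{-k}}|\alpha^{-s}\varphi^*_\lambda(\alpha\sqrt L)f|^q\,\tfrac{\d\alpha}{\alpha}\right)^{1/q}\right)(x),
\]
with $0<A<\min\{q,p/q_\omega\}$, $\lambda>n/A$ and $\delta$ arbitrarily large (no condition on $m$ is needed here, as $\psi,\eta$ vanish to infinite order). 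Feeding this into the $\widetilde{M_{t,r}^{p,\omega}(\ell^q)}$-norm defining $\|f\|_{\dot{\mathcal F}^{s,q,\psi,L}_{p,t,r,\omega}(X)}$, summing over $k$ by the Hardy inequality (Theorem \ref{hardy bourgain}) and applying the Fefferman--Stein inequality (Lemma \ref{max FS Bou}(iii)) bounds it by $\|f\|^{\bigstar\lambda}_{\dot{\mathcal F}^{s,q,\varphi,L}_{p,t,r,\omega}(X)}$.

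For the second estimate we take $g\in\mathcal P$ from Lemma \ref{iden}, so that $f-g=c_\psi\int_0^\infty\psi(t\sqrt L)f\,\frac{\d t}{t}$ in $\mathcal S'$, whence $\varphi(\alpha\sqrt L)(f-g)=c_\psi\int_0^\infty\varphi(\alpha\sqrt L)\psi(t\sqrt L)f\,\frac{\d t}{t}$. The crucial point is the composition-kernel bound
\[
|K_{\varphi(\alpha\sqrt L)\psi(t\sqrt L)}(x,z)|\lesssim\Big(\tfrac{\alpha\wedge t}{\alpha\vee t}\Big)^{\sigma}\frac{1}{V(x,\alpha\vee t)}\Big(1+\frac{\rho(x,z)}{\alpha\vee t}\Big)^{-\Lambda},
\]
where $\sigma$ may be taken arbitrarily large when $t\le\alpha$ (from the Schwartz decay of $\varphi$) but only up to $2m$ when $\alpha<t$ (this is where $\varphi(\xi)=\xi^{2m}\phi(\xi)$ enters). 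Combining this with the analogue of Lemma \ref{est peetre psi} for $\varphi$ and with Lemma \ref{basic est}, one gets, for $\alpha\approx2^{-j}$ and $t\approx2^{-k}$, a pointwise estimate of $2^{js}\varphi^*_\lambda(\alpha\sqrt L)(f-g)(x)$ by $\sum_k2^{-|j-k|\delta}\mathcal M_A(|2^{ks}\psi_k(\sqrt L)f|)(x)$; on the branch $k>j$ (that is, $\alpha<t$) the exponent $2m$ must absorb both the scale factor $2^{(j-k)s}$ produced by $\alpha^{-s}$ and the decay threshold $(1/p-1/t)nq_\omega$ demanded by Theorem \ref{hardy bourgain}, which is exactly why we assume $2m>(1/p-1/t)nq_\omega+s$ (so that the effective decay $\delta=2m-s$ exceeds $(1/p-1/t)nq_\omega$). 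Discretising the $\alpha$-integral of the $\bigstar\lambda$-norm, then invoking Theorem \ref{hardy bourgain} and Lemma \ref{max FS Bou}(iii) (valid since $0<A<\min\{q,p/q_\omega\}$, $\omega\in A_{p/A}$ and $r>-nt/\log\beta$) yields $\|f-g\|^{\bigstar\lambda}_{\dot{\mathcal F}^{s,q,\varphi,L}_{p,t,r,\omega}(X)}\lesssim\|\{2^{ks}\psi_k(\sqrt L)f\}_k\|_{\widetilde{M_{t,r}^{p,\omega}(\ell^q)}}=\|f\|_{\dot{\mathcal F}^{s,q,\psi,L}_{p,t,r,\omega}(X)}$.

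The main obstacle is the second estimate, and within it the composition-kernel bound for $\varphi(\alpha\sqrt L)\psi(t\sqrt L)$ with a gain of order exactly $2m$ on the low-frequency branch, together with the bookkeeping that balances this gain against the homogeneity factor $2^{(j-k)s}$ coming from $\alpha^{-s}$ and against the decay threshold $(1/p-1/t)nq_\omega$ imposed by the Bourgain--Morrey structure in Theorem \ref{hardy bourgain}; this balance is the source of the hypothesis $2m>(1/p-1/t)nq_\omega+s$. The remaining ingredients --- existence and $\mathcal S'$-convergence of the reproducing formulas, the reduction of Peetre maximal functions to $\mathcal M_A$, and the concluding maximal and Hardy estimates --- are routine given the tools already assembled in Sections \ref{Pre} and \ref{Bou spa}.
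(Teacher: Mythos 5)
Your proposal is correct and follows essentially the same route as the paper: the Calder\'on reproducing formula from Lemma \ref{iden}, the composition-kernel estimates of \cite{BBD20} with arbitrarily large gain on one branch and gain exactly $2m$ on the branch $\alpha<t$ (which, combined with the factor $\alpha^{-s}$, forces $2m-s>(1/p-1/t)nq_\omega$ for Theorem \ref{hardy bourgain}), and the Peetre-maximal/$\mathcal M_A$ machinery of Theorems \ref{Peetre}, \ref{Ma ell q Mptr} and Lemma \ref{max FS Bou}; your re-derivation of the bound $|\psi_j(\sqrt L)f|\lesssim(\int|\varphi^*_\lambda(\alpha\sqrt L)f|^q\frac{\d\alpha}{\alpha})^{1/q}$ via an auxiliary $\eta$ simply reproves Lemma \ref{psi le con}, which the paper cites directly. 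Only a cosmetic slip: with your conventions $\alpha\approx2^{-j}$, $t\approx2^{-k}$, the critical low-frequency branch $\alpha<t$ is $k<j$, not $k>j$, but this does not affect the argument.
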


\begin{rem}
	The proof of Theorem \ref{char con Sm} follows the ideas in \cite[Theorem 3.6]{BBD20}.
	Comparing with the proof of Theorem \ref{char conti}, the difficulty of the  proof of Theorem \ref{char con Sm} is lack of compact support condition for the functions in $\mathscr S _m (\mathbb R)$. Another difference with Theorem \ref{char conti} is that the results are formulated for $f \in \mathcal S ' $. The reason is that, as $K_ {\varphi (\alpha  \sqrt{L}) }  (x, \cdot ) $ may not be in $\mathcal S_\infty$ for $\varphi \in \mathscr{S} _m (\mathbb R)$, $\varphi (\alpha \sqrt{L}) f $ may not be defined when $f\in \mathcal S_\infty '.$ Although each $f \in \mathcal S _\infty '$ has an extension to an element in $\mathcal S'$, the extension in not unique; that is, $\varphi ( \alpha \sqrt{L} ) f$ will depend on the chosen representative of $f$. Theorems \ref{char con Sm} says that there exists a representative such that the left-hand-side inequality in (\ref{eq Sm 1}), (\ref{eq Sm 2}) holds.
\end{rem}

\begin{proof}[Proof of Theorem \ref{char con Sm}.]
	We only prove (\ref{eq Sm 2}) since the proof of (\ref{eq Sm 1}) is similar. We divide the proof into three steps.
	
	Step 1: Let $\psi$ be a partition of unity. By Lemma \ref{iden}, there exists $g\in \mathcal P$ such that 
	\begin{equation*}
		f - g = c_\psi \int_0^\infty \psi (\alpha \sqrt{L} ) f \frac{\d \alpha}{\alpha} \;\; \mathrm{in} \;\mathcal{S} '.
	\end{equation*}
	We will show that
	\begin{equation} \label{eq Sm step 1}
		\| f-g\| _{  \dot{\mathcal F}^{s,q, \varphi, L}_{p,t,r,\omega} (X)      }^{\bigstar \lambda}
		\lesssim  	\| f\| _{  \dot{\mathcal F}^{s,q,L}_{p,t,r,\omega} (X)  } .
	\end{equation}
	Let $\lambda >0$ , $\alpha \in [2^{-\nu -1}  , 2^{-\nu} ]$ for some $\nu \in \mathbb Z$.
	From   \cite[(55)]{BBD20}, we have the following estimate:
	\begin{align*}
		\nonumber
		| \varphi_{\lambda}^* (\alpha \sqrt{L}) (f-g) |  
		& \lesssim \sum_{j \ge \nu -1}  2^{ -  (2M- \lambda) (j-\nu) } \psi_{j,\lambda}^* (\sqrt{L}) f + \sum_{j <  \nu +3}  2^{ -  2m (\nu-j ) } \psi_{j,\lambda}^* (\sqrt{L}) f  \\
		& \lesssim  \sum_{j\in \mathbb Z} 2^{-2m   |\nu -j| } \psi_{j, \lambda}^* ( \sqrt{L}) f.
	\end{align*} 
	for all $\alpha \in [2^{-\nu -1}  , 2^{-\nu} ] $ and $M > m+\lambda /2$.
	Therefore,
		\begin{align} \label{hardy v j}
		\nonumber
	& 2^{\nu s}	| \varphi_{\lambda}^* (\alpha \sqrt{L}) (f-g) |  \\
	\nonumber
		& \lesssim \sum_{j \ge \nu -1}  2^{ -  (2M- \lambda) (j-\nu) } 2^{ -s (j -\nu )} 2^{js} \psi_{j,\lambda}^* (\sqrt{L}) f    + \sum_{j <  \nu +3}  2^{ -  2m (\nu-j ) }  2^{ s (\nu -j )} 2^{js} \psi_{j,\lambda}^* (\sqrt{L}) f  \\
		\nonumber
		& = \sum_{j \ge \nu -1}  2^{ -  (2M- \lambda + s) (j-\nu) }   2^{js} \psi_{j,\lambda}^* (\sqrt{L}) f    + \sum_{j <  \nu +3}  2^{ - ( 2m -s ) (\nu-j ) }   2^{js} \psi_{j,\lambda}^* (\sqrt{L}) f  \\
		& \lesssim  \sum_{j\in \mathbb Z} 2^{- (2m -s )   |\nu -j| } 2^{js}  \psi_{j, \lambda}^* ( \sqrt{L}) f.
	\end{align} 
	if we provide that $ M > (  2m - 2s +\lambda  )/2 $.  
	Hence,
	\begin{equation*}
			\int_{2^{-\nu -1}}  ^{ 2 ^{-\nu} } \Big(  \alpha ^{-s} |   \varphi_{\lambda}^* (\alpha \sqrt{L}) (f-g) |   \Big) ^q \frac{\d \alpha}{\alpha} \lesssim 
			\Big(     \sum_{j\in \mathbb Z} 2^{- ( 2m-s)   |\nu -j|  } 2^{js} \psi_{j, \lambda}^* ( \sqrt{L}) f    \Big)^q .
	\end{equation*}
		Therefore,
	\begin{align*}
		\nonumber
		&	\| f-g\| _{  \dot{\mathcal F}^{s,q, \varphi, L}_{p,t,r,\omega} (X)      }^{\bigstar \lambda} \\
		& \lesssim \bigg(    \sum_{Q \in \mathcal D}  \omega (Q)^{r/t-r/p} \Big(  \int_Q  \Big(   \sum_{\nu = j_Q}^\infty  \Big(     \sum_{j\in \mathbb Z} 2^{-  (2m-s)   |\nu -j|  }   2^{js} \psi_{j, \lambda}^* ( \sqrt{L}) f (x)     \Big)^q 
		\Big)^{p/q}  \omega (x) \d \mu (x)      \Big)^{r/p}                 \bigg)^{1/r}.
	\end{align*}
	Hence, if $(2m-s)  >  (1/p -1/t) n q_\omega $, by Theorem \ref{hardy bourgain},  we have
	\begin{equation*}
			\| f-g\| _{  \dot{\mathcal F}^{s,q, \varphi, L}_{p,t,r,\omega} (X)      }^{\bigstar \lambda}
		\lesssim 	\| f\| _{  \dot{\mathcal F}^{s,q,L}_{p,t,r,\omega} (X)      }^{\bigstar \lambda}.
	\end{equation*}
	Since $ \lambda >\max \{  n/q, n q_\omega /p  \} $, by  Theorem \ref{Peetre}, we have
	\begin{equation*}
			\| f\| _{  \dot{\mathcal F}^{s,q,L}_{p,t,r,\omega} (X)      }^{\bigstar \lambda}  \approx	\| f\| _{  \dot{\mathcal F}^{s,q,L}_{p,t,r,\omega} (X)  } .
	\end{equation*} 
Hence, we get (\ref{eq Sm step 1}). 
	
	Step 2: We will show that 
	\begin{equation*}
		\| f\| _{  \dot{\mathcal F}^{s,q,L}_{p,t,r,\omega} (X)  } 
	\lesssim  			\| f\| _{  \dot{\mathcal F}^{s,q, \varphi, L}_{p,t,r,\omega} (X)     }^{\bigstar \lambda}.
	\end{equation*}
	Let $\psi$ be a partition of unity. By Lemma \ref{psi le con}, we have
	\begin{equation*}
		| \psi_j ( \sqrt{L} ) f (x)  |^q \lesssim \int_{2^{-j-2}}^{2^{-j+2}}  | \varphi _ \lambda ^*  (\alpha \sqrt{L})  f(x) |^q \frac{\d \alpha} {\alpha}.
	\end{equation*}	
	Hence, the desired inequality follows.
	
	Step 3: We will prove that 
	\begin{equation}\label{eq Sm step 3}
		\| f\| _{  \dot{\mathcal F}^{s,q, \varphi, L}_{p,t,r,\omega} (X)     }^{\bigstar \lambda} \lesssim 
		\| f\| _{  \dot{\mathcal F}^{s,q, \varphi, L}_{p,t,r,\omega} (X)     }^{\bigstar }.
	\end{equation}
	Let 
	\begin{equation*}
		F_k = \Big(   \int_{2^{-k}} ^{ 2^{-k+1}} ( \alpha ^{-s}  |\varphi(\alpha \sqrt{L})  f(z) |  )^q  \frac{\d \alpha}{\alpha}   \Big)^{1/q}.
	\end{equation*}
	Note that   $\max\{  n/p, n/q, nq_\omega /p \} $  $ =\max\{  n/q, nq_\omega /p \} <n/A <\lambda $. 
	From the  inequality after \cite[(60)]{BBD20}, we have 
	\begin{equation*}
		\bigg(  \int_{2^{-j}} ^{ 2^{-j+1} } ( \alpha^{-s}  |  \varphi_\lambda^* ( \alpha \sqrt{L} ) f (x) |  )^q  \frac{\d \alpha}{ \alpha}    \bigg)^{A/q} \lesssim  \bigg(  \sum_{k \ge j}  2^{   (j-k)(B-\lambda+s)A  }  \mathcal M _A ( F_k )  (x)  ^q  \bigg)^{A/q}.
	\end{equation*}
	Let $B>0$ sufficiently large ($(B-\lambda+s)A >  (1/p -1/t) n q_\omega  $) such that  we can use Theorem \ref{hardy bourgain}.
	Hence, by Theorem   \ref{hardy bourgain} and Lemma \ref{max FS Bou}, we obtain (\ref{eq Sm step 3}).
	
	Thus we complete the proof of Theorem \ref{char con Sm}.
\end{proof}

\begin{rem}\label{rem Sm}
	\begin{itemize}
		\item[\rm (a)] If $r=\infty$, $  p=t$, then $ M_{p,\omega}^{t,r} =L^{p}(\omega) $. In this case,  Theorem \ref{char con Sm} can be found in \cite[Theorem 3.6]{BBD20}.
		\item[\rm (b)] Case $r=\infty$, $  p=t$, and $L = - \Delta$, the Laplacian on $\rn$, $\mathcal P$ is the set of all polynomials. In this case, Theorem \ref{char con Sm} are in line with  \cite[Theorems 3.1, 4.1]{BPT96}.
		\item[\rm (c)] The presence of the polynomials $g$ in (\ref{eq Sm 1}) and (\ref{eq Sm 2}) can be omitted if $ f\in L^2$. The details can be seen in \cite[Remark 3.7]{BBD20}.
	\end{itemize}
\end{rem}

Let $ \Psi _{m,\alpha}  (L) = (\alpha ^2 L)^m e^{- \alpha^2 L} $  for $\alpha >0$ and $m\in\mathbb N$. For $\lambda >0$, $f\in \mathcal S '$,  we define 
\begin{equation*}
	\Psi _{m,\alpha, \lambda}^*  (L) f(x) = \sup_{y\in X} \frac{ | \Psi _{m,\alpha}  (L)  f(y) | }{ (1+\rho(x,y)/\alpha )^\lambda } .
\end{equation*}
Applying Theorem \ref{char con Sm} and Remark  \ref{rem Sm} for $\varphi (\xi) = \xi^{2m} e^{-\xi^2}$, we have the following heat kernel characterizations for the  Bourgain-Morrey-Besov type spaces and Triebel-Lizorkin type spaces.

\begin{cor} \label{cor Psi}
		 Let  $\omega \in A_\infty$, $0<p<\infty$, $0<q\le \infty$ and  $s\in \mathbb R$.
	
	{\rm (a)} 	Let $0<p<t<\infty$ and $ \max\{t, -nt / \log \beta  \} < r <\infty $, where $\beta = 1-  (1- \alpha_0)^{p/A} /  [\omega]_{A_{p/A}} $ with $0< A<  p / q_\omega$, or $ 0 <p \le t < r =\infty$. 
	Let $m\in \mathbb N $,  $  2m  >  (1/p-1/t)n q_\omega +s$.  Then
	for $\lambda > n q_\omega / p$ and $f\in \mathcal S '$,
	there exists $g \in \mathcal P$ such that
\begin{align} \label{eq heat 1}
	\nonumber
	&  \bigg(    \sum_{Q \in \mathcal D}  \omega (Q)^{r/t-r/p} 
	\Big(  \int_{0}^{2^{-j_Q} }  \Big( \int_Q | \alpha ^{-s}  \Psi_{m,\alpha,\lambda}^* (L) (f-g)  (x)|^p \omega (x ) \d \mu (x) \Big)^{q/p} \frac{\d \alpha}{ \alpha} \Big) ^{r/q}
	\bigg)^{1/r}  \\
	\nonumber
&	\lesssim
	\| f\| _{  \dot{\mathcal B}^{s,q,L}_{p,t,r,\omega} (X)  } \\
	&	\lesssim   \bigg(    \sum_{Q \in \mathcal D}  \omega (Q)^{r/t-r/p} 
		\Big(  \int_{0}^{2^{-j_Q} }  \Big( \int_Q | \alpha ^{-s}  \Psi_{m,\alpha} (L)f (x)|^p \omega (x ) \d \mu (x) \Big)^{q/p} \frac{\d \alpha}{ \alpha} \Big) ^{r/q}
		\bigg)^{1/r} 	.
\end{align}
	
	{\rm (b)} 
	If $0<p<t<\infty$ and $ \max\{t, -nt / \log \beta  \} < r <\infty $, where $\beta = 1-  (1- \alpha_0)^{p/A} /  [\omega]_{A_{p/A}} $ with $0< A<  \min \{ q , p/q_\omega  \}$, or $ 0 <p \le t < r =\infty$.
	Let $m\in \mathbb N $,  $  2m  >  (1/p-1/t)n q_\omega +s$. Then for
	$\lambda >\max \{  n/q, n q_\omega /p  \} $ and $f \in \mathcal S'$, there exists $g \in \mathcal P$ such that
\begin{align} \label{eq heat 2}
	\nonumber
	& \bigg(    \sum_{Q \in \mathcal D}  \omega (Q)^{r/t-r/p} \Big(  \int_Q  \Big(  \int_{0}^{2^{-j_Q} }| \alpha ^{-s}  \Psi_{m,\alpha,\lambda}^* (L) (f-g)  (x)|^q\frac{\d \alpha}{ \alpha} \Big)^{p/q}  \omega (x) \d \mu (x)      \Big)^{r/p}                 \bigg)^{1/r}  \\
	\nonumber 
&	\lesssim  	\| f\| _{  \dot{\mathcal F}^{s,q,L}_{p,t,r,\omega} (X)  } \\
&	\lesssim  	
	 \bigg(    \sum_{Q \in \mathcal D}  \omega (Q)^{r/t-r/p} \Big(  \int_Q  \Big(  \int_{0}^{2^{-j_Q} }| \alpha ^{-s}  \Psi_{m,\alpha} (L)f (x)|^q\frac{\d \alpha}{ \alpha} \Big)^{p/q}  \omega (x) \d \mu (x)      \Big)^{r/p}                 \bigg)^{1/r} 	.
\end{align}

Moreover, if $f\in L^2 $, $g$ can be removed in (\ref{eq heat 1}) and (\ref{eq heat 2}).

\end{cor}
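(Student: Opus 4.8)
The plan is to obtain Corollary~\ref{cor Psi} as a direct specialization of Theorem~\ref{char con Sm} to the auxiliary function $\varphi(\xi) := \xi^{2m}e^{-\xi^2}$, followed by an appeal to Remark~\ref{rem Sm}(c) for the concluding $L^2$ assertion. No new analytic ingredient is needed; the work is to check the hypotheses of Theorem~\ref{char con Sm} for this $\varphi$ and then to recognize that, for this choice, the abstract norms appearing in that theorem coincide verbatim with the heat-kernel expressions in (\ref{eq heat 1}) and (\ref{eq heat 2}).

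First I would verify that $\varphi \in \mathscr S_m(\mathbb R)$. Indeed $\varphi$ is even; it lies in $\mathscr S(\mathbb R)$ since $e^{-\xi^2}$ and each of its derivatives decay faster than any inverse polynomial; it factors as $\varphi(\xi) = \xi^{2m}\phi(\xi)$ with $\phi(\xi) := e^{-\xi^2} \in \mathscr S(\mathbb R)$; and $\varphi(\xi) = \xi^{2m}e^{-\xi^2} > 0$ for all $\xi \in (1/2,2)$, so the nonvanishing requirement on $(1/2,2)$ holds. Next I would record the functional-calculus identity: by the spectral theorem for the nonnegative self-adjoint operator $L$, for every $\alpha > 0$,
\[
\varphi(\alpha\sqrt{L}) = (\alpha\sqrt{L})^{2m} e^{-(\alpha\sqrt{L})^2} = (\alpha^2 L)^m e^{-\alpha^2 L} = \Psi_{m,\alpha}(L),
\]
and hence also $\varphi_\lambda^*(\alpha\sqrt{L}) f = \Psi_{m,\alpha,\lambda}^*(L) f$ for all $f \in \mathcal S'$, $x \in X$ and $\lambda > 0$. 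Substituting these two equalities into $\|\cdot\|^{\bigstar}_{\dot{\mathcal B}^{s,q,\varphi,L}_{p,t,r,\omega}(X)}$, $\|\cdot\|^{\bigstar \lambda}_{\dot{\mathcal B}^{s,q,\varphi,L}_{p,t,r,\omega}(X)}$ and their Triebel-Lizorkin counterparts turns them exactly into the iterated norms on the two outer sides of (\ref{eq heat 1}) and (\ref{eq heat 2}).

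Given these identifications, parts (a) and (b) of Theorem~\ref{char con Sm} --- whose assumptions on $p,t,r,q,s,\lambda,A$ and whose admissibility condition $2m > (1/p-1/t)nq_\omega + s$ are precisely those imposed in the corollary --- produce (\ref{eq heat 1}) and (\ref{eq heat 2}) with the polynomial $g \in \mathcal P$ supplied by that theorem. For the final sentence I would invoke Remark~\ref{rem Sm}(c) (equivalently \cite[Remark 3.7]{BBD20}): when $f \in L^2$ the polynomial correction is unnecessary and one may take $g = 0$. I anticipate no genuine obstacle here; the only point deserving an explicit line is the identity $\varphi(\alpha\sqrt{L}) = \Psi_{m,\alpha}(L)$, but this is immediate since both sides are bounded Borel functions of $\sqrt{L}$ agreeing on the spectrum $[0,\infty)$.
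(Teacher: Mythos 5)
Your proposal is correct and matches the paper's own derivation: the paper obtains this corollary precisely by applying Theorem~\ref{char con Sm} (and Remark~\ref{rem Sm}(c) for the $L^2$ case) with $\varphi(\xi)=\xi^{2m}e^{-\xi^2}$, using the spectral identity $\varphi(\alpha\sqrt{L})=(\alpha^2L)^m e^{-\alpha^2 L}=\Psi_{m,\alpha}(L)$ exactly as you do. Your added verification that $\varphi\in\mathscr S_m(\mathbb R)$ and that the Peetre maximal functions coincide is the same routine checking implicit in the paper.
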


\begin{rem}
	Let $r=\infty$, $ p =t$.   (\ref{eq heat 1}) and (\ref{eq heat 2}) were proved in  \cite{BBD20}.
	Let $r=\infty$, $ p =t$ again.  In \cite{GKKP17}, \cite{KP15}, the authors proved (\ref{eq heat 1}) for $s\in \mathbb R$, $1\le p \le \infty$, $0<q \le \infty$ and (\ref{eq heat 2}) for $s\in \mathbb R$, $1 < p < \infty$, $0 < q \le \infty$ for the inhomogeneous and homogeneous Besov and Triebel-Lizorkin spaces under additional conditions of H\"older continuity and Markov property of the heat kernel $p_\alpha (x,y)$. Moreover, their results are formulated for distributions in $\mathcal S_\infty '$. 
	In \cite{BX25}, Bai and Xu obtained  (\ref{eq heat 1}) and (\ref{eq heat 2}) for the  Bourgain-Morrey-Besov  spaces and Triebel-Lizorkin  spaces.

	The nondegeneracy condition of the function $\varphi$ in the definition of the class $ \mathcal S _m (\mathbb R)$, $\varphi (\xi) \neq 0$ on $ (-2, -1/2) \cup (1/2, 2) $, can be weakened to $ \varphi(\lambda) \neq 0 $ for some $\lambda >0$. Then Theorem \ref{char con Sm} holds under this weaker condition on $\varphi $. The reasons can be founded in \cite[Remark 3.10]{BBD20}.
\end{rem}

\subsection{Characterizations for weighted Bourgain-Morrey Triebel-Lizorkin type spaces  via Lusin functions and  Littlewood-Paley functions}
For $s\in \mathbb R$, $\lambda , a >0$, $ 0< q <\infty$,  each cube $Q \in \mathcal D$, each measurable function $F$, we define the Littlewood-Paley function  associated with $Q$ by
\begin{equation*}
	\mathcal G _{\lambda ,q }^{s ,Q} F (x) = \bigg(    \int_0^{2^{-j_Q}}  \int _X ( \alpha ^{-s} |F(y,\alpha)| )^q \Big(1 + \frac{ \rho(x,y)}{ \alpha}\Big) ^{-\lambda q}  \frac{\d \mu (y) \d \alpha }{\alpha V(x, \alpha)}     \bigg)^{1/q},
\end{equation*}
and the Lusin function associated with $Q$  by
\begin{equation*} 
		\mathcal S _{a, q }^{s,Q} F (x) = \bigg(    \int_0^{ 2^{-j_Q} } \int _{B(x, a \alpha)} ( \alpha ^{-s} |F(y,\alpha)| )^q   \frac{\d \mu (y) \d \alpha }{\alpha V(x, \alpha)}     \bigg)^{1/q}.
\end{equation*}
When either $s=0$ or $a=1$ , we will drop them in the notation of  $\mathcal G _{\lambda ,q }^{s }$ and $ \mathcal S _{a, q }^{s}$.

\begin{thm} \label{Lusin change area}
	Let $a >1$, $\omega \in A_u$, $ 1\le u <\infty$, $0<q \le p <\infty$,  and $s \in \mathbb R$  . 
	Let    $0<p<t<r<\infty$ 	or $0<p\le t<r=\infty$. 
	Let  $	\gamma : = 
	( \tilde{n} +n u ) / q + nu ( 1/p-1/t) + n/r$.
	Then there exists a positive constant $C$ such that 
	\begin{align*}
	\nonumber
	&  \bigg(    \sum_{Q \in \mathcal D}  \omega (Q)^{r/t-r/p} \Big(  \int_Q  \Big( 	\mathcal S _{a, q }^{s,Q} F (x)
	\Big)^{p}  \omega (x) \d \mu (x)      \Big)^{r/p}                 \bigg)^{1/r} \\
	& \le C  a ^ \gamma
	\bigg(    \sum_{ Q \in \mathcal D}  \omega (Q)^{r/t-r/p} \Big(  \int_{Q}  \Big( 	\mathcal S _{1, q }^{s,Q} F (x)
	\Big)^{p}  \omega (x) \d \mu (x)      \Big)^{r/p}                 \bigg)^{1/r}.
\end{align*}
\end{thm}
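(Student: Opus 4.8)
The plan is to compare the Lusin function $\mathcal S_{a,q}^{s,Q}F$ with $\mathcal S_{1,q}^{s,Q}F$ by splitting the enlarged aperture $B(x,a\alpha)$ into dyadic annuli relative to the unit aperture and controlling each annulus by a shifted (but unit-aperture) Lusin function evaluated at nearby points. First I would fix $x\in X$, $\alpha\in(0,2^{-j_Q}]$ and write $B(x,a\alpha)=\bigcup_{\ell\ge 0}\big(B(x,2^{\ell+1}\alpha)\setminus B(x,2^\ell\alpha)\big)$ up to $2^\ell\approx a$. For a point $y$ in the $\ell$-th annulus, one has $\rho(x,y)\approx 2^\ell\alpha$, and by the doubling property $V(x,\alpha)\approx 2^{-\ell n}V(x,2^\ell\alpha)\gtrsim 2^{-\ell n}V(y,2^\ell\alpha)(1+\rho(x,y)/(2^\ell\alpha))^{-\tilde n}\approx 2^{-\ell n}V(y,2^\ell\alpha)$, using \eqref{V x r V y r}. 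This lets me bound the contribution of the $\ell$-th annulus by $2^{\ell(n+\tilde n)/q}$ times an integral over $B(x,2^{\ell+1}\alpha)$ against $1/V(y,2^\ell\alpha)$, which by a standard averaging argument (covering $B(x,2^{\ell+1}\alpha)$ by $\lesssim 2^{\ell n}$ balls of radius $\alpha$ and using that each such ball $B(z,\alpha)$ with $z$ near $x$ satisfies $\mathcal S_{1,q}^{s,Q}F(z)$ controls the local piece) is dominated by $2^{\ell n /q}\,(\mathcal M_{?})$ of $\big(\mathcal S_{1,q}^{s,Q}F\big)$-type quantities. Collecting the $\ell$-sum, I expect to obtain pointwise
\[
\mathcal S_{a,q}^{s,Q}F(x)\lesssim a^{(\tilde n+n)/q}\,\mathcal M_v\big(\mathcal S_{1,q}^{s,Q}F\big)(x)
\]
for a suitable small exponent $v<q$, or more precisely a weighted maximal function adapted to $\omega$.

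The second step is to insert this pointwise bound into the outer Bourgain–Morrey norm and run the weighted maximal inequality. Here the cube-by-cube structure of $\dot{\mathcal F}$ forces some care: the maximal operator acts on the whole space, but the inner integral is over $Q$ with the cutoff at scale $2^{-j_Q}$. Since $\mathcal S_{a,q}^{s,Q}$ and $\mathcal S_{1,q}^{s,Q}$ carry the \emph{same} cube $Q$ (same truncation scale), the pointwise estimate is genuinely local to a fixed $Q$, so after pulling out the factor $a^{(\tilde n+n)/q}$ I can apply the weighted maximal bound $\|\mathcal M_{v,\omega}g\|_{L^p(\omega)}\lesssim\|g\|_{L^p(\omega)}$ (valid since $\omega\in A_u\subset A_{p/v}$ for $v$ small, by Lemma~\ref{weights}(i),(v)) on each slice $\int_Q(\cdots)^p\omega\,d\mu$. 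However, applying $\mathcal M_{v,\omega}$ spills mass from $Q$ onto its dyadic ancestors, so I would instead split $B(x,2^{\ell+1}\alpha)$ into the part inside a fixed dilate $2^{C\ell}Q$ and the far part, handle the near part by the maximal operator on a larger cube, and sum over ancestors using the reverse-doubling decay $\omega(2^kQ)^{1/t-1/p}\lesssim \beta^{k(1/p-1/t)}\cdots$ exactly as in Theorem~\ref{hardy bourgain} and Theorem~\ref{M ell Bour}; this is where the extra exponent $nu(1/p-1/t)+n/r$ in $\gamma$ enters, from estimating $\omega(Q)^{r/t-r/p}$ in terms of $\omega(2^k Q)^{r/t-r/p}$ together with the number $\lesssim 2^{kn}$ of children. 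The far part, where $y\notin 2^{C\ell}Q$ but $\rho(x,y)\lesssim a\alpha\le a2^{-j_Q}$, is negligible for $\ell$ large when $a$ is fixed, or can be absorbed into the annulus sum with geometric decay.

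Putting the pieces together: for each $Q$ and each $\ell$ I obtain a bound by $2^{\ell[(\tilde n+n)/q+n u(1/p-1/t)+n/r]}$ times the full norm with aperture $1$, times the maximal inequality constant; summing the geometric-in-$\ell$ series up to $2^\ell\approx a$ produces the claimed power $a^\gamma$ with $\gamma=(\tilde n+n u)/q+nu(1/p-1/t)+n/r$ — the $u$ in place of $1$ in the first term comes from replacing the unweighted doubling exponent $n$ by the $A_u$-weighted doubling exponent $nu$ (Lemma~\ref{weights}(iii)) when one passes from $\mu$-averages to $\omega$-averages inside the $L^p(\omega)$ estimate. The main obstacle I anticipate is the bookkeeping in the second step: making the ``spill onto ancestors'' rigorous while keeping the aperture power sharp, i.e. tracking simultaneously the annulus index $\ell$, the ancestor index $k$, and the relation $2^{-j_Q}$-truncation, and verifying that the triple sum converges and collapses to exactly $a^\gamma$. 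The use of the $\min(1,p,q)$-inequality from Remark~\ref{min tri} to break the $\ell$-sum outside the $\ell^r(\cdot)$ norm, together with the $q\le p$ hypothesis which lets Minkowski's integral inequality move the $\ell$-sum through the inner $L^{p/q}$ integration, should make this manageable, but it is the delicate part.
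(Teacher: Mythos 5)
Your argument fails at its very first step: no pointwise bound of the form $\mathcal S_{a,q}^{s,Q}F(x)\lesssim a^{c}\,\mathcal M_v\big(\mathcal S_{1,q}^{s,Q}F\big)(x)$ (nor the variant with $\big[\mathcal M\big((\mathcal S_{1,q}^{s,Q}F)^q\big)(x)\big]^{1/q}$) can hold with a constant depending only on $a$, so the ``standard averaging argument'' you invoke does not exist. The obstruction is precisely that after swapping the $y$- and $z$-integrations the averaging ball $B(x,(a+1)\alpha)$ still depends on $\alpha$, so the average cannot be pulled out of the $\alpha$-integral; contributions from many scales accumulate on the left while a maximal function of $\mathcal S_{1,q}^{s,Q}F$ only sees one scale at a time. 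Concretely (take $s=0$, $\omega\equiv 1$, $X=\mathbb R^n$, $2^{-j_Q}\geq 1$): let $F=\sum_{k=1}^{N}F_k$ with $F_k$ supported where $\alpha\approx 2^{-k}$, concentrated near a point $y_k$ with $\rho(x,y_k)\approx a2^{-k}$, normalized so that $\iint|F_k|^q\,\d\mu\,\d\alpha/\alpha\approx 2^{-kn}$. Then $\big(\mathcal S_{a,q}^{0,Q}F(x)\big)^q\approx N$, whereas $\mathcal S_{1,q}^{0,Q}F\approx 1$ only on the essentially disjoint balls $B(y_k,2^{-k})$; any ball $B\ni x$ meeting $B(y_k,2^{-k})$ has radius $\gtrsim a2^{-k}$, so $\mu(B)^{-1}\int_B(\mathcal S_{1,q}^{0,Q}F)^v\,\d\mu\lesssim a^{-n}$ uniformly in $N$, i.e. $\mathcal M_v(\mathcal S_{1,q}^{0,Q}F)(x)\lesssim a^{-n/v}$ stays bounded while the left-hand side tends to infinity. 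The same example with $a=2^{\ell}$ shows that even a single annulus piece in your decomposition admits no such pointwise domination, so the later bookkeeping over annuli and ancestors cannot repair the input; moreover, even a correct maximal-function route would need $\mathcal M_v$ with $v<p/q_\omega$ and could not yield the exponent $(\tilde n+nu)/q$, and it would force the extra hypothesis $r>-nt/\log\beta$ from Lemma \ref{max FS Bou}/Theorem \ref{M ell Bour}, which the theorem does not assume.

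The paper proceeds by a genuinely different mechanism, which is also where the hypothesis $q\le p$ is used: reduce to $s=0$, $q=2\le p$ by replacing $F$ with $(\alpha^{-s}|F|)^{q/2}$, then dualize $\int_Q(\mathcal S_{a,2}^{0,Q}F)^2\,g\,\omega\,\d\mu$ against $g\in L^{(p/2)'}_\omega(Q)$. After Fubini the aperture enlargement acts on $g$, not on $F$: the key estimate is $M_{a\alpha,\omega}g(y)\lesssim M_{\alpha,\omega}(\mathcal M_\omega g)(y)$, obtained from $\chi_{B(y,a\alpha)}(x)\le \omega(B(y,\alpha))^{-1}\int_{B(y,\alpha)}\chi_{B(x,2a\alpha)}(z)\,\omega(z)\,\d\mu(z)$, combined with (\ref{V x r V y r}) and $\omega(B(y,a\alpha))/\omega(B(y,\alpha))\lesssim a^{nu}$; boundedness of $\mathcal M_\omega$ on $L^{(p/2)'}(\omega)$ then gives the slice bound with constant $a^{(\tilde n+nu)/2}$, at the price of enlarging $Q$ to a cube $Q'$ of sidelength $\approx a\,2^{-j_Q}$. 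The remaining factors $a^{nu(1/p-1/t)}$ and $a^{n/r}$ come from $\omega(Q)^{1/t-1/p}\lesssim a^{nu(1/p-1/t)}\omega(Q')^{1/t-1/p}$ and from the (roughly $a^{n}$-to-one) reindexing $Q\mapsto Q'$ in the outer $\ell^r$ sum — not from a maximal inequality, which is why no lower bound on $r$ appears in the statement.
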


\begin{rem}
	When $p=t, r =\infty , q \le p <\infty$, Theorem  \ref{Lusin change area}  is same as \cite[Proposition 3.11]{BBD20}. However, we read \cite{AS77} and find that the methods in \cite{AS77} are not suitable for Bourgain-Morrey Triebel-Lizorkin type spaces. Hence, we only get the case $q\le p$.
\end{rem}

\begin{proof}[Proof of Theorem  \ref{Lusin change area}]
	It suffices to prove Theorem  \ref{Lusin change area} for $s = 0$ and $q=2$; since in the general case of $s$ and $q$, we set $\tilde{F} (y, \alpha ) = ( \alpha ^{-s}  | F (y,t) |) ^{q/2}  $ and then apply the result for the case $s = 0$ and $q=2$, we will get the desired estimate.
	
	 Let $b$ be a real number and  $[b] $ be the integer part of $b$.
	
	Suppose that $s = 0$, $q =2$ and $ p\ge 2$.   We  adapt the proof of \cite[Proposition 3.11]{BBD20}.  For a positive function $g \in L ^ { ( p/2 )' } _ \omega  (Q)$,  by using (\ref{V x r V y r}), we have 
	\begin{align*}
		\nonumber
		\langle ( \mathcal S _{a,2} ^ {0,Q} F  )^2, g \rangle _\omega  
		\nonumber
		& := \int_Q \int_0  ^{ 2 ^{ -j_Q  }	}	 \int_{\rho( x,y) < a \alpha } | F (y, \alpha) |^2   \frac{\d \mu (y) \d \alpha }{\alpha V(x, \alpha)}  g (x) \omega (x) \d \mu (x) \\
			\nonumber
		& \lesssim a^ { \tilde{n} }  \int_Q \int_0  ^{ 2 ^{ -j_Q  }	}	 \int_{\rho( x,y) < a \alpha } | F (y, \alpha) |^2   \frac{\d \mu (y) \d \alpha }{\alpha V(y, \alpha)}  g (x) \omega (x) \d \mu (x) \\
		 & \lesssim  a^ { \tilde{n} }  \int_{Q'} \int_0  ^{ 2 ^{ -j_Q  }	}	 | F (y, \alpha) |^2    M_{a \alpha,\omega} g (y)  \omega ( B (y,a \alpha ) )\frac{\d \mu (y) \d \alpha }{\alpha V(y, \alpha)} ,
	\end{align*}
	where 
	\begin{equation*}
		M_{a \alpha,\omega} g (y) = \frac{1 } { \omega (B(y,a \alpha)  ) }  \int_{B(y ,  a \alpha)} g (x) \omega (x) \d \mu (x),
	\end{equation*}
	and $Q' $ is the cube with the same center with $Q$ and with the side length $2^{-j_Q + 1 + [\log a]}$. 
	Note that $Q'$ can be covered by finite cubes in  $\mathcal D _{ j_Q -1 - [\log a] }$.
 (Since in the following, we will sum up all cube $Q \in \mathcal D$, we may think that $Q' $ is a dyadic cube in $\mathcal D _{ j_Q -1 - [\log a] }$.) 
	
	For simplicity of writing (involving some constants), suppose that $\rho$ is a metric.
	Observe that 
		\begin{equation*}
	\chi_{B(y, a \alpha)} (x) \le \frac{1}{ \omega( B(y,\alpha) ) } \int_{ B( y, \alpha) }  \chi_{  B(x, 2  a \alpha) } (z) \omega (z)  \d \mu (z).
	\end{equation*}
	It follows that 
	\begin{align*}
		\nonumber
		 & \frac{1 } { \omega (B(y, a \alpha)  ) }  \int_{B(y ,  a \alpha)}  \chi_{ B (y ,a \alpha)  } (x)  g (x) \omega (x) \d \mu (x) \\
		 \nonumber
		 & \le  \frac{1}{ \omega( B(y,\alpha) ) } \int_{ B( y, \alpha) }    \frac{1 } { \omega (B(y,a \alpha)  ) }  
	    \int_{B(y ,  a \alpha)}  \chi_{  B(x, 2 a \alpha) } (z) g (x) \omega (x) \d \mu (x) \omega (z) \d \mu (z).
	\end{align*}
	Note that in this situation, we have $\rho(y,z) \le \alpha$ and $\rho(x,y) \le a \alpha $, and, hence $ B(y, a \alpha) \subset B (z , 4 a \alpha ),   B(x, 2 a\alpha ) \subset B(z, 4 a \alpha)  $ and $\omega ( B(y, a \alpha) )  \approx \omega (B (z, 4 a \alpha) ) $.
	As a consequence, we obtain
	\begin{align*}
		 \frac{1 } { \omega (B(y,a \alpha)  ) }  \int_{B(y ,  a \alpha)}  \chi_{ B (y ,a \alpha)  } (x)  g (x) \omega (x) \d \mu (x) 
&	\lesssim  \frac{1}{  \omega (B(z,4 a \alpha)  )} \int_{  B(z,4 a \alpha) } g (x) \omega (x) \d \mu (x) \\
	& \lesssim \mathcal M _\omega g (z).
	\end{align*} 
	This implies that 
	\begin{equation*}
		M_{ a \alpha,\omega} g (y)  \lesssim M_{\alpha,\omega}  \big(  \mathcal M _\omega g  \big) (y).
	\end{equation*}
	Hence, $\langle ( \mathcal S _{a,2} ^ {0,Q} F  )^2, g \rangle _\omega $ is less than
	\begin{align}
		\nonumber
		&	 a^ { \tilde{n} }  \int_{Q'} \int_0  ^{ 2 ^{ -j_Q  }	}	 | F (y, \alpha) |^2    M_{\alpha,\omega}  \big(  \mathcal M _\omega g  \big) (y)  \omega ( B (y,a \alpha ) )\frac{\d \mu (y) \d \alpha }{\alpha V(y, \alpha)}  \\
		\nonumber
		& =  a^ { \tilde{n} }  \int_{Q'} \int_0  ^{ 2 ^{ -j_Q  }	}	 \int_{ \rho(x,y) < \alpha} | F (y, \alpha) |^2   
		\frac{  \omega ( B (y,a \alpha )  }{ \omega ( B (y,\alpha )}  \frac{\d \mu (y) \d \alpha }{\alpha V(y, \alpha)}   \mathcal M _\omega g (x) \omega(x) \d \mu (x)  \\
		\nonumber
			& \lesssim a^ { \tilde{n} +n u }  \int_{Q'} \int_0  ^{ 2 ^{ -j_Q  }	}	 \int_{ \rho(x,y) < \alpha} | F (y, \alpha) |^2   
	  \frac{\d \mu (y) \d \alpha }{\alpha V(y, \alpha)}   \mathcal M _\omega g (x) \omega(x) \d \mu (x)  \\
	  \nonumber
	  & \lesssim a^ { \tilde{n} +n u }  \bigg\{  \int_{Q'} 
	 \Big\{  \int_0  ^{ 2 ^{ -j_Q  }	}	 \int_{ \rho(x,y) < \alpha} | F (y, \alpha) |^2   
	  \frac{\d \mu (y) \d \alpha }{\alpha V(y, \alpha)}  \Big\} ^{p/2} \omega(x) \d \mu (x) \bigg\}^{2/p}  \| \mathcal M _\omega g \| _ { L ^ { ( p/2 )' } _ \omega (X)}  .
	\end{align}
	Note that $g$ is supported in $Q$. Taking the supremum over all $g \in L ^ { ( p/2 )' } _ \omega (Q) $ with norm $1$, we obtain
	\begin{align*}
	&	\Big(  \int_Q  \Big( 	\mathcal S _{a, 2 }^{0,Q} F (x)
		\Big)^{p}  \omega (x) \d \mu (x)      \Big)^{1/p} \\
		& \lesssim     a^ { ( \tilde{n} +n u ) /2}  
		  \bigg(  \int_{Q'}    \Big( \int_0  ^{ 2 ^{ -j_Q  }	}	 \int_{ \rho(x,y) < \alpha} | F (y, \alpha) |^2   
		\frac{\d \mu (y) \d \alpha }{\alpha V(y, \alpha)}  \Big)^{p/2} \omega(x) \d \mu (x) \bigg)^{1/p} .
	\end{align*}
Since $\omega (Q ' ) \lesssim  [\omega ]_{A_u} (  \mu (Q') / \mu (Q) )^u \omega (Q) \lesssim a^{nu} \omega (Q)  $ and $ 1/t -1/p \le 0$, we have
\begin{equation*}
	\omega (Q) ^{1/t-1/p} \lesssim a^{nu (1/p -1/t)}\omega (Q ' ) .
\end{equation*}

    	Therefore,
		\begin{align}
		\nonumber
		&  \bigg(    \sum_{Q \in \mathcal D}  \omega (Q)^{r/t-r/p} \Big(  \int_Q  \Big( 	\mathcal S _{a, 2 }^{0,Q} F (x)
		\Big)^{p}  \omega (x) \d \mu (x)      \Big)^{r/p}                 \bigg)^{1/r} \\
		\nonumber
		& \lesssim    a^ { ( \tilde{n} +n u ) /2}   \\
		\nonumber 
		& \quad \times \bigg(    \sum_{Q  \in \mathcal D}  \omega (Q )^{r/t-r/p}  \bigg(  \int_{Q'}    \Big(  \int_0  ^{ 2 ^{ -j_Q  }	}	 \int_{ \rho(x,y) < \alpha} | F (y, \alpha) |^2   
		\frac{\d \mu (y) \d \alpha }{\alpha V(y, \alpha)}  \Big)^{p/2} \omega(x) \d \mu (x) \bigg)^{r/p}             \bigg)^{1/r} \\
				\nonumber
		& \lesssim    a^ { ( \tilde{n} +n u ) /2}   a ^{nu ( 1/p-1/t)}  a ^{n/r}  \\
		\nonumber
		& \times
		 \bigg(    \sum_{Q ' \in \mathcal D}  \omega (Q ')^{r/t-r/p}  \bigg(  \int_{Q'}    \Big(  \int_0  ^{ 2 ^{ -j_Q  + [ \log a ] }	}	 \int_{ \rho(x,y) < \alpha} | F (y, \alpha) |^2   
		\frac{\d \mu (y) \d \alpha }{\alpha V(y, \alpha)}  \Big)^{p/2} \omega(x) \d \mu (x) \bigg)^{r/p}             \bigg)^{1/r} \\
		\nonumber
		& \lesssim a^{   ( \tilde{n} +n u ) /2 + nu ( 1/p-1/t) + n/r   }  
		\bigg(    \sum_{ Q \in \mathcal D}  \omega (Q)^{r/t-r/p} \Big(  \int_{Q}  \Big( 	\mathcal S _{1, q }^{0,Q} F (x)
		\Big)^{p}  \omega (x) \d \mu (x)      \Big)^{r/p}                 \bigg)^{1/r}.
	\end{align}
	Thus we finish the proof.
\end{proof}

\begin{cor} \label{cor Little Lusin}
				Let $a \ge 1$, $\omega \in A_u$, $ 1\le u <\infty$, $0<q \le p <\infty$,  and $s \in \mathbb R$  . 
		Let    $0<p<t<r<\infty$ 	or $0<p\le t<r=\infty$. 
		Let  $	\gamma : = 
		( \tilde{n} +n u ) / q + nu ( 1/p-1/t) + n/r$ and
		 $\lambda > \gamma$. Then
	\begin{align}
	\nonumber
	&\bigg(    \sum_{Q \in \mathcal D}  \omega (Q)^{r/t-r/p} \Big(  \int_Q  \Big( 	\mathcal G _{\lambda, q }^{s,Q} F (x)
	\Big)^{p}  \omega (x) \d \mu (x)      \Big)^{r/p}                 \bigg)^{1/r} \\
	\nonumber
	&  \approx
	\bigg(    \sum_{Q \in \mathcal D}  \omega (Q)^{r/t-r/p} \Big(  \int_Q  \Big( 	\mathcal S _{ a ,q }^{s,Q} F (x)
	\Big)^{p}  \omega (x) \d \mu (x)      \Big)^{r/p}                 \bigg)^{1/r}
\end{align}
	for all measurable function $F$.
\end{cor}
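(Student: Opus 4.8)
The plan is to derive the equivalence from two one‑sided inequalities, together with the trivial monotonicity $\mathcal S_{1,q}^{s,Q}F\le\mathcal S_{a,q}^{s,Q}F$ (valid for $a\ge1$, since enlarging the aperture only enlarges the domain of the inner integral), and a change‑of‑aperture estimate supplied by Theorem \ref{Lusin change area}. Without loss of generality one may assume $F\ge0$, since $\mathcal G$ and $\mathcal S$ only see $|F|$. Throughout I abbreviate, for a family $\{H_Q\}_{Q\in\mathcal D}$ of nonnegative measurable functions,
$$
N(\{H_Q\}):=\Big(\sum_{Q\in\mathcal D}\omega(Q)^{r/t-r/p}\Big(\int_Q H_Q(x)^p\,\omega(x)\,\d\mu(x)\Big)^{r/p}\Big)^{1/r}
$$
(with the obvious supremum reading when $r=\infty$), and I write $N(\mathcal G_{\lambda,q}^{s,\cdot}F)$, $N(\mathcal S_{a,q}^{s,\cdot}F)$ for the values of $N$ on the families indexed by $Q\in\mathcal D$. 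In this notation the assertion is $N(\mathcal G_{\lambda,q}^{s,\cdot}F)\approx N(\mathcal S_{a,q}^{s,\cdot}F)$.

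The easy direction is $N(\mathcal S_{a,q}^{s,\cdot}F)\lesssim N(\mathcal G_{\lambda,q}^{s,\cdot}F)$. On $B(x,a\alpha)$ one has $(1+\rho(x,y)/\alpha)^{-\lambda q}\ge(1+a)^{-\lambda q}$, so restricting the $y$‑integral defining $\mathcal G_{\lambda,q}^{s,Q}$ to $B(x,a\alpha)$ gives the pointwise bound $\mathcal S_{a,q}^{s,Q}F(x)\le(1+a)^{\lambda}\,\mathcal G_{\lambda,q}^{s,Q}F(x)$; integrating this and summing over $Q$ yields $N(\mathcal S_{a,q}^{s,\cdot}F)\le(1+a)^{\lambda}N(\mathcal G_{\lambda,q}^{s,\cdot}F)$, with no restriction on $\lambda$. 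The substance is the reverse estimate $N(\mathcal G_{\lambda,q}^{s,\cdot}F)\lesssim N(\mathcal S_{1,q}^{s,\cdot}F)$. First I would split $X$ into $B(x,\alpha)$ together with the annuli $\{2^{k-1}\alpha\le\rho(x,y)<2^k\alpha\}$, $k\ge1$: on the $k$‑th annulus $(1+\rho(x,y)/\alpha)^{-\lambda q}\approx 2^{-k\lambda q}$, the annulus lies inside $B(x,2^k\alpha)$, and — crucially — the Lusin functional $\mathcal S_{2^k,q}^{s,Q}$ is normalised by $V(x,\alpha)$ and not by $V(x,2^k\alpha)$, so no power of $2^{kn}$ appears. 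This gives the pointwise estimate $\mathcal G_{\lambda,q}^{s,Q}F(x)^q\lesssim\sum_{k\ge0}2^{-k\lambda q}\,\mathcal S_{2^k,q}^{s,Q}F(x)^q$ for $x\in Q$.

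Then I would insert this into $N(\cdot)$: raise to the power $p/q$, integrate the $p$‑th power over $Q$, raise to the power $r/p$, and sum over $Q\in\mathcal D$. This produces three nested summations over $k$, each handled by Hölder's inequality when the pertinent exponent ($p/q$, then $r/p$) exceeds $1$ and by the elementary inequality \eqref{ell^d} when it is at most $1$, each application sacrificing an arbitrarily small fraction $\eta>0$ of the geometric decay. The outcome is, for every $\eta>0$,
$$
N(\mathcal G_{\lambda,q}^{s,\cdot}F)^r\ \lesssim\ \sum_{k\ge0}2^{-k\lambda r(1-\eta)}\,N(\mathcal S_{2^k,q}^{s,\cdot}F)^r .
$$
Now I would apply Theorem \ref{Lusin change area} with aperture $a=2^k$ (the case $k=0$ being trivial and $2^k>1$ for $k\ge1$), which gives $N(\mathcal S_{2^k,q}^{s,\cdot}F)\lesssim 2^{k\gamma}N(\mathcal S_{1,q}^{s,\cdot}F)$ with $\gamma=(\tilde n+nu)/q+nu(1/p-1/t)+n/r$, so that
$$
N(\mathcal G_{\lambda,q}^{s,\cdot}F)^r\ \lesssim\ \Big(\sum_{k\ge0}2^{-kr((1-\eta)\lambda-\gamma)}\Big)\,N(\mathcal S_{1,q}^{s,\cdot}F)^r .
$$
Since $\lambda>\gamma$ strictly, one may fix $\eta$ small enough that $(1-\eta)\lambda>\gamma$, and the series converges. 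Finally, chaining the easy direction, the main direction, and the trivial monotonicity $\mathcal S_{1,q}^{s,Q}F\le\mathcal S_{a,q}^{s,Q}F$, we get $N(\mathcal S_{1,q}^{s,\cdot}F)\le N(\mathcal S_{a,q}^{s,\cdot}F)\le(1+a)^{\lambda}N(\mathcal G_{\lambda,q}^{s,\cdot}F)\lesssim N(\mathcal S_{1,q}^{s,\cdot}F)$, whence all four quantities are comparable and the asserted equivalence follows.

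The main obstacle I anticipate is exactly the bookkeeping in the reverse direction: because $p/q$ and $r/p$ may lie on either side of $1$, one must track carefully how much of the exponential decay $2^{-k\lambda q}$ is consumed by the various applications of Hölder's inequality, so that what remains still beats the growth factor $2^{k\gamma}$ coming from Theorem \ref{Lusin change area} — which is precisely why the hypothesis is the strict inequality $\lambda>\gamma$ and why $\gamma$ has the displayed form. The pointwise annular estimate and the easy direction are, by contrast, routine.
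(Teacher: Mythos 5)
Your proposal is correct and follows essentially the same route as the paper: the same dyadic-annulus pointwise bound $(\mathcal G_{\lambda,q}^{s,Q}F)^q\lesssim\sum_{k\ge0}2^{-k\lambda q}(\mathcal S_{2^k,q}^{s,Q}F)^q$, the reduction to aperture $1$ by invoking Theorem \ref{Lusin change area} with $a=2^k$, and the summation of the resulting geometric series under the strict hypothesis $\lambda>\gamma$. The only cosmetic difference is in the bookkeeping: the paper pulls the $k$-sum through the norm using $\min(1,q)$-power subadditivity (no loss of decay), whereas you distribute H\"older's inequality at each nesting level with an arbitrarily small $\eta$-loss — both work — and your explicit constant $(1+a)^{\lambda}$ in the easy direction is in fact slightly more careful than the paper's bare claim $\mathcal S_{a,q}^{s,Q}F\le\mathcal G_{\lambda,q}^{s,Q}F$.
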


\begin{proof}
	Due to Theorem \ref{Lusin change area}, we need only to consider case $a=1$.
	Since $ \mathcal    S _{a, q }^{s, Q} F \le   \mathcal G _{\lambda ,q }^{s ,Q} F $ for any $\lambda >0$, it suffices to show that 
	\begin{align*}
		\nonumber
		  &\bigg(    \sum_{Q \in \mathcal D}  \omega (Q)^{r/t-r/p} \Big(  \int_Q  \Big( 	\mathcal G _{\lambda, q }^{s,Q} F (x)
		\Big)^{p}  \omega (x) \d \mu (x)      \Big)^{r/p}                 \bigg)^{1/r} \\
		&\lesssim
		  \bigg(    \sum_{Q \in \mathcal D}  \omega (Q)^{r/t-r/p} \Big(  \int_Q  \Big( 	\mathcal S _{ q }^{s,Q} F (x)
		\Big)^{p}  \omega (x) \d \mu (x)      \Big)^{r/p}                 \bigg)^{1/r}
	\end{align*}
	Indeed, it is easy to see that
	\begin{equation}\label{eq G S}
	( \mathcal G _{\lambda ,q }^{s ,Q} F )^q \le \sum_{\ell=0}^\infty 2^{-q\ell \lambda}  ( \mathcal S _{2^\ell, q }^{s, Q}  F )^q.
	\end{equation}
		From (\ref{eq G S}) and we have
			\begin{align}
			\nonumber
			&\bigg(    \sum_{Q \in \mathcal D}  \omega (Q)^{r/t-r/p} \Big(  \int_Q  \Big( 	\mathcal G _{\lambda, q }^{s,Q} F (x)
			\Big)^{q p/q}  \omega (x) \d \mu (x)      \Big)^{r/p}                 \bigg)^{\min(1,q) /r} \\
			\nonumber
			&\le  \bigg(    \sum_{Q \in \mathcal D}  \omega (Q)^{r/t-r/p} \Big(  \int_Q \big(  \sum_{\ell=0}^\infty 2^{-\ell q \lambda}  [ \mathcal S _{2^\ell, q }^{s  ,Q} F ]^{q}  \big)^{p/q} \omega (x) \d \mu (x)      \Big)^{r/p}                 \bigg)^{\min(1,q)/r} \\
			\nonumber
			& \le \sum_{\ell=0}^\infty 2^{-\ell \lambda \min(1,q) } 
			\bigg(    \sum_{Q \in \mathcal D}  \omega (Q)^{r/t-r/p} \Big(  \int_Q   [ \mathcal S _{2^\ell, q }^{s  ,Q} F ]^{p}  \omega (x) \d \mu (x)      \Big)^{r/p}                 \bigg)^{\min(1,q)/r}  \\
			\nonumber
			& \lesssim  \sum_{\ell=0}^\infty 2^{-\ell \lambda \min(1,q) } 2^{\ell  \gamma \min(1,q) }  
			\bigg(    \sum_{Q \in \mathcal D}  \omega (Q)^{r/t-r/p} \Big(  \int_Q   [ \mathcal S _{ q }^{s  ,Q} F ]^{p}  \omega (x) \d \mu (x)      \Big)^{r/p}                 \bigg)^{\min(1,p)/r}  \\
			\nonumber
			& \lesssim   \bigg(    \sum_{Q \in \mathcal D}  \omega (Q)^{r/t-r/p} \Big(  \int_Q   [ \mathcal S _{ q }^{s  ,Q} F ]^{p}  \omega (x) \d \mu (x)      \Big)^{r/p}                 \bigg)^{\min(1,p)/r}  ,
		\end{align}
			as long as  $\lambda >  \gamma$.
\end{proof}

\begin{rem}
	Note that  both Theorem \ref{Lusin change area}  and  Corollary \ref{cor Little Lusin} have no the condition that $r >- n t / \log  \beta  $ since the Hardy-Littlewood maximal function is not used.	
\end{rem}

We have the following characterization for weighted Bourgain-Morrey Triebel-Lizorkin type spaces  via Lusin functions and the Littlewood-Paley functions.

\begin{thm} \label{char Lusin LP}
	Let  $\psi$ be a partition of unity. Let $\omega \in A_\infty$, $0<q \le  p<\infty$,  $s\in \mathbb R$. 
		If $ 0<p <t<\infty $  and $ \max\{t, -nt / \log \beta  \} < r <\infty $, where 
	 $\beta = 1-  (1- \alpha_0)^{p/A} /  [\omega]_{A_{p/A}} $ with $ 0< A < \min \{ q , p/q_\omega  \}$, or  $0<p\le t<r=\infty$, then for  $	\gamma : = 
	 ( \tilde{n} +n q_\omega ) / q + n q_\omega ( 1/p-1/t) + n/r$ 
	 and $\lambda > \gamma$, we have 
	\begin{align}
		\nonumber
		\| f\| _{  \dot{\mathcal F}^{s,q,L}_{p,t,r,\omega} (X)  }  & \approx 	\bigg(    \sum_{Q \in \mathcal D}  \omega (Q)^{r/t-r/p} \Big(  \int_Q  \Big( 	\mathcal S _{ q }^{s,Q} ( \psi (\alpha \sqrt{L}) f ) (x)
		\Big)^{p}  \omega (x) \d \mu (x)      \Big)^{r/p}                 \bigg)^{1/r}  \\
		\nonumber
		&  \approx  \bigg(    \sum_{Q \in \mathcal D}  \omega (Q)^{r/t-r/p} \Big(  \int_Q  \Big( 	\mathcal G _{\lambda, q }^{s,Q}   (\psi  (\alpha \sqrt{L} ) f)  (x)
		\Big)^{p}  \omega (x) \d \mu (x)      \Big)^{r/p}                 \bigg)^{1/r},
	\end{align}
	for every $f\in \mathcal S _\infty '$.
\end{thm}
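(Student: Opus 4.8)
The plan is to prove Theorem \ref{char Lusin LP} by connecting the Lusin function $\mathcal S_q^{s,Q}(\psi(\alpha\sqrt L)f)$ to the already-established continuous characterization $\|f\|_{\dot{\mathcal F}^{s,q,L}_{p,t,r,\omega}(X)}^{\bigstar}$ from Theorem \ref{char conti}(b). First I would observe that, by Corollary \ref{cor Little Lusin} applied with $u=q_\omega+\varepsilon$ for suitably small $\varepsilon>0$ (so that $\omega\in A_u$ and $\gamma$ is controlled as in the hypothesis), the Lusin-function expression and the Littlewood-Paley-function expression are equivalent whenever $\lambda>\gamma$; this immediately reduces the theorem to proving the first equivalence
\[
\|f\|_{\dot{\mathcal F}^{s,q,L}_{p,t,r,\omega}(X)}\approx \bigg(\sum_{Q\in\mathcal D}\omega(Q)^{r/t-r/p}\Big(\int_Q\big(\mathcal S_q^{s,Q}(\psi(\alpha\sqrt L)f)(x)\big)^p\omega(x)\,\d\mu(x)\Big)^{r/p}\bigg)^{1/r}.
\]

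For the direction ``$\gtrsim$'' (the Lusin square function dominated by the norm), the key point is that for $x\in Q$ and $y\in B(x,a\alpha)$ with $\alpha<2^{-j_Q}$ one has $(1+\rho(x,y)/\alpha)\lesssim 1$, hence trivially $|\psi(\alpha\sqrt L)f(y)|\le \psi_\lambda^*(\alpha\sqrt L)f(x)$, which gives the pointwise bound $\mathcal S_q^{s,Q}(\psi(\alpha\sqrt L)f)(x)\lesssim(\int_0^{2^{-j_Q}}(\alpha^{-s}|\psi_\lambda^*(\alpha\sqrt L)f(x)|)^q\frac{\d\alpha}{\alpha})^{1/q}$; summing over $Q$ gives exactly $\|f\|_{\dot{\mathcal F}^{s,q,L}_{p,t,r,\omega}(X)}^{\bigstar\lambda}$, which by Theorem \ref{char conti}(b) is $\approx\|f\|_{\dot{\mathcal F}^{s,q,L}_{p,t,r,\omega}(X)}$. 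For the reverse direction ``$\lesssim$'' the plan is to run the Peetre-maximal argument in reverse: starting from Lemma \ref{psi le con} with the $L^A$ average, $|\psi_j(\sqrt L)f(x)|^A\lesssim \int_{2^{-j-2}}^{2^{-j+2}}|\psi_\lambda^*(\alpha\sqrt L)f(x)|^A\frac{\d\alpha}{\alpha}$, then use Lemma \ref{est peetre psi} to write $|\psi_\lambda^*(\alpha\sqrt L)f(x)|^A\lesssim\int_X \frac{1}{V(z,\alpha)}\frac{|\psi(\alpha\sqrt L)f(z)|^A}{(1+\rho(x,z)/\alpha)^{\lambda A}}\d\mu(z)$, and split this kernel integral over the annuli $\rho(x,z)\sim 2^\ell\alpha$ to recover, after Minkowski's inequality (valid since $A<q$), a bound of the form $|\psi_j(\sqrt L)f(x)|\lesssim\sum_{\ell\ge0}2^{-\ell(\lambda-n/A)}\big(\fint_{\rho(x,z)<2^\ell\alpha}\cdots\big)$, i.e.\ essentially $\mathcal M_A$ applied to a dilated Lusin integrand. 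After integrating in $\alpha\in[2^{-j-1},2^{-j}]$ and summing in $j$ one arrives at $\|f\|_{\dot{\mathcal F}^{s,q,L}_{p,t,r,\omega}(X)}\lesssim \|\mathcal M_A(\cdots)\|$, and then Lemma \ref{max FS Bou} (with $0<A<\min\{q,p/q_\omega\}$ and $r>-nt/\log\beta$) removes the maximal operator and Corollary \ref{cor Little Lusin} changes the aperture of the Lusin function back to $1$.

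The main obstacle I anticipate is the bookkeeping in the reverse direction: the dilation factor $2^\ell$ inside the Lusin integrand produces a Lusin function with aperture $2^\ell$, and one must verify that the geometric gain $2^{-\ell(\lambda-n/A)}$ (coming from the $(1+\rho/\alpha)^{-\lambda A}$ decay together with the $V(z,\alpha)$ normalization) beats the growth $2^{\ell\gamma}$ of the aperture-changing estimate in Theorem \ref{Lusin change area}/Corollary \ref{cor Little Lusin}. This is precisely why the hypothesis requires $\lambda>\gamma$ with $\gamma=(\tilde n+nq_\omega)/q+nq_\omega(1/p-1/t)+n/r$; one has to choose $A$ close enough to $\min\{q,p/q_\omega\}$ and $u=q_\omega+\varepsilon$ close enough to $q_\omega$ so that $n/A$ and $\gamma_u$ (the exponent $\gamma$ with $u$ in place of $q_\omega$) stay below $\lambda$, and then the series $\sum_\ell 2^{-\ell(\lambda-\gamma)}$ converges. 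A secondary technical point, handled as in the proof of Theorem \ref{Lusin change area}, is reducing to the case $s=0$, $q=2$ by the substitution $\tilde F(y,\alpha)=(\alpha^{-s}|\psi(\alpha\sqrt L)f(y)|)^{q/2}$, and managing the $\min(1,p,q)$-inequality (Remark \ref{min tri}) when interchanging the $\ell$-sum with the Bourgain--Morrey norm. Once these are in place, assembling the two directions with Corollary \ref{cor Little Lusin} completes the proof.
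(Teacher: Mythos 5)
Your skeleton matches the paper's proof: the direction in which the Lusin expression is controlled by the norm is argued exactly as in the paper (for $\rho(x,y)<\alpha$ one has $|\psi(\alpha\sqrt L)f(y)|\le\psi^{*}_{\lambda}(\alpha\sqrt L)f(x)$, then invoke Theorem \ref{char conti}), and the passage between the $\mathcal G_{\lambda,q}^{s,Q}$- and $\mathcal S_{q}^{s,Q}$-expressions is Corollary \ref{cor Little Lusin}, also as in the paper; your remark that one should work with $u=q_\omega+\varepsilon$ so that $\gamma_u<\lambda$ is a point the paper leaves implicit and is worth keeping. The difference is only in how you organize the remaining inequality, and it is there that your sketch has a genuine problem.

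In your reverse direction, after splitting the kernel of Lemma \ref{est peetre psi} into annuli you propose to collapse the spatial averages into $\mathcal M_A(|\psi(\alpha\sqrt L)f|)$ and then remove the maximal operator with Lemma \ref{max FS Bou}. Taken literally, this lands you back at the vertical square function $\big(\int_0^{2^{-j_Q}}(\alpha^{-s}|\psi(\alpha\sqrt L)f(x)|)^q\frac{\d\alpha}{\alpha}\big)^{1/q}$, i.e.\ at the $\bigstar$-characterization of Theorem \ref{char conti}, which is circular for your goal: the vertical expression is not pointwise dominated by the Lusin (area) expression, so this step produces no control by $\mathcal S_q^{s,Q}$ at all. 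To salvage your scheme you must \emph{not} apply $\mathcal M_A$: keep the aperture-$2^\ell$ Lusin functions coming from the annuli and change apertures with Theorem \ref{Lusin change area}, summing the geometric series via the $\min(1,p,q)$-inequality — but this is exactly what the decomposition (\ref{eq G S}) inside Corollary \ref{cor Little Lusin} already accomplishes, so the hand-made annular decomposition is redundant. The paper's shortcut is: Lemma \ref{est peetre psi} with $A=q$ gives the pointwise bound $|\psi(\alpha\sqrt L)f(x)|^q\lesssim\int_X V(x,\alpha)^{-1}|\psi(\alpha\sqrt L)f(z)|^q(1+\rho(x,z)/\alpha)^{-\lambda q}\,\d\mu(z)$ (the change from $V(z,\alpha)$ to $V(x,\alpha)$ costs a factor $(1+\rho(x,z)/\alpha)^{\tilde n}$, absorbed into $\lambda$), whence $\|f\|_{\dot{\mathcal F}^{s,q,L}_{p,t,r,\omega}(X)}\approx\|f\|^{\bigstar}\lesssim$ the $\mathcal G_{\lambda,q}$-expression for \emph{every} $\lambda>0$; the hypothesis $\lambda>\gamma$ is used only in Corollary \ref{cor Little Lusin}. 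Your explanation that $\lambda>\gamma$ is needed to make your $\ell$-series beat the aperture growth conflates the theorem's $\lambda$ with the free decay parameter of Lemma \ref{est peetre psi} (which you may take as large as you like), so no tuning of $A$ and $u$ against $\lambda$ is required beyond what Corollary \ref{cor Little Lusin} encodes. Likewise, the reduction to $s=0$, $q=2$ is internal to the proof of Theorem \ref{Lusin change area} and is not needed at the level of this theorem, and no new maximal-function input (hence none of the extra conditions of Lemma \ref{max FS Bou}) enters beyond what is already packaged in Theorem \ref{char conti}.
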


\begin{proof}
	We first prove that 
	\begin{equation} \label{eq Lus}
			\bigg(    \sum_{Q \in \mathcal D}  \omega (Q)^{r/t-r/p} \Big(  \int_Q  \Big( 	\mathcal S _{ q }^{s,Q} ( \psi (\alpha \sqrt{L}) f ) (x)
		\Big)^{p}  \omega (x) \d \mu (x)      \Big)^{r/p}                 \bigg)^{1/r}
		\lesssim 
		 \| f\| _{  \dot{\mathcal F}^{s,q,L}_{p,t,r,\omega} (X)  }  .
	\end{equation}
 Observe that 
	\begin{equation*}
		| \psi  (\alpha \sqrt{L} ) f (y) | \le \psi_\lambda^*   ( \alpha \sqrt{L} ) f (x)
	\end{equation*}
	for all $\lambda >0 $ and $\rho(x,y)  < \alpha$. Hence,
	\begin{align}
		\nonumber
		\mathcal S _{ q }^{s,Q  } ( \psi (\alpha \sqrt{L}) f ) (x)   & \le  \bigg(   \int_0^{ 2 ^{-j_Q} } \int_{B(x,  \alpha)}  (  \alpha ^{-s} |    \psi_\lambda^*   ( \alpha \sqrt{L} ) f (x) | )^q  \frac{\d \mu(y) \d\alpha }{\alpha V(x,\alpha)}   \bigg)^{1/q}
		  \\
		  \nonumber
		& \lesssim \bigg(   \int_0^{ 2 ^{-j_Q} }  (  \alpha ^{-s} |    \psi_\lambda^*   ( \alpha \sqrt{L} ) f (x) | )^q  \frac{\d\alpha }{\alpha }   \bigg)^{1/q}.
	\end{align}
	Theorem \ref{char conti}  implies (\ref{eq Lus}).
	
	Due to Corollary \ref{cor Little Lusin}, it remains to show that 
	\begin{equation} \label{eq Lus 2}
		  \| f\| _{  \dot{\mathcal F}^{s,q,L}_{p,t,r,\omega} (X)  } \lesssim 
		  \bigg(    \sum_{Q \in \mathcal D}  \omega (Q)^{r/t-r/p} \Big(  \int_Q  \Big( 	\mathcal G _{\lambda, q }^{s,Q}   (\psi  (\alpha \sqrt{L} ) f)  (x)
		  \Big)^{p}  \omega (x) \d \mu (x)      \Big)^{r/p}                 \bigg)^{1/r} .
	\end{equation}
	By Lemma \ref{est peetre psi}, we have
	\begin{equation*}
		| \psi  (\alpha \sqrt{L} ) f (x) | \lesssim \bigg(  \int_X \frac{1}{ V(x,\alpha) }  \frac{   | \psi  (\alpha \sqrt{L} ) f (z)|^q  }{ ( 1+ \rho(x,z) /\alpha )^{\lambda q} }     \d \mu(z)      \bigg) ^{1/q}
	\end{equation*} 
	for all $x\in X$, $\lambda >0$ and $\alpha >0$.
	This implies that 
	\begin{align}
		\nonumber
		 & \bigg(    \sum_{Q \in \mathcal D}  \omega (Q)^{r/t-r/p} \Big(  \int_Q  \Big(  \int_{0}^{2^{-j_Q} }| \alpha ^{-s} \psi  (\alpha  \sqrt{L} )f (x)|^q\frac{\d \alpha}{ \alpha} \Big)^{p/q}  \omega (x) \d \mu (x)      \Big)^{r/p}                 \bigg)^{1/r} \\
		 \nonumber
		 &\lesssim	    \bigg(    \sum_{Q \in \mathcal D}  \omega (Q)^{r/t-r/p} \Big(  \int_Q  \Big( 	\mathcal G _{\lambda, q }^{s,Q}   (\psi  (\alpha \sqrt{L} ) f)  (x)
		 \Big)^{p}  \omega (x) \d \mu (x)      \Big)^{r/p}                 \bigg)^{1/r}.
	\end{align}
	Using Theorem \ref{char conti} , we obtain  (\ref{eq Lus 2}).
\end{proof}

\section{Atomic decompositions} \label{atm dec}
We first recall the definition of weighted atoms related to $L$, which can be seen in \cite{BBD20}.

\begin{defn}
	Let $0 <p \le \infty $, $M \in \mathbb N _+$  and $\omega \in A_\infty$.  A function $a$ is said to be an $ (L,M, p , \omega ) $ atom  if there exists a dyadic cube $Q \in \mathcal D$ such that:
	
	{\rm (i)} $a =L^M b $ with $b  \in D (L^M)$, where $D (L^M) $ is the domain of  $L^M$;
	
	{\rm (ii)} supp $L^k b \subset 3 B_Q$, $ k =0, \ldots , 2M$, 	where $B_Q$ is a ball associated with $Q$ defined in Remark  \ref{dya cube};
	
	{\rm (iii)} $ | L^k b(x) | \le  \ell(Q) ^{2(M-k) }  \omega (Q) ^{-1/p} $,  $ k =0, \ldots , 2M$.
\end{defn}

\subsection{Atomic decompositions for $  \dot{\mathcal B}^{s,q,L}_{p,t,r,\omega} (X)$}

\begin{thm} \label{atm B 1}
	Let  $\omega \in A_\infty$,  $0<q\le \infty$, $s\in \mathbb R$, $M \in\mathbb N _+$. 
	Let  $0<p<t<\infty$  and $ \max\{t, -nt / \log \beta  \} < r <\infty $, where  $\beta = 1-  (1- \alpha_0)^{p/A} /  [\omega]_{A_{p/A}} $ with $0< A < p / q_\omega$, 	or let $0<p\le t<r=\infty$.	
	 If $f \in  \dot{\mathcal B}^{s,q,L}_{p,t,r,\omega} (X)  $, then there exist a sequence of $ (L, M, p ,\omega ) $ atoms $ \{  a_Q \} _ { Q \in \mathcal D _\nu, \nu \in \mathbb Z  } $ and a sequence of coefficients $ \{  s_Q \} _{Q\in \mathcal D _\nu , \nu \in \mathbb Z } $ such that 
	\begin{equation*}
		f =  \sum_{\nu \in \mathbb Z} \sum_{Q \in \mathcal D _\nu} s_Q a_Q \; \; \mathrm{in} \; \mathcal S _\infty '.
	\end{equation*}
	Moreover, 
	\begin{align} \label{eq atm B 1 1}
&	 \left( 	\sum_{Q \in \mathcal D }\omega (Q) ^{r/t - r/p} \left( \sum_{\nu = j_Q}^\infty  \left(   \sum_{P \in \D_{\nu} , P \subset Q} 2^{\nu s  p }|s_P|^p  \right)^{q/p}
\right) ^{r/q}  \right)^{1/r} 	\lesssim
		\| f \| _{    \dot{\mathcal B}^{s,q,L}_{p,t,r,\omega} (X)   }. 
	\end{align}
\end{thm}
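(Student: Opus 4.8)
The plan is to extract the atoms from a Calder\'on-type reproducing formula adapted to $L$, discretized over the dyadic cubes $\mathcal D$, and then to recognize the left-hand side of (\ref{eq atm B 1 1}) as a constant multiple of a Peetre-maximal norm, which by Theorem \ref{Peetre}(a) is comparable to $\|f\|_{\dot{\mathcal B}^{s,q,L}_{p,t,r,\omega}(X)}$. To this end I would first fix a partition of unity $\psi$ and an even $\Phi\in\mathscr S(\mathbb R)$ whose Fourier transform is supported in $[-1,1]$ and with $\int_0^\infty\xi^{2M-1}\Phi(\xi)\psi(\xi)\,\d\xi\neq0$ (for instance $\Phi=\eta^2$ with $\eta$ even Schwartz and $\widehat\eta$ supported in $[-1/2,1/2]$; a generic such $\Phi$ works even if $\psi$ changes sign). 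Put $\Phi^{(k)}(\xi):=\xi^{2k}\Phi(\xi)$ for $0\le k\le 2M$ and $\Psi(\xi):=\xi^{2M}\Phi(\xi)$, so $\Psi(t\sqrt{L})=t^{2M}L^M\Phi(t\sqrt{L})$. I will use that, since $\widehat{\Phi^{(k)}}$ is supported in $[-1,1]$, finite speed of propagation for $\cos(t\sqrt{L})$ forces $K_{\Phi^{(k)}(t\sqrt{L})}(x,y)=0$ whenever $\rho(x,y)>t$, and that the Gaussian bound on the heat kernel gives $\int_X|K_{\Phi^{(k)}(t\sqrt{L})}(x,y)|\,\d\mu(y)\lesssim1$ uniformly in $x$ and $t$. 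Applying Lemma \ref{iden} to $\Psi\psi$ (which lies in $\mathscr S(\mathbb R)$, is supported in $[1/2,2]$, and has $\int(\Psi\psi)(\xi)\xi^{-1}\,\d\xi\neq0$) yields, for $f\in\mathcal S_\infty'$,
\[
f=c\sum_{\nu\in\mathbb Z}\int_{2^{-\nu-1}}^{2^{-\nu}}\Psi(t\sqrt{L})\psi(t\sqrt{L})f\,\frac{\d t}{t}\qquad\text{in }\mathcal S_\infty'.
\]
Splitting $X=\bigcup_{Q\in\mathcal D_\nu}Q$ (up to a null set) in the inner integral, one writes $f=\sum_{\nu\in\mathbb Z}\sum_{Q\in\mathcal D_\nu}s_Qa_Q$ in $\mathcal S_\infty'$ (convergence and rearrangement being justified as in \cite{BBD20}), where, for $Q\in\mathcal D_\nu$,
\[
s_Q:=C\,\omega(Q)^{1/p}\sup_{y\in3B_Q}\ \sup_{t\in[2^{-\nu-1},2^{-\nu}]}|\psi(t\sqrt{L})f(y)|,\qquad a_Q:=\frac{c}{s_Q}\int_{2^{-\nu-1}}^{2^{-\nu}}\int_Q K_{\Psi(t\sqrt{L})}(\cdot,y)\,\psi(t\sqrt{L})f(y)\,\d\mu(y)\,\frac{\d t}{t},
\]
with $C$ a fixed constant chosen below.

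The next step is to check that $a_Q$ is an $(L,M,p,\omega)$ atom. Writing $b_Q:=\frac{c}{s_Q}\int_{2^{-\nu-1}}^{2^{-\nu}}t^{2M}\int_Q K_{\Phi(t\sqrt{L})}(\cdot,y)\psi(t\sqrt{L})f(y)\,\d\mu(y)\,\frac{\d t}{t}\in D(L^{2M})$, one has $a_Q=L^Mb_Q$ and, for $0\le k\le 2M$,
\[
L^kb_Q(x)=\frac{c}{s_Q}\int_{2^{-\nu-1}}^{2^{-\nu}}t^{2(M-k)}\int_Q K_{\Phi^{(k)}(t\sqrt{L})}(x,y)\,\psi(t\sqrt{L})f(y)\,\d\mu(y)\,\frac{\d t}{t}.
\]
Since $K_{\Phi^{(k)}(t\sqrt{L})}(x,y)=0$ for $\rho(x,y)>t$ and $y\in Q\subset B_Q$, the integrand vanishes unless $x\in3B_Q$, so $\operatorname{supp}L^kb_Q\subset3B_Q$; and since $|\psi(t\sqrt{L})f(y)|\le s_Q/(C\omega(Q)^{1/p})$ for $y\in Q$, the uniform kernel bound together with $\ell(Q)\approx2^{-\nu}$ gives $|L^kb_Q(x)|\lesssim C^{-1}\ell(Q)^{2(M-k)}\omega(Q)^{-1/p}$; thus any sufficiently large $C$ makes $a_Q$ an atom.

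It then remains to prove (\ref{eq atm B 1 1}). For $y\in3B_Q$, $z\in Q$ and $t\in[2^{-\nu-1},2^{-\nu}]$ one has $\rho(z,y)\lesssim2^{-\nu}\approx t$, hence $|\psi(t\sqrt{L})f(y)|\le(1+\rho(z,y)/t)^\lambda\psi_\lambda^*(t\sqrt{L})f(z)\lesssim\psi_\lambda^*(t\sqrt{L})f(z)$; taking suprema in $y,t$, an infimum in $z$, and invoking Lemma \ref{peetre psi} with $\varphi=\psi$ gives $\sup_{y\in3B_Q}\sup_t|\psi(t\sqrt{L})f(y)|\lesssim\inf_{z\in Q}\sum_{k=\nu-2}^{\nu+3}\psi_{k,\lambda}^*(\sqrt{L})f(z)$. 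Since $|k-\nu|\le3$ and the sum has boundedly many terms, this yields $2^{\nu sp}|s_Q|^p\lesssim\int_Q\sum_{k=\nu-2}^{\nu+3}|2^{ks}\psi_{k,\lambda}^*(\sqrt{L})f|^p\omega\,\d\mu$. Summing over $P\in\mathcal D_\nu$ with $P\subset Q$ (which partition $Q$ up to a null set), then over $\nu\ge j_Q$, then over $Q\in\mathcal D$, and absorbing the finite window $k\in\{\nu-2,\dots,\nu+3\}$ by a standard shift argument — the finitely many resulting low-frequency terms being regrouped onto parent cubes exactly as in the proof of Theorem \ref{equivalence}, using $\omega(P)\approx\omega(2^1P)$ and the bound $c2^n$ on the number of children — the left side of (\ref{eq atm B 1 1}) is dominated by $\|f\|_{\dot{\mathcal B}^{s,q,\psi,L}_{p,t,r,\omega}(X)}^{*,\lambda}$. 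Choosing $\lambda>nq_\omega/p$ at the outset, Theorem \ref{Peetre}(a) (applicable since $r$ satisfies the hypotheses of the present theorem) identifies the latter with $\|f\|_{\dot{\mathcal B}^{s,q,L}_{p,t,r,\omega}(X)}$ up to constants, which is (\ref{eq atm B 1 1}).

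The step I expect to be the main obstacle is the construction in the first paragraph: one must build the reproducing formula from $\Psi=\xi^{2M}\Phi$ with $\widehat\Phi$ of compact support — so that finite propagation speed delivers \emph{exact} compact support for the atoms and the factorization $a_Q=L^Mb_Q$ is automatic — while keeping enough non-degeneracy for Lemma \ref{iden} to apply, and one must carefully justify the convergence and rearrangement of the resulting double series in $\mathcal S_\infty'$. Everything after that is bookkeeping feeding into the Peetre-maximal characterization of Theorem \ref{Peetre}.
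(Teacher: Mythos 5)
Your construction of the atoms is essentially the paper's: both proofs start from the Calder\'on reproducing formula of Lemma \ref{iden} applied to a product of the partition of unity with an even Schwartz function whose (inverse) Fourier transform has compact support, localize the inner integral over the cubes of $\mathcal D_\nu$, and use finite propagation speed plus the kernel bound of Lemma \ref{kernel est} to verify the support and size conditions (the paper simply cites \cite[Theorem 4.2]{BBD20} for these points, which is the same mechanism you spell out). Where you genuinely diverge is in the coefficient estimate. The paper keeps $\psi_M(\xi)=\xi^{-2M}\psi(\xi)$ as the analysis piece, defines $s_Q$ through $\int_{2^{-\nu-1}}^{2^{-\nu}}|\psi_M(\alpha\sqrt L)f|\,\frac{\mathrm d\alpha}{\alpha}$, and therefore has to pass through the auxiliary maximal function $F^*_{M,\lambda}$, the almost-diagonal estimate (\ref{F hardy}), and then the Hardy-type inequality of Theorem \ref{hardy bourgain} before invoking Theorem \ref{Peetre}. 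You instead put the factor $\xi^{2M}$ on the $\Phi$-side ($\Psi=\xi^{2M}\Phi$), keep the partition of unity $\psi$ itself in the localized piece, and define $s_Q$ by a supremum of $|\psi(t\sqrt L)f|$; then Lemma \ref{peetre psi} bounds $s_Q$ directly by the discrete Peetre maximal functions at the six neighbouring scales, and the finite frequency shift is absorbed by the parent-cube regrouping already used in Theorem \ref{equivalence}, after which Theorem \ref{Peetre}(a) finishes. This buys you a more elementary argument that avoids Theorem \ref{hardy bourgain} entirely (and hence any condition of the form $2m-s>(1/p-1/t)np/A$, which in the paper is satisfied by taking the free parameter $m$ large); the paper's route, on the other hand, is the one that generalizes verbatim to the molecular setting, where no pointwise comparison at a single scale is available and the off-diagonal decay genuinely requires the Hardy inequality.

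Two caveats, both at the same level of rigor as the paper itself: your nondegeneracy requirement $\int_0^\infty\xi^{2M-1}\Phi(\xi)\psi(\xi)\,\mathrm d\xi\neq 0$ is not automatic when $\psi$ changes sign, but the paper's normalization $c=\big(\int_0^\infty\psi(\xi)\Phi(\xi)\xi^{-1}\,\mathrm d\xi\big)^{-1}$ has exactly the same issue, and your genericity/modulation fix is adequate; and the convergence and rearrangement of the double series in $\mathcal S_\infty'$ is deferred to \cite{BBD20} in both arguments, which is legitimate here since your splitting groups exactly the same localized pieces and only redistributes the normalization between $s_Q$ and $a_Q$.
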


\begin{proof}
	We use the idea from \cite[Theorem 4.2]{BBD20}.
	Let $\psi $ be a partition of unity. Let $\varphi \in \mathscr S (\mathbb R)$ be an even function with supp $\varphi \subset (-1, 1)$ and $\int_{-1}^1 \varphi (t) \d t = 2\pi $. Let $\Phi $ be the Fourier transform of $\varphi$. By Lemma \ref{iden}, for $f\in \mathcal S _\infty '$, we have
	\begin{equation*}
		f = c \int_0^\infty \psi (\alpha \sqrt{L}) \Phi(\alpha \sqrt{L}) f \frac{\d \alpha}{\alpha}
	\end{equation*}
	in $\mathcal S _\infty '$, where $c = \big( \int_0^\infty \psi(\xi)  \Phi (\xi)   \xi ^{-1} \d \xi  \big) ^{-1}$.  For brevity, we write $c=1$. Let $\psi_M (\xi)  = \xi ^{-2M} \psi (\xi)$. By Lemma \ref{cube}, we have
	\begin{align}
		\nonumber
		 f = & \sum_{\nu \in \mathbb Z} \int_{2 ^{-\nu-1} } ^{ 2^{-\nu} } (\alpha^2 L) ^M \Phi( \alpha \sqrt{L})  ( \psi_M (\alpha \sqrt{L})  f) \frac{\d \alpha}{\alpha}\\
		 \nonumber
		 = &  \sum_{\nu \in \mathbb Z} \sum_{Q \in \mathcal D _\nu} \int_{2 ^{-\nu-1} } ^{ 2^{-\nu} } (\alpha^2 L) ^M \Phi( \alpha \sqrt{L})  ( \psi_M (\alpha \sqrt{L})  f  \cdot \chi _Q )  \frac{\d \alpha}{\alpha}
	\end{align}
	For each $ \nu \in \mathbb Z $ and $Q \in \mathcal D _\nu$, set 
	\begin{equation*}
		s_Q = \omega(Q) ^{1/p} \sup_{y\in Q}  \int_{2 ^{-\nu-1} } ^{ 2^{-\nu} } |  \psi_M (\alpha \sqrt{L})  f (y) |  \frac{\d \alpha}{\alpha},
	\end{equation*}
	and $a_Q = L^M b_Q$, where 
	\begin{equation}\label{eq b_Q}
		b_Q = \frac{1}{s_Q} \int_{2 ^{-\nu-1} } ^{ 2^{-\nu} } \alpha^{2M}  \Phi(\alpha \sqrt{L})  ( \psi_M (\alpha \sqrt{L})  f  \cdot \chi _Q ) \frac{\d \alpha}{\alpha}.
	\end{equation}
	From  \cite[Theorem 4.2]{BBD20}, we have
	\begin{itemize}
		\item[(i)]  $ f =  \sum_{\nu \in \mathbb Z} \sum_{Q \in \mathcal D _\nu} s_Q a_Q $ in $\mathcal S _\infty '$;
		\item[(ii)] supp $L^k b \subset 3B_Q$, for $k = 0, \ldots, 2M$;
		\item[(iii)] $ |L^k b_Q (x) | \lesssim 2^ { - \nu (2M-2k)}  \omega(Q) ^{-1/p}  $ , for $k = 0, \ldots, 2M$, $Q\in\mathcal D _\nu$.
	\end{itemize}
	Hence $a_Q$ is (a multiple of) an $ (L, M, p, \omega) $ atom. It remains to prove (\ref{eq atm B 1 1}).
	Indeed, for any $\lambda >0$, 
	\begin{align*}
		\omega (Q) ^{-1/p} s_Q\chi _Q  & =\chi_Q   \sup_{y\in Q}  \int_{2 ^{-\nu-1} } ^{ 2^{-\nu} } |  \psi_M (\alpha \sqrt{L})  f (y) |  \frac{\d \alpha}{\alpha} \\
		 &  \lesssim  \chi _Q  	F_{M,\lambda} ^*  (\sqrt{L}) f,
	\end{align*}
	where 
	\begin{equation*}
		F_{M,\lambda} ^*  (\sqrt{L}) f (x) = \sup_{y\in X} \frac{   \int_{2 ^{-\nu-1} } ^{ 2^{-\nu} } |  \psi_M (\alpha \sqrt{L})  f (y) |  \frac{\d \alpha}{\alpha}       }{ (1+ 2^\nu \rho(x,y)) ^\lambda }.
	\end{equation*}
Note that $P \in\mathcal D_{\nu} $ are disjoint with each other.
	As a consequence, 
	\begin{equation*}
		\chi_Q \left( \sum_{P \in \mathcal D_{\nu}} 2^{	\nu s} |s_P| \omega(P) ^{-1/p}  \chi_P \right) \lesssim	\chi_Q 2^{	\nu s}	F_{M,\lambda} ^*  (\sqrt{L}) f .
	\end{equation*}
	On the other hand,  arguing similarly to (\ref{hardy v j}), we obtain
	\begin{equation} \label{F hardy}
		2^{\nu s } |  F_{M,\lambda} ^*  (\sqrt{L}) f (x)| \lesssim \sum_{j\in \mathbb Z} 2^{ - (2m - s) |\nu -j| } 2^{js} \psi_{j,\lambda}^* (\sqrt{L}) f (x) . 
	\end{equation}
	(Equation (\ref{F hardy}) can also be seen in \cite[(86)]{BBD20}.)	
 Let $m$ be sufficiently large such that $ 2m -s  > (1/p -1/t) n p /A.$ Let $\lambda $  be sufficiently large such that satisfy the condition of Theorem  \ref{Peetre}.
	Hence,  by Theorems \ref{hardy bourgain}, \ref{Peetre}, we obtain
		\begin{align*}
		\nonumber
		& \left( 	\sum_{Q \in \mathcal D }\omega (Q) ^{r/t - r/p} \left( \sum_{\nu = j_Q}^\infty  \left(   \sum_{P \in \D_{\nu} , P \subset Q} 2^{\nu s  p }|s_P|^p  \right)^{q/p}
		\right) ^{r/q}  \right)^{1/r}  \\
		\nonumber
			&   \approx \left( 	\sum_{Q \in \mathcal D }\omega (Q) ^{r/t - r/p} \left( \sum_{\nu = j_Q}^\infty  \left(  \int_Q    \left(  \sum_{P \in \D_{\nu} , P \subset Q} 2^{\nu s}|s_P| \omega (P)^{-1/p} \chi_P  \right) ^p \omega (x) \d \mu (x) \right)^{q/p}
		\right) ^{r/q}  \right)^{1/r}  \\
		\nonumber
		&\lesssim
		\left( 
		\sum_{Q \in \mathcal D }\omega (Q) ^{r/t - r/p} \left( \sum_{\nu = j_Q}^\infty 
		\left(    \int_Q  \big( \sum_{j\in \mathbb Z} 2^{ - (2m - s) |\nu -j| } 2^{js} \psi_{j,\lambda}^* (\sqrt{L}) f (x)   \big)  ^p \omega (x) \d \mu (x)
		\right)^{q/p}
		\right) ^{r/q}  \right)^{1/r} \\
		\nonumber
		&\lesssim
		\bigg\{
		\sum_{Q \in \mathcal D }\omega (Q) ^{r/t - r/p} \bigg( \sum_{j = j_Q}^\infty 
		\bigg(   \int_Q  \big(  2^{js} \psi_{j,\lambda}^* (\sqrt{L}) f (x)   \big)  ^p \omega (x) \d \mu (x)
		\bigg)^{q/p}
		\bigg) ^{r/q}  \bigg\}^{1/r} \\
		\nonumber
		&\lesssim
		\| f \| _{    \dot{\mathcal B}^{s,q,L}_{p,t,r,\omega} (X)   }. 
	\end{align*}

Thus the proof is complete.
\end{proof}

\begin{rem}
	If $M_{p,\omega}^{t,r} = L^p(w)$, then 
 Theorem \ref{atm B 1} is just Theorem 4.2  in \cite{BBD20}.
\end{rem}

Conversely, each atomic decomposition with suitable coefficients belongs to the spaces $  \dot{\mathcal B}^{s,q,L}_{p,t,r,\omega} (X) .$
\begin{thm}\label{atm B 2}
	Let  $\omega \in A_\infty$,  $0<q\le \infty$, $s\in \mathbb R$, $M \in\mathbb N _+$. 
	Let  $0<p<t<\infty$  and $ \max\{t, -nt / \log \beta  \} < r <\infty $, where  $\beta = 1-  (1- \alpha_0)^{p/B} /  [\omega]_{A_{p/B}} $ with $0< B< p / q_\omega$, 	or let $0<p\le t<r=\infty$.
		Let $ M$ be a sufficiently large number such that
		\begin{equation*}
			M > \frac{1}{2} \bigg( \max \Big( n q_\omega  / \min(1,p,q) -s ,   s \Big) +n +(1/p -1/t) n  q_\omega \bigg).
		\end{equation*}
	If 
	\begin{equation*}
		f =  \sum_{\nu \in \mathbb Z} \sum_{Q \in \mathcal D _\nu} s_Q a_Q \; \;  \mathrm{in} \; \mathcal S _\infty '
	\end{equation*} 	
	where $\{a_Q\} _ { Q \in \mathcal D _\nu ,\nu \in \mathbb Z }$ is a sequence of $  (L, M, p , \omega) $ atoms and $  \{  s_Q \} _ { Q \in \mathcal D _\nu ,\nu \in \mathbb Z }$ is a sequence of coefficients satisfying
	\begin{equation*}
	 \left( 	\sum_{Q \in \mathcal D }\omega (Q) ^{r/t - r/p} \left( \sum_{\nu = j_Q}^\infty  \left(   \sum_{P \in \D_{\nu} , P \subset Q} 2^{\nu s  p }|s_P|^p  \right)^{q/p}
	\right) ^{r/q}  \right)^{1/r} 
	<\infty,
	\end{equation*}
	then $ f \in  \dot{\mathcal B}^{s,q,L}_{p,t,r,\omega} (X)   $ and 
	\begin{equation} \label{eq atm B 2 2}
		\| f \| _{ \dot{\mathcal B}^{s,q,L}_{p,t,r,\omega} (X)   }  \lesssim 	 \left( 	\sum_{Q \in \mathcal D }\omega (Q) ^{r/t - r/p} \left( \sum_{\nu = j_Q}^\infty  \left(   \sum_{P \in \D_{\nu} , P \subset Q} 2^{\nu s  p }|s_P|^p  \right)^{q/p}
		\right) ^{r/q}  \right)^{1/r} .
	\end{equation}
\end{thm}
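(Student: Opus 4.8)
\textbf{Proof proposal for Theorem \ref{atm B 2}.}

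The plan is to mirror the strategy of \cite[Theorem 4.2]{BBD20} adapted to the Bourgain-Morrey sequence norm $\widetilde{\ell^q(M_{t,r}^{p,\omega})}$, using the Hardy type inequality (Theorem \ref{hardy bourgain}) and the Peetre maximal characterization (Theorem \ref{Peetre}) as the two main engines. First I would fix a partition of unity $\psi$ and, for each $j\in\mathbb Z$, apply $\psi_j(\sqrt L)$ to the atomic expansion $f=\sum_{\nu\in\mathbb Z}\sum_{Q\in\mathcal D_\nu}s_Q a_Q$, obtaining $\psi_j(\sqrt L)f=\sum_\nu\sum_{Q\in\mathcal D_\nu}s_Q\,\psi_j(\sqrt L)a_Q$. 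The core estimate, which is exactly the kernel bound used in \cite{BBD20}, is the pointwise inequality
\begin{equation*}
|\psi_j(\sqrt L)a_Q(x)|\lesssim 2^{-|j-\nu|\,2M}\,\frac{\omega(Q)^{-1/p}}{(1+2^{\min(j,\nu)}\rho(x,x_Q))^{L_0}}
\end{equation*}
for any prescribed decay $L_0$, valid because $a_Q=L^M b_Q$ has the cancellation and support/size properties of an $(L,M,p,\omega)$-atom and $\psi\in\mathscr S(\mathbb R)$ is supported away from the origin; I would cite \cite[(83)]{BBD20} (or reprove it in two lines from the Gaussian bound and the finite-speed-of-propagation type estimates already invoked in Lemma \ref{basic est}) rather than grind it out.

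Next I would pass from the pointwise atom bound to a bound on $2^{js}|\psi_j(\sqrt L)f|$ in terms of a weighted maximal function of the coefficient sequence. Summing over $Q\in\mathcal D_\nu$ (whose dilates $3B_Q$ have bounded overlap) and using the standard device
\begin{equation*}
\sum_{Q\in\mathcal D_\nu}\frac{|s_Q|\,\omega(Q)^{-1/p}\,\chi_{3B_Q}(x)}{(1+2^{\min(j,\nu)}\rho(x,x_Q))^{L_0}}\lesssim 2^{(\nu-\min(j,\nu))n/A}\,\Big(\mathcal M_A\big(\textstyle\sum_{Q\in\mathcal D_\nu}s_Q\omega(Q)^{-1/p}\chi_Q\big)(x)\Big)
\end{equation*}
for $A<\min(1,p,q)$ with $\omega\in A_{p/A}$ (using $L_0>n/A$, which is where the lower bound on $M$ enters through $2M\ge L_0$), I obtain
\begin{equation*}
2^{js}|\psi_j(\sqrt L)f(x)|\lesssim\sum_{\nu\in\mathbb Z}2^{-|j-\nu|\delta}\,\mathcal M_A(g_\nu)(x),\qquad g_\nu:=2^{\nu s}\sum_{P\in\mathcal D_\nu}|s_P|\,\omega(P)^{-1/p}\chi_P,
\end{equation*}
where $\delta:=2M-|s|-n/A-(1/p-1/t)np/A>0$ precisely by the hypothesis on $M$ (after absorbing the $2^{(\nu-\min(j,\nu))n/A}$ factor and the shift $2^{-|j-\nu|s}$ coming from $2^{js}=2^{(j-\nu)s}2^{\nu s}$, and using the $\omega(Q)^{1/t-1/p}\lesssim\omega(Q')^{1/t-1/p}2^{(1/p-1/t)(j_Q-m)np/A}$ trick from \eqref{omega Q le omega fa Q} inside the Bourgain-Morrey norm). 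Then $\|f\|_{\dot{\mathcal B}^{s,q,L}_{p,t,r,\omega}(X)}\approx\|\{2^{js}\psi_j(\sqrt L)f\}\|_{\widetilde{\ell^q(M_{t,r}^{p,\omega})}}$ by Theorem \ref{Peetre}, and applying first Theorem \ref{hardy bourgain} (which needs $\delta>(1/p-1/t)nq_\omega$, again guaranteed by the size of $M$ after choosing $A$ close to $p/q_\omega$) to sum the $2^{-|j-\nu|\delta}$ geometric factor, and then Theorem \ref{Ma ell q Mptr} to remove $\mathcal M_A$, gives
\begin{equation*}
\|f\|_{\dot{\mathcal B}^{s,q,L}_{p,t,r,\omega}(X)}\lesssim\big\|\{g_\nu\}_{\nu\in\mathbb Z}\big\|_{\widetilde{\ell^q(M_{t,r}^{p,\omega})}}=\Big(\sum_{Q\in\mathcal D}\omega(Q)^{r/t-r/p}\Big(\sum_{\nu=j_Q}^\infty\Big(\sum_{P\in\mathcal D_\nu,P\subset Q}2^{\nu s p}|s_P|^p\Big)^{q/p}\Big)^{r/q}\Big)^{1/r},
\end{equation*}
which is \eqref{eq atm B 2 2}. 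The last equality here uses that the $\chi_P$ are disjoint so $\int_Q(\sum_{P\subset Q,P\in\mathcal D_\nu}|s_P|\omega(P)^{-1/p}\chi_P)^p\omega\,d\mu=\sum_{P\subset Q,P\in\mathcal D_\nu}|s_P|^p$; one should also note that the parts of $g_\nu$ supported outside $Q$ (i.e. the tails of $3B_Q$ reaching into neighboring cubes) only contribute finitely many adjacent cubes and are handled exactly as in the $I_{-2},I_3$ terms of Theorem \ref{equivalence}.

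I expect the main obstacle to be bookkeeping rather than a genuine new difficulty: one must verify that the single exponent $M$ in the hypothesis simultaneously dominates (i) the decay $L_0>n/A$ needed for the $\mathcal M_A$-domination of the kernel sum, (ii) the shift $|s|$ coming from the $2^{js}$ weight when $s>0$ versus $s<0$ (hence the $\max(\cdot,s)$ in the stated bound, with the first slot $nq_\omega/\min(1,p,q)-s$ covering the case $s$ very negative where $\delta$ must still beat $(1/p-1/t)nq_\omega$ after the $A$-loss), and (iii) the $(1/p-1/t)nq_\omega$ threshold of Theorem \ref{hardy bourgain}; tracking the two-sided split $\nu\ge j$ vs.\ $\nu<j$ (the latter requiring the parent-cube comparison \eqref{omega Q le omega fa Q} since $1/t-1/p\le0$) is the delicate part. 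I would also remark, as in \cite[Theorem 4.2]{BBD20}, that the convergence $f=\sum s_Q a_Q$ in $\mathcal S_\infty'$ together with continuity of each $\psi_j(\sqrt L):\mathcal S_\infty'\to\mathcal S_\infty'$ justifies interchanging $\psi_j(\sqrt L)$ with the sum, and that the case $r=\infty$ (with $\ell^\infty$ over $Q\in\mathcal D$) is identical, only replacing the outer $\ell^r$ sum by a supremum throughout.
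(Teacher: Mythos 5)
Your proposal follows essentially the same route as the paper's proof: the paper likewise applies $\psi_j(\sqrt L)$ to the atomic expansion, invokes the pointwise bound \cite[(89)]{BBD20} (which already packages your atom estimate and the maximal-function domination step), and then concludes with Theorem \ref{hardy bourgain}, the boundedness of the maximal operator on $\widetilde{\ell^q(M_{t,r}^{p,\omega})}$ (Theorem \ref{Ma ell q Mptr}), and the disjointness of the cubes in $\mathcal D_\nu$. Apart from cosmetic differences (you use the unweighted $\mathcal M_A$ where the paper uses $\mathcal M_{\omega,A}$, your displayed domination device should not carry the factor $\chi_{3B_Q}$, and the norm identity you credit to Theorem \ref{Peetre} is just Definition \ref{def spa}), the argument is correct.
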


\begin{proof}
	Let $\psi$ be a partition of unity. Since 
	\begin{equation*}
	f =  \sum_{\nu \in \mathbb Z} \sum_{Q \in \mathcal D _\nu} s_Q a_Q \; \;  \mathrm{in} \; \mathcal S _\infty '   ,
\end{equation*} 
for each $j \in \mathbb Z $, we have	
\begin{align}
	\nonumber
	\psi_j (  \sqrt{L} ) f = \sum_{\nu \in \mathbb Z} \sum_{Q \in \mathcal D _\nu } s_Q \psi_j( \sqrt{L} )a_Q.
\end{align}
Fix $ \tilde q  \in (q_\omega, \infty )$
and $A <\min \{1,p,q \} $.
From \cite[(89)]{BBD20}, we have
	\begin{align}
	\nonumber
	2^{js} |\psi_j (\sqrt{L})  f |  & \lesssim \sum_{  \nu : \nu  \ge j} 2^{  -(\nu -j)(2M -n -n \tilde q /A + s)  }  \mathcal M _{\omega, A} \Big(   \sum_{Q \in \mathcal D _\nu} 2^{\nu s} |s_Q| \omega (Q) ^{-1/p} \chi _Q    \Big) \\
	\nonumber
	& +  \sum_{  \nu : \nu  \ge j} 2^{  -(2M -n- s) (j -\nu)  }  \mathcal M _{\omega, A} \Big(   \sum_{Q \in \mathcal D _\nu} 2^{\nu s} |s_Q| \omega (Q) ^{-1/p} \chi _Q    \Big) .
\end{align}
Let $2M -n -n \tilde q /A + s > (1/p -1/t) n  \tilde q$ and $2M -n- s > (1/p -1/t) n  \tilde q $ such that we can use Theorem \ref{hardy bourgain}. That is 
\begin{equation*}
	M > \frac{1}{2} \bigg( \max \Big( n \tilde q /A -s ,   s \Big) +n + (1/p -1/t) n  \tilde q\bigg).
\end{equation*}

By Theorem \ref{hardy bourgain}, we obtain 
\begin{align}
	\nonumber
	&	\bigg(    \sum_{Q \in \mathcal D}  \omega (Q)^{r/t-r/p} \Big(  \sum_{j=j_Q}^\infty \big(   \int_Q | 2^{js}  \psi_j (   \sqrt{L} )f (x)|^p \omega (x)\d \mu (x)   \big)^{q/p}   \Big)^{r/q}                 \bigg)^{1/r}  \\
	\nonumber
	& \lesssim 		\bigg(    \sum_{Q \in \mathcal D}  \omega (Q)^{r/t-r/p} \Big(  \sum_{\nu = j_Q }^\infty \big(   \int_Q |  \mathcal M _{\omega, A} \Big(   \sum_{P \in \mathcal D _\nu} 2^{\nu s} |s_P| \omega (P) ^{-1/p} \chi _P    \Big)|^p \omega (x)\d \mu (x)   \big)^{q/p}   \Big)^{r/q}                 \bigg)^{1/r}  \\
	\nonumber
	& \lesssim 		 \left( 	\sum_{Q \in \mathcal D }\omega (Q) ^{r/t - r/p} \left( \sum_{\nu = j_Q}^\infty  \left(   \sum_{P \in \D_{\nu} , P \subset Q} 2^{\nu s  p }|s_P|^p  \right)^{q/p}
	\right) ^{r/q}  \right)^{1/r}. \qedhere
\end{align}
\end{proof}

\subsection{Atomic decompositions for $  \dot{\mathcal F}^{s,q,L}_{p,t,r,\omega} (X) .$}

\begin{thm} \label{atm F 1}
		Let  $\omega \in A_\infty$,  $0<q\le \infty$, $s\in \mathbb R$, $M \in\mathbb N _+$. 
	Let  $0<p<t<\infty$  and $ \max\{t, -nt / \log \beta  \} < r <\infty $, where  $\beta = 1-  (1- \alpha_0)^{p/A} /  [\omega]_{A_{p/A}} $ with $0< A < \min \{ q , p/q_\omega  \}$, 	or let $0<p\le t<r=\infty$.
	 If $f \in  \dot{\mathcal F}^{s,q,L}_{p,t,r,\omega} (X)  $, then there exist a sequence of $ (L, M, p ,\omega ) $ atoms $ \{  a_Q \} _ { Q \in \mathcal D _\nu, \nu \in \mathbb Z  } $ and a sequence of coefficients $ \{  s_Q \} _{Q\in \mathcal D _\nu , \nu \in \mathbb Z } $ such that 
	\begin{equation*}
		f =  \sum_{\nu \in \mathbb Z} \sum_{Q \in \mathcal D _\nu} s_Q a_Q \; \; \mathrm{in} \; \mathcal S _\infty '.
	\end{equation*}
	Moreover, 
	\begin{align} \label{eq atm F 1 1}
		\nonumber
		&	\bigg(
		\sum_{Q \in \mathcal D }\omega (Q) ^{r/t - r/p} 
		\bigg(      \int_Q  \bigg( \sum_{\nu = j_Q}^\infty 
		2^{\nu sq}   \sum_{ P \in \D _{\nu}, P \subset Q }  
		\big(   | \omega( P )^{-1/p}  s_ {P} | \chi_P (x) \big)^q      \bigg)^{p/q}  \omega(x) \d \mu (x)
		    \bigg)^{r/p}
  \bigg)^{1/r}  \\
		& \lesssim  \| f \| _{    \dot{\mathcal F}^{s,q,L}_{p,t,r,\omega} (X)   }.
	\end{align}
\end{thm}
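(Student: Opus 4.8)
The plan is to adapt the proof of Theorem \ref{atm B 1} (the Besov case) to the Triebel–Lizorkin setting, replacing the $\ell^q$-over-$\nu$ norm by the $L^p(\omega)$-norm of an $\ell^q$-sum, and using the vector-valued maximal inequality (Lemma \ref{max FS Bou}(iii)) together with the Hardy-type inequality (Theorem \ref{hardy bourgain}) in their $\widetilde{M_{t,r}^{p,\omega}(\ell^q)}$ forms. First I would take a partition of unity $\psi$ and an even $\varphi\in\mathscr S(\mathbb R)$ with $\operatorname{supp}\varphi\subset(-1,1)$ and $\int_{-1}^1\varphi=2\pi$, let $\Phi=\widehat\varphi$, and invoke Lemma \ref{iden} to write, for $f\in\mathcal S_\infty'$,
\[
f=c\int_0^\infty \psi(\alpha\sqrt L)\Phi(\alpha\sqrt L)f\,\frac{\d\alpha}{\alpha}
 =\sum_{\nu\in\mathbb Z}\sum_{Q\in\mathcal D_\nu}\int_{2^{-\nu-1}}^{2^{-\nu}}(\alpha^2L)^M\Phi(\alpha\sqrt L)\big(\psi_M(\alpha\sqrt L)f\cdot\chi_Q\big)\frac{\d\alpha}{\alpha},
\]
where $\psi_M(\xi)=\xi^{-2M}\psi(\xi)$. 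Exactly as in \cite[Theorem 4.2]{BBD20} and in the proof of Theorem \ref{atm B 1}, set
\[
s_Q=\omega(Q)^{1/p}\sup_{y\in Q}\int_{2^{-\nu-1}}^{2^{-\nu}}|\psi_M(\alpha\sqrt L)f(y)|\,\frac{\d\alpha}{\alpha},\qquad a_Q=L^Mb_Q,
\]
with $b_Q$ as in \eqref{eq b_Q}. Then $f=\sum_\nu\sum_{Q\in\mathcal D_\nu}s_Qa_Q$ in $\mathcal S_\infty'$, and properties (i)–(iii) there show each $a_Q$ is a multiple of an $(L,M,p,\omega)$ atom; this part is identical to Theorem \ref{atm B 1} and needs no change.

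The substantive step is estimate \eqref{eq atm F 1 1}. As in the Besov proof, for any $\lambda>0$ one has the pointwise bound
\[
\omega(Q)^{-1/p}s_Q\chi_Q\lesssim \chi_Q\,F^*_{M,\lambda}(\sqrt L)f,\qquad
F^*_{M,\lambda}(\sqrt L)f(x)=\sup_{y\in X}\frac{\int_{2^{-\nu-1}}^{2^{-\nu}}|\psi_M(\alpha\sqrt L)f(y)|\,\frac{\d\alpha}{\alpha}}{(1+2^\nu\rho(x,y))^\lambda},
\]
and since the cubes $P\in\mathcal D_\nu$ are pairwise disjoint,
\[
\sum_{P\in\mathcal D_\nu}2^{\nu s}|\omega(P)^{-1/p}s_P|\chi_P\lesssim 2^{\nu s}F^*_{M,\lambda}(\sqrt L)f.
\]
Combining with the Hardy-type pointwise estimate \eqref{F hardy} (equivalently \cite[(86)]{BBD20}),
\[
2^{\nu s}F^*_{M,\lambda}(\sqrt L)f(x)\lesssim\sum_{j\in\mathbb Z}2^{-(2m-s)|\nu-j|}2^{js}\psi^*_{j,\lambda}(\sqrt L)f(x).
\]
Plugging this into the left-hand side of \eqref{eq atm F 1 1} and taking $\ell^q$ in $\nu$ first, then $L^p(\omega)$ on each $Q$, then the outer Bourgain–Morrey sum, I would apply the Hardy-type inequality of Theorem \ref{hardy bourgain} in its $\widetilde{M_{t,r}^{p,\omega}(\ell^q)}$ form — which requires $2m-s>(1/p-1/t)nq_\omega$, hence the stated lower bound on $M$ — to absorb the convolution in $|\nu-j|$, reducing to
\[
\Bigg(\sum_{Q\in\mathcal D}\omega(Q)^{r/t-r/p}\Big(\int_Q\Big(\sum_{j=j_Q}^\infty|2^{js}\psi^*_{j,\lambda}(\sqrt L)f(x)|^q\Big)^{p/q}\omega(x)\,\d\mu(x)\Big)^{r/p}\Bigg)^{1/r}
=\|f\|^{*,\lambda}_{\dot{\mathcal F}^{s,q,\psi,L}_{p,t,r,\omega}(X)}.
\]
Finally, choosing $\lambda$ large enough to meet the hypotheses of Theorem \ref{Peetre}(b) (so $\lambda>\max\{n/q,nq_\omega/p\}$, with $A<\min\{q,p/q_\omega\}$ so that Lemma \ref{max FS Bou}(iii) applies and $r>-nt/\log\beta$), Theorem \ref{Peetre}(b) identifies this Peetre-maximal quantity with $\|f\|_{\dot{\mathcal F}^{s,q,L}_{p,t,r,\omega}(X)}$, giving \eqref{eq atm F 1 1}.

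I expect the main obstacle to be purely bookkeeping rather than conceptual: one must check that the parameter $A$ can be chosen simultaneously $<\min\{q,p/q_\omega\}$ (needed for Lemma \ref{max FS Bou}(iii)) and $<\min\{1,p,q\}$ (needed when the maximal operator is applied inside the atomic estimate, if one follows the $\mathcal M_{\omega,A}$ route as in \cite[(89)]{BBD20}), while keeping $\beta=1-(1-\alpha_0)^{p/A}/[\omega]_{A_{p/A}}$ compatible with the running assumption $r>-nt/\log\beta$; and that the exponents $r/p$, $p/q$, $q/p$ are handled correctly when $p<q$ versus $p\ge q$ (the case split already present in Theorem \ref{hardy bourgain}). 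The geometric-series bound $\sum_{j}2^{-(2m-s)|\nu-j|}\cdot(\text{growth})<\infty$ is exactly the content of Theorem \ref{hardy bourgain}, so once the parameters are pinned down the remaining computation is routine; I would present it by citing Theorems \ref{hardy bourgain} and \ref{Peetre} and the disjointness of dyadic cubes, in close parallel to the displayed chain of inequalities at the end of the proof of Theorem \ref{atm B 1}.
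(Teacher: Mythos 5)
Your proposal is correct and follows essentially the same route as the paper: identical atom construction via Lemma \ref{iden} and \eqref{eq b_Q}, the pointwise bound by $F^*_{M,\lambda}(\sqrt L)f$ together with disjointness of the cubes in $\mathcal D_\nu$ and \eqref{F hardy}, then Theorem \ref{hardy bourgain} (with $2m-s>(1/p-1/t)nq_\omega$) followed by Theorem \ref{Peetre}. The caveat you raise about choosing $A$ for an $\mathcal M_{\omega,A}$-based step is moot here, since that route is only needed for the converse direction (Theorem \ref{atm F 2}); in this direction no maximal inequality is used.
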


\begin{proof}
	Similar as Theorem \ref{atm B 1}, we  have the representation
		\begin{itemize}
		\item[(i)]  $ f =  \sum_{\nu \in \mathbb Z} \sum_{Q \in \mathcal D _\nu} s_Q a_Q $ in $\mathcal S _\infty '$;
		\item[(ii)] supp $L^k b \subset 3B_Q$, for $k = 0, \ldots, 2M$;
		\item[(iii)] $ |L^k b_Q (x) | \lesssim 2^ { - \nu (2M-2k)}  \omega(Q) ^{-1/p}  $ , for $k = 0, \ldots, 2M$, $Q\in\mathcal D _\nu$,
	\end{itemize}
	where 
	\begin{equation*}
	s_Q = \omega(Q) ^{1/p} \sup_{y\in Q}  \int_{2 ^{-\nu-1} } ^{ 2^{-\nu} } |  \psi_M (\alpha \sqrt{L})  f (y) |  \frac{\d \alpha}{\alpha},
\end{equation*}
	and $a_Q = L^M b_Q$  is an $(L,M,p,\omega)$ atom defined by
	\begin{equation*}
		b_Q = \frac{1}{s_Q} \int_{2 ^{-\nu-1} } ^{ 2^{-\nu} } \alpha^{2M}  \Phi(\alpha \sqrt{L})  [\psi_M (\alpha \sqrt{L})  f  \cdot \chi _Q ]  \frac{\d \alpha}{\alpha}.
	\end{equation*}
	It remains to prove (\ref{eq atm F 1 1}). Indeed, for any $\lambda >0$, 
	\begin{equation*}
	\omega(Q) ^{-1/p} s_Q \chi _Q  \lesssim 	  \chi _Q F_{M,\lambda} ^*  (\sqrt{L}) f,
	\end{equation*}
	where 
	\begin{equation*}
		F_{M,\lambda} ^*  (\sqrt{L}) f (x) = \sup_{y\in X} \frac{   \int_{2 ^{-\nu-1} } ^{ 2^{-\nu} } |  \psi_M (\alpha \sqrt{L})  f (y) |  \frac{\d \alpha}{\alpha}       }{ (1+ 2^\nu \rho(x,y)) ^\lambda }.
	\end{equation*}
	Note that $ P \in \mathcal D_{nu} $ are disjoint with each other.
	As a consequence, 
	\begin{equation*}
		 \sum_{ P \in \mathcal D_{nu}, P \subset Q }  
		\big(   | \omega( P )^{-1/p}  s_ {P} | \chi_{P} \big)^q  
		\lesssim \chi _Q  | 	F_{M,\lambda} ^*  (\sqrt{L}) f |^q .
	\end{equation*}
	On the other hand, by (\ref{F hardy}), we have
		\begin{equation*} 
		2^{\nu s } |  F_{M,\lambda} ^*  (\sqrt{L}) f (x)| \lesssim \sum_{j\in \mathbb Z} 2^{  - (2m - s) |\nu -j| } 2^{js} \psi_{j,\lambda}^* (\sqrt{L}) f (x) . 
	\end{equation*}
	 Let $m$ be sufficiently large such that $ 2m -s  >  (1/p -1/t) n  q_\omega .$ Let $\lambda $  be sufficiently large such that satisfy the condition of Theorem  \ref{Peetre}.
	Therefore, 	  the left side of  (\ref{eq atm F 1 1}) is less that
	\begin{align}
		\nonumber
		&	\bigg\{
		\sum_{Q \in \mathcal D }\omega (Q) ^{r/t - r/p} 
		\bigg(      \int_Q  \bigg( \sum_{\nu = j_Q}^\infty  \Big( 
	\sum_{j\in \mathbb Z} 2^{ - (2m - s) |\nu -j| } 2^{js} \psi_{j,\lambda}^* (\sqrt{L}) f (x)  \Big) ^q     \bigg)^{p/q}  \omega(x) \d \mu (x)
		\bigg)^{r/p}
		\bigg\}^{1/r}  \\
		\nonumber
		& \lesssim 
		\bigg\{
		\sum_{Q \in \mathcal D }\omega (Q) ^{r/t - r/p} 
		\bigg(      \int_Q  \bigg( \sum_{ j  = j_Q}^\infty 
		 \big( 2^{js} \psi_{j,\lambda}^* (\sqrt{L}) f (x)    \big)^q     \bigg)^{p/q}  \omega(x) \d \mu (x)
		\bigg)^{r/p}
		\bigg\}^{1/r} \\
		\nonumber
		& \lesssim  \| f \| _{    \dot{\mathcal F}^{s,q,L}_{p,t,r,\omega} (X)   }. \qedhere
	\end{align}
\end{proof}

	If $ 0<  p=t <r =\infty$, Theorem \ref{atm B 1} becomes \cite[Theorem 4.6]{BBD20}.
For the converse direction, we have the following result.
\begin{thm} \label{atm F 2}
		Let  $\omega \in A_\infty$,  $0<q\le \infty$, $s\in \mathbb R$, $M \in\mathbb N _+$. 
	Let  $0<p<t<\infty$  and $ \max\{t, -nt / \log \beta  \} < r <\infty $, where  $\beta = 1-  (1- \alpha_0)^{p/B} /  [\omega]_{A_{p/B}} $ with $0< B< \min \{ q , p/q_\omega  \}$, 	or let $0<p\le t<r=\infty$.
			Let $ M$ be a sufficiently large number such that
	\begin{equation*}
		M > \frac{1}{2} \bigg( \max \Big( n  q_\omega / \min(1,p,q) -s ,   s \Big) +n +(1/p -1/t) n  q_\omega  \bigg).
	\end{equation*}
	If 
	\begin{equation*}
		f =  \sum_{\nu \in \mathbb Z} \sum_{Q \in \mathcal D _\nu} s_Q a_Q \; \;  \mathrm{in} \; \mathcal S _\infty '
	\end{equation*} 	
	where $\{a_Q\} _ { Q \in \mathcal D _\nu ,\nu \in \mathbb Z }$ is a sequence of $  (L, M, p , \omega) $ atoms and $  \{  s_Q \} _ { Q \in \mathcal D _\nu ,\nu \in \mathbb Z }$ is a sequence of coefficients satisfying
	\begin{equation*}
\bigg(    \sum_{Q \in \mathcal D}  \omega (Q)^{r/t-r/p} \Big(  \int_Q  \Big(  \sum_{\nu =j_Q}^\infty 2^{\nu sq} \big(    \sum_{ P \in \mathcal D_{\nu}, P \subset Q  }  |s_P| \omega (P) ^{-1/p} \chi _{P}  \big)^q \Big)^{p/q}  \omega (x) \d \mu (x)      \Big)^{r/p}                 \bigg)^{1/r}  <\infty,
	\end{equation*}
	then $ f \in  \dot{\mathcal F}^{s,q,L}_{p,t,r,\omega} (X)   $ and 
	\begin{align} \label{eq atm F 2 2}
		\nonumber
	&	\| f \| _{ \dot{\mathcal F}^{s,q,L}_{p,t,r,\omega} (X)   } \\
	&  \lesssim \bigg(    \sum_{Q \in \mathcal D}  \omega (Q)^{r/t-r/p} \Big(  \int_Q  \Big(  \sum_{\nu =j_Q}^\infty 2^{\nu sq} \big(    \sum_{ P \in \mathcal D_{\nu}, P \subset Q  }  |s_P| \omega (P) ^{-1/p} \chi _{P}  \big)^q \Big)^{p/q}  \omega (x) \d \mu (x)      \Big)^{r/p}                 \bigg)^{1/r} .
	\end{align}
\end{thm}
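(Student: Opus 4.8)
The plan is to adapt the proof of Theorem \ref{atm B 2} to the Triebel--Lizorkin--type norm, replacing the sequence space $\widetilde{\ell^q(M_{t,r}^{p,\omega})}$ by $\widetilde{M_{t,r}^{p,\omega}(\ell^q)}$ and combining the pointwise estimate from \cite{BBD20} with the Hardy--type inequality of Theorem \ref{hardy bourgain} and a vector--valued Fefferman--Stein inequality. Fix a partition of unity $\psi$. Since $f=\sum_{\nu\in\mathbb Z}\sum_{Q\in\mathcal D_\nu}s_Qa_Q$ in $\mathcal S_\infty'$, and since $K_{\psi_j(\sqrt L)}(x,\cdot)\in\mathcal S_\infty$ so that $\psi_j(\sqrt L)$ acts continuously on $\mathcal S_\infty'$, applying $\psi_j(\sqrt L)$ termwise yields $\psi_j(\sqrt L)f=\sum_{\nu\in\mathbb Z}\sum_{Q\in\mathcal D_\nu}s_Q\,\psi_j(\sqrt L)a_Q$ for every $j\in\mathbb Z$. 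Hence, by Definition \ref{def spa} and the independence of $\psi$ (Theorem \ref{inde psi}), it suffices to bound $\big\|\{2^{js}\psi_j(\sqrt L)f\}_{j\in\mathbb Z}\big\|_{\widetilde{M_{t,r}^{p,\omega}(\ell^q)}}=\|f\|_{\dot{\mathcal F}^{s,q,\psi,L}_{p,t,r,\omega}(X)}$. Here one uses that an $(L,M,p,\omega)$ atom $a_Q$ with $Q\in\mathcal D_\nu$ has the form $a_Q=L^Mb_Q$ with $\operatorname{supp}L^kb_Q\subset 3B_Q$ and $|L^kb_Q(x)|\le\ell(Q)^{2(M-k)}\omega(Q)^{-1/p}\lesssim 2^{-2\nu(M-k)}\omega(Q)^{-1/p}$ for $k=0,\dots,2M$.

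The heart of the argument is the pointwise estimate \cite[(89)]{BBD20}: fixing $\tilde q\in(q_\omega,\infty)$ and $0<A\le\min\{1,p,q\}$ with $\omega\in A_{p/A}$, the support, size and cancellation properties of the atoms (together with the Gaussian bound on the heat kernel of $L$) give, for all $x\in X$ and $j\in\mathbb Z$,
\begin{align*}
2^{js}\,|\psi_j(\sqrt L)f(x)|
&\lesssim \sum_{\nu\ge j}2^{-(\nu-j)(2M-n-n\tilde q/A+s)}\,\mathcal M_{A,\omega}(g_\nu)(x)
+\sum_{\nu<j}2^{-(j-\nu)(2M-n-s)}\,\mathcal M_{A,\omega}(g_\nu)(x)\\
&\lesssim \sum_{\nu\in\mathbb Z}2^{-|\nu-j|\delta}\,\mathcal M_{A,\omega}(g_\nu)(x),
\end{align*}
where $g_\nu:=\sum_{Q\in\mathcal D_\nu}2^{\nu s}|s_Q|\,\omega(Q)^{-1/p}\chi_Q$ and $\delta:=\min\{2M-n-n\tilde q/A+s,\ 2M-n-s\}$. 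Since the cubes of $\mathcal D_\nu$ are pairwise disjoint, $|g_\nu|^q=\sum_{Q\in\mathcal D_\nu}\big(2^{\nu s}|s_Q|\omega(Q)^{-1/p}\big)^q\chi_Q$, and therefore $\big\|\{g_\nu\}_{\nu\in\mathbb Z}\big\|_{\widetilde{M_{t,r}^{p,\omega}(\ell^q)}}$ is precisely the right--hand side of (\ref{eq atm F 2 2}).

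It then remains to feed this into the two sequence--space inequalities. The displayed pointwise bound and the $\widetilde{M_{t,r}^{p,\omega}(\ell^q)}$--version of the Hardy--type inequality (Theorem \ref{hardy bourgain}, used in the form of the Remark following it, which only requires the geometric--decay exponent to exceed $(1/p-1/t)nq_\omega$) give
\[
\big\|\{2^{js}\psi_j(\sqrt L)f\}_{j}\big\|_{\widetilde{M_{t,r}^{p,\omega}(\ell^q)}}\ \lesssim\ \big\|\{\mathcal M_{A,\omega}(g_\nu)\}_{\nu}\big\|_{\widetilde{M_{t,r}^{p,\omega}(\ell^q)}},
\]
provided $\tilde q$ is taken close enough to $q_\omega$ and $A$ close enough to $\min\{1,p,q\}$ that $\delta>(1/p-1/t)n\tilde q$; an elementary computation shows this is achievable under the stated lower bound on $M$. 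Finally, a vector--valued Fefferman--Stein inequality for the weighted maximal operator $\mathcal M_{A,\omega}$ on the truncated sequence space $\widetilde{M_{t,r}^{p,\omega}(\ell^q)}$ — the analogue on that space of Lemma \ref{max FS Bou}(iii) and the companion of Theorem \ref{Ma ell q Mptr}, available because $0<A<\min\{q,p/q_\omega\}$ and $r>-nt/\log\beta$, and provable by repeating the decomposition--and--summation scheme of Theorem \ref{M ell Bour} — absorbs the maximal operator and yields $\|f\|_{\dot{\mathcal F}^{s,q,\psi,L}_{p,t,r,\omega}(X)}\lesssim\big\|\{g_\nu\}_\nu\big\|_{\widetilde{M_{t,r}^{p,\omega}(\ell^q)}}$, which is (\ref{eq atm F 2 2}); in particular $f\in\dot{\mathcal F}^{s,q,L}_{p,t,r,\omega}(X)$.

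The main obstacle is the exponent bookkeeping around $\delta$: the high--frequency part of the pointwise estimate decays at rate $2M-n-n\tilde q/A+s$ and the low--frequency part at rate $2M-n-s$, and both must simultaneously beat the Hardy threshold $(1/p-1/t)n\tilde q$ while $A$ stays admissible for the weighted vector maximal inequality on $\widetilde{M_{t,r}^{p,\omega}(\ell^q)}$, i.e. $A<\min\{q,p/q_\omega\}$; it is this interplay of constraints that dictates the lower bound on $M$. A minor additional point, should it not already be on record, is to write down and verify that $\ell^q$--inside weighted vector maximal inequality on $\widetilde{M_{t,r}^{p,\omega}(\ell^q)}$, which follows by the argument proving Theorem \ref{M ell Bour} combined with Lemma \ref{max FS Bou}(iii).
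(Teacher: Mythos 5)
Your proposal is correct and follows essentially the same route as the paper's proof: apply $\psi_j(\sqrt L)$ termwise, invoke the pointwise estimate \cite[(89)]{BBD20} with $\tilde q\in(q_\omega,\infty)$ and $A<\min\{1,p,q\}$, then use the Hardy-type inequality (Theorem \ref{hardy bourgain}) followed by removal of $\mathcal M_{\omega,A}$ via a vector-valued maximal inequality on $\widetilde{M_{t,r}^{p,\omega}(\ell^q)}$, with the same exponent bookkeeping dictating the lower bound on $M$. In fact you are slightly more explicit than the paper in flagging that the $\ell^q$-inside weighted Fefferman--Stein inequality on the truncated space must be recorded (by the scheme of Theorem \ref{M ell Bour} and Lemma \ref{max FS Bou}(iii)), a step the paper uses implicitly under its hypotheses on $B$ and $r$.
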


\begin{proof}
		Let $\psi$ be a partition of unity. Since 
	\begin{equation*}
		f =  \sum_{\nu \in \mathbb Z} \sum_{Q \in \mathcal D _\nu} s_Q a_Q \; \;  \mathrm{in} \; \mathcal S _\infty '   ,
	\end{equation*} 
	for each $j \in \mathbb Z $, we have	
	\begin{align}
		\nonumber
		\psi_j (  \sqrt{L} ) f = \sum_{\nu \in \mathbb Z} \sum_{Q \in \mathcal D _\nu } s_Q \psi_j( \sqrt{L} )a_Q .
	\end{align}
	Fix $ \tilde q  \in (q_\omega, \infty )$
	and $A <\min \{1,p,q \} $.
	From \cite[(89)]{BBD20}, we have
	\begin{align}
		\nonumber
		2^{js} |\psi_j (\sqrt{L})  f |  & \lesssim \sum_{  \nu : \nu  \ge j} 2^{  -(\nu -j)(2M -n -n \tilde q /A + s)  }  \mathcal M _{\omega, A} \Big(   \sum_{Q \in \mathcal D _\nu} 2^{\nu s} |s_Q| \omega (Q) ^{-1/p} \chi _Q    \Big) \\
		\nonumber
		& +  \sum_{  \nu : \nu  \ge j} 2^{  -(2M -n- s) (j -\nu)  }  \mathcal M _{\omega, A} \Big(   \sum_{Q \in \mathcal D _\nu} 2^{\nu s} |s_Q| \omega (Q) ^{-1/p} \chi _Q    \Big) .
	\end{align}
	Let $2M -n -n \tilde q /A + s > (1/p -1/t) n  \tilde q $ and $2M -n- s > (1/p -1/t) n  \tilde q $ such that we can use Theorem \ref{hardy bourgain}. That is 
	\begin{equation}
		M > \frac{1}{2} \bigg( \max \Big( n \tilde q /A -s ,   s \Big) +n +(1/p -1/t) n  \tilde q  \bigg) .
	\end{equation}
	By Theorem \ref{hardy bourgain}, we obtain
	\begin{align}
		\nonumber
		&\bigg(    \sum_{Q \in \mathcal D}  \omega (Q)^{r/t-r/p} \Big(  \int_Q  \Big(  \sum_{j=j_Q}^\infty | 2^{js}  \psi_j  ( \sqrt{L} )f (x)|^q \Big)^{p/q}  \omega (x) \d \mu (x)      \Big)^{r/p}                 \bigg)^{1/r}  \\
		\nonumber
		&\lesssim \bigg(    \sum_{Q \in \mathcal D}  \omega (Q)^{r/t-r/p} \Big(  \int_Q  \Big(  \sum_{\nu =j_Q}^\infty \big(    \mathcal M _{\omega, A} \Big(   \sum_{P \in \mathcal D _\nu} 2^{\nu s} |s_P| \omega (P) ^{-1/p} \chi _P \Big) \big)^q \Big)^{p/q}  \omega (x) \d \mu (x)      \Big)^{r/p}                 \bigg)^{1/r}  \\
			\nonumber
		&\lesssim \bigg(    \sum_{Q \in \mathcal D}  \omega (Q)^{r/t-r/p} \Big(  \int_Q  \Big(  \sum_{\nu =j_Q}^\infty \big(    \sum_{P \in \mathcal D _\nu} 2^{\nu s} |s_P| \omega (P) ^{-1/p} \chi _P \big)^q \Big)^{p/q}  \omega (x) \d \mu (x)      \Big)^{r/p}                 \bigg)^{1/r}  \\
		\nonumber
		&= \bigg(    \sum_{Q \in \mathcal D}  \omega (Q)^{r/t-r/p} \Big(  \int_Q  \Big(  \sum_{\nu =j_Q}^\infty 2^{\nu sq} \big(    \sum_{ P \in \mathcal D_{\nu}, P \subset Q  }  |s_P| \omega (P) ^{-1/p} \chi _{P}  \big)^q \Big)^{p/q}  \omega (x) \d \mu (x)      \Big)^{r/p}                 \bigg)^{1/r}  .
	\end{align}
Thus we prove (\ref{eq atm F 2 2}) and the proof is complete.
\end{proof}
\begin{rem}
	{\rm (i)}  Let  $\omega \in A_\infty$,  $0<q< \infty$, $s\in \mathbb R$. 
	Let  $0<p<t<\infty$  and $ \max\{t, -nt / \log \beta  \} < r <\infty $, where  $\beta = 1-  (1- \alpha_0)^{p/B} /  [\omega]_{A_{p/B}} $ with $0< B<  p/q_\omega $.
	It is easy to see that each atom $a_Q = L^{2M} b_Q $, defined by (\ref{eq b_Q}) belongs to the spaces of test functions $\mathcal S _\infty $ (see also \cite[Remark 4.8]{BBD20}).  As a direct consequence of the atomic decomposition results in Theorem \ref{atm B 1},   $\mathcal S _\infty $ is dense in both $\dot{\mathcal B}^{s,q,L}_{p,t,r,\omega} (X)  $.
	
	{\rm (ii)}  Let  $\omega \in A_\infty$,  $0<q< \infty$, $s\in \mathbb R$. 
	Let  $0<p<t<\infty$  and $ \max\{t, -nt / \log \beta  \} < r <\infty $, where  $\beta = 1-  (1- \alpha_0)^{p/B} /  [\omega]_{A_{p/B}} $ with $0< B< \min \{ q , p/q_\omega  \}$.
		It is easy to see that each atom $a_Q = L^{2M} b_Q $, defined by (\ref{eq b_Q}) belongs to the spaces of test functions $\mathcal S _\infty $.  As a direct consequence of the atomic decomposition results in Theorem  \ref{atm F 1},  $\mathcal S _\infty $ is dense in  $\dot{\mathcal  F}^{s,q,L}_{p,t,r,\omega} (X)  $.
\end{rem}

\subsection{Molecular decompositions}	
Let us recall the definition of molecular; see \cite[page 72]{BBD20}.
\begin{defn}
	Let $ 0<p <\infty $, $N>0$, $ M \in  \mathbb N _+ $ and $\omega \in A _\infty $. A function $m$ is  said to be an $( L,M,N, p,\omega )$ molecule if there exists a dyadic cube $Q \in \mathcal D$ such that
	\begin{itemize}
		\item[\rm (i)]  $m = L^M b; $
		\item[\rm (ii)]  $ | L^k b(x) | \le  \ell(Q) ^{   2(M-k)  } \omega(Q)^{-1/p} \Big(1+ \frac{|x-x_Q|}{\ell(Q)} \Big)^{-N} $, $k=0,\ldots,2M$.
	\end{itemize}
\end{defn}	
	Since an atom $a$ is also an molecule,  
	we immediately get the following result.
	
	\begin{thm}
			Let  $\omega \in A_\infty$,  $0<q\le \infty$, $s\in \mathbb R$, $M \in\mathbb N _+$. 
		
	{\rm (a)}	
		Let  $0<p<t<\infty$  and $ \max\{t, -nt / \log \beta  \} < r <\infty $, where  $\beta = 1-  (1- \alpha_0)^{p/A} /  [\omega]_{A_{p/A}} $ with $0< A < p / q_\omega$, 	or let $0<p\le t<r=\infty$.	
		If $f \in  \dot{\mathcal B}^{s,q,L}_{p,t,r,\omega} (X)  $, then there exists a sequence of $ (L, M, N, p ,\omega ) $  molecules $ \{  a_Q \} _ { Q \in \mathcal D _\nu, \nu \in \mathbb Z  } $ and a sequence of coefficients $ \{  s_Q \} _{Q\in \mathcal D _\nu , \nu \in \mathbb Z } $ such that 
		\begin{equation*}
			f =  \sum_{\nu \in \mathbb Z} \sum_{Q \in \mathcal D _\nu} s_Q a_Q \; \; \mathrm{in} \; \mathcal S _\infty '.
		\end{equation*}
		Moreover, 
		\begin{align*}
			&	 \left( 	\sum_{Q \in \mathcal D }\omega (Q) ^{r/t - r/p} \left( \sum_{\nu = j_Q}^\infty  \left(   \sum_{P \in \D_{\nu} , P \subset Q} 2^{\nu s  p }|s_P|^p  \right)^{q/p}
			\right) ^{r/q}  \right)^{1/r} 	\lesssim
			\| f \| _{    \dot{\mathcal B}^{s,q,L}_{p,t,r,\omega} (X)   }. 
		\end{align*}

	{\rm (b)} 
	Let  $0<p<t<\infty$  and $ \max\{t, -nt / \log \beta  \} < r <\infty $, where  $\beta = 1-  (1- \alpha_0)^{p/A} /  [\omega]_{A_{p/A}} $ with $0< A < \min \{ q , p/q_\omega  \}$, 	or let $0<p\le t<r=\infty$.
	If $f \in  \dot{\mathcal F}^{s,q,L}_{p,t,r,\omega} (X)  $, then there exist a sequence of $(L, M, N, p ,\omega ) $ molecules  $ \{  a_Q \} _ { Q \in \mathcal D _\nu, \nu \in \mathbb Z  } $ and a sequence of coefficients $ \{  s_Q \} _{Q\in \mathcal D _\nu , \nu \in \mathbb Z } $ such that 
	\begin{equation*}
		f =  \sum_{\nu \in \mathbb Z} \sum_{Q \in \mathcal D _\nu} s_Q a_Q \; \; \mathrm{in} \; \mathcal S _\infty '.
	\end{equation*}
	Moreover, 
	\begin{align*}
		\nonumber
		&	\bigg(
		\sum_{Q \in \mathcal D }\omega (Q) ^{r/t - r/p} 
		\bigg(      \int_Q  \bigg( \sum_{\nu = j_Q}^\infty 
		2^{\nu sq}   \sum_{ P \in \D _{\nu}, P \subset Q }  
		\big(   | \omega( P )^{-1/p}  s_ {P} | \chi_P (x) \big)^q      \bigg)^{p/q}  \omega(x) \d \mu (x)
		\bigg)^{r/p}
		\bigg)^{1/r}  \\
		& \lesssim  \| f \| _{    \dot{\mathcal F}^{s,q,L}_{p,t,r,\omega} (X)   }.
	\end{align*}
	\end{thm}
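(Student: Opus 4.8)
The plan is to deduce this theorem directly from the atomic decompositions already established in Theorems \ref{atm B 1} and \ref{atm F 1}, using the elementary observation that every $(L,M,p,\omega)$ atom is, up to a uniform multiplicative constant, an $(L,M,N,p,\omega)$ molecule for every $N>0$. In other words, no new construction is needed: the molecular statement is a corollary of the atomic one.

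First I would fix $N>0$ and recall that an $(L,M,p,\omega)$ atom $a=L^Mb$ associated with a dyadic cube $Q\in\mathcal D_\nu$ satisfies $\operatorname{supp}L^kb\subset 3B_Q$ and $|L^kb(x)|\le \ell(Q)^{2(M-k)}\omega(Q)^{-1/p}$ for $k=0,\dots,2M$. Since $B_Q=B(x_Q,\kappa_0 2^{-\nu})$ and $\ell(Q)\approx 2^{-\nu}$ by Remark \ref{dya cube}, for $x\in 3B_Q$ one has $\rho(x,x_Q)\lesssim \ell(Q)$, hence $(1+\rho(x,x_Q)/\ell(Q))^{-N}\approx 1$ there, while for $x\notin 3B_Q$ both sides of the molecular estimate vanish. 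Consequently $|L^kb(x)|\le C_N\,\ell(Q)^{2(M-k)}\omega(Q)^{-1/p}(1+\rho(x,x_Q)/\ell(Q))^{-N}$ for all $x\in X$ and all $k=0,\dots,2M$, with $C_N$ independent of $Q$; equivalently $C_N^{-1}a$ is an $(L,M,N,p,\omega)$ molecule.

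Now, given $f\in\dot{\mathcal B}^{s,q,L}_{p,t,r,\omega}(X)$, Theorem \ref{atm B 1} produces $(L,M,p,\omega)$ atoms $\{a_Q\}_{Q\in\mathcal D_\nu,\nu\in\mathbb Z}$ and coefficients $\{s_Q\}$ with $f=\sum_{\nu\in\mathbb Z}\sum_{Q\in\mathcal D_\nu}s_Qa_Q$ in $\mathcal S_\infty'$ and with the quantity on the left of \eqref{eq atm B 1 1} bounded by $\|f\|_{\dot{\mathcal B}^{s,q,L}_{p,t,r,\omega}(X)}$. Setting $\tilde a_Q:=C_N^{-1}a_Q$ and $\tilde s_Q:=C_Ns_Q$, each $\tilde a_Q$ is an $(L,M,N,p,\omega)$ molecule, $f=\sum_{\nu\in\mathbb Z}\sum_{Q\in\mathcal D_\nu}\tilde s_Q\tilde a_Q$ in $\mathcal S_\infty'$, and, since $C_N$ is a fixed constant, the coefficient sequence $\{\tilde s_Q\}$ obeys the same estimate as $\{s_Q\}$ up to the multiplicative factor $C_N$. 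This proves part (a). Part (b) is identical, invoking Theorem \ref{atm F 1} in place of Theorem \ref{atm B 1} and keeping the same rescaling of atoms into molecules, so that the mixed-norm coefficient expression appearing on the left of \eqref{eq atm F 1 1} is again controlled by $\|f\|_{\dot{\mathcal F}^{s,q,L}_{p,t,r,\omega}(X)}$.

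There is no genuine obstacle here; the only point requiring (minor) care is verifying that the support and size conditions defining an atom force the pointwise molecular bound with a constant uniform in $Q$, which is immediate from the relation $B_Q=B(x_Q,\kappa_0 2^{-\nu})$ together with $\ell(Q)\approx 2^{-\nu}$ recorded in Remark \ref{dya cube}. One should also note that the convergence $f=\sum_{\nu}\sum_{Q\in\mathcal D_\nu}s_Qa_Q$ in $\mathcal S_\infty'$ is unaffected by the harmless rescaling by the fixed constant $C_N$, so the molecular expansion converges in $\mathcal S_\infty'$ exactly as the atomic one does.
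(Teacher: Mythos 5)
Your proposal is correct and is exactly the paper's argument: the paper proves this theorem by the one-line observation that every $(L,M,p,\omega)$ atom is (a constant multiple of) an $(L,M,N,p,\omega)$ molecule, and then simply invokes Theorems \ref{atm B 1} and \ref{atm F 1}. Your explicit verification that the support condition supp\,$L^kb\subset 3B_Q$ together with $\ell(Q)\approx 2^{-\nu}$ yields the molecular decay bound with a constant uniform in $Q$ is just a spelled-out version of what the paper leaves implicit.
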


To get the converse direction, we need some lemmas.

\begin{lem}[Lemma 4.1, \cite{BX25}] \label{psi m_Q}
	Let $\psi $  be a partition of unity and let $m_Q$ be an $( L,M,N, p,\omega )$ molecule with some $Q \in \mathcal D _\nu$. Then if $ N -n >  W >0$, we have that
	for $\alpha > 2^{-\nu }$,
	\begin{equation}
		\nonumber
		\big| \psi \big( \alpha \sqrt{L}\big) m_Q (x) \big| \lesssim
		\bigg( \frac{2^{-\nu }}{\alpha } \bigg)^{2M} \omega(Q) ^{-1/p}  \bigg(  1+ \frac{  \rho(x,x_Q)}{\alpha}  \bigg) ^{-W} ;
	\end{equation}
	and for $\alpha \le 2^{-\nu }$,
	\begin{equation}
		\nonumber
		\big| \psi \big( \alpha \sqrt{L}\big) m_Q \big| \lesssim \bigg(   \frac{\alpha }{ 2^{-\nu } } \bigg)^{2M-n} \omega (Q) ^{-1/p}   \bigg(  1+ \frac{  \rho(x,x_Q)}{ 2^{-\nu} }  \bigg) ^{-W}   .
	\end{equation}
	
\end{lem}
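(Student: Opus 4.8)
The plan is to reduce the estimate to the standard off-diagonal bounds for functions of $L$ whose spectrum is localized away from the origin. Since $\psi$ is a partition of unity, $\psi\in\mathscr S(\mathbb R)$ with $\operatorname{supp}\psi\subset[1/2,2]$; hence $\Phi_M(\lambda):=\lambda^{2M}\psi(\lambda)$ and $\widetilde\psi_M(\lambda):=\lambda^{-2M}\psi(\lambda)$ are again Schwartz functions supported in $[1/2,2]$, and the Gaussian heat kernel bound furnishes, for every $\sigma>0$, the kernel estimate $|K_{\phi(\alpha\sqrt L)}(x,y)|\lesssim V(x,\alpha)^{-1}(1+\rho(x,y)/\alpha)^{-\sigma}$ with $\phi\in\{\Phi_M,\widetilde\psi_M\}$. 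Writing $\ell:=\ell(Q)\approx 2^{-\nu}$ for $Q\in\mathcal D_\nu$ and $m_Q=L^Mb_Q$, I would treat the two regimes of $\alpha$ by two different factorizations of $\psi(\alpha\sqrt L)$, fixing once and for all some $W$ with $0<W<N-n$ and taking $\sigma$ as large as needed.

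For $\alpha>2^{-\nu}$, I would use $\psi(\alpha\sqrt L)m_Q=L^M\psi(\alpha\sqrt L)b_Q=\alpha^{-2M}\Phi_M(\alpha\sqrt L)b_Q$, so that the gain $\alpha^{-2M}$ is exposed at the outset. Inserting the kernel bound for $\Phi_M(\alpha\sqrt L)$ and the molecular estimate $|b_Q(y)|\le \ell^{2M}\omega(Q)^{-1/p}(1+\rho(y,x_Q)/\ell)^{-N}$ (the case $k=0$), one replaces --- legitimately, since $\alpha>\ell$ --- the $\ell$-scale decay of $b_Q$ by the weaker $\alpha$-scale decay, then peels off a factor $(1+\rho(x,x_Q)/\alpha)^{-W}$ via the product inequality recalled in Section \ref{Pre} and absorbs the remaining integral by Lemma \ref{basic est}(i). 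This yields $(\ell/\alpha)^{2M}\omega(Q)^{-1/p}(1+\rho(x,x_Q)/\alpha)^{-W}$, which is the first claimed bound.

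For $\alpha\le 2^{-\nu}$, I would instead factor $\psi(\alpha\sqrt L)=\alpha^{2M}L^M\widetilde\psi_M(\alpha\sqrt L)$ and move the $M$ powers of $L$ onto $b_Q$, obtaining $\psi(\alpha\sqrt L)m_Q=\alpha^{2M}\widetilde\psi_M(\alpha\sqrt L)L^{2M}b_Q$; now the relevant molecular estimate is $|L^{2M}b_Q(y)|\le \ell^{-2M}\omega(Q)^{-1/p}(1+\rho(y,x_Q)/\ell)^{-N}$ (the case $k=2M$). Since $\alpha\le\ell$, the $\alpha$-scale decay of the kernel can be downgraded to $\ell$-scale decay; the product inequality again extracts $(1+\rho(x,x_Q)/\ell)^{-W}$, Lemma \ref{basic est}(i) bounds the leftover integral by $V(x,\ell)$, and the doubling property contributes the volume ratio $V(x,\ell)/V(x,\alpha)\lesssim(\ell/\alpha)^n$ (valid because $\ell\ge\alpha$). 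Collecting the powers of $\alpha/\ell$ gives $(\alpha/\ell)^{2M-n}\omega(Q)^{-1/p}(1+\rho(x,x_Q)/\ell)^{-W}$, i.e.\ the second claimed bound; here the hypothesis $N-n>W$ is precisely what renders the combination of the product inequality and Lemma \ref{basic est}(i) admissible.

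The only nonroutine ingredient is the Schwartz off-diagonal decay of $K_{\phi(\alpha\sqrt L)}$, which is by now a standard consequence of the Gaussian heat kernel bound and already underlies several of the results recalled in Section \ref{Pre}; everything else is bookkeeping of the two scales $\alpha$ and $\ell(Q)$. The one point to be careful about is the direction of each scale comparison --- trading $\ell$-decay for $\alpha$-decay when $\alpha>\ell$, and $\alpha$-decay for $\ell$-decay when $\alpha\le\ell$ --- and tracking the extra $(\ell/\alpha)^n$ factor produced by the volume ratio in the second regime.
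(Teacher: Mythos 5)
Your argument is correct and matches the proof this paper relies on: the lemma is not reproved here (it is imported from \cite{BX25}), and the standard argument is exactly your two-regime factorization $\psi(\alpha\sqrt L)m_Q=\alpha^{-2M}\Phi_M(\alpha\sqrt L)b_Q$ for $\alpha>2^{-\nu}$ versus $\psi(\alpha\sqrt L)m_Q=\alpha^{2M}\widetilde\psi_M(\alpha\sqrt L)L^{2M}b_Q$ for $\alpha\le 2^{-\nu}$, combined with the off-diagonal kernel bound of Lemma \ref{kernel est}, the molecular decay at $k=0$ and $k=2M$, the scale comparison in the correct direction, and the volume ratio $V(x,\ell)/V(x,\alpha)\lesssim(\ell/\alpha)^n$. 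Two cosmetic remarks: take even extensions of $\Phi_M$ and $\widetilde\psi_M$ (they are supported in $[1/2,2]$, so this changes nothing) before invoking Lemma \ref{kernel est}, and in the scheme you describe the hypothesis $N-n>W$ is merely sufficient rather than \emph{precisely} what is needed --- you only use $W<N$ together with the freely large kernel decay exponent $\sigma$.
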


\begin{lem} [Lemma 4.4, \cite{BBD20}]   \label{f_Q}
	Let $\omega \in A_{\tilde q}$, $W >n$, $0<A \le 1$, $\kappa \in [0, \tilde q /A]$ and $\eta, \nu \in \mathbb Z$, $\nu \ge \eta$. Assume that $ \{ f_Q\} _{Q \in \mathcal D _\nu } $ is  a sequence of functions satisfying
	\begin{equation*}
		| f_Q (x) | \lesssim \bigg(   \frac{V(Q)}{ V(x_Q , 2^ {-\eta}) }  \bigg)^\kappa  \bigg(  1 + \frac{\rho(x,x_Q)}{ 2^{-\eta} } \bigg) ^{-W}.
	\end{equation*}
	Then for  $ n \tilde q / W < A \le 1 $  and a sequence of numbers $\{ s_Q\}_{Q\in \mathcal D}$, we have
	\begin{equation*}
		\sum_{Q\in \mathcal D _\nu} |s_Q| |f_Q (x)| \lesssim 2^{n (\nu -\eta) (\tilde q  /r -\kappa ) }  \mathcal M _{\omega, A}  \bigg(   	\sum_{Q\in \mathcal D _\nu} |s_Q| \chi _Q  \bigg)  (x).
	\end{equation*}
\end{lem}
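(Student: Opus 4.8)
The plan is to carry out the standard dyadic annular decomposition used for molecular estimates. Fix $x\in X$ and split $\mathcal D_\nu$ into the families $E_0:=\{Q\in\mathcal D_\nu:\rho(x,x_Q)<2^{-\eta}\}$ and, for $\ell\ge1$, $E_\ell:=\{Q\in\mathcal D_\nu:2^{\ell-1}2^{-\eta}\le\rho(x,x_Q)<2^{\ell}2^{-\eta}\}$. For $Q\in E_\ell$ the hypothesis on $f_Q$ gives $|f_Q(x)|\lesssim 2^{-\ell W}\bigl(V(Q)/V(x_Q,2^{-\eta})\bigr)^{\kappa}$, and, since $\nu\ge\eta$ (so that cubes in $\mathcal D_\nu$ have radius $\lesssim 2^{-\eta}$) together with Remark \ref{dya cube}, every such $Q$ is contained in the ball $B_\ell:=B(x,C2^{\ell-\eta})$ for a fixed constant $C=C(X)$.

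Next I would convert the coefficients into the weighted maximal function. Put $g:=\sum_{P\in\mathcal D_\nu}|s_P|\chi_P$. Because the cubes of $\mathcal D_\nu$ are pairwise disjoint, $g^{A}=\sum_{P\in\mathcal D_\nu}|s_P|^{A}\chi_P$, so averaging over the ball $B_\ell\ni x$ yields, for every $Q\in E_\ell$,
\[
|s_Q|^{A}\omega(Q)\le\int_{B_\ell}g^{A}\,\d\omega\le\omega(B_\ell)\bigl(\mathcal M_{\omega,A}g(x)\bigr)^{A}.
\]
The one slightly delicate point is that, rather than counting the cubes in $E_\ell$ and multiplying by a worst-case term, I would invoke the embedding $\ell^{1}\hookrightarrow\ell^{1/A}$ (valid since $1/A\ge1$), i.e. $\sum_{Q\in E_\ell}c_Q^{1/A}\le\bigl(\sum_{Q\in E_\ell}c_Q\bigr)^{1/A}$ for nonnegative $c_Q$, applied with $c_Q=|s_Q|^{A}\omega(Q)$. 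This gives
\[
\sum_{Q\in E_\ell}|s_Q|\bigl(V(Q)/V(x_Q,2^{-\eta})\bigr)^{\kappa}\lesssim\Bigl(\max_{Q\in E_\ell}\bigl(\omega(B_\ell)/\omega(Q)\bigr)^{1/A}\bigl(V(Q)/V(x_Q,2^{-\eta})\bigr)^{\kappa}\Bigr)\mathcal M_{\omega,A}g(x),
\]
with no cardinality factor lost.

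It then remains to bound that maximum. By the $A_{\tilde q}$ property (Lemma \ref{weights}(iii)) applied to $Q\subset B_\ell$ we have $\omega(B_\ell)/\omega(Q)\lesssim\bigl(V(B_\ell)/V(Q)\bigr)^{\tilde q}$; by the doubling property together with Remark \ref{dya cube}, $V(B_\ell)/V(Q)\lesssim 2^{n(\ell+\nu-\eta)}$; and, by doubling alone, $V(B_\ell)/V(x_Q,2^{-\eta})\approx V(x_Q,2^{\ell-\eta})/V(x_Q,2^{-\eta})\lesssim 2^{n\ell}$. Rewriting the product as $\bigl(V(B_\ell)/V(Q)\bigr)^{\tilde q/A-\kappa}\bigl(V(B_\ell)/V(x_Q,2^{-\eta})\bigr)^{\kappa}$ and using $0\le\kappa\le\tilde q/A$, the maximum is $\lesssim 2^{n(\ell+\nu-\eta)(\tilde q/A-\kappa)}2^{n\ell\kappa}=2^{n(\nu-\eta)(\tilde q/A-\kappa)}2^{n\ell\tilde q/A}$. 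Multiplying by $2^{-\ell W}$ and summing over $\ell\ge0$ gives a geometric series of ratio $2^{n\tilde q/A-W}$, which is $<1$ precisely by the hypothesis $A>n\tilde q/W$; summing the contributions of all the $E_\ell$ then yields $\sum_{Q\in\mathcal D_\nu}|s_Q|\,|f_Q(x)|\lesssim 2^{n(\nu-\eta)(\tilde q/A-\kappa)}\mathcal M_{\omega,A}g(x)$, which is the asserted bound (here the parameter written $r$ in the exponent of the statement is the integrability exponent $A$, as in \cite{BBD20}).

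The part I expect to be the genuine obstacle is exactly the exponent bookkeeping of the last paragraph: one must let the $\ell^{1}\hookrightarrow\ell^{1/A}$ inequality absorb the summation over each $E_\ell$ so that $\#E_\ell$ never enters, and then balance the three volume ratios — $V(B_\ell)/V(Q)$ from the maximal-function average, $V(Q)/V(x_Q,2^{-\eta})$ from the molecule, and $V(B_\ell)/V(x_Q,2^{-\eta})$ from the annulus — so that the total power of $2^{\nu-\eta}$ collapses to exactly $\tilde q/A-\kappa$ while the power of $2^{\ell}$ stays strictly below $W$. Everything else reduces to routine applications of the doubling property, (\ref{V x r V y r}), and the $A_{\tilde q}$ estimates recalled in Section \ref{Pre}.
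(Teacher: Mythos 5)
Your argument is correct, and it is essentially the proof of Lemma 4.4 in \cite{BBD20} that the paper simply cites: annular decomposition about $x$, the superadditivity $\sum_Q c_Q^{1/A}\le\bigl(\sum_Q c_Q\bigr)^{1/A}$ for $A\le 1$ in place of any cube count, and the $A_{\tilde q}$ comparison $\omega(B_\ell)/\omega(Q)\lesssim\bigl(V(B_\ell)/V(Q)\bigr)^{\tilde q}$, together with the correct reading of the exponent $\tilde q/r$ as $\tilde q/A$. Your exponent bookkeeping moreover substantiates the paper's Remark after the lemma, since the only place $\kappa$ is used is the split $\bigl(V(B_\ell)/V(Q)\bigr)^{\tilde q/A-\kappa}\bigl(V(B_\ell)/V(x_Q,2^{-\eta})\bigr)^{\kappa}$, which is exactly what allows the extended range $\kappa\in[0,\tilde q/A]$.
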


\begin{rem}
	In  \cite[Lemma 4.4]{BBD20},  $\kappa \in [0,1]$. However, repeating the proof in \cite[Lemma 4.4]{BBD20},  we find that if  the condition of $\kappa$ is  changed to $\kappa \in [0,\tilde q /A]$, the result still hold true. Note that since $\tilde q \ge 1 $ and $A\le 1$,  we have $\tilde q /A \ge 1$ and $[0,1] \subset [0,\tilde q /A] $.
\end{rem}

Then we are ready to get the inverse direction of molecular decompositions, that is, each molecular decomposition with suitable coefficients belongs to spaces  $ \dot{\mathcal B}^{s,q,L}_{p,t,r,\omega} (X) ,  \dot{\mathcal F}^{s,q,L}_{p,t,r,\omega} (X) $.

\begin{thm}
		Let  $\omega \in A_\infty$,  $0<q\le \infty$, $s\in \mathbb R$, $M \in\mathbb N _+$. 
	
	{\rm (a)}	
	Let  $0<p<t<\infty$  and $ \max\{t, -nt / \log \beta  \} < r <\infty $, where  $\beta = 1-  (1- \alpha_0)^{p/B} /  [\omega]_{A_{p/B}} $ with $0< B < p / q_\omega$, 	or let $0<p\le t<r=\infty$.
		Let $ M,N$ be a sufficiently large number such that
	\begin{align*}
		&	2M >  \max \left( n + \frac{n q_\omega}{\min(1,p / q_\omega ,q)} -s , s  \right)  + (1/p -1/t) n \tilde q  \\
		& N > n +  \frac{n q_\omega}{\min(1,/ q_\omega,q)} .
	\end{align*}
	If
\begin{equation*}
	f =  \sum_{\nu \in \mathbb Z} \sum_{Q \in \mathcal D _\nu} s_Q m_Q \; \;  \mathrm{in} \; \mathcal S _\infty ' ,
\end{equation*} 	
where $\{m_Q\} _ { Q \in \mathcal D _\nu ,\nu \in \mathbb Z }$ is a sequence of $  (L, M, N, p , \omega) $ molecules and $  \{  s_Q \} _ { Q \in \mathcal D _\nu ,\nu \in \mathbb Z }$ is a sequence of coefficients satisfying
\begin{equation*}
	\Bigg\{  \sum_{\nu\in \mathbb Z}  
	\bigg\|  \big(      \sum_{P \in \mathcal D _\nu} 2^{\nu s} |s_P| \omega (P) ^{-1/p} \chi _P \big) \bigg\|_{ M_{p,\omega}^{t,r} } ^q \Bigg\}^{1/q}
	<\infty,
\end{equation*}
then $ f \in  \dot{\mathcal B}^{s,q,L}_{p,t,r,\omega} (X)   $ and
\begin{equation*} 
	\| f \| _{ \dot{\mathcal B}^{s,q,L}_{p,t,r,\omega} (X)   }  \lesssim 		\Bigg\{  \sum_{\nu\in \mathbb Z}  
	\bigg\|  \big(      \sum_{P \in \mathcal D _\nu} 2^{\nu s} |s_P| \omega (P) ^{-1/p} \chi _P \big) \bigg\|_{ M_{p,\omega}^{t,r} } ^q \Bigg\}^{1/q} .
\end{equation*}
	
		{\rm (b)}	
	Let  $0<p<t<\infty$  and $ \max\{t, -nt / \log \beta  \} < r <\infty $, where  $\beta = 1-  (1- \alpha_0)^{p/B} /  [\omega]_{A_{p/B}} $ with $0< B < \min \{ q , p/q_\omega  \} $, 	or let $0<p\le t<r=\infty$.
	Let $ M,N$ be a sufficiently large number such that
	\begin{align*}
		&	2M >  \max \left( n + \frac{n q_\omega}{\min(1,p / q_\omega ,q)} -s , s  \right)  + (1/p -1/t) n \tilde q  \\
		& N > n +  \frac{n q_\omega}{\min(1,/ q_\omega,q)} .
	\end{align*}
	If
\begin{equation*}
	f =  \sum_{\nu \in \mathbb Z} \sum_{Q \in \mathcal D _\nu} s_Q m_Q \; \;  \mathrm{in} \; \mathcal S _\infty ' ,
\end{equation*} 	
where $\{m_Q\} _ { Q \in \mathcal D _\nu ,\nu \in \mathbb Z }$ is a sequence of $  (L, M, N, p , \omega) $ molecules and $  \{  s_Q \} _ { Q \in \mathcal D _\nu ,\nu \in \mathbb Z }$ is a sequence of coefficients satisfying
\begin{equation*}
	\bigg(    \sum_{Q \in \mathcal D}  \omega (Q)^{r/t-r/p} \Big(  \int_Q  \Big(  \sum_{\nu =-\infty }^\infty \big(      \sum_{P \in \mathcal D _\nu} 2^{\nu s} |s_P| \omega (P) ^{-1/p} \chi _P  \big)^q \Big)^{p/q}  \omega (x) \mathrm {d} \mu (x)      \Big)^{r/p}                 \bigg)^{1/r}  <\infty,
\end{equation*}
then $ f \in  \dot{\mathcal F}^{s,q,L}_{p,t,r,\omega} (X)   $ and
\begin{equation*} 
	\| f \| _{ \dot{\mathcal F}^{s,q,L}_{p,t,r,\omega} (X)   }  \lesssim \bigg(    \sum_{Q \in \mathcal D}  \omega (Q)^{r/t-r/p} \Big(  \int_Q  \Big(  \sum_{\nu =-\infty }^\infty \big(      \sum_{P \in \mathcal D _\nu} 2^{\nu s} |s_P| \omega (P) ^{-1/p} \chi _P  \big)^q \Big)^{p/q}  \omega (x) \mathrm {d} \mu (x)      \Big)^{r/p}                 \bigg)^{1/r}  .
\end{equation*}

\end{thm}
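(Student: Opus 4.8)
\ The plan is to follow the scheme of the atomic converse results, Theorems~\ref{atm B 2} and~\ref{atm F 2}, with the atomic size estimates replaced by the molecular ones of Lemma~\ref{psi m_Q}. I describe part~(b), the Triebel--Lizorkin case; part~(a) is proved in the same way after interchanging the roles of the $\ell^q$-summation and the $\omega$-integration (exactly as Theorem~\ref{atm F 2} differs from Theorem~\ref{atm B 2}), the weaker restriction $0<B<p/q_\omega$ being enough there because only the scalar maximal inequality Lemma~\ref{max FS Bou}(ii) and the Hardy-type inequality Theorem~\ref{hardy bourgain}, rather than a vector-valued Fefferman--Stein inequality, are needed. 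Fix a partition of unity $\psi$. Since $f=\sum_{\nu\in\mathbb Z}\sum_{Q\in\D_\nu}s_Q m_Q$ in $\mathcal S_\infty'$, we have $\psi_j(\sqrt{L})f=\sum_{\nu\in\mathbb Z}\sum_{Q\in\D_\nu}s_Q\,\psi_j(\sqrt{L})m_Q$ for every $j\in\mathbb Z$, and everything reduces to the pointwise estimate
\begin{equation*}
2^{js}\bigl|\psi_j(\sqrt{L})f(x)\bigr|\lesssim\sum_{\nu\in\mathbb Z}2^{-|\nu-j|\delta}\,\mathcal M_{\omega,A}(h_\nu)(x),\qquad h_\nu:=\sum_{P\in\D_\nu}2^{\nu s}|s_P|\,\omega(P)^{-1/p}\chi_P,
\end{equation*}
for a suitable $\delta>0$ and a suitable $A\in(0,\min\{1,q,p/q_\omega\})$ with $\omega\in A_{p/A}$.

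To prove this estimate, fix $j$, $\nu$ and a cube $Q\in\D_\nu$ and apply Lemma~\ref{psi m_Q} with $\alpha=2^{-j}$ and $W:=N-n$: when $\nu>j$ the first bound gives $|\psi_j(\sqrt{L})m_Q(x)|\lesssim 2^{-2M(\nu-j)}\omega(Q)^{-1/p}(1+2^j\rho(x,x_Q))^{-W}$, while when $\nu\le j$ the second bound gives $|\psi_j(\sqrt{L})m_Q(x)|\lesssim 2^{-(2M-n)(j-\nu)}\omega(Q)^{-1/p}(1+2^{\nu}\rho(x,x_Q))^{-W}$, the bump being centred at the coarser scale $2^{-\min(j,\nu)}$ in either case. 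Summing over $Q\in\D_\nu$ by Lemma~\ref{f_Q} (with $\kappa=0$ and $\eta=\min(j,\nu)$; this requires $n\tilde q/W<A$, which forces the lower bound on $N$), and then writing $2^{js}=2^{\nu s}2^{(j-\nu)s}$ and pulling $2^{\nu s}$ into the maximal operator, produces the displayed bound with $\delta=\min\{2M-n-n\tilde q/A+s,\;2M-n-s\}$; the hypotheses on $M$ are exactly what makes both of these exponents strictly exceed $(1/p-1/t)n\tilde q$. This combination of Lemmas~\ref{psi m_Q} and~\ref{f_Q} is the molecular analogue of the inequality \cite[(89)]{BBD20} invoked directly in Theorems~\ref{atm B 2} and~\ref{atm F 2}.

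Granted the pointwise bound, one writes $\|f\|_{\dot{\mathcal F}^{s,q,L}_{p,t,r,\omega}(X)}=\|\{2^{js}\psi_j(\sqrt{L})f\}_j\|_{\widetilde{M_{t,r}^{p,\omega}(\ell^q)}}$, replaces the index range $\sum_{j\ge j_Q}$ by the larger $\sum_{j\in\mathbb Z}$ (which only increases the quantity), and then uses Young's inequality in the summation index $\nu$ (with a standard $\min(1,q)$-power reduction when $q<1$) together with $\delta>0$ to get the pointwise domination $\bigl(\sum_j(2^{js}|\psi_j(\sqrt{L})f|)^q\bigr)^{1/q}\lesssim\bigl(\sum_\nu(\mathcal M_{\omega,A}h_\nu)^q\bigr)^{1/q}$, hence $\|f\|_{\dot{\mathcal F}^{s,q,L}_{p,t,r,\omega}(X)}\lesssim\bigl\|\bigl(\sum_\nu(\mathcal M_{\omega,A}h_\nu)^q\bigr)^{1/q}\bigr\|_{M_{p,\omega}^{t,r}}$. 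The weighted vector-valued Fefferman--Stein inequality on $M_{p,\omega}^{t,r}$ (the weighted counterpart of Lemma~\ref{max FS Bou}(iii), proved by the same argument, and where the constraint $r>-nt/\log\beta$ is used) then removes the maximal operators. Finally one unwinds $h_\nu$: the cubes of $\D_\nu$ being pairwise disjoint, $\bigl(\sum_\nu h_\nu^q\bigr)^{1/q}$ is precisely the function appearing on the right-hand side of the statement, which yields the claimed bound. For part~(a), the corresponding right-hand side $\{\sum_\nu\|h_\nu\|_{M_{p,\omega}^{t,r}}^q\}^{1/q}$ is the one that emerges naturally when, instead, one estimates $\|\psi_j(\sqrt{L})f\|_{M_{p,\omega}^{t,r}}$ one scale at a time, using Lemma~\ref{max FS Bou}(ii), the quasi-triangle inequality of $M_{p,\omega}^{t,r}$ and then Theorem~\ref{hardy bourgain}, exactly as in Theorem~\ref{atm B 2}.

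The step I expect to be the main obstacle is the pointwise bound together with the ensuing bookkeeping: one must keep track of the two distinct scales in each summand --- the molecular scale $2^{-\nu}$ attached to $Q\in\D_\nu$ and the analysis scale $2^{-j}$ coming from $\psi_j$ --- and make the geometric summation over $\nu$ interact correctly with the two index cutoffs $\nu\ge j_Q$ and $j\ge j_Q$ built into the $\dot{\mathcal B}$- and $\dot{\mathcal F}$-type norms, while also checking that the enlarged range $\kappa\in[0,\tilde q/A]$ allowed in Lemma~\ref{f_Q} (note $\tilde q/A\ge1$) costs nothing. Once the pointwise estimate is in hand with $\delta>(1/p-1/t)n\tilde q$ and $A<\min\{1,q,p/q_\omega\}$, the remaining manipulations are routine applications of the results of Sections~\ref{Pre} and~\ref{Bou spa}.
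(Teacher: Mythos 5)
Your proposal matches the paper's proof in all essentials: you use the same molecular estimates of Lemma \ref{psi m_Q} combined with Lemma \ref{f_Q} (with $\kappa=0$ and the bump at the coarser of the two scales) to arrive at the same two-regime pointwise bound $2^{js}|\psi_j(\sqrt{L})f|\lesssim\sum_{\nu}2^{-|\nu-j|\delta}\,\mathcal M_{\omega,A}\big(\sum_{P\in\mathcal D_\nu}2^{\nu s}|s_P|\omega(P)^{-1/p}\chi_P\big)$ under the stated largeness conditions on $M$ and $N$, after which the paper concludes exactly as you do, by repeating the arguments of Theorems \ref{atm B 2} and \ref{atm F 2} (Hardy-type inequality of Theorem \ref{hardy bourgain} together with maximal-function boundedness on the Bourgain--Morrey scale). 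Your reorganization of that last step (enlarging the truncated $j$-sum and using Young's inequality plus the weighted vector-valued maximal estimate for part (b), and a scale-by-scale estimate with the scalar maximal inequality for part (a)) is only a cosmetic variant of the paper's conclusion, at the same level of rigor.
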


\begin{proof}
	The proof can be done by using similar arguments to those in \cite{BX25,BBD20, FJ85, FJ90}. However, for the   convenience of the reader, we  provide the details.
	
	Fix $\tilde q \in (q_\omega ,\infty)$ and $A  < \min(1,p /q_\omega ,q)$ such that $\omega  \in A_{\tilde q}$ . Now fix $W >n \tilde q /A $. Let $\psi $ be a partition of unity. Since
	\begin{equation*}
		f =  \sum_{\nu \in \mathbb Z} \sum_{Q \in \mathcal D _\nu} s_Q m_Q \; \;  \mathrm{in} \; \mathcal S _\infty ',
	\end{equation*} 	
	then, for each $j\in \mathbb Z$, we obtain
	\begin{align*}
		\psi_j (\sqrt{L})  f &=  \sum_{\nu \in \mathbb Z} \sum_{Q \in \mathcal D _\nu} s_Q 	\psi_j (\sqrt{L})   m_Q \\
		&=  \sum_{\nu: \nu \ge j} \sum_{Q \in \mathcal D _\nu} s_Q 	\psi_j (\sqrt{L})   m_Q 
		+  \sum_{\nu: \nu < j} \sum_{Q \in \mathcal D _\nu} s_Q 	\psi_j (\sqrt{L})   m_Q.
	\end{align*}
	Using Lemma \ref{psi m_Q}  and Lemma  \ref{f_Q} (with $\kappa =0 $), we obtain
	\begin{align} \label{nu  ge j}
		\nonumber
		& \sum_{\nu: \nu \ge j} \sum_{Q \in \mathcal D _\nu} s_Q 	\psi_j (\sqrt{L})   m_Q  \\
		\nonumber
		& \lesssim \sum_{\nu: \nu \ge j} \sum_{Q \in \mathcal D _\nu}  |s_Q|   \bigg(   \frac{ 2^{-j} }{ 2^{-\nu } } \bigg)^{2M-n} \omega (Q) ^{-1/p}   \bigg(  1+ \frac{  \rho(x,x_Q)}{ 2^{-j}}  \bigg) ^{-W}  \\
		& \lesssim \sum_{\nu: \nu \ge j} 
		2^ { -(\nu -j) (2M-n - n \tilde q / A) } \mathcal M _ {\omega, A}  \bigg(  	\sum_{Q \in \mathcal D _\nu}  |s_Q|   \omega (Q) ^{-1/p}  \chi_Q \bigg).
	\end{align}
	In (\ref{nu  ge j}), it is required that   $N-n > W >0$.
	Here, $ n\tilde q / W < A <\min(1, q, p/q_\omega)  \le 1 $.

	And for the another sum, we have
	\begin{align} \label{nu  < j}
		\nonumber
		&  \sum_{\nu: \nu < j} \sum_{Q \in \mathcal D _\nu} s_Q 	\psi_j (\sqrt{L})   m_Q \\
		\nonumber
		& \lesssim  \sum_{\nu: \nu < j} \sum_{Q \in \mathcal D _\nu} |s_Q| 	\bigg( \frac{2^{-\nu }}{ 2^{-j} } \bigg)^{2M} \omega(Q) ^{-1/p}  \bigg(  1+ \frac{  \rho(x,x_Q)}{ 2^{-j} }  \bigg) ^{-W} \\
		& \lesssim  \sum_{\nu: \nu < j} 
		2 ^{ - (j-\nu ) 2M  } \mathcal M _{\omega , A} \bigg(   \sum_{Q \in \mathcal D _\nu}  |s_Q|   \omega (Q) ^{-1/p}  \chi_Q \bigg).
	\end{align}
	In (\ref{nu  < j}), again we use the condition that    $N-n > W >0$. 
	Follow  (\ref{nu  ge j}) and (\ref{nu  < j}), and we get 
	\begin{align}
		\nonumber
		|2^{js} 	\psi_j (\sqrt{L})  f | &  \lesssim   \sum_{\nu: \nu \ge j} 
		2^ { -(\nu -j) (2M-n - n \tilde q / A +s ) } \mathcal M _ {\omega, A}  \bigg(  	\sum_{Q \in \mathcal D _\nu} 2^{\nu s}  |s_Q|   \omega (Q) ^{-1/p}  \chi_Q \bigg)  \\
		\nonumber
		& +  \sum_{\nu: \nu < j} 
		2 ^{ - (j-\nu ) ( 2M  -s  ) } \mathcal M _{\omega , A} \bigg(   \sum_{Q \in \mathcal D _\nu} 2^{\nu s}  |s_Q|   \omega (Q) ^{-1/p}  \chi_Q \bigg).
	\end{align}
	Let $ 2M-n - n \tilde q / A +s > (1/p -1/t) n \tilde q $ and  $2M  -s > (1/p -1/t) n \tilde q  $. Note that $W > n\tilde{q} / A $, hence we let $W = n\tilde q/ A +\epsilon$ for any sufficiently small $\epsilon >0$.    Note that 
		\begin{align*}
		& \begin{cases}
			&  N-n >W, \\
			&  2M-n - n \tilde q / A +s > (1/p -1/t) n \tilde q, \\
			& 2M  -s > (1/p -1/t) n \tilde q, 
		\end{cases}	
	\end{align*}
	

	Then repeating the proofs of Theorems \ref{atm B 2}, \ref{atm F 2}, we get the results and we omit the details here.
\end{proof}

\subsection{Embeddings}
For brevity, we define the spaces $\dot{\mathcal A}^{s,q,L}_{p,t,r,\omega} (X)$ to be any spaces $\dot{\mathcal A}^{s,q,\psi,L}_{p,t,r,\omega} (X), \mathcal A \in\{\mathcal B, \mathcal F\}$ with any partitions of unity $\psi$.
Recall that the symbol $\hookrightarrow$ stands for continuous embedding. To begin with, we list so-called basic embeddings.

\begin{pro} \label{basic embed}
	Let $ 0< q\le \infty$, $s\in \mathbb R$ and $\omega \in A_\infty $. Let $0<p<t<r<\infty$ 	or $0<p\le t<r=\infty$.
	
	{\rm (i)} The space $\dot{\mathcal A}^{s,q,L}_{p,t,r,\omega} (X), \mathcal A \in\{ \mathcal B, \mathcal F\}$ is monotone with $q$, namely, if $q_1 \le q_2$, then 
	\begin{equation*}
		\dot{\mathcal A}^{s,q_1,L}_{p,t,r,\omega} (X) \hookrightarrow \dot{ \mathcal A}^{s,q_2,L}_{p,t,r,\omega} (X).
	\end{equation*}
	
	{\rm (ii)} The space $\dot{\mathcal A}^{s,q,L}_{p,t,r,\omega} (X), \mathcal A \in\{ \mathcal B, \mathcal F\}$ is monotone with $r$, namely, if $r_1 \le r_2$, then 
	\begin{equation*}
		\dot{\mathcal A}^{s,q,L}_{p,t,r_1,\omega} (X) \hookrightarrow \dot{ \mathcal A}^{s,q,L}_{p,t,r_2,\omega} (X).
	\end{equation*}

	{\rm (iii)}
	\begin{equation} \label{embedding 3}
		\dot{\mathcal B}^{s,\min\{p,q\},L}_{p,t,r,\omega} (X) \hookrightarrow 	\dot{\mathcal F}^{s,q,L}_{p,t,r,\omega} (X)  \hookrightarrow
		\dot{\mathcal B}^{s,\max\{p,q\},L}_{p,t,r,\omega} (X).
	\end{equation}

	{\rm (iv)} If $ 0< p_1 \le p_2 < t < r  \le \infty  $, then for $\mathcal A\in \{ \mathcal B, \mathcal F\}$, we have
	\begin{equation*}
		\dot{\mathcal A}^{s,q,L}_{p_2 ,t,r,\omega} (X)  \hookrightarrow  \dot{\mathcal A}^{s,q,L}_{p_1,t,r,\omega} (X)  . 
	\end{equation*}

\end{pro}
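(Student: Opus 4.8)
The plan is to deduce all four embeddings from inequalities that hold separately on each dyadic cube $Q\in\mathcal D$ and then sum over $Q\in\mathcal D$ in $\ell^r$. Throughout, I would fix one partition of unity $\psi$ and use it in the norms on both sides of each embedding (this costs nothing by Theorem \ref{inde psi}), and for $Q\in\mathcal D$ abbreviate $g_j:=2^{js}\psi_j(\sqrt{L})f$. For each of the four assertions the weight factor $\omega(Q)^{r/t-r/p}$ and the outer $\ell^r$-summation over $Q$ are the same for the source and the target space, so it suffices to bound, uniformly in $Q$, the ``inner'' quantities $\big(\sum_{j\ge j_Q}(\int_Q|g_j|^p\omega\,\d\mu)^{q/p}\big)^{1/q}$ in the Besov case and $\big(\int_Q(\sum_{j\ge j_Q}|g_j|^q)^{p/q}\omega\,\d\mu\big)^{1/p}$ in the Triebel--Lizorkin case; after that one raises to the $r$-th power, multiplies by $\omega(Q)^{r/t-r/p}$, and sums over $Q$ (with a supremum in place of the sum when $r=\infty$).

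Parts (i) and (ii) are then immediate. For (ii), the inner quantity attached to a cube $Q$ does not depend on $r$, so writing $b_Q:=\omega(Q)^{1/t-1/p}\cdot(\text{inner quantity})$ one has $\|f\|_{\dot{\mathcal A}^{s,q,L}_{p,t,r,\omega}(X)}=\|\{b_Q\}_{Q\in\mathcal D}\|_{\ell^r}$, and the claim reduces to $\ell^{r_1}\hookrightarrow\ell^{r_2}$ for $r_1\le r_2$. For (i), one applies the inclusion $\ell^{q_1}\hookrightarrow\ell^{q_2}$ (valid for $q_1\le q_2$) to the sequence $\{(\int_Q|g_j|^p\omega\,\d\mu)^{1/p}\}_{j\ge j_Q}$ in the Besov case, and pointwise in $x\in Q$ to $\{g_j(x)\}_{j\ge j_Q}$, before integrating against $\omega\,\d\mu$, in the Triebel--Lizorkin case.

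For (iv), the key estimate is Hölder's inequality in the measure $\omega\,\d\mu$ on $Q$ with exponents $p_2/p_1>1$ and its conjugate: for every measurable $g$,
\[
\Big(\int_Q|g|^{p_1}\omega\,\d\mu\Big)^{1/p_1}\le\omega(Q)^{1/p_1-1/p_2}\Big(\int_Q|g|^{p_2}\omega\,\d\mu\Big)^{1/p_2}.
\]
Applied with $g=g_j$ for each $j\ge j_Q$ in the Besov case, and with $g=\big(\sum_{j\ge j_Q}|g_j|^q\big)^{1/q}$ in the Triebel--Lizorkin case, the factor $\omega(Q)^{1/p_1-1/p_2}$ converts the weight exponent $\omega(Q)^{1/t-1/p_1}$ of the source space into the exponent $\omega(Q)^{1/t-1/p_2}$ of the target space; summing over $Q$ in $\ell^r$ concludes.

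The main obstacle is (iii). Here the aim is to prove, for a fixed $Q\in\mathcal D$ and an arbitrary sequence $\{g_j\}_{j\ge j_Q}$ of measurable functions, the two inequalities
\[
\Big(\int_Q\big(\sum_j|g_j|^q\big)^{p/q}\omega\,\d\mu\Big)^{1/p}\lesssim\Big(\sum_j\big(\int_Q|g_j|^p\omega\,\d\mu\big)^{\min\{p,q\}/p}\Big)^{1/\min\{p,q\}}
\]
and
\[
\Big(\sum_j\big(\int_Q|g_j|^p\omega\,\d\mu\big)^{\max\{p,q\}/p}\Big)^{1/\max\{p,q\}}\lesssim\Big(\int_Q\big(\sum_j|g_j|^q\big)^{p/q}\omega\,\d\mu\Big)^{1/p},
\]
which, specialized to $g_j=2^{js}\psi_j(\sqrt{L})f$, are exactly the two embeddings stated in part (iii) (the norm inequalities flipping the direction of $\hookrightarrow$). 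Both would be established by a short case split on whether $q\le p$ or $q>p$: in the regime where the exponent $p/q$ (resp. $q/p$) is at least $1$, one interchanges the $\ell^q$-summation in $j$ with the integral in $\omega\,\d\mu$ via Minkowski's integral inequality in $L^{p/q}(Q,\omega)$ (resp. $L^{q/p}(Q,\omega)$); in the complementary regime one uses the pointwise inclusions $\ell^p\hookrightarrow\ell^q$ or $\ell^q\hookrightarrow\ell^p$. The value $q=\infty$ is the degenerate case of these, with sums over $j$ replaced by suprema. Once these localized inequalities are in hand, the two embeddings follow by the summation procedure of the first paragraph.
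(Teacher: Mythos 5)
Your proposal is correct and follows essentially the same route as the paper: parts (i)--(ii) from monotonicity of the $\ell^q$-norms, part (iii) via the per-cube interchange of the $j$-sum and the $\omega\,\d\mu$-integral (pointwise $\ell^{q}$--$\ell^{p}$ comparisons together with Minkowski's integral inequality, split according to $q\le p$ or $q>p$), and part (iv) via H\"older's inequality on each cube producing the factor $\omega(Q)^{1/p_1-1/p_2}$. No gaps.
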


\begin{proof}
	The properties  (i) and (ii) are  coming from the monotonicity of the $\ell^q$-norm on $q$.
	
	(iii)
	It follows from the (generalized)
	Minkowski’s inequality. Indeed, let $f_j := | 2^{js} \psi_j (\sqrt{L}) f|$.
	If $ 0 <q \le p <\infty$, then (\ref{embedding 3}) follows from
	\begin{align*}
		\| f \|_ {  \dot{\mathcal B}^{s, p ,L}_{p,t,r,\omega} (X)  }&  =  \left(  \sum_{Q \in \mathcal D} \omega (Q) ^{r/t - r/p}  \left( \sum_{j = j_Q}^\infty   \int_Q           f_j^p  \omega (x) \d \mu (x)   \right)^{r/p} \right) ^{1/r} \\
		&  =  \left(  \sum_{Q \in \mathcal D} \omega (Q) ^{r/t - r/p}  \left(  \int_Q     \sum_{j = j_Q}^\infty        f_j^p \omega (x) \d \mu (x)   \right)^{r/p} \right) ^{1/r} \\
		& \le  \left(  \sum_{Q \in \mathcal D} \omega (Q) ^{r/t - r/p}  \left(  \int_Q   \left(  \sum_{j = j_Q}^\infty           f_j^q \right)^{p/q}\omega (x) \d \mu (x)   \right)^{r/p} \right) ^{1/r} \\
		& \le  \left(  \sum_{Q \in \mathcal D} \omega (Q) ^{r/t - r/p}  \left(  \sum_{j = j_Q}^\infty  \left( \int_Q           f_j^p \omega (x) \d \mu (x) \right)^{q/p}  \right)^{r/q} \right) ^{1/r}  = 	\| f \|_ {  \dot{\mathcal B}^{s, q ,L}_{p,t,r,\omega} (X)  }.
	\end{align*}
	If $ 0 < p < q <\infty$, then (\ref{embedding 3}) follows from
	\begin{align*}
		\| f \|_ {  \dot{\mathcal B}^{s, q ,L}_{p,t,r,\omega} (X)  }&  =   \left(  \sum_{Q \in \mathcal D} \omega (Q) ^{r/t - r/p}  \left(  \sum_{j = j_Q}^\infty  \left( \int_Q           f_j^p \omega (x)  \d \mu (x) \right)^{q/p}  \right)^{r/q} \right) ^{1/r} \\
		& \le  \left(  \sum_{Q \in \mathcal D} \omega (Q) ^{r/t - r/p}  \left(  \int_Q   \left(  \sum_{j = j_Q}^\infty           f_j^q \right)^{p/q} \omega (x) \d \mu (x)   \right)^{r/p} \right) ^{1/r} \\
		& \le  \left(  \sum_{Q \in \mathcal D} \omega (Q) ^{r/t - r/p}  \left(  \int_Q   \left(  \sum_{j = j_Q}^\infty           f_j^p \right)^{p/p} \omega (x) \d \mu (x)   \right)^{r/p} \right) ^{1/r}  = 	\| f \|_ {  \dot{\mathcal B}^{s, p ,L}_{p,t,r,\omega} (X)  }.
	\end{align*}
	
	(iv) It is obtained by  H\"older's inequality. 
\end{proof}

Use the atomic decompositions for weighted Bourgain-Morrey-Beov type spaces, we obtain the following embeddings,
 which will be used to prove  that  spaces  $\dot{\mathcal B}^{s,q,L}_{p_1,t_1,r,\omega} (X)$   are continuously embedded into $\mathcal S _\infty '$.

\begin{pro} \label{Sobolev  type embedding}
		Let  $\omega \in A_\infty$,  $0<q\le \infty$, $s\in \mathbb R$, $M \in\mathbb N _+$. 
		
	Let  $0<p_0 <t_0 <\infty$  and $ \max\{t_0, -nt_0 / \log \beta  \} < r <\infty $, where  $\beta = 1-  (1- \alpha_0)^{p_0/A} /  [\omega]_{A_{p_0/A}} $ with $0< A < p_0 / q_\omega$, 	or let $0<p_0 \le t_0 <r=\infty$.	
	
		Let  $0<p_1 <t_1 <\infty$  and $ \max\{t_1, -nt_1 / \log \beta  \} < r <\infty $, where  $\beta = 1-  (1- \alpha_0)^{p_1 /A} /  [\omega]_{A_{p_1/A}} $ with $0< A < p_1/ q_\omega$, 	or let $0<p_1 \le t_1 <r=\infty$.	
	
If $ 1/t_0 -1/p_0 = 1/t_1 -1/p_1$ and $p_0 >p_1 $, then 
 $  \dot{\mathcal B}^{s,q,L}_{p_1,t_1,r,\omega} (X)  \hookrightarrow \dot{\mathcal B}^{s,q,L}_{p_0,t_0,r,\omega} (X) $.

\end{pro}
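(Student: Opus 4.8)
The plan is to derive the embedding by transporting atomic decompositions from the source space to the target space. Let $f\in\dot{\mathcal B}^{s,q,L}_{p_1,t_1,r,\omega}(X)$. The hypotheses imposed on $(p_1,t_1,r,\omega)$ are precisely those of Theorem \ref{atm B 1}, so there are a sequence of $(L,M,p_1,\omega)$-atoms $\{a_Q\}_{Q\in\mathcal D_\nu,\nu\in\mathbb Z}$ and coefficients $\{s_Q\}$ with $f=\sum_{\nu\in\mathbb Z}\sum_{Q\in\mathcal D_\nu}s_Qa_Q$ in $\mathcal S_\infty'$ and
\begin{equation*}
\Big(\sum_{Q\in\mathcal D}\omega(Q)^{r/t_1-r/p_1}\Big(\sum_{\nu=j_Q}^\infty\Big(\sum_{P\in\mathcal D_\nu,\,P\subset Q}2^{\nu sp_1}|s_P|^{p_1}\Big)^{q/p_1}\Big)^{r/q}\Big)^{1/r}\lesssim\|f\|_{\dot{\mathcal B}^{s,q,L}_{p_1,t_1,r,\omega}(X)}.
\end{equation*}
Here $M\in\mathbb N_+$ is free; I would fix it at the outset large enough to also meet the requirement of Theorem \ref{atm B 2} for the target parameters $(p_0,t_0)$ (which, since $1/p_0-1/t_0=1/p_1-1/t_1$, differs from the $(p_1,t_1)$ threshold only through the $\min(1,p,q)$ term, so one $M$ serves both).

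Next I would change the integrability exponent of the atoms. In the definition of an $(L,M,p,\omega)$-atom associated with $Q$ only the size bound depends on $p$, and only through the factor $\omega(Q)^{-1/p}$; the support and representation conditions are $p$-free. Hence for each $Q$ the function $\tilde a_Q:=\omega(Q)^{1/p_1-1/p_0}a_Q$ is a bounded multiple of an $(L,M,p_0,\omega)$-atom, and $f=\sum_{\nu}\sum_{Q\in\mathcal D_\nu}\tilde s_Q\tilde a_Q$ with $\tilde s_Q:=\omega(Q)^{1/p_0-1/p_1}s_Q$. Applying Theorem \ref{atm B 2} to $\dot{\mathcal B}^{s,q,L}_{p_0,t_0,r,\omega}(X)$ then yields $f\in\dot{\mathcal B}^{s,q,L}_{p_0,t_0,r,\omega}(X)$ and
\begin{equation*}
\|f\|_{\dot{\mathcal B}^{s,q,L}_{p_0,t_0,r,\omega}(X)}\lesssim\Big(\sum_{Q\in\mathcal D}\omega(Q)^{r/t_0-r/p_0}\Big(\sum_{\nu=j_Q}^\infty\Big(\sum_{P\in\mathcal D_\nu,\,P\subset Q}2^{\nu sp_0}\,\omega(P)^{1-p_0/p_1}|s_P|^{p_0}\Big)^{q/p_0}\Big)^{r/q}\Big)^{1/r}.
\end{equation*}
The proposition is thereby reduced to the purely discrete claim that the last quantity is controlled by the first.

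This reduction is where the two hypotheses are spent. The identity $1/t_0-1/p_0=1/t_1-1/p_1$ makes the outer Bourgain–Morrey weights $\omega(Q)^{r/t_i-r/p_i}$ identical on the two sides. For the inner sums, which are genuine counting sums over the subcubes $P\in\mathcal D_\nu$ of $Q$ but now carry the extra factor $\omega(P)^{1-p_0/p_1}$, I would split the generation sum into the part with $\nu$ near $j_Q$ and the part with $\nu$ far from $j_Q$, use $p_0>p_1$ together with the $\ell^{p_1}\hookrightarrow\ell^{p_0}$ nesting to trade the $p_0$-power for the $p_1$-power, and compare $\omega(P)$ with $\omega(Q)$ (and with $\omega$ of intermediate ancestors) through Lemma \ref{weights} and the doubling/reverse-doubling bounds, so that the surplus powers of $\omega(P)$ are absorbed against a geometrically convergent series in the generation gap — exactly the mechanism already exploited in the proofs of Theorems \ref{M ell Bour} and \ref{hardy bourgain}. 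As there, the cases $q\le\min\{1,p_0,p_1\}$, intermediate $q$, large $q$, and the endpoint $r=\infty$ are treated separately.

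The main obstacle is precisely this last sequence-space inequality. In contrast with the single-cube Hölder estimate underlying Proposition \ref{basic embed}(iv), one cannot compare the $p_0$- and $p_1$-quantities cube by cube: the exponent change leaves behind an unfavorable power of $\omega(P)$, and neutralizing it requires using the entire double sum over all dyadic cubes — in effect the over-counting of each coefficient $s_P$ by the dyadic ancestors of its supporting cube — so that those extra $\omega$-powers are swallowed by the reverse-doubling gain, with the common Morrey exponents $1/t_i-1/p_i$ making the bookkeeping on the two sides match. Everything else — the atomic representation in the source space, the exponent rescaling of the atoms, and the reassembly in the target space — is routine once the machinery of Section \ref{atm dec} is in hand.
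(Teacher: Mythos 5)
Your steps 1--3 are carried out correctly, and your renormalization $\tilde a_Q=\omega(Q)^{1/p_1-1/p_0}a_Q$, $\tilde s_Q=\omega(Q)^{1/p_0-1/p_1}s_Q$ is indeed the honest way to feed the decomposition from Theorem \ref{atm B 1} into Theorem \ref{atm B 2} for the target exponents. The problem is that the ``purely discrete claim'' you reduce everything to is false, so the step you yourself flag as the main obstacle cannot be closed. Test it on one coefficient: fix $P_0\in\mathcal D_{\nu_0}$, take $s_{P_0}\neq 0$ and $s_P=0$ otherwise, and set $\tau:=1/p_1-1/t_1=1/p_0-1/t_0>0$. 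On either side only the ancestors $Q\supseteq P_0$ contribute, each with outer weight $\omega(Q)^{-\tau}$; since $\omega(Q)$ grows geometrically along the ancestor chain (reverse doubling together with Lemma \ref{weights}(iv)), both sides are comparable to their $Q=P_0$ term. For the source quantity that term is $2^{\nu_0 s}\,\omega(P_0)^{1/t_1-1/p_1}|s_{P_0}|$, whereas for your target quantity, because of the leftover factor $\omega(P)^{1-p_0/p_1}$, it is $2^{\nu_0 s}\,\omega(P_0)^{1/t_0-1/p_1}|s_{P_0}|$. The ratio is $\omega(P_0)^{1/t_0-1/t_1}$, and since $t_0>t_1$ while $\omega(P_0)$ tends to $0$ along shrinking chains of dyadic cubes, it is unbounded. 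So no splitting in the generation gap, ancestor over-counting, or reverse-doubling absorption can prove the inequality: the failure already occurs for a single small cube, where the ancestor sum is harmless, and such concentrated coefficient families are (up to Peetre-type tails) exactly what the canonical construction produces for a single frequency-localized bump, so you cannot appeal to hidden structure of the $s_P$ either.

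For comparison, the paper's own proof takes a different and much shorter route that never renormalizes: it keeps a single coefficient sequence $\{s_P\}$, observes that the outer weights coincide because $\omega(Q)^{r/t_0-r/p_0}=\omega(Q)^{r/t_1-r/p_1}$, and applies the counting-measure monotonicity $\|\cdot\|_{\ell^{p_0}}\le\|\cdot\|_{\ell^{p_1}}$ to the inner sums over $P\in\mathcal D_\nu$, $P\subset Q$; the factor $\omega(P)^{1-p_0/p_1}$ that sinks your reduction therefore never appears there. The price of that shortcut is that the same atoms and coefficients must be read simultaneously as an $(L,M,p_1,\omega)$ and an $(L,M,p_0,\omega)$ atomic decomposition, which is precisely the normalization mismatch your step 2 makes explicit; your own computation, once that mismatch is made explicit, shows the resulting comparison genuinely fails, so your proposal cannot be completed along the lines you describe.
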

\begin{proof}
	We use the ideas from \cite[Proposition 2.5]{YSY10}.
	By atomic decompositions for weighted Bourgain-Morrey-Beov type spaces, 
	we have
	\begin{align*}
		\| f\|_{   \dot{\mathcal B}^{s,q,L}_{p_0,t_0,r,\omega} (X)} & \approx  \left( 	\sum_{Q \in \mathcal D }\omega (Q) ^{r/t_0 - r/p_0} \left( \sum_{\nu = j_Q}^\infty  \left(   \sum_{P \in \D_{\nu} , P \subset Q} 2^{\nu s  p_0 }|s_P|^{p_0}  \right)^{q/p_0}
		\right) ^{r/q}  \right)^{1/r}  \\
		& = \left( 	\sum_{Q \in \mathcal D }\omega (Q) ^{r/t_0 - r/p_0} \left( \sum_{\nu = j_Q}^\infty 2^{\nu s q} \left(   \sum_{P \in \D_{\nu} , P \subset Q} |s_P|^{p_0}  \right)^{q/p_0}
		\right) ^{r/q}  \right)^{1/r}  \\
		& \le   \left( 	\sum_{Q \in \mathcal D }\omega (Q) ^{r/t_0 - r/p_0} \left( \sum_{\nu = j_Q}^\infty  \left(   \sum_{P \in \D_{\nu} , P \subset Q} 2^{\nu s  p_1 }|s_P|^{p_1}  \right)^{q/p_1}
		\right) ^{r/q}  \right)^{1/r}  \\
		& =   \left( 	\sum_{Q \in \mathcal D }\omega (Q) ^{r/t_1 - r/p_1} \left( \sum_{\nu = j_Q}^\infty  \left(   \sum_{P \in \D_{\nu} , P \subset Q} 2^{\nu s  p_1 }|s_P|^{p_1}  \right)^{q/p_1}
		\right) ^{r/q}  \right)^{1/r}  \\
		& \approx \| f\|_{   \dot{\mathcal B}^{s,q,L}_{p_1,t_1,r,\omega} (X)} .
	\end{align*}
	Hence $  \dot{\mathcal B}^{s,q,L}_{p_1,t_1,r,\omega} (X)  \hookrightarrow \dot{\mathcal B}^{s,q,L}_{p_0,t_0,r,\omega} (X) $.
\end{proof}

\begin{lem}[p. 29, \cite{BBD20}] \label{Lp embedding complete}
	{\rm  (i) }	Let $0< p,q \le \infty$, $s\in \mathbb R$ and $\omega \in A_\infty $.
	The spaces $\dot{\mathcal B}^{s,q,L}_{p,p,\infty,\omega} (X)$ are complete, and  continuously embedded into $\mathcal S _\infty '$.
	
	{\rm  (ii) }	Let  $0<p<\infty$, $0< q \le \infty$, $s\in \mathbb R$ and $\omega \in A_\infty $.
	The spaces $\dot{\mathcal F}^{s,q,L}_{p,p,\infty,\omega} (X)$ are complete, and  continuously embedded into $\mathcal S _\infty '$.
\end{lem}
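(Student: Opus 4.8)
The plan is to establish first the continuous embedding into $\mathcal S_\infty'$ and then to deduce completeness from it by a Fatou-type argument, in the spirit of \cite[p.~29]{BBD20}. For $t=p$ and $r=\infty$ one is in the range ``$0<p\le t<r=\infty$'' of Definition \ref{def spa}, so no $\beta$-restriction is needed and Theorems \ref{Peetre} and \ref{char conti} are available for every $\lambda>nq_\omega/p$. Moreover $\dot{\mathcal F}^{s,q,L}_{p,p,\infty,\omega}(X)\hookrightarrow\dot{\mathcal B}^{s,\max\{p,q\},L}_{p,p,\infty,\omega}(X)$ by Proposition \ref{basic embed}(iii), so for the embedding into $\mathcal S_\infty'$ it suffices to treat the $\dot{\mathcal B}$ spaces; in part (i) with $p=\infty$ the space reduces to the classical weighted Besov space and the statement is literally \cite[p.~29]{BBD20}, so below I assume $0<p<\infty$.

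Fix a partition of unity $\psi$ and an even $\tilde\psi\in\mathscr S(\mathbb R)$ with $\tilde\psi\equiv1$ on $[1/2,2]$, so that $\psi_\nu(\sqrt L)=\psi_\nu(\sqrt L)\tilde\psi(2^{-\nu}\sqrt L)$. For $f\in\dot{\mathcal B}^{s,q,L}_{p,p,\infty,\omega}(X)$ and $\phi\in\mathcal S_\infty$, the Calderón reproducing identity (the discrete analogue of Lemma \ref{iden}, $f=\sum_{\nu}\psi_\nu(\sqrt L)f$ in $\mathcal S_\infty'$) together with self-adjointness of $\psi_\nu(\sqrt L)$ gives
\begin{equation*}
  \langle f,\phi\rangle=\sum_{\nu\in\mathbb Z}\int_X\psi_\nu(\sqrt L)f(x)\,\overline{\tilde\psi(2^{-\nu}\sqrt L)\phi(x)}\,\d\mu(x).
\end{equation*}
Let $Q_\nu\in\mathcal D$ be the dyadic cube at scale $2^{-\nu}$ containing the reference point $x_0$, so $j_{Q_\nu}=\nu$. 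Keeping only the index $j=\nu$ and the cube $Q=Q_\nu$ in the Peetre-maximal norm of Theorem \ref{Peetre} gives $\int_{Q_\nu}|2^{\nu s}\psi_{\nu,\lambda}^*(\sqrt L)f|^p\omega\lesssim\|f\|^p$; Chebyshev produces $y_\nu\in Q_\nu$ with $\psi_{\nu,\lambda}^*(\sqrt L)f(y_\nu)\lesssim 2^{-\nu s}\omega(Q_\nu)^{-1/p}\|f\|$, and the slow variation of $\psi_{\nu,\lambda}^*(\sqrt L)f$ (with $|\psi_\nu(\sqrt L)f|\le\psi_{\nu,\lambda}^*(\sqrt L)f$) yields
\begin{equation*}
  |\psi_\nu(\sqrt L)f(x)|\lesssim 2^{-\nu s}\,\omega(Q_\nu)^{-1/p}\,\|f\|\,\big(1+2^\nu\rho(x,x_0)\big)^{\lambda}.
\end{equation*}
On the other side, writing $\phi=L^mg_m$ with $g_m\in\mathcal S$ and also using $L^m\phi\in\mathcal S$, a rescaling of the symbol of $\tilde\psi$ (as $(\cdot)^{\pm2m}$ times a Schwartz function) gives, for any $N$,
\begin{equation*}
  |\tilde\psi(2^{-\nu}\sqrt L)\phi(x)|\lesssim 2^{-2m|\nu|}\,\frac{1}{V(x_0,2^{-\nu})}\big(1+2^\nu\rho(x,x_0)\big)^{-N},
\end{equation*}
the implied constant depending on $m$, $N$ and finitely many seminorms of $\phi$. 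Substituting the last two bounds into the $\nu$-th integral, integrating the $\rho$-factors by Lemma \ref{basic est}(i), and using the $A_\infty$ estimate $\omega(Q_\nu)^{-1/p}\lesssim 2^{\,\nu_+\,n(q_\omega+\varepsilon)/p}$ (from Lemma \ref{weights}(iii) and doubling), the $\nu$-th term is $\lesssim 2^{-\varepsilon_0|\nu|}\|f\|$ for some $\varepsilon_0>0$ once $m$ is large compared to $s$, $\lambda$ and $nq_\omega/p$; summing over $\nu$ gives $|\langle f,\phi\rangle|\lesssim\mathcal C(\phi)\,\|f\|$ with $\mathcal C(\phi)$ a finite combination of the $\mathcal S_\infty$-seminorms of $\phi$, i.e. $\dot{\mathcal B}^{s,q,L}_{p,p,\infty,\omega}(X)\hookrightarrow\mathcal S_\infty'$. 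The hardest point is precisely this pointwise bound on $\psi_\nu(\sqrt L)f$: since the ``$M^{p,\infty}_{p,\omega}=L^p(\omega)$'' normalization only provides a local $L^p(\omega)$ bound with a supremum over cubes, one cannot read off a pointwise estimate directly but must route it through the Peetre-maximal characterization, Chebyshev and slow variation, keeping careful track of the $\nu$-growth of $\omega(Q_\nu)^{-1/p}$ so that it is absorbed by the smoothing gain $2^{-2m|\nu|}$.

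Completeness is then routine. Let $\{f_n\}$ be Cauchy in $\dot{\mathcal B}^{s,q,L}_{p,p,\infty,\omega}(X)$; by the embedding just proved it is Cauchy, hence convergent, in $\mathcal S_\infty'$, say $f_n\to f$. Since $K_{\psi_j(\sqrt L)}(x,\cdot)\in\mathcal S_\infty$ for each $j$, we have $\psi_j(\sqrt L)f_n(x)\to\psi_j(\sqrt L)f(x)$ for every $x\in X$. Applying Fatou's lemma in each $\int_Q(\cdot)^p\omega$, then in the sum over $j\ge j_Q$, then in the supremum over $Q\in\mathcal D$, yields $\|f\|_{\dot{\mathcal B}}\le\liminf_n\|f_n\|_{\dot{\mathcal B}}<\infty$ and, with $f_m-f_n$ in place of $f$ and $m\to\infty$, $\|f-f_n\|_{\dot{\mathcal B}}\le\liminf_m\|f_m-f_n\|_{\dot{\mathcal B}}\to0$; thus $f\in\dot{\mathcal B}^{s,q,L}_{p,p,\infty,\omega}(X)$ and $f_n\to f$ in norm. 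The same argument, with the $\int_Q$ and $\sum_j$ integrations interchanged, proves completeness of $\dot{\mathcal F}^{s,q,L}_{p,p,\infty,\omega}(X)$, and its embedding into $\mathcal S_\infty'$ has already been reduced to the $\dot{\mathcal B}$ case via Proposition \ref{basic embed}(iii).
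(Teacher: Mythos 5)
Your proposal is essentially correct, but it takes a genuinely different route from the one the paper relies on. The paper itself gives no proof of this lemma: it is quoted from \cite{BBD20}, and the closest in-paper argument is the proof of Theorem \ref{embedding S infty '}, which estimates $\sum_{\tau\in I_j}\int_{Q^j_\tau}\big|\psi_j\big(\sqrt L\big)f\big|\,\big|\psi_j\big(\sqrt L\big)\phi\big|\,\d\mu$ by H\"older's inequality on each level-$j$ cube together with the weighted summation bound (\ref{sum le 1}), and therefore first reduces to $p>1$ via the Sobolev-type embedding of Proposition \ref{Sobolev  type embedding}. You avoid both the duality/H\"older step and the reduction to $p>1$: restricting the Peetre-maximal norm (Theorem \ref{Peetre}(a), which is available for $t=p$, $r=\infty$) to the single term $j=\nu$, $Q=Q_\nu\ni x_0$, and combining Chebyshev with the slow-variation inequality $|\psi_\nu(\sqrt L)f(x)|\le\psi^*_{\nu,\lambda}(\sqrt L)f(y)\,(1+2^\nu\rho(x,y))^\lambda$ yields the pointwise bound $|\psi_\nu(\sqrt L)f(x)|\lesssim 2^{-\nu s}\omega(Q_\nu)^{-1/p}\|f\|\,(1+2^\nu\rho(x,x_0))^{\lambda}$ for all $0<p<\infty$; this is a clean alternative for the $t=p$, $r=\infty$ case, and your Fatou argument for completeness coincides with the paper's.

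One displayed estimate is overstated. For $\nu\ge0$, the factorization of $\tilde\psi$ together with Lemma \ref{kernel est} and $|L^m\phi(y)|\lesssim(1+\rho(y,x_0))^{-M}$ only gives decay at \emph{unit} scale around $x_0$, of the form $|\tilde\psi(2^{-\nu}\sqrt L)\phi(x)|\lesssim 2^{-2m\nu}\,2^{\nu n}\,V(x_0,1)^{-1}(1+\rho(x,x_0))^{-N}$, not the scale-$2^{-\nu}$ decay $(1+2^\nu\rho(x,x_0))^{-N}$ you wrote: the test function itself decays only at unit scale, and a kernel localized at scale $2^{-\nu}\le1$ cannot improve this. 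The defect is harmless: either keep the unit-scale decay and absorb the loss $(1+2^\nu\rho(x,x_0))^{\lambda}\le 2^{\nu\lambda}(1+\rho(x,x_0))^{\lambda}$ coming from the $f$-side by peeling off more powers of $L$ from $\phi$ (legitimate since $\phi\in\mathcal S_\infty$, so $L^K\phi\in\mathcal S$ for every $K$), i.e.\ take the smoothing exponent large compared with $\lambda+|s|+n(q_\omega+\varepsilon)/p+n$, which your final condition on $m$ already anticipates; or interpose $K\gg m+N$ powers of $L$ and use $2^{-2K\nu}(1+\rho(x,x_0))^{-N}\le 2^{-(2K-N)\nu}(1+2^\nu\rho(x,x_0))^{-N}$ to recover exactly the form you stated. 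Two small points for a careful write-up: choose $\tilde\psi$ supported in $\pm[1/4,4]$ (so that both $\lambda^{\pm2m}\tilde\psi(\lambda)$ are Schwartz, which your ``$(\cdot)^{\pm2m}$'' factorization needs), and note that your convention $j_{Q_\nu}=\nu$ for a cube of scale $2^{-\nu}$ is the intended one in Definition \ref{def spa} (the paper's ``$j_Q:=-\nu$'' is a sign slip), so the term $(j,Q)=(\nu,Q_\nu)$ indeed occurs in the norm.
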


\begin{lem}[Proposition 2.10, \cite{BBD20}] \label{conv in S infty}
	Let $\psi $ be a partition of unity. Then for any $f\in \mathcal S_\infty '$, we have
	\[
	f = \sum_{j \in \mathbb Z} \psi_j \big(\sqrt{L}\big) (f) \;   \mathrm{in} \; \mathcal S_\infty '.
	\]
\end{lem}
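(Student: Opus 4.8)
The plan is to follow the scheme of \cite[Proposition 2.10]{BBD20}. Put $\Psi_N:=\sum_{|j|\le N}\psi_j$, so that the partial sums $S_N:=\sum_{|j|\le N}\psi_j(\sqrt L)f=\Psi_N(\sqrt L)f$ are well defined elements of $\mathcal S_\infty'$ by \eqref{varphi}. First I would reduce the weak-$*$ convergence ``$S_N\to f$ in $\mathcal S_\infty'$'' to a convergence inside $\mathcal S_\infty$: a direct computation using the self-adjointness of $L$ and Fubini's theorem (all integrals converging in $\mathcal S_\infty$, and the continuity of $f$ on $\mathcal S_\infty$) yields $\langle\psi_j(\sqrt L)f,\phi\rangle=\langle f,\overline{\psi_j}(\sqrt L)\phi\rangle$ for every $\phi\in\mathcal S_\infty$, hence $\langle S_N,\phi\rangle=\langle f,\overline{\Psi_N}(\sqrt L)\phi\rangle$. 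Since $f$ is continuous on $\mathcal S_\infty$ and $\{\overline{\psi_j}\}_{j\in\mathbb Z}$ is again a partition of unity, it therefore suffices to prove the following statement for an arbitrary partition of unity $\psi$: for every $\phi\in\mathcal S_\infty$ one has $\Psi_N(\sqrt L)\phi\to\phi$ in $\mathcal S_\infty$.

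To check this, fix a seminorm index $(m,\ell,k)$ and write $\phi=L^k g_k$ with $g_k=L^{-k}\phi\in\mathcal S_\infty$. As $(1-\Psi_N)(\sqrt L)$ commutes, in the functional calculus of $L$, with the powers of $L$ involved, the $g_k$-component of $\phi-\Psi_N(\sqrt L)\phi$ is $(1-\Psi_N)(\sqrt L)g_k$, so that
\begin{equation*}
\mathcal P^{*}_{m,\ell,k}\big(\phi-\Psi_N(\sqrt L)\phi\big)=\sup_{x\in X}\big(1+\rho(x,x_0)\big)^{m}\,\big|(1-\Psi_N)(\sqrt L)h(x)\big|,\qquad h:=L^\ell g_k\in\mathcal S_\infty .
\end{equation*}
Using $\sum_{j\in\mathbb Z}\psi_j\equiv1$ on $(0,\infty)$ and that $h\in\mathcal S_\infty$ (hence has no spectral mass at $0$), we have $(1-\Psi_N)(\sqrt L)h=\sum_{|j|>N}\psi_j(\sqrt L)h$. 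Thus everything reduces to the pointwise estimate: for every $h\in\mathcal S_\infty$ and every $m>0$ there exist $\delta>0$ and $C$, depending only on finitely many seminorms of $h$, with $|\psi_j(\sqrt L)h(x)|\le C\,2^{-|j|\delta}(1+\rho(x,x_0))^{-m}$ for all $j\in\mathbb Z$, $x\in X$; summing over $|j|>N$ then gives $\mathcal P^{*}_{m,\ell,k}(\phi-\Psi_N(\sqrt L)\phi)\lesssim 2^{-N\delta}\to0$, as needed.

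For the pointwise estimate I would split into $j\ge0$ and $j<0$, exploiting the two defining properties of $\mathcal S_\infty$. For $j\ge0$, write $h=L^{-K}(L^K h)$ with $L^K h\in\mathcal S$; since $\psi_j(\sqrt\lambda)\lambda^{-K}$ is supported where $\lambda\sim4^{j}$ with size $\lesssim4^{-jK}$, it has the form $\psi_j(\sqrt L)h=4^{-jK}\,\widetilde\psi_1(2^{-j}\sqrt L)(L^K h)$ with $\widetilde\psi_1\in C_c^\infty(\mathbb R)$ fixed. The kernel bound $|K_{\widetilde\psi_1(2^{-j}\sqrt L)}(x,y)|\lesssim V(x,2^{-j})^{-1}(1+2^{j}\rho(x,y))^{-M'}$, valid for every $M'$ thanks to the Gaussian heat-kernel bound (see \cite{BBD20}), together with Lemma \ref{basic est}{\rm(i)}, the rapid decay of $L^K h$, and $(1+\rho(x,x_0))\le(1+2^{j}\rho(x,y))(1+\rho(y,x_0))$, then gives the claim with $\delta=2K$. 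For $j<0$, write instead $h=L^{K}g_K$ with $g_K=L^{-K}h\in\mathcal S$ --- here one genuinely needs $h\in\mathcal S_\infty$; now $\psi_j(\sqrt\lambda)\lambda^{K}$ is supported where $\lambda\sim4^{j}$ with size $\lesssim4^{jK}$, so $\psi_j(\sqrt L)h=4^{jK}\,\widetilde\psi_2(2^{-j}\sqrt L)g_K$, and the same kernel estimate applies at the scale $2^{-j}\ge1$; the only new feature is that converting $(1+\rho(x,y))^{m}$ into $(1+2^{j}\rho(x,y))^{m}$ costs a factor $2^{|j|m}$, which is absorbed by $4^{jK}=2^{-2|j|K}$ provided $K>m/2$. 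Choosing $K=K(m)$ large enough produces a common $\delta>0$ in both regimes.

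The main obstacle is exactly this last step: one must obtain genuine geometric decay in $|j|$ simultaneously at the high-frequency end ($j\to+\infty$) and the low-frequency end ($j\to-\infty$), and the two halves are controlled by the two \emph{different} features of $\mathcal S_\infty$ --- the smoothness $h\in\bigcap_{m}D(L^{m})$ for large positive $j$, and the spectral vanishing at the origin, i.e. $L^{-K}h\in\mathcal S$ for every $K$, for large negative $j$. The surrounding bookkeeping --- tracking the $V(x,2^{-j})$ factors, keeping the relevant Schwartz seminorms uniform, and (in the reduction step) the conjugations --- is routine but must be done with some care.
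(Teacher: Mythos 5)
The paper gives no proof of this lemma at all — it is imported verbatim as Proposition 2.10 of \cite{BBD20} — and your plan is essentially the argument of that reference: dualize so that the claim reduces to $\Psi_N(\sqrt L)\phi\to\phi$ in $\mathcal S_\infty$ for $\phi\in\mathcal S_\infty$, and then obtain geometric decay in $|j|$ for $\psi_j(\sqrt L)h$ by trading powers of $L$ (writing $h=L^{-K}(L^Kh)$ for $j\ge 0$ and $h=L^K(L^{-K}h)$ for $j<0$) against the kernel bound of Lemma \ref{kernel est} together with Lemma \ref{basic est}. The estimates you outline are correct (the exponent bookkeeping $\delta=2K-m>0$ works), and the steps you leave implicit — moving $f$ inside the $\mathcal S_\infty$-valued integral and the even extension needed to apply the kernel estimate — are the same routine technicalities handled in Section 2 of \cite{BBD20}.
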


\begin{lem}[Lemma 2.6, \cite{BBD20}]  \label{kernel est}
	Let $\varphi \in \mathscr S (\mathbb R)$ be an even function. Then for any $W >0$, there exists $C>0$ such that
	\[
	\Big| K _ { \varphi ( \alpha \sqrt{L}) }  (x,y) \Big| \le \frac{C}{V(x  \vee y , \alpha  )}\bigg(  1+ \frac{\rho(x,y) }{\alpha} \bigg)^{-W}
	\]
	for all $\alpha >0$ and $x,y \in X$.
\end{lem}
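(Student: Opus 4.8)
The plan is to deduce Lemma~\ref{kernel est} from the Gaussian heat kernel bound via the finite speed of propagation for the wave operator $\cos(t\sqrt{L})$, which is the standard route (and is exactly the argument of \cite[Lemma 2.6]{BBD20}). First I would record two consequences of the assumed Gaussian upper bound on $p_\alpha(x,y)$. On the one hand, the Gaussian bound together with the self-adjointness of $L$ gives the Davies--Gaffney $L^2$ off-diagonal estimate for $\{e^{-tL}\}$, which is well known to be equivalent to finite speed of propagation: the Schwartz kernel of $\cos(t\sqrt L)$ is supported in $\{(x,y):\rho(x,y)\le|t|\}$ for every $t$. On the other hand, the Gaussian bound gives the on-diagonal $L^2\to L^\infty$ estimate
\[
\big\|e^{-\alpha^2 L}\big\|_{L^2(B(x,\alpha))\to L^\infty(X)}\lesssim V(x,\alpha)^{-1/2},
\]
which, combined with its dual, is what will produce the volume normalization $V(x\vee y,\alpha)^{-1}$ in the conclusion.

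Next, since $\varphi$ is even and Schwartz, Fourier inversion gives
\[
\varphi(\alpha\sqrt{L})=\frac1\pi\int_0^\infty\widehat\varphi(\tau)\cos(\alpha\tau\sqrt{L})\,d\tau,
\]
with $\widehat\varphi$ again Schwartz. I would then decompose this integral dyadically in $\tau$: fix a partition $1=\sum_{j\ge 0}\eta_j$ on $[0,\infty)$ with $\eta_0$ supported near the origin and $\operatorname{supp}\eta_j\subset\{|\tau|\sim 2^j\}$ for $j\ge1$, and let $m_j$ be the even function with cosine transform $\widehat\varphi\,\eta_j$, so that $\varphi(\alpha\sqrt L)=\sum_{j\ge0}m_j(\alpha\sqrt L)$. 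By finite speed of propagation the kernel of $m_j(\alpha\sqrt L)$ is supported in $\{\rho(x,y)\lesssim\alpha 2^j\}$, while the rapid decay of $\widehat\varphi$ gives $\|\widehat\varphi\,\eta_j\|_{L^1}\lesssim 2^{-jN}$ for every $N$. Inserting one heat factor $e^{-\alpha^2 L}$ and using the $L^2\to L^\infty$ bound above (together with doubling to compare $V(x,\alpha 2^j)$ with $V(x,\alpha)$) then yields, uniformly on the support of the kernel,
\[
\big|K_{m_j(\alpha\sqrt L)}(x,y)\big|\lesssim 2^{-jN}\,V(x\vee y,\alpha)^{-1}.
\]
Summing over those $j$ with $2^j\gtrsim 1+\rho(x,y)/\alpha$, and choosing $N$ large relative to $W$ (after absorbing the $2^{jn}$ loss coming from doubling), gives the asserted bound $|K_{\varphi(\alpha\sqrt L)}(x,y)|\lesssim V(x\vee y,\alpha)^{-1}(1+\rho(x,y)/\alpha)^{-W}$.

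The main obstacle is the passage from $L^2$ operator estimates on the band-limited pieces $m_j(\alpha\sqrt L)$ to honest pointwise bounds on their kernels carrying the correct factor $V(x\vee y,\alpha)^{-1}$: the natural $L^2$ estimates only ``see'' the ball of radius $\alpha$ around each of $x$ and $y$, whereas the kernel of $m_j(\alpha\sqrt L)$ is spread over the much larger set $\rho(x,y)\lesssim\alpha 2^j$, so one must tile that set by $\alpha$-balls and combine the polynomial volume growth with the support information. Once this kernel lemma is in place, the $j$-summation and the choice of exponents are routine, and the reverse-doubling hypothesis plays no role here. Alternatively, one may simply invoke \cite[Lemma 2.6]{BBD20}, where precisely this argument is carried out in the present framework.
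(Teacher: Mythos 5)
First, note that the paper does not prove Lemma \ref{kernel est} at all: it is quoted verbatim from \cite[Lemma 2.6]{BBD20}, so your closing remark that one may ``simply invoke'' that reference is exactly what the authors do. Your sketch of an actual proof follows the standard wave-equation route (cosine transform, finite speed of propagation from Davies--Gaffney, dyadic decomposition of $\widehat\varphi$, Gaussian on-diagonal bounds plus doubling), which is indeed the strategy underlying \cite{BBD20} via \cite{CKP12,KP15}; the reverse doubling is irrelevant, as you say, and the small imprecision that the propagation speed is some finite constant rather than $1$ is harmless.

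There is, however, a genuine gap at the pivotal step where you pass from $L^2$ information on the pieces $m_j(\alpha\sqrt L)$ to pointwise kernel bounds by ``inserting one heat factor $e^{-\alpha^2L}$''. Written out, this means factoring $m_j(\alpha\sqrt L)=\big[m_j(\alpha\sqrt L)e^{\alpha^2L}\big]e^{-\alpha^2L}$ (or a two-sided variant), and the compensating operator is $g_j(\alpha\sqrt L)$ with $g_j(\lambda)=m_j(\lambda)e^{\lambda^2}$, which is unbounded: the rapid but only polynomial-type decay of $m_j$ cannot beat $e^{\lambda^2}$, so the crucial $2^{-jN}$ operator-norm smallness is destroyed and the $L^2\to L^\infty$ bound for $e^{-\alpha^2L}$ cannot be brought to bear. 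Your stated remedy (tiling the support $\rho(x,y)\lesssim\alpha 2^j$ by $\alpha$-balls) addresses only volume bookkeeping, not this obstruction. A standard repair that keeps your outline intact is to smooth with resolvents instead of heat operators: write $\varphi(\lambda)=(1+\lambda^2)^{-m}\,\tilde\varphi(\lambda)\,(1+\lambda^2)^{-m}$ with $\tilde\varphi(\lambda)=(1+\lambda^2)^{2m}\varphi(\lambda)$ still even and Schwartz, decompose $\tilde\varphi(\alpha\sqrt L)=\sum_j\tilde m_j(\alpha\sqrt L)$ on the Fourier side exactly as you do, and sandwich each piece between two copies of $(I+\alpha^2L)^{-m}$, whose kernels enjoy pointwise bounds $V(x,\alpha)^{-1}(1+\rho(x,y)/\alpha)^{-W'}$ by subordination to the heat semigroup. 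Then $|K_j(x,y)|\le\|\tilde m_j(\alpha\sqrt L)\|_{L^2\to L^2}\,\|K_{(I+\alpha^2L)^{-m}}(x,\cdot)\|_{L^2}\,\|K_{(I+\alpha^2L)^{-m}}(\cdot,y)\|_{L^2}$ gives the near-diagonal estimate with the factor $2^{-jN}V(x\vee y,\alpha)^{-1}$ (after using doubling and \eqref{V x r V y r} to symmetrize the volume), while in the regime $\rho(x,y)\gg\alpha2^j$ a near/far splitting of $K_{(I+\alpha^2L)^{-m}}(\cdot,y)$ combined with the support property of the middle kernel produces the extra decay $(1+\rho(x,y)/\alpha)^{-W}$; the $j$-summation then goes through as you indicate. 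Alternatively, one can decompose dyadically in the spectral variable and quote the band-limited kernel estimates of \cite{CKP12,KP15}, which is in effect what \cite{BBD20} does.
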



Now we are ready to show spaces are continuously embedded into $\mathcal S _\infty '$ and complete.
\begin{thm} \label{embedding S infty '}
	Let  $\omega \in A_\infty$, $0<p<\infty$, $0<q\le \infty$ and  $s\in \mathbb R$.
	
	{\rm (a)} 	If $0<p<t<\infty$ and $ \max\{t, -nt / \log \beta  \} < r <\infty $, where $\beta = 1-  (1- \alpha_0)^{p/A} /  [\omega]_{A_{p/A}} $ with $0< A<  p / q_\omega$, or $ 0 <p \le t < r =\infty$, 	then the spaces $\dot{\mathcal B}^{s,q,L}_{p,t,r,\omega} (X)$ are complete  and  continuously embedded into $\mathcal S _\infty '$. 
	
	{\rm (b)} 
	If $0<p<t<\infty$ and $ \max\{t, -nt / \log \beta  \} < r <\infty $, where $\beta = 1-  (1- \alpha_0)^{p/A} /  [\omega]_{A_{p/A}} $ with $0< A<  \min \{ q , p/q_\omega  \}$, or $ 0 <p \le t < r =\infty$, then the spaces $\dot{\mathcal F}^{s,q,L}_{p,t,r,\omega} (X)$ are complete  and  continuously embedded into $\mathcal S _\infty '$. 
\end{thm}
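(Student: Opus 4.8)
The plan is to obtain both assertions from the Besov-type case together with the atomic decompositions already established. First, Proposition~\ref{basic embed}(iii) gives the continuous embedding $\dot{\mathcal F}^{s,q,L}_{p,t,r,\omega}(X)\hookrightarrow\dot{\mathcal B}^{s,\max\{p,q\},L}_{p,t,r,\omega}(X)$, and the hypotheses of part~(b) imply those of part~(a) (with the same $A$) for this target space; hence the continuous embedding $\dot{\mathcal F}^{s,q,L}_{p,t,r,\omega}(X)\hookrightarrow\mathcal S_\infty'$ follows once the embedding $\dot{\mathcal B}^{s,q,L}_{p,t,r,\omega}(X)\hookrightarrow\mathcal S_\infty'$ is proved for every $q\in(0,\infty]$. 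Completeness of $\dot{\mathcal F}^{s,q,L}_{p,t,r,\omega}(X)$ will be handled separately but by the same mechanism, replacing Theorems~\ref{atm B 1} and \ref{atm B 2} by Theorems~\ref{atm F 1} and \ref{atm F 2}. Thus it suffices to treat $\dot{\mathcal B}^{s,q,L}_{p,t,r,\omega}(X)$.

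For the continuous embedding into $\mathcal S_\infty'$ I would adapt the argument used for the diagonal case $p=t$, $r=\infty$ in \cite[p.~29]{BBD20} (recorded as Lemma~\ref{Lp embedding complete}). Fix a partition of unity $\psi$ and $\phi\in\mathcal S_\infty$; by Lemma~\ref{conv in S infty}, $f=\sum_{j\in\mathbb Z}\psi_j(\sqrt L)f$ in $\mathcal S_\infty'$, so $\langle f,\phi\rangle=\sum_{j\in\mathbb Z}\langle\psi_j(\sqrt L)f,\phi\rangle$ and it is enough to bound each pairing and sum the resulting series. Each $\psi_j(\sqrt L)f$ is a genuine function; fixing a dyadic cube $Q_0\ni x_0$ of scale $1$ and letting $R_j\supset Q_0$ be its dyadic ancestor of scale $2^{-j}$ (with $R_j:=Q_0$ when $j\ge j_{Q_0}$), the definition of $\dot{\mathcal B}^{s,q,L}_{p,t,r,\omega}(X)$ forces
\[
\big\|2^{js}\psi_j(\sqrt L)f\big\|_{L^p(\omega;\,R_j)}\le\omega(R_j)^{1/p-1/t}\,\|f\|_{\dot{\mathcal B}^{s,q,L}_{p,t,r,\omega}(X)},
\]
and since $1/p-1/t>0$ and $\omega$ is doubling, $\omega(R_j)^{1/p-1/t}$ grows at most like a fixed power of $2^{|j|}$. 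On the other hand, writing $\phi=L^{N}g_{N}$ (from the definition of $\mathcal S_\infty$) for the low-frequency part $j\le0$, and $\psi(\eta)=\eta^{2M}\widetilde\psi(\eta)$ with $\widetilde\psi\in\mathscr S(\mathbb R)$ for the high-frequency part $j>0$, and using the kernel estimate of Lemma~\ref{kernel est} together with the finiteness of the seminorms $\mathcal P_{m,\ell}(\phi)$, one obtains $|\langle\psi_j(\sqrt L)f,\phi\rangle|\lesssim C_\phi\,2^{-2(M\wedge N)|j|}\,\omega(R_j)^{1/p-1/t}2^{-js}\,\|f\|_{\dot{\mathcal B}^{s,q,L}_{p,t,r,\omega}(X)}$ up to polynomial factors in $|j|$. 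Choosing $N,M$ large enough that the exponential decay dominates all the polynomial growth, the series over $j$ converges and $|\langle f,\phi\rangle|\lesssim C_\phi\,\|f\|_{\dot{\mathcal B}^{s,q,L}_{p,t,r,\omega}(X)}$, which is the continuity of $f\mapsto\langle f,\phi\rangle$; in the quasi-Banach range $p<1$ or $q<1$ the triangle inequality is replaced throughout by the $\min(1,p,q)$-subadditivity of Remark~\ref{min tri}.

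For completeness, choose $M$ large enough to meet the requirements of both Theorem~\ref{atm B 1} and Theorem~\ref{atm B 2}; those two results then identify $\|f\|_{\dot{\mathcal B}^{s,q,L}_{p,t,r,\omega}(X)}$, up to constants, with the atomic sequence quasi-norm on the left-hand side of $(\ref{eq atm B 1 1})$ evaluated at a suitable decomposition of $f$, and the corresponding sequence space is complete, being a mixed discrete $\ell^r(\ell^q(L^p(\omega)))$-type space. Given a Cauchy sequence $\{f_k\}$ in $\dot{\mathcal B}^{s,q,L}_{p,t,r,\omega}(X)$, the embedding just proved makes it Cauchy in $\mathcal S_\infty'$, so $f_k\to f$ in $\mathcal S_\infty'$ for some $f$; applying Fatou's lemma to the defining quasi-norm of $f-f_k$ (along a subsequence, using that $\mathcal S_\infty'$-convergence forces local a.e.\ convergence of $\psi_j(\sqrt L)f_k$) gives $f\in\dot{\mathcal B}^{s,q,L}_{p,t,r,\omega}(X)$ and $\|f-f_k\|_{\dot{\mathcal B}^{s,q,L}_{p,t,r,\omega}(X)}\to0$; equivalently one may verify that every $\min(1,p,q)$-absolutely convergent series converges in the space by concatenating atomic decompositions and invoking Theorem~\ref{atm B 2}. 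The analogous argument with Theorems~\ref{atm F 1} and \ref{atm F 2} and the sequence quasi-norm of $(\ref{eq atm F 1 1})$ yields completeness of $\dot{\mathcal F}^{s,q,L}_{p,t,r,\omega}(X)$.

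The main obstacle is the term-by-term estimate in the second step. In contrast with the case $p=t$, $r=\infty$ of \cite{BBD20}, where $\|\psi_j(\sqrt L)f\|_{L^p(\omega)}$ is controlled globally, the Bourgain--Morrey quasi-norm controls $\psi_j(\sqrt L)f$ only on a fixed cube and only up to the factor $\omega(R_j)^{1/p-1/t}$, which grows along the chain of dyadic ancestors of $Q_0$ --- this is precisely where $\mu(X)=\infty$ and the reverse doubling condition enter. One must therefore check that the decay extracted from $\phi\in\mathcal S_\infty$, namely the $2^{2Nj}$ gain at low frequency coming from $\phi=L^{N}g_N$ and the $2^{-2Mj}$ gain at high frequency coming from $\psi(\eta)=\eta^{2M}\widetilde\psi(\eta)$, genuinely beats this growth for a suitable choice of $N$ and $M$, and in the quasi-Banach range one must keep the $\min(1,p,q)$-exponent bookkeeping consistent. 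The remaining parts --- the reduction from $\mathcal F$ to $\mathcal B$ and the Fatou/atomic completeness argument --- are routine once the estimate is in place.
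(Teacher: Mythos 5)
Your reduction from $\dot{\mathcal F}$ to $\dot{\mathcal B}$, your use of Lemma \ref{conv in S infty} to write $\langle f,\phi\rangle$ as a sum over frequencies, and your Fatou-type completeness argument all run parallel to the paper. But the heart of the matter is the term-by-term estimate, and there your proposal has a genuine gap: you control $\psi_j(\sqrt L)f$ only on the single chain of cubes $R_j\supset Q_0\ni x_0$, i.e.\ you use the Bourgain--Morrey quasi-norm only for the pairs $(R_j,j)$. The pairing $\langle\psi_j(\sqrt L)f,\phi\rangle$ (or $\langle\psi_j(\sqrt L)f,\psi_j(\sqrt L)\phi\rangle$) is an integral over all of $X$, and the polynomial decay of $\phi$ in $\rho(x,x_0)$ does not localize it to $R_j$; at a fixed frequency $j$ the function $\psi_j(\sqrt L)f$ may be large far from $x_0$, and the only available control there is the Morrey-type information on the cubes away from $x_0$, whose normalizing factors $\omega(Q)^{1/t-1/p}$ degrade with the cube. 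Your claimed bound $|\langle\psi_j(\sqrt L)f,\phi\rangle|\lesssim C_\phi\,2^{-2(M\wedge N)|j|}\omega(R_j)^{1/p-1/t}2^{-js}\|f\|$ is therefore not justified: the frequency gain $2^{-2(M\wedge N)|j|}$ says nothing about the spatial tail, and no choice of $M,N$ repairs this. You also identify the ``main obstacle'' as the growth of $\omega(R_j)^{1/p-1/t}$ along the ancestor chain, which misdiagnoses the difficulty.

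What the paper actually does at this step is: use the squared partition $\sum_j\psi_j^2=1$ so that the test side carries the factor $\psi_j(\sqrt L)\phi$, whose kernel estimate (Lemma \ref{kernel est}) together with the seminorms of $\phi$ yields the pointwise decay $(1+\rho(x,x_0))^{-m+\tilde n+n}$ with the gains $2^{-j(2m-n)}$ ($j\ge0$, writing $\psi(\lambda)=\lambda^{2m}\Psi(\lambda)$) and $2^{j(m+\tilde n+n)}$ ($j<0$, writing $\psi(\lambda)=\lambda^{-2m}\Psi(\lambda)$ and using $\phi=L^mg_m$); then cover $X$ by \emph{all} dyadic cubes $Q^j_\tau$, $\tau\in I_j$, reduce to $p>1$ by the Sobolev-type embedding (Proposition \ref{Sobolev  type embedding}) so that H\"older with exponent $p'$ applies on each cube, and invoke the summation estimate (\ref{sum le 1}) from \cite[(20)]{BX25}, which shows that $\sum_{\tau\in I_j}\omega(Q^j_\tau)^{1/p-1/t}\bigl(\int_{Q^j_\tau}\omega^{-p'/p}(1+\rho(\cdot,x_0))^{(-m+\tilde n+n)p'}\,\mathrm d\mu\bigr)^{1/p'}\lesssim1$ uniformly in $j$ once $m>\tilde n+3n/2+nq_\omega/t$. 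This spatial summation over all cubes at each scale, balanced against the weight factors, is exactly the ingredient your argument lacks (and the reduction to $p>1$ is also needed, since for $p\le1$ the H\"older duality step is unavailable; the $\min(1,p,q)$-inequality only handles the sum over $j$). The completeness part of your proposal is essentially the paper's Fatou argument and is fine modulo these prerequisites.
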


\begin{proof}
	The  proof is similar	as \cite[Theorem  3.7]{BX25}. The idea also comes from \cite[Theorem 5.3]{GKKP17}.  
	For the reader's convenience, we give some details.
	We first prove that $\dot{\mathcal F}^{s,q,L}_{p,t,r,\omega} (X)$ and $\dot{\mathcal B}^{s,q,L}_{p,t,r,\omega} (X)$  are  continuously embedded into $\mathcal S _\infty '$. By  Proposition \ref{basic embed}, it suffices to show that  $\dot{\mathcal B}^{s,\infty,L}_{p,t,r,\omega} (X) \hookrightarrow \mathcal S _\infty '$. By Lemma \ref{Lp embedding complete}, it remains to prove the cases $0< p< t< r =\infty$ and $ 0< p<t<\infty$, $ \max\{t, -nt / \log \beta  \} < r <\infty  $. 
	
	We first consider the case $0< p< t< r =\infty$. Let $f \in \dot{\mathcal B}^{s,\infty,L}_{p,t,\infty,\omega} (X)$  and $\phi \in \mathcal S _\infty$. We will show that there exists an $m \in \mathbb N$ such that
	\begin{equation} \label{eq wanted est}
		| \langle f , \phi \rangle | \lesssim \| f \| _{ \dot{\mathcal B}^{s,\infty,L}_{p,t,\infty,\omega} (X)   } \mathcal P _{  m, m,m} ^ *(\phi).
	\end{equation}
	First recall that since $\phi \in \mathcal S _\infty$, for each $k\in \mathbb N$, there exists $g_k \in \mathcal S$ such that $\phi = L^k g_k$. For $m>0$ and $\ell , k \in \mathbb N$, we have
	\begin{equation} \label{semi norm S infty}
		\mathcal P _{m, \ell ,k} ^* (\phi ) = \sup_{x\in X} (1 + \rho (x, x_0)) ^m  \big|  L^\ell g_k (x)    \big|.
	\end{equation}
	Let $\psi \in C_0^\infty (\mathbb R _+) $ be a real-valued function  such that supp $\psi  \subset [1/2,2]$ and $ \sum_{j\in \mathbb Z} \psi^2 ( 2^{-j} \lambda ) =1 $ for all $ \lambda \in \mathbb R _+$. Set $\psi_j (\lambda) = \psi(2^ {-j} \lambda), j\in \mathbb Z$. Then $\sum_{j \in \mathbb Z}  \psi_{j}^2 (\lambda) =1 $ for $\lambda \in \mathbb R_+$. Hence, by Lemma \ref{conv in S infty}, we have
	\begin{equation} \label{eq f = sum psi 2}
		f = \sum_{j\in \mathbb Z} \psi_j ^2 \big(\sqrt{L}\big) f  \quad \operatorname{in} \;  \mathcal S _\infty '.
	\end{equation}
	Note that $\{\psi _j \}_{j\in \mathbb Z}$  can be used to define an equivalent norm on $   \dot{\mathcal B}^{s, \infty ,L}_{p,t,\infty,\omega} (X) $. From (\ref{eq f = sum psi 2}), we obtain
	\begin{equation} \label{inner product}
		\langle f , \phi \rangle   = \sum_{j\in \mathbb Z} 	\big\langle  \psi_j^2 \big(\sqrt{L}\big)  f , \phi \big\rangle =  \sum_{j\in \mathbb Z} 	\big\langle  \psi_j  \big(\sqrt{L}\big) f ,  \psi_j \big(\sqrt{L}\big) \phi \big\rangle .
	\end{equation}
	We next estimate  $\big|  	\big\langle  \psi_j  \big(\sqrt{L}\big) f ,  \psi_j \big(\sqrt{L}\big) \phi \big\rangle   \big|   $ for each $j \in \mathbb Z$. We consider two subcases. 
	
	Subcase: $j \ge 0$. Let $m>   \tilde n + 3n/2+ n q_\omega /t  + |s| $. We first estimate $   \big|   \psi_j \big(\sqrt{L}\big) \phi (x) \big|   $. Set $ \Psi (\lambda) := \lambda ^{-2m} \psi (\lambda)$. Then $\psi_j \big(\sqrt{L}\big)  = 2^{-2mj}  \Psi \big(2^{-j} \sqrt{L} \big) L ^m$ and hence
	\[
	\psi_j \big(\sqrt{L}\big) \phi (x)  = 2^{-2mj} \int_X  K_{   \Psi (2^{-j} \sqrt{L} ) } (x,y) L^m \phi (y) \d \mu  (y).
	\]
	Clearly, $\Psi \in C_0^\infty (\mathbb R _+)$ and  supp $\Psi \subset [1/2,2]$. Hence, by Lemma \ref{kernel est}, we have
	\[
	K_{   \Psi (2^{-j} \sqrt{L} ) } (x,y)	 \le \frac{C}{V(x  \vee y , 2^{-j} )}\bigg(  1+ \frac{\rho(x,y) }{2^{-j}} \bigg)^{-m}.
	\]
	On the other hand, since $\phi \in \mathcal S _\infty$, by (\ref{semi norm S infty}), we have for each $k\in \mathbb N$,
	\[
	| L^m \phi (y) | \lesssim (1 + \rho (x_0,y ) ) ^{-m} \mathcal P _{m,m,k}^* (\phi).
	\]
	Putting the above estimate together, we obtain
	\[
	\big| 	\psi_j \big(\sqrt{L}\big) \phi (x) \big| \lesssim 2^{-2mj}  \mathcal P _{m,m,k}^* (\phi) \int_X \frac{\d \mu  (y) }{  V (y ,2^{-j})    (1 + 2^j  \rho(x,y) )^m  (1+ \rho (y,x_0)) ^m     }.
	\]
	From (\ref{V x r V y r}) and the doubling condition, we have
	\[
	V (x_0,1) \le C (1 +\rho (y,x_0) ) ^{ \tilde n} V (y,1) \le C 2^{jn} (1 +\rho (y,x_0) ) ^{ \tilde n} V (y, 2^{-j}).
	\]
	Hence,
	\begin{align}
		\nonumber
		\big| 	\psi_j \big(\sqrt{L}\big) \phi (x) \big| & \lesssim 2^{- j( 2m -n)}  \mathcal P _{m,m,k}^* (\phi) \int_X \frac{\d \mu  (y) }{  V (x_0 ,1)    (1 +  \rho(x,y) )^m  (1+ \rho (y,x_0)) ^{m-\tilde n}     } \\
		\nonumber
		& \lesssim 2^{- j( 2m -n)}  \mathcal P _{m,m,k}^* (\phi) \int_X \frac{\d \mu  (y) }{  V (x_0 ,1)    (1 +  \rho(x,y) )^{m - \tilde n}  (1+ \rho (y,x_0)) ^{m-\tilde n}     } \\
		\nonumber
		& \lesssim 2^{- j( 2m -n)}  \mathcal P _{m,m,k}^* (\phi)  ( 1+ \rho (x,x_0) ) ^{-m + \tilde n +n}.
	\end{align}
	Here for the last inequality we used  $m > \tilde n +n$.
	Now we are ready to estimate the inner products in (\ref{inner product}).
	We may  consider the case $1< p $ thanks to  Proposition  \ref{Sobolev  type embedding}. Then applying the H\"older inequality, we get

	\begin{align*}
		& \big|	\big\langle  \psi_j  \big(\sqrt{L}\big) f ,  \psi_j \big(\sqrt{L}\big) \phi \big\rangle  \big| \\
		& \le \sum_{ \tau  \in I_j} \int_{ Q^j _\tau   }  \big| \psi_j  \big(\sqrt{L}\big) f (x) \big|  \big|  \psi_j \big(\sqrt{L}\big) \phi  (x) \big| \d \mu (x) \\
		& \le 2^{- j( 2m -n)}  \mathcal P _{m,m,k}^* (\phi)  \sum_{ \tau  \in I_j} \int_{ Q^j _\tau   }  \big| \psi_j  \big(\sqrt{L}\big) f (x) \big| \omega (x) ^{1/p} \omega (x) ^{-1/p}   ( 1+ \rho (x,x_0) ) ^{-m + \tilde n +n}  \d \mu (x) \\
		& \le 2^{- j( 2m -n)}  \mathcal P _{m,m,k}^* (\phi) 2^{-js}  \| f\| _{  \dot{\mathcal B}^{s,\infty,L}_{p,t,r,\omega} (X)  } \\
		& \times \sum_{ \tau  \in I_j}  \omega ( Q^j_\tau ) ^{1/p -1/t}  \bigg(  \int_{ Q^j _\tau   }  \omega (x) ^{-p'/p}   ( 1+ \rho (x,x_0) ) ^{ (-m + \tilde n +n) p'}  \d \mu (x)   \bigg)^{1/p'} .
	\end{align*}
	From \cite[(20)]{BX25}, we have that for $m >   \tilde n + 3n/2+ n q_\omega /t$,
	\begin{equation} \label{sum le 1}
		\sum_{ \tau  \in I_j}  \omega ( Q^j_\tau ) ^{1/p -1/t}  \bigg(  \int_{ Q^j _\tau   }  \omega (x) ^{-p'/p}   ( 1+ \rho (x,x_0) ) ^{ (-m + \tilde n +n) p'}  \d \mu (x)   \bigg)^{1/p'}  \lesssim 1.
	\end{equation}
	Hence, we have
	\[
	\big|	\big\langle  \psi_j  \big(\sqrt{L}\big) f ,  \psi_j \big(\sqrt{L}\big) \phi \big\rangle  \big|  \lesssim 2^{- j( 2m -n)}  \mathcal P _{m,m,k}^* (\phi) 2^{-js}  \| f\| _{  \dot{\mathcal B}^{s,\infty,L}_{p,t,r,\omega} (X)  }, \quad j\ge 0,
	\]
	if we provide that $m >   \tilde n + 3n/2+ n q_\omega /t$. Summing up these estimates we obtain
	\begin{equation} \label{eq conti emded 1}
		\sum _{j\ge 0} 	 \big|	\big\langle  \psi_j  \big(\sqrt{L}\big) f ,  \psi_j \big(\sqrt{L}\big) \phi \big\rangle  \big|  \lesssim  \mathcal P _{m,m,k}^* (\phi)   \| f\| _{   \dot{\mathcal B}^{s,\infty,L}_{p,t,r,\omega} (X)  },
	\end{equation}
	where we used that $  2m > n-s$.
	
	Subcase: $j<0$. Choose $m >  \tilde n + 3n/2+ n q_\omega /t  + |s| $. Set $\Psi (\lambda) := \lambda ^{2m} \psi (\lambda).$  Then $\psi_j \big(\sqrt{L}\big) = 2 ^{2mj} L^{-m} \Psi \big(2 ^{ - j}  \sqrt{L}\big)$  and
	\[
	\psi_j \big(\sqrt{L}\big) \phi (x) =  2 ^{2mj}  \Psi \big( 2^{-j} \sqrt{L}\big) L^{-m} \phi (x) =  2 ^{2mj} \int_X K_{   \Psi ( 2^{-j} \sqrt{L} )  }  (x,y) L^{-m} \phi (y) \d \mu  (y).
	\]
	Clearly, $\Psi \in C_0^\infty (\mathbb R _+)$ and  supp $\Psi \subset [1/2,2]$. Hence, by Lemma \ref{kernel est}, we have
	\[
	K_{   \Psi (2^{-j} \sqrt{L} ) } (x,y)	 \le \frac{C}{V(x  \vee y , 2^{-j} )}\bigg(  1+ \frac{\rho(x,y) }{2^{-j}} \bigg)^{-m}.
	\]
	On the other hand, since $\phi \in \mathcal S _\infty$, by (\ref{semi norm S infty}), we have
	
	\[
	| L^{-m} \phi (y) | \lesssim (1 + \rho (x_0,y ) ) ^{-m} \mathcal P _{m,m,m}^* (\phi).
	\]
	Putting the above estimate together, we obtain
	\[
	\big| 	\psi_j \big(\sqrt{L}\big) \phi (x) \big| \lesssim 2^{2mj}  \mathcal P _{m,m,m}^* (\phi) \int_X \frac{\d \mu  (y) }{  V (y ,2^{-j})    (1 + 2^j  \rho(x,y) )^m  (1+ \rho (y,x_0)) ^m     }.
	\]
	From   $ V(x, 2^{-j}) \le c (1 + 2^j  \rho (x,y)) ^{\tilde n}  V (y, 2^{-j}) $, we have
	
	\begin{align}
		\nonumber
		\big| 	\psi_j \big(\sqrt{L}\big) \phi (x) \big| & \le
		2^{2mj}  \mathcal P _{m,m,m}^* (\phi) \int_X \frac{\d \mu  (y) }{  V (x ,2^{-j})    (1 + 2^j  \rho(x,y) )^ {m - \tilde n}  (1+ \rho (y,x_0)) ^{ m -\tilde n}     }
		\\
		\nonumber
		& \le
		2^{2mj}  \mathcal P _{m,m,m}^* (\phi) ( 1+ 2^j \rho ( x,x_0) )^ {-m +\tilde n +n}
		\\
		\nonumber
		& \le
		2^{j (m +\tilde n +n) }  \mathcal P _{m,m,m}^* (\phi) ( 1+ \rho ( x,x_0) )^ {-m +\tilde n +n}, \quad j <0.
	\end{align}
	Here for the second inequality we used that $m > \tilde n +n$.
	Now we are ready to estimate the inner products in (\ref{inner product}).
	We may  consider the case $1< p $ thanks to  Proposition  \ref{Sobolev  type embedding}. Then applying the H\"older inequality, we get
	\begin{align*}
		& \big|	\big\langle  \psi_j  \big(\sqrt{L}\big) f ,  \psi_j \big(\sqrt{L}\big) \phi \big\rangle  \big| \\
		& \le \sum_{ \tau  \in I_j} \int_{ Q^j _\tau   }  \big| \psi_j  \big(\sqrt{L}\big) f (x) \big|  \big|  \psi_j \big(\sqrt{L}\big) \phi  (x) \big| \d \mu (x) \\
		& \le 	2^{j (m +\tilde n +n) }  \mathcal P _{m,m,m}^* (\phi)   \sum_{ \tau  \in I_j} \int_{ Q^j _\tau   }  \big| \psi_j  \big(\sqrt{L}\big) f (x) \big| \omega (x) ^{1/p} \omega (x) ^{-1/p}   ( 1+ \rho (x,x_0) ) ^{-m + \tilde n +n}  \d \mu (x) \\
		\nonumber
		& \le 	2^{j (m +\tilde n +n) }  \mathcal P _{m,m,m}^* (\phi)  \sum_{ \tau  \in I_j} \int_{ Q^j _\tau   }  \big| \psi_j  \big(\sqrt{L}\big) f (x) \big| \omega (x) ^{1/p} \omega (x) ^{-1/p}   ( 1+ \rho (x,x_0) ) ^{-m + \tilde n +n}  \d \mu (x) \\
		& \le 	2^{j (m +\tilde n +n) }  \mathcal P _{m,m,m}^* (\phi) 2^{-js}  \| f\| _{   \dot{\mathcal B}^{s,\infty,L}_{p,t,r,\omega} (X)  } \\
		&	\times
		\sum_{ \tau  \in I_j}  \omega ( Q^j_\tau ) ^{1/p -1/t}  \bigg(  \int_{ Q^j _\tau   }  \omega (x) ^{-p'/p}   ( 1+ \rho (x,x_0) ) ^{ (-m + \tilde n +n) p'}  \d \mu (x)   \bigg)^{1/p'} .
	\end{align*}
	Note that (\ref{sum le 1}) also holds for $j <0$. Then if $  m > s - \tilde n -n$, we have
	\begin{equation} \label{eq conti emded 2}
		\sum_{j <0}  \big|	\big\langle  \psi_j  \big(\sqrt{L}\big) f ,  \psi_j \big(\sqrt{L}\big) \phi \big\rangle  \big|  \lesssim \mathcal P _{m,m,m}^* (\phi)  \| f\| _{  \dot{\mathcal B}^{s,\infty,L}_{p,t,r,\omega} (X)   } .
	\end{equation}
	By (\ref{eq conti emded 1}) and (\ref{eq conti emded 2}), we have $ \dot{\mathcal B}^{s,\infty,L}_{p,t,r,\omega} (X)   \hookrightarrow \mathcal {S}_\infty '  $.
	
	Case $0<p<t<\infty $, $ \max\{t, -nt / \log \beta  \} < r <\infty  $. By the fact that $  \dot{\mathcal B}^{s,q,L}_{p,t,r,\omega} (X)  \hookrightarrow  \dot{\mathcal B}^{s,q,L}_{p,t,\infty,\omega} (X) $,  we have $ \dot{\mathcal B}^{s,q,L}_{p,t,r,\omega} (X)  \hookrightarrow \mathcal  S _\infty '$.
	
	Finally, we prove the completeness. Let $ \{ f_\ell\}_{\ell =1}^\infty  $ be a fundamental sequence in $  \dot{\mathcal B}^{s,q,L}_{p,t,r,\omega} (X) $.  By the fact that $  \dot{\mathcal B}^{s,q,L}_{p,t,r,\omega} (X) \hookrightarrow \mathcal  S _\infty '$, we obtain $ \{ f_\ell\}_{\ell =1}^\infty  $  is also a fundamental sequence in $\mathcal S _\infty '$. Because $\mathcal  S _\infty '$ is complete, we find a limit element $f \in \mathcal S _\infty '$.  If $ \psi $ be a partition of unity, then $ \psi_j \big(\sqrt{L}\big) f _\ell $ converge to $ \psi_j \big(\sqrt{L}\big) f $ in $  \mathcal S _\infty '$ if $\ell \to \infty $. On the other hand, $\big\{ \psi_j \big(\sqrt{L}\big)  f_\ell \big\}_{ \ell=1} ^\infty$ is a fundamental sequence in $ M_{p,\omega}^{t,r}$. By \cite[Lemma 2.9]{BBD20}, (also can be found in \cite[p. 22]{BBD20}), we have
	\[
	\big|\psi_j \big(\sqrt{L}\big) f_\ell (x)\big| \le \psi_{j,\lambda} ^* \big(\sqrt{L}\big) f_\ell(x) < \infty
	\]
	for all $x\in X$, $\ell \in  \{ 1, 2,3,\ldots\}$. This show  $\big\{ \psi_j \big(\sqrt{L}\big) f_\ell \big\}_{ \ell=1} ^\infty$ is also a  fundamental sequence in $L^\infty$. This show the limiting element of $ \big\{ \psi_j \big(\sqrt{L}\big) f_\ell \big\} _{\ell=1}^\infty  $ in $ M_{p,\omega}^{t,r}$  (which is the same as in $L^\infty$) coincides with $ \psi_j \big(\sqrt{L}\big) f $. Using  Fatou's lemma, we obtain
	\begin{align*}
		\| f \| _{   \dot{\mathcal B}^{s,q,L}_{p,t,r,\omega} (X) } & = 	\bigg(    \sum_{Q \in \mathcal D}  \omega (Q)^{r/t-r/p} \Big(  \sum_{j=j_Q}^\infty \Big(   \int_Q | 2^{js}  \psi_j (   \sqrt{L} )f (x)|^p \omega (x)\d \mu (x)   \Big)^{q/p}   \Big)^{r/q}                 \bigg)^{1/r}\\
		& = 	\bigg(    \sum_{Q \in \mathcal D}  \omega (Q)^{r/t-r/p} \Big(  \sum_{j=j_Q}^\infty \Big(   \int_Q 2^{jsp} \liminf_{\ell\to \infty} |  \psi_j \big(\sqrt{L}\big) f_\ell (x)|^p \omega (x)\d \mu (x)   \Big)^{q/p}   \Big)^{r/q}                 \bigg)^{1/r} \\
		& \le   \liminf_{\ell\to \infty}	\bigg(    \sum_{Q \in \mathcal D}  \omega (Q)^{r/t-r/p} \Big(  \sum_{j=j_Q}^\infty \Big(   \int_Q  | 2^{js} \psi_j \big(\sqrt{L}\big) f_\ell (x)|^p \omega (x)\d \mu (x)   \Big)^{q/p}   \Big)^{r/q}                 \bigg)^{1/r} \\
		& <\infty.
	\end{align*}
	This shows that $f \in \dot{\mathcal B}^{s,q,L}_{p,t,r,\omega} (X)$ and $\{f_\ell \}_{\ell=1}^\infty $ converges to $f$ in  $\dot{\mathcal B}^{s,q,L}_{p,t,r,\omega} (X)$. Hence, $\dot{\mathcal B}^{s,q,L}_{p,t,r,\omega} (X)$ is complete. Similarly, we obtain that  $\dot{\mathcal F}^{s,q,L}_{p,t,r,\omega} (X)$ is also complete.
\end{proof}

\section{Applications}\label{app}

First we recall the notation of fractional powers.
Let $\tau \in \mathbb R$ and define $L^{\tau /2}  : \mathcal S_\infty  \to \mathcal  S_\infty $  by
\begin{equation}\label{tau}
	L^{\tau /2}  f = \frac{1}{\Gamma  ( m -\tau/2  )} \int_0^\infty  \alpha ^{-\tau /2} (\alpha L)^m e^{-\alpha L} f \frac{\d \alpha}{\alpha}
\end{equation}
for any $m \in \mathbb N$, $m > \tau /2$. 
From \cite[Section 5.4]{BBD20},  we know that
\begin{itemize}
	\item[(i)] $ L ^{\tau /2}  $ is well defined as an operator from $ \mathcal S _\infty $ into $ \mathcal S _\infty.  $
	\item[(ii)]  $ L ^{\tau_1} ( L^{\tau_2}f) =L ^{\tau_1+ \tau_2} f$, for all $ f \in \mathcal S _\infty  .$
\end{itemize}

\begin{thm} \label{Frac pow}
		Let  $\omega \in A_\infty$,  $0<q\le \infty$, $s\in \mathbb R$. Let $\tau \in \mathbb R$ and let $ L^{\tau /2} $ be defined as in (\ref{tau}). 
		
			{\rm (a)}	Let  $0<p<t<\infty$  and $ \max\{t, -nt / \log \beta  \} < r <\infty $, where  $\beta = 1-  (1- \alpha_0)^{p/A} /  [\omega]_{A_{p/A}} $ with $0< A < p / q_\omega$, 	or let $0<p\le t<r=\infty$.	
		Then the fractional integral $L^{\tau /2}$ is a bounded operator from $  \dot{\mathcal B}^{s,q,L}_{p,t,r,\omega} (X)  $ into $  \dot{\mathcal B}^{s +\tau ,q,L}_{p,t,r,\omega} (X)  $.
		
		{\rm (b)}
		Let  $0<p<t<\infty$  and $ \max\{t, -nt / \log \beta  \} < r <\infty $, where  $\beta = 1-  (1- \alpha_0)^{p/A} /  [\omega]_{A_{p/A}} $ with $0< A <\min \{ q , p/q_\omega  \} $, 	or let $0<p\le t<r=\infty$.	
	Then the fractional integral $L^{\tau /2}$ is a bounded operator from $  \dot{\mathcal F}^{s,q,L}_{p,t,r,\omega} (X)  $ into $  \dot{\mathcal F}^{s +\tau ,q,L}_{p,t,r,\omega} (X)  $.
\end{thm}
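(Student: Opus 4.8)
The plan is to deduce Theorem~\ref{Frac pow} from the continuous characterizations of Section~\ref{Bou spa}, combined with the elementary fact that $L^{\tau/2}$ \emph{intertwines} the Littlewood--Paley pieces $\psi(\alpha\sqrt{L})$ with an explicit power of $\alpha$. Recall first, from the discussion following \eqref{tau} (see also \cite[Section 5.4]{BBD20}), that $L^{\tau/2}$ maps $\mathcal S_\infty$ continuously into itself and satisfies $L^{\tau_1}L^{\tau_2}=L^{\tau_1+\tau_2}$ on $\mathcal S_\infty$; by transposition it extends to a continuous operator on $\mathcal S_\infty'$, so that $L^{\tau/2}f$ is a well-defined element of $\mathcal S_\infty'$ for each $f$ in $\dot{\mathcal B}^{s,q,L}_{p,t,r,\omega}(X)$ or $\dot{\mathcal F}^{s,q,L}_{p,t,r,\omega}(X)$ (both of which embed continuously into $\mathcal S_\infty'$ by Theorem~\ref{embedding S infty '}). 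We carry out (a); part (b) follows by the same argument, replacing Theorem~\ref{char conti}(a) and Corollary~\ref{cor Psi}(a) by their (b) counterparts and using in addition the Lusin/Littlewood--Paley characterization, Theorem~\ref{char Lusin LP}.

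Fix a partition of unity $\psi$ and set $\widetilde\psi(\lambda):=|\lambda|^{\tau}\psi(\lambda)$; then $\widetilde\psi\in\mathscr S(\mathbb R)$ is even, with $\operatorname{supp}\widetilde\psi\subset\operatorname{supp}\psi\subset[1/2,2]$ (after even extension), so $\operatorname{supp}\widetilde\psi$ avoids the origin. Hence $\widetilde\psi(\alpha\sqrt{L})$ acts on $\mathcal S_\infty'$ via \eqref{varphi} (by \cite[Lemma 2.6]{BBD20}) and its kernel obeys the Gaussian--type bound of Lemma~\ref{kernel est}. From the functional calculus for the nonnegative self-adjoint operator $L$ (using the composition rule on $\mathcal S_\infty$ and passing to $\mathcal S_\infty'$ by duality), the symbol identity $\psi(\alpha\lambda)\lambda^{\tau}=\alpha^{-\tau}(\alpha\lambda)^{\tau}\psi(\alpha\lambda)=\alpha^{-\tau}\widetilde\psi(\alpha\lambda)$ yields, for every $\alpha>0$ and $f\in\mathcal S_\infty'$,
\[
\psi(\alpha\sqrt{L})\,L^{\tau/2}f \;=\; \alpha^{-\tau}\,\widetilde\psi(\alpha\sqrt{L})f \qquad\text{in }\mathcal S_\infty'.
\]
The same identity holds with $\psi$ replaced by the heat profile $\xi\mapsto\xi^{2m}e^{-\xi^{2}}$, i.e. for $\Psi_{m,\alpha}(L)$; this is the form one inserts into Corollary~\ref{cor Psi}.

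By Theorem~\ref{char conti}(a), $\|L^{\tau/2}f\|_{\dot{\mathcal B}^{s+\tau,q,L}_{p,t,r,\omega}(X)}$ is comparable to its $\bigstar$--quantity built from $\psi$. Inserting the displayed identity, the corresponding $\alpha$--integrand is a fixed power of $\alpha$ times $\widetilde\psi(\alpha\sqrt{L})f$; writing $\widetilde\psi(\alpha\sqrt{L})f=\widetilde\psi(\alpha\sqrt{L})\big(\sum_k\varphi_k(\sqrt{L})f\big)$ for a partition of unity $\varphi$ and using Lemma~\ref{kernel est} together with the fact that only finitely many terms (those with $2^{-k}\approx\alpha$) survive by the support of $\widetilde\psi$, one bounds $|\widetilde\psi(\alpha\sqrt{L})f(x)|$ pointwise by a finite sum of Peetre maximal functions $\varphi_{k,\lambda}^{*}(\sqrt{L})f(x)$. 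Collecting the powers of $\alpha$ and performing the attendant adjustment of the smoothness index, the resulting expression is a Peetre--maximal quantity of $f$ on the source scale, of exactly the type handled by Theorems~\ref{equivalence} and \ref{Peetre}(a); this gives $\|L^{\tau/2}f\|_{\dot{\mathcal B}^{s+\tau,q,L}_{p,t,r,\omega}(X)}\lesssim\|f\|_{\dot{\mathcal B}^{s,q,L}_{p,t,r,\omega}(X)}$, which is (a). For (b) one argues identically, using that the identity above is pointwise in $x$ for each fixed $\alpha$, so it passes through the Lusin/Littlewood--Paley characterization (Theorem~\ref{char Lusin LP}) and the $\mathcal F$--version of Theorem~\ref{char conti} unchanged.

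The point demanding genuine care is the passage to $\mathcal S_\infty'$: one must justify that $L^{\tau/2}$ commutes, distributionally, with $\psi(\alpha\sqrt{L})$ and with the Peetre maximal operation $\psi_\lambda^{*}(\alpha\sqrt{L})$ — this is where the $\mathcal S_\infty\to\mathcal S_\infty$ mapping property of $L^{\tau/2}$ and the kernel bounds of Lemma~\ref{kernel est} for $\widetilde\psi(\alpha\sqrt{L})$ enter — and one must check that $\widetilde\psi$ fulfils the hypotheses of the quoted characterizations, in particular sufficient vanishing at $0$, which is automatic here because $\operatorname{supp}\widetilde\psi$ avoids the origin. The remaining work — tracking the exact power of $\alpha$ and matching it against the shift of the smoothness index from $s$ to $s+\tau$, and noting that the Muckenhoupt/Bourgain--Morrey restrictions $r>\max\{t,-nt/\log\beta\}$ with $0<A<p/q_\omega$ (respectively $0<A<\min\{q,p/q_\omega\}$) enter only through the already quoted characterization theorems and require nothing new — is routine and parallels \cite[Section 5.4]{BBD20}.
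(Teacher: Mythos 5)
Your route is essentially the paper's: reduce, via the continuous characterization of Theorem \ref{char conti}, to a pointwise estimate of $\psi(\alpha\sqrt{L})L^{\tau/2}f$ by $\alpha^{-\tau}$ times a Peetre-type maximal quantity of $f$ at scale $\alpha$, and then re-index the smoothness. The only genuine difference is that the paper simply quotes $|\psi(\alpha\sqrt{L})L^{\tau/2}f(x)|\lesssim\alpha^{-\tau}\psi_\lambda^*(\alpha\sqrt{L})f(x)$ from \cite[Theorem 7.1]{BBD20}, whereas you re-derive it from the spectral identity $\psi(\alpha\sqrt{L})L^{\tau/2}=\alpha^{-\tau}\widetilde\psi(\alpha\sqrt{L})$ with $\widetilde\psi(\lambda)=\lambda^{\tau}\psi(\lambda)$, plus Lemma \ref{kernel est}. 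That is legitimate, but note that $\widetilde\psi$ is not a partition of unity, so you cannot literally invoke Lemma \ref{peetre psi} or Theorem \ref{equivalence}; you must actually run the short finite-overlap/kernel argument you sketch (which indeed only uses $\operatorname{supp}\widetilde\psi\subset[1/2,2]$ and smoothness).

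There are two concrete problems. First, the exponent bookkeeping in your final step does not come out as you claim: inserting your identity into the $\bigstar$-quantity at smoothness $s+\tau$ gives the integrand $\alpha^{-(s+\tau)}|\psi(\alpha\sqrt{L})L^{\tau/2}f|=\alpha^{-(s+\tau)-\tau}|\widetilde\psi(\alpha\sqrt{L})f|$, and after the Peetre-maximal bound this is controlled by the norm of $f$ at smoothness $s+2\tau$, not $s$. What your computation actually yields (and what the paper's own displayed chain yields) is $\|L^{\tau/2}f\|_{\dot{\mathcal A}^{s,q,L}_{p,t,r,\omega}(X)}\lesssim\|f\|_{\dot{\mathcal A}^{s+\tau,q,L}_{p,t,r,\omega}(X)}$, i.e. boundedness $\dot{\mathcal A}^{s+\tau}\to\dot{\mathcal A}^{s}$ (equivalently $\dot{\mathcal A}^{s}\to\dot{\mathcal A}^{s-\tau}$); to land on the statement "into $\dot{\mathcal A}^{s+\tau}$" as literally printed one must replace $\tau$ by $-\tau$. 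This sign mismatch already exists between the theorem's wording and the paper's proof, but you should make the exponents explicit rather than calling them routine, since as written your concluding inequality does not follow from your displayed identity. Second, in part (b) the appeal to Theorem \ref{char Lusin LP} is both unnecessary and harmful: that characterization is only available for $q\le p$, a restriction not present in Theorem \ref{Frac pow}; Theorem \ref{char conti}(b) alone suffices, exactly as in the paper (and Corollary \ref{cor Psi} is likewise not needed).
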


\begin{proof}
	Let $\psi$ be a partition of unity. From \cite[Theorme 7.1]{BBD20}, we have
	\begin{equation*}
		| \psi(\alpha \sqrt{L} ) L^{ \tau /2} ) f (x) | \lesssim \alpha^{-\tau} \psi_\lambda^* ( \alpha  \sqrt{L} )  f(x).
	\end{equation*}
	By Theorem \ref{char conti}, we have
	\begin{align}
		\nonumber
		& 	\|L^{\tau/2}  f \| _{ \dot{\mathcal F}^{s,q,L}_{p,t,r,\omega} (X)   } \\
		\nonumber
			 &  \approx 
			\bigg(    \sum_{Q \in \mathcal D}  \omega (Q)^{r/t-r/p} \Big(  \int_Q  \Big(  \int_{0}^{2^{-j_Q} }| \alpha ^{-s} \psi  (\alpha  \sqrt{L} )L^{\tau/2} f (x)|^q\frac{\d \alpha}{ \alpha} \Big)^{p/q}  \omega (x) \d \mu (x)      \Big)^{r/p}                 \bigg)^{1/r} \\
			\nonumber
			& \lesssim 	\bigg(    \sum_{Q \in \mathcal D}  \omega (Q)^{r/t-r/p} \Big(  \int_Q  \Big(  \int_{0}^{2^{-j_Q} }| \alpha ^{-s -\tau} \psi_\lambda^* ( \alpha  \sqrt{L} )  f(x) |^q\frac{\d \alpha}{ \alpha} \Big)^{p/q}  \omega (x) \d \mu (x)      \Big)^{r/p}                 \bigg)^{1/r} \\
			\nonumber
			& \approx \|  f \| _{ \dot{\mathcal F}^{s+\tau ,q,L}_{p,t,r,\omega} (X)   }.
	\end{align}
	The proof of $  \dot{\mathcal B}^{s,q,L}_{p,t,r,\omega} (X)  $  is similar by using Theorem \ref{char conti} again. This completes our proof.
\end{proof}

Next we consider spectral multiplier of Laplace transform type.
Let $m : [0,\infty ) \to \mathbb C $ be a bounded function. Define the spectral multiplier of Laplace transform type of $L$ by
\begin{equation} \label{spe mul}
	\tilde{m}  (L) = \int_0^\infty \alpha L  e ^{-\alpha ^2 L}  m(\alpha^2) \d\alpha.
\end{equation}

\begin{thm}\label{spectral}
		Let  $\omega \in A_\infty$,  $0<q\le \infty$, $s\in \mathbb R$. Let $\tilde{m}  (L)   $ be the spectral multiplier of Laplace transform type  defined by (\ref{spe mul}).
		
	{\rm (a)}	Let  $0<p<t<\infty$  and $ \max\{t, -nt / \log \beta  \} < r <\infty $, where  $\beta = 1-  (1- \alpha_0)^{p/A} /  [\omega]_{A_{p/A}} $ with $0< A < p / q_\omega$, 	or let $0<p\le t<r=\infty$.	
	Then  $\tilde{m}  (L)   $  is bounded on $ \dot{\mathcal B}^{s,q,L}_{p,t,r,\omega} (X)  $.

		{\rm (b)}
		Let  $0<p<t<\infty$  and $ \max\{t, -nt / \log \beta  \} < r <\infty $, where  $\beta = 1-  (1- \alpha_0)^{p/A} /  [\omega]_{A_{p/A}} $ with $0< A < \min \{ q , p/q_\omega  \}$, 	or let $0<p\le t<r=\infty$.	
	Then $\tilde{m}  (L)   $ is bounded on $ \dot{\mathcal F}^{s,q,L}_{p,t,r,\omega} (X)  $.
\end{thm}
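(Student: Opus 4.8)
The plan is to follow the proof of Theorem \ref{Frac pow} essentially verbatim: reduce the boundedness of $\tilde m(L)$ to a single pointwise Peetre-maximal estimate and then feed it into the continuous characterization of Theorem \ref{char conti}. First one records, as in \cite[Section 7]{BBD20}, that $\tilde m(L)$ maps $\mathcal S_\infty$ continuously into itself and commutes with the functional calculus of $L$; by duality $\tilde m(L)$ then acts continuously on $\mathcal S_\infty'$, and since $\dot{\mathcal B}^{s,q,L}_{p,t,r,\omega}(X)$ and $\dot{\mathcal F}^{s,q,L}_{p,t,r,\omega}(X)$ embed into $\mathcal S_\infty'$ by Theorem \ref{embedding S infty '}, $\tilde m(L)f$ is a well-defined element of $\mathcal S_\infty'$ for every $f$ in these spaces, so the norms on the right-hand side make sense.

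The core step is the claim that, for any partition of unity $\psi$, any $\lambda>0$ and any $f\in\mathcal S_\infty'$,
\begin{equation*}
\bigl|\psi\bigl(\alpha\sqrt{L}\bigr)\tilde m(L)f(x)\bigr|\lesssim\|m\|_{\infty}\,\psi_\lambda^*\bigl(\alpha\sqrt{L}\bigr)f(x),\qquad \alpha>0,\ x\in X.
\end{equation*}
To prove it I would start from (\ref{spe mul}), pull $\psi(\alpha\sqrt{L})$ inside the integral and substitute $\beta=\alpha t$ to obtain
\begin{equation*}
\psi\bigl(\alpha\sqrt{L}\bigr)\tilde m(L)f=\int_0^\infty t\,m(\alpha^2t^2)\,\Phi_t\bigl(\alpha\sqrt{L}\bigr)f\,\d t,\qquad\Phi_t(\xi):=\xi^2\psi(\xi)e^{-t^2\xi^2}.
\end{equation*}
Since $\operatorname{supp}\psi\subset[1/2,2]$, the function $\Phi_t$ is supported in $[1/2,2]$ and each of its Schwartz seminorms is $\lesssim(1+t)^{N}e^{-t^2/4}$; thus Lemma \ref{kernel est} gives, for $W$ arbitrarily large,
\begin{equation*}
\bigl|K_{\Phi_t(\alpha\sqrt{L})}(x,z)\bigr|\lesssim(1+t)^{N}e^{-t^2/4}\,\frac{1}{V(x,\alpha)}\Bigl(1+\frac{\rho(x,z)}{\alpha}\Bigr)^{-W}.
\end{equation*}
Expanding $f=\sum_{j\in\mathbb Z}\psi_j(\sqrt{L})f$ in $\mathcal S_\infty'$ via Lemma \ref{conv in S infty}, only the $O(1)$ indices $j$ with $2^j\approx\alpha^{-1}$ contribute to $\Phi_t(\alpha\sqrt{L})\psi_j(\sqrt{L})f$; for those $j$ I would integrate the kernel bound against $|\psi_j(\sqrt{L})f(z)|\le(1+2^j\rho(x,z))^\lambda\psi_{j,\lambda}^*(\sqrt{L})f(x)\approx(1+\rho(x,z)/\alpha)^\lambda\psi_{j,\lambda}^*(\sqrt{L})f(x)$, take $W>\lambda+n$, and use Lemma \ref{basic est}(i) to make the $z$-integral $\lesssim1$. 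Summing the finitely many $j$ and comparing $\psi_{j,\lambda}^*(\sqrt{L})f$ with $\psi_\lambda^*(\alpha\sqrt{L})f$ through Lemma \ref{peetre psi} gives $|\Phi_t(\alpha\sqrt{L})f(x)|\lesssim(1+t)^{N}e^{-t^2/4}\psi_\lambda^*(\alpha\sqrt{L})f(x)$; multiplying by $t|m(\alpha^2t^2)|\le\|m\|_\infty t$ and integrating over $t\in(0,\infty)$ (the weight $t(1+t)^{N}e^{-t^2/4}$ is integrable) proves the claim.

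With the claim in hand the argument closes as in Theorem \ref{Frac pow}. For part (b), Theorem \ref{char conti}(b) gives $\|\tilde m(L)f\|_{\dot{\mathcal F}^{s,q,L}_{p,t,r,\omega}(X)}\approx\|\tilde m(L)f\|^{\bigstar}_{\dot{\mathcal F}^{s,q,L}_{p,t,r,\omega}(X)}$, which by the claim is $\lesssim\|f\|^{\bigstar\lambda}_{\dot{\mathcal F}^{s,q,L}_{p,t,r,\omega}(X)}\approx\|f\|_{\dot{\mathcal F}^{s,q,L}_{p,t,r,\omega}(X)}$, provided $\lambda$ is chosen as large as the hypotheses of Theorem \ref{char conti}(b) demand; the same chain with $\dot{\mathcal B}$ in place of $\dot{\mathcal F}$ and Theorem \ref{char conti}(a) yields part (a). The main obstacle is the pointwise Peetre-maximal estimate above — in particular, extracting genuine decay in the auxiliary parameter $t$ from the Gaussian factor while simultaneously reducing to $\psi_\lambda^*(\alpha\sqrt{L})f$; once that is settled, every remaining step is already packaged in the characterizations and maximal inequalities established in Section \ref{Bou spa}.
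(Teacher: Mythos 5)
Your overall strategy is exactly the paper's: establish the pointwise bound $|\psi(\alpha\sqrt{L})\,\tilde m(L)f(x)|\lesssim\psi_\lambda^*(\alpha\sqrt{L})f(x)$ and feed it into Theorem \ref{char conti}; the paper simply quotes this bound from \cite[Theorem 7.2]{BBD20}, whereas you try to re-derive it. The closing chain $\|\tilde m(L)f\|_{\dot{\mathcal F}}\approx\|\tilde m(L)f\|^{\bigstar}\lesssim\|f\|^{\bigstar\lambda}\approx\|f\|_{\dot{\mathcal F}}$ (and its $\dot{\mathcal B}$ analogue) is identical to the paper's, so the only point that needs scrutiny is your derivation of the pointwise estimate.

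There is a genuine flaw in its last step. After decomposing $f=\sum_j\psi_j(\sqrt{L})f$ and using the kernel bound you arrive (correctly) at
\begin{equation*}
|\Phi_t(\alpha\sqrt{L})f(x)|\lesssim (1+t)^{N}e^{-t^2/4}\sum_{j:\,2^{j}\approx\alpha^{-1}}\psi_{j,\lambda}^*(\sqrt{L})f(x),
\end{equation*}
and then claim that Lemma \ref{peetre psi} converts the right-hand side into $\psi_\lambda^*(\alpha\sqrt{L})f(x)$. Lemma \ref{peetre psi} goes the other way: it dominates $\sup_{\alpha\in[2^{-j-1},2^{-j}]}\psi_\lambda^*(\alpha\sqrt{L})f$ by the discrete maximal functions $\varphi_{k,\lambda}^*(\sqrt{L})f$, and no pointwise domination of $\psi_{j,\lambda}^*(\sqrt{L})f(x)$ by $\psi_\lambda^*(\alpha\sqrt{L})f(x)$ at a single nearby scale $\alpha\neq 2^{-j}$ is available (to pass from the discrete to the continuous functional calculus one needs an average in $\alpha$, as in Lemma \ref{psi le con}, not a single value). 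The damage is repairable without the claimed inequality: keep the discrete bound, integrate in $\alpha$ over $(0,2^{-j_Q}]$ dyadically to get $\int_0^{2^{-j_Q}}(\alpha^{-s}|\psi(\alpha\sqrt{L})\tilde m(L)f(x)|)^q\frac{\d\alpha}{\alpha}\lesssim\sum_{j\ge j_Q-2}(2^{js}\psi_{j,\lambda}^*(\sqrt{L})f(x))^q$, absorb the $j\in\{j_Q-2,\dots,j_Q-1\}$ boundary terms by passing to parent cubes exactly as in the proof of Theorem \ref{equivalence}, and conclude with the discrete Peetre characterization (Theorem \ref{Peetre}) instead of $\|f\|^{\bigstar\lambda}$; alternatively, simply cite \cite[Theorem 7.2]{BBD20} as the paper does. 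Two smaller points you should acknowledge if you keep your derivation: Lemma \ref{kernel est} is stated for even $\varphi$ and for a fixed function, so you need to symmetrize $\Phi_t$ and to note that the constant depends only on finitely many Schwartz seminorms of the symbol in order to extract the uniform factor $(1+t)^N e^{-t^2/4}$.
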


\begin{proof}
	We only prove the case of spaces $\dot{\mathcal F}^{s,q,L}_{p,t,r,\omega} (X)$   since the proof of  $ \dot{\mathcal B}^{s,q,L}_{p,t,r,\omega} (X)  $ is similar.
	Let $\psi $ be a partition of unity. From \cite[Theorem 7.2]{BBD20}, we have 
	\begin{equation*}
		| \psi (\alpha \sqrt{L} )  \tilde{m} (L) f(x) | \lesssim \psi_\lambda^* ( \alpha \sqrt{L}) f(x). 
	\end{equation*}
	Hence, Theorem \ref{char conti} implies the result.
\end{proof}

\vspace{0.4cm}

\medskip 

\noindent{\bf Acknowledgements}

\medskip 

\noindent{\bf Data Availability} Our manuscript has no associated data.

\medskip 
\noindent{\bf\Large Declarations}
\medskip 

\noindent{\bf Conflict of interest} The authors state that there is no conflict of interest.

%
%
%
%
%
%
%
%
%
%
%
%
%
%
%
%
%
%
%
%
%

\end{document}